\newtheorem{lemma}{Lemma}
\newtheorem{theorem}[lemma]{Theorem}
\newtheorem{remark}[lemma]{Remark}
\newtheorem{example}{Example}
\numberwithin{equation}{section}
\numberwithin{lemma}{section}
\newcommand{\N}{\mathbb{N}}    %natural numbers
\newcommand{\NN}{\mathbb{N}_0} %Natural Nonnegative numbers
\newcommand{\R}{\mathbb{R}}    %real number field
\newcommand{\nv}{\vec{n}}
\newcommand{\bo}{\mathcal{O}}
\newcommand{\Op}{\Omega^+}
\newcommand{\Om}{\Omega^-}
\newcommand{\be}{ \begin{equation} }
\newcommand{\ee}{ \end{equation} }
\newcommand{\odd}{\operatorname{odd}}
\newcommand{\ind}{\Lambda}
\begin{document}
		
\title[A High Order Compact Scheme for Interface Problems]{A High Order Compact Finite Difference Scheme for Elliptic Interface Problems with Discontinuous
and High-Contrast Coefficients }

\author{Qiwei Feng, Bin Han and Peter Minev}

\thanks{
Research supported in part by
Natural Sciences and Engineering Research Council (NSERC) of Canada}

\address{Department of Mathematical and Statistical Sciences,
University of Alberta, Edmonton,\quad Alberta, Canada T6G 2G1.
\quad {\tt qfeng@ualberta.ca }\qquad {\tt bhan@ualberta.ca}
\quad{\tt minev@ualberta.ca}
}

\begin{abstract}
	
The elliptic interface problems with discontinuous and high-contrast coefficients appear in many applications and often lead to huge condition numbers of the corresponding linear systems. 
Thus, it is highly desired to construct high order schemes to solve the elliptic interface problems with discontinuous and high-contrast coefficients.
Let $\Gamma$ be a smooth curve inside
a rectangular region $\Omega$.
In this paper, we consider the elliptic interface problem $-\nabla\cdot (a \nabla u)=f$ in $\Omega\setminus \Gamma$ with Dirichlet boundary conditions, where the coefficient $a$ and the source term $f$ are smooth in $\Omega\setminus \Gamma$ and the two nonzero jump condition functions $[u]$ and $[a\nabla u\cdot \vec{n}]$ across $\Gamma$ are smooth along $\Gamma$.	To solve such elliptic interface problems, we propose a high order compact finite difference scheme for numerically computing both the solution $u$ and the gradient $\nabla u$ on uniform Cartesian grids without changing coordinates into local coordinates.
Our numerical experiments confirm the fourth order accuracy for computing the solution $u$,  the gradient $\nabla u$ and the velocity $a \nabla u$ of the proposed compact finite difference scheme on uniform meshes for the elliptic interface problems with discontinuous and high-contrast coefficients.
\end{abstract}

\keywords{Elliptic interface equations, high order compact finite difference schemes, discontinuous, cell-wise smooth and high-contrast coefficients, two non-homogeneous jump conditions}

\subjclass[2010]{65N06, 35J05, 76S05, 41A58}
\maketitle

\maketitle

\pagenumbering{arabic}

\section{Introduction and problem formulation}

Elliptic interface problems with discontinuous coefficients arise in many applications such as modelling of underground waste
disposal, solidification processes, mechanics of composite materials, oil reservoir simulations and other flows in porous media, multiphase flows, and many others.

Most of the numerical techniques for such problems are based on (continuous and discontinuous) finite element and finite volume methods (e.g., see \cite{GO94,APHansbo2002,GongLiLi08,Babu1970,BrambleKing96,EwingLLL99,EwingIL01,LinLZ15,HeLL2011}).  Since the goal of our paper is
to develop a compact high-order finite difference scheme, we focus our literature review on the works employing such discretizations.
The most important contributions involving the finite difference method (IIM) are due to LeVeque and  Li (see \cite{LiIto06,LeLi94,Li98,Li96} and the references therein). In particular,
\cite[Section~7.2.7]{LiIto06} proposes a fourth order compact finite difference scheme for numerical approximations of  elliptic problems with piecewise  constant coefficients,  continuous source terms and
two homogeneous jump conditions and \cite[Section~7.5.4]{LiIto06} provides some numerical results for the proposed fourth order compact scheme on uniform grids.
\cite{CFL19} derives a second order compact finite difference method for the solution globally and its gradient at the interface for the interface elliptic problems with piecewise  smooth coefficients and
two non-homogeneous jump conditions. \cite{DFL20} considers anisotropic elliptic interface problems whose coefficient matrix is symmetric, semi-positive-definite, and derives a hybrid discretization involving finite elements away
of the interfaces, and an immersed interface finite difference approximation near or at the interfaces.  The error in the maximum norm is of order $\bo(h^2\log \frac{1}{h})$. Based on the
fast iterative immersed interface method (FIIIM) proposed in \cite{Li98}, \cite{WB00} constructs
a second order explicit-jump immersed interface method (EJIIM) for elliptic interface problems with
discontinuous coefficients and singular sources.  In fact this approach of EJIIM is quite similar to the famous immersed boundary method (IBM) of Peskin \cite{Peskin02}. For the elliptic interface problems
with discontinuous coefficients and singular sources, a high-order method is constructed by combining a Discontinuous Galerkin (DG) spatial discretization and IBM in \cite{BG15}.
For elliptic problems with sharp-edged interfaces, the
matched interface and boundary (MIB) method is considered in \cite{YuZhouWei07,YuWei07}.
In \cite{ZZFW06}, a
high order MIB method is introduced to solve the  elliptic equations with singular sources. Moreover, the fourth order compact finite difference schemes for the elliptic equations on irregular domains are derived in \cite{ItoLiKyei05,LiIto06}.

In \cite{FengHanMinev21}, we derived a sixth order compact finite difference scheme for the Poisson equation with  singular sources, whose solution has a discontinuity across a smooth interface.
The most important feature of the scheme is that the matrix of the resulting linear system is independent of the location of the singularity in the source term.  In the present paper, we consider the more general
case of an elliptic interface problem with a discontinuous, piecewise smooth, and high-contrast coefficient, and a discontinuous source term. The problem involves two non-homogeneous jump conditions across an interface curve, one on the solution, and one the normal component of its gradient.

To fix the ideas, let $\Omega=(l_1, l_2)\times(l_3, l_4)$ be a two-dimensional rectangular region. We define a smooth curve $\Gamma:=\{(x,y)\in \Omega: \psi(x,y)=0\}$, which partitions $\Omega$ into two subregions:
$\Op:=\{(x,y)\in \Omega\; :\; \psi(x,y)>0\}$ and $\Om:=\{(x,y)\in \Omega\; : \; \psi(x,y)<0\}$, where $\psi(x,y)$ is a smooth function in 2D.
We also define
$
a_{\pm}:=a \chi_{\Omega_{\pm}}$, $f_{\pm}:=f \chi_{\Omega_{\pm}}$ and $u_{\pm}:=u \chi_{\Omega_{\pm}}.
$
%%%%
%
\begin{figure}[htbp]
	\hspace{6mm}
	\vspace{-0.3mm}
	\centering	
\begin{subfigure}[b]{0.4\textwidth}
\begin{tikzpicture}
	\draw[domain =0:360,smooth]
	 plot({sqrt(2)*cos(\x)}, {sqrt(2)*sin(\x)});
	\draw
(-pi, -pi) -- (-pi, pi) -- (pi, pi) -- (pi, -pi) --(-pi,-pi);
	\node (A) at (0,0) {$\Omega^{-}$};
	\node (A) at (2,2) {$\Omega^{+}$};
	\node (A) at (1,2) {$a^{+}$};
	\node (A) at (0.7,-0.5) {$a_{-}$};
	\node (A) at (0.7,0.5) {$f_{-}$};
    \node (A) at (2.5,1) {$f_{+}$}; 	
	\node (A) at (-1,-1.3) {$\Gamma$}; 	
	\node (A) at (1.5,-2.8) {$\Omega\backslash \Gamma={\Omega}^{+} \cup {\Omega}^{-}$};
	\node (A) at (-2.7,-1) {$\partial\Omega$};
	\node (A) at (-2.9,-2) {$g$};
	\node (A) at (1.8,-0.6) {$\nv$};
	\node (A) at (-1.8,1) {$[u]=g_1$};
	\node (A) at (-1.3,1.6) {$[a\nabla u \cdot \nv]=g_2$};
	\draw[->] (1.3,-0.5)--(1.65,-0.62);
\end{tikzpicture}
\end{subfigure}
	\caption
	{The problem region $\Omega=(-\pi,\pi)^2$ and
the two subregions $\Omega^+=\{(x,y)\in \Omega\; :\; \psi(x,y)>0\}$ and $\Omega^-=\{(x,y)\in \Omega\; :\; \psi(x,y)<0\}$ partitioned by the interface curve $\Gamma=\{(x,y)\in \Omega\; :\; \psi(x,y)=0\}$ with the function
$\psi(x,y)=x^2+y^2-2$.}
\label{fig:figure0}
\vspace{-3.9mm}
\end{figure}
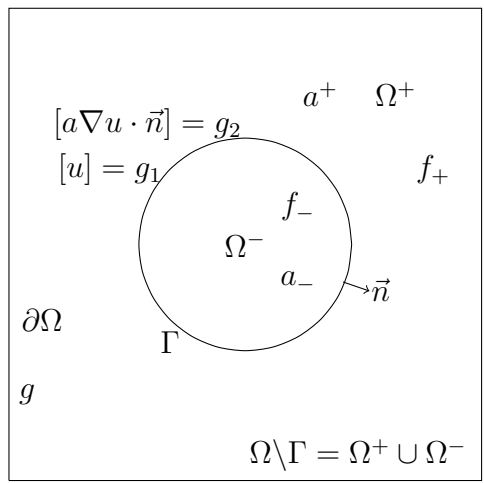
The goal of this paper is to derive a high order compact finite difference scheme for the elliptic interface problem with piecewise smooth coefficients and sources:
\begin{equation} \label{Qeques1}
\begin{cases}
-\nabla \cdot( a\nabla u)=f &\text{in $\Omega \setminus \Gamma$},\\	
\left[u\right]=g_1 &\text{on $\Gamma$},\\
\left[a\nabla  u \cdot \nv \right]=g_2  &\text{on $\Gamma$},\\
u=g &\text{on $\partial\Omega$}.
\end{cases}
\end{equation}
Here $\nv$ is the unit normal vector of $\Gamma$ pointing towards $\Op$, and for a point $(x_0,y_0)\in \Gamma$,
\begin{align}
&\hspace{12mm}[u](x_0,y_0):=\lim_{(x,y)\in \Op \atop (x,y) \to (x_0,y_0) }u(x,y)- \lim_{(x,y)\in \Om \atop (x,y) \to (x_0,y_0) }u(x,y),\label{jumpCD1}\\
&[ a\nabla  u \cdot \nv](x_0,y_0):=  \lim_{(x,y)\in \Op \atop (x,y) \to (x_0,y_0) } a(x,y)\nabla  u(x,y) \cdot \nv- \lim_{(x,y)\in \Om \atop (x,y) \to (x_0,y_0) }  a(x,y) \nabla  u(x,y) \cdot \nv. \label{jumpCD2}
\end{align}
 For the convenience of readers, an example for \eqref{Qeques1} with $\psi(x,y)=x^2+y^2-2$ is illustrated
in \cref{fig:figure0}. Furthermore, \cite{Vazqu07} provides the physical background for the problem \eqref{Qeques1}.
%%
%%

%\cite{Vazqu07} provides the physical background for  \eqref{Qeques1}. By Darcy's law
%\[
%\vec{v}=-\frac{k}{\mu
%}\nabla u,
%\]
%represents the velocity of the fluid flow through a porous medium,
%where $k$ is the permeability which depends on the area of the
%tunnel in the void and the tortuosity, $\mu$ is the viscosity of the
%fluid flow and $u$ is the solution which represents the
%pressure \cite{Vazqu07}. Let $\rho$ denote the density and $\phi$
%denote the porosity of the medium. Then we have
%\[
%\frac{\partial(\phi \rho)}{\partial t}+\nabla \cdot (\rho \vec{v})=f,
%\]
%by the conservation law which is called the continuity equation \cite{Vazqu07}.
%We assume that $\phi$ does not depend on $t$
%and we also assume that $\rho$ is a constant, so
%\[
%\nabla \cdot (\rho \vec{v})= -\nabla \cdot \Big(a \nabla  u\Big)=f,
%\]
%where $a=-\frac{k}{\mu
%}\rho$. If $\mu$ is also a constant, then coefficient $a$ only depends on the permeability $k$. So a small number $a$ means that the fluid flow can pass through the corresponding subdomain slowly which causes the high pressure $u$ (see \cref{fig:figure6} to \cref{fig:figure9} for some examples). In the realistic problems, $k$ and $\mu$ maybe related to the pressure $u$ which causes the nonlinear elliptic problems.

In this paper we consider the elliptic interface problem  in \eqref{Qeques1} under the following assumptions:

\begin{itemize}
	
\item $a(x,y)$ is smooth and positive in each of the subregions $\Op$ and $\Om$, and $a(x,y)$ is discontinuous across the interface curve $\Gamma$.	
	
\item $f(x,y)$ is smooth in each of the subregions $\Op$ and $\Om$, and $f(x,y)$ may be discontinuous across the interface curve $\Gamma$.

\item All functions $\psi(x,y)$, $g_1(x,y)$, $g_2(x,y)$  and $g(x,y)$ are smooth.

\item The exact solution $u(x,y)$ is piecewise smooth in the sense that $u(x,y)$ has uniformly continuous partial derivatives of (total) order up to five in each of the subregions $\Op$ and $\Om$.
\end{itemize}

The paper is organized as follows.
In \cref{sec:Solution:Regular}, we  construct the fourth order compact finite difference scheme for the numerical solution at regular points. The explicit formulas at regular points are shown in \cref{thm:regular}.
\cref{thm:regular} also shows that the maximum order of compact schemes at regular points is six.
In \cref{sec:Solution:Irregular}, we derive the third order compact finite difference scheme for the numerical solution at irregular points,  and discuss its accuracy order in \cref{thm:irregular}. \cref{thm:Max:Order} proves that the maximum order of compact finite difference schemes at irregular points is three.

The explicit formulas for the gradient approximation at regular and irregular points are shown in \cref{thm:gradient:regular} and  \cref{thm:gradient:irregular}, respectively. Furthermore, \cref{thm:gradient:regular} shows that the maximum order of compact schemes for the approximated gradients at regular points is four.  Note that the gradient computation is done explicitly.

In \cref{sec:Numeri}, we provide numerical results to verify the convergence rate measured in the numerical approximated $L^2$ norms for the numerical  solution $u_h$, the gradient approximation $\nabla u_h$, and the flux approximation $a\nabla u_h$. We consider two test cases: (1) the exact solution is known and $\Gamma$ does not intersect $\partial \Omega$ and (2) the exact solution is unknown and $\Gamma$ does not intersect $\partial \Omega$. Since we achieve fourth order at the regular points and third order at the irregular points for the solution and its gradient, the convergence rates for $u_h$, $\nabla u_h$ and $a\nabla u_h$ are between 3 and 4. Note that, we choose the coefficient contrast as $\sup(a_+)/\inf(a_-)=10^{-3},10^{-2},10^{3},10^{4}$ in the numerical tests.

In \cref{sec:Conclu}, we summarize the main contributions of this paper.

\section{Preliminaries}

Since $\Omega=(l_1,l_2)\times (l_3,l_4)$ is a rectangular domain and
we use uniform Cartesian meshes, we can assume that $l_4-l_3=N_0 (l_2-l_1)$ for some positive integer $N_0$.
For any positive integer $N_1\in \mathbb{N}$, we define $N_2:=N_0 N_1$ and then the grid size is  $h:=(l_2-l_1)/N_1=(l_4-l_3)/N_2$.

Let $x_i=l_1+i h$ and $y_j=l_3+j h$ for $i=1,\ldots,N_1-1$, and $j=1,\ldots,N_2-1$.
As in this paper we are interested in compact finite difference schemes on uniform Cartesian grids, the compact scheme involves only nine points $(x_i+kh, y_j+lh)$ for $k,l\in \{-1,0,1\}$.
It is convenient to use a level set function $\psi$, which is a two-dimensional smooth function, to describe a given smooth interface curve $\Gamma$ through
\[
\Gamma:=\{(x,y)\in \Omega \; : \; \psi(x,y)=0\}.
\]
Then the interface curve $\Gamma$ splits the problem domain $\Omega$ into two subregions:
$\Op:=\{ (x,y)\in \Omega \; : \; \psi(x,y)>0\}$ and
$\Om:=\{ (x,y)\in \Omega \; : \; \psi(x,y)<0\}$.
Now the interface curve $\Gamma$ splits these nine points into two groups depending on whether these points lie inside $\Op$ or $\Om$.
%A grid point may lie on the curve $\Gamma$ %For simplicity of discussion, we shall assume that a grid point does not lie on the interface curve $\Gamma$.
If a grid point lies on the curve $\Gamma$, then the grid point lies on the boundaries of both $\Op$ and $\Om$. For simplicity we may assume that the grid point belongs to $\bar{\Omega}^+$ and we can use the interface conditions to handle such a grid point.
Thus, we naturally define
\[
d_{i,j}^+:=\{(k,\ell) \; : \;
k,\ell\in \{-1,0,1\}, \psi(x_i+kh, y_j+\ell h)\ge 0\}
\]
and
\[
d_{i,j}^-:=\{(k,\ell) \; : \;
k,\ell\in \{-1,0,1\}, \psi(x_i+kh, y_j+\ell h)<0\}.
\]
That is, the interface curve $\Gamma$ splits the nine points of a compact scheme into two disjoint sets  $\{(x_{i+k}, y_{j+\ell})\; : \; (k,\ell)\in d_{i,j}^+\} \subseteq \Op$ and
$\{(x_{i+k}, y_{j+\ell})\; : \; (k,\ell)\in d_{i,j}^-\} \subseteq \Om$.
We say that a grid/center point $(x_i,y_j)$ is
\emph{a regular point} if  $d_{i,j}^+=\emptyset$ or $d_{i,j}^-=\emptyset$.
That is, the center point $(x_i,y_j)$  is regular if all its nine points are completely inside $\Op$ (hence $d_{i,j}^-=\emptyset$) or inside $\Om$ (i.e., $d_{i,j}^+=\emptyset$).
Otherwise, the center point $(x_i,y_j)$ is called \emph{an irregular point}
if $d_{i,j}^+\ne \emptyset$ and $d_{i,j}^-\ne \emptyset$. That is, the interface curve $\Gamma$ splits the nine points into two disjoint nonempty sets.

Before we discuss the compact  schemes at a regular or an irregular point $(x_i,y_j)$, let us introduce some notations.
We first pick up and fix a base point $(x_i^*,y_j^*)$ inside the open square $(x_i-h,x_i+h)\times (y_j-h,y_j+h)$, i.e., we can say
\be \label{base:pt}
x_i^*=x_i-v_0h  \quad \mbox{and}\quad y_j^*=y_j-w_0h  \quad \mbox{with}\quad
-1<v_0, w_0<1.
\ee
For simplicity, we shall use the following notions:
\be \label{ufmn}
a^{(m,n)}:=\frac{\partial^{m+n} a}{ \partial^m x \partial^n y}(x_i^*,y_j^*), \quad
u^{(m,n)}:=\frac{\partial^{m+n} u}{ \partial^m x \partial^n y}(x_i^*,y_j^*)
 \quad\mbox{and}\quad
f^{(m,n)}:=\frac{\partial^{m+n} f}{ \partial^m x \partial^n y}(x_i^*,y_j^*),
\ee
which are just their $(m,n)$th partial derivatives at the base point $(x_i^*,y_j^*)$.
Define $\NN:=\N\cup\{0\}$, the set of all nonnegative integers.
For a nonnegative integer $K\in \NN$, we define
\be \label{Sk}
\ind_{K}:=\{(m,n-m) \; : \; n=0,\ldots,K
\; \mbox{ and }\; m=0,\ldots,n\}, \qquad K\in \NN.
\ee
For a smooth function $u$, its value $u(x+x_i^*,y+y_j^*)$ for small $x,y$ can be well approximated through its Taylor polynomial below:
\be \label{u:approx}
u(x+x_i^*,y+y_j^*)=
%\sum_{n=0}^{M+1}\sum_{m=0}^{n} \frac{u^{(m,n-m)}}{m!(n-m)!}x^my^{n-m}
\sum_{(m,n)\in \ind_{M+1}} \frac{u^{(m,n)}}{m!n!}x^m y^{n}
+\bo(h^{M+2}), \qquad x, y \in (-2h,2h).
\ee
In other words, in a neighborhood of the base point $(x_i^*,y_j^*)$,
the function $u$ is well approximated and completely determined by the partial derivatives of $u$ of total degree less than $M+2$ at the base point $(x_i^*,y_j^*)$, i.e.,  by the unknown quantities $u^{(m,n)}, (m,n)\in \ind_{M+1}$.
$a(x+x_i^*,y+y_j^*)$ and $f(x+x_i^*,y+y_j^*)$ can  be approximated similarly for small $x,y$.
%
%\be \label{f:approx}
%f(x+x_i^*,y+y_j^*)=\sum_{n=0}^{M-1}\sum_{m=0}^{n} \frac{f^{(m,n-m)}}{m!(n-m)!}x^m y^{n-m}+\bo(h^{M}), \quad x,y\in (-2h,2h).
%\ee
%
For $x\in \R$, the floor function $\lfloor x\rfloor$ is defined to be the largest integer less than or equal to $x$.
For an integer $m$, we define
\[
\odd(m):=\begin{cases}
0, &\text{if $m$ is even},\\
1, &\text{if $m$ is odd}.
\end{cases}
\]
That is, $\odd(m)=m-2\lfloor m/2\rfloor$ and $\lfloor m/2\rfloor=\frac{m-\odd(m)}{2}$.
Since the function $u$ is a solution for the partial differential equation in \eqref{Qeques1}, all the quantities $u^{(m,n)}, (m,n)\in \ind_{M+1}$ are not independent of each other.
Similar to the Lemma 2.1 in \cite{FengHanMinev21}, we have the following result:

\begin{lemma}\label{lem:uderiv}
Let $u$ be a function satisfying $-\nabla \cdot (a \nabla u)=f$ in $\Omega\setminus \Gamma$.
If a point $(x_i^*,y_j^*)\in \Omega\setminus \Gamma$, then
\be \label{uderiv:relation}
\begin{split}
u^{(m',n')}&
=(-1)^{\lfloor\frac{m'}{2}\rfloor}
u^{(\odd(m'),n'+m'-\odd(m'))}+
\sum_{(m,n)\in \ind_{m'+n'-1}^1}
A^{u}_{m',n',m,n}u^{(m,n)} \\
&+\sum_{\ell=1}^{\lfloor m'/2\rfloor}
\frac{(-1)^{\ell} f^{(m'-2\ell,n'+2\ell-2)}}{a^{(0,0)}}+\sum_{(m,n)\in \ind_{m'+n'-3}}
A^{f}_{m',n',m,n}f^{(m,n)} ,\qquad \forall\; (m',n')\in \ind_{M+1}^2,
\end{split}
\ee
where the subsets $\ind_{M+1}^1$ and $\ind_{M+1}^2$ of $\ind_{M+1}$ are defined by
\be \label{Sk1}
\ind_{M+1}^2:=\ind_{M+1}\setminus \ind_{M+1}^1\quad \mbox{with}\quad
\ind_{M+1}^1:=\{(\ell,k-\ell) \; :   k=\ell,\ldots, M+1-\ell\; \mbox{and} \;\ell=0,1\; \},
\ee
and
\be\label{AuAf1}
A^{u}_{m',n',m,n}=\frac{1}{(a^{(0,0)})^{d^u_{m',n',m,n}}} \sum_{k}  C^{u}_{m',n',m,n,k} \Bigg(\prod_{(i,j)\in \ind_{m'+n'-1}}\big(a^{(i,j)}\big)^{d^u_{m',n',m,n,i,j,k}}\Bigg),
\ee
\be\label{AuAf2}
A^{f}_{m',n',m,n}=\frac{1}{(a^{(0,0)})^{d^f_{m',n',m,n}}}  \sum_{k}  C^{f}_{m',n',m,n,k} \Bigg(\prod_{(i,j)\in \ind_{m'+n'-3}}\big(a^{(i,j)}\big)^{d^f_{m',n',m,n,i,j,k}}\Bigg),
\ee
where all $d^u_{m',n',m,n}$, $d^f_{m',n',m,n}$, $d^u_{m',n',m,n,i,j,k}$ and $d^f_{m',n',m,n,i,j,k}$ are no-negative integers, $C^{u}_{m',n',m,n,k}$ and $C^{f}_{m',n',m,n,k}$ are two constants. All above constants
are uniquely determined by the identity in \eqref{u20}.
\end{lemma}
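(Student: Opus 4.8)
The plan is to express every high-order partial derivative $u^{(m',n')}$ in terms of those of $x$-order at most one by repeatedly invoking the equation, and to organize this as a strong induction on the $x$-order $m'$. Since $(x_i^*,y_j^*)\in\Omega\setminus\Gamma$, the functions $a$, $f$, $u$ are smooth near the base point and $-\nabla\cdot(a\nabla u)=f$ holds classically there. Expanding the divergence and using $a^{(0,0)}=a(x_i^*,y_j^*)>0$, I would first solve for the top $x$-derivative:
\[
-a u_{xx}-a u_{yy}-a_x u_x-a_y u_y=f,
\qquad\text{hence}\qquad
u_{xx}=-u_{yy}-\frac{a_x u_x+a_y u_y+f}{a}.
\]
This is the analogue of Lemma~2.1 in \cite{FengHanMinev21}, the only new feature being the first-order terms $a_x u_x+a_y u_y$ coming from $\nabla a\neq 0$.

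For the \emph{reduction step}, note that for $(m',n')\in\ind_{M+1}^2$ we have $m'\ge 2$ by \eqref{Sk1}, so applying $\partial_x^{m'-2}\partial_y^{n'}$ to the displayed identity and evaluating at $(x_i^*,y_j^*)$ gives
\[
u^{(m',n')}=-u^{(m'-2,n'+2)}-\partial_x^{m'-2}\partial_y^{n'}\Big(\frac{a_x u_x+a_y u_y+f}{a}\Big).
\]
Expanding the last term by the Leibniz rule produces three types of contributions: the single term $-f^{(m'-2,n')}/a^{(0,0)}$ in which every derivative falls on $f$ while $1/a$ is left undifferentiated; products of derivatives of $a$ (and of $1/a$, whose base-point expansion contributes only powers of $a^{(0,0)}$ in the denominator) with $u$-derivatives of $x$-order at most $m'-1$ and total degree at most $m'+n'-1$; and products of derivatives of $a$ with $f$-derivatives of total degree at most $m'+n'-3$.

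I would then prove \eqref{uderiv:relation} by strong induction on $m'$. For the base case $m'=2$, the $u$-derivatives in the second type already have $x$-order at most one, hence lie in $\ind_{m'+n'-1}^1$, and no further reduction is needed. For the inductive step I apply the induction hypothesis to $u^{(m'-2,n'+2)}$ (when $m'-2\ge 2$) and to every $u$-derivative of $x$-order $\ge 2$ appearing above; each application strictly lowers the $x$-order and leaves all resulting $u$-terms in $\ind^1$ and all extra $f$-terms in $\ind_{\cdot-3}$ while respecting the stated total-degree bounds. A short computation checks that the leading contributions propagate correctly: since $\odd(m'-2)=\odd(m')$ and $\lfloor(m'-2)/2\rfloor=\lfloor m'/2\rfloor-1$, the leading part of $-u^{(m'-2,n'+2)}$ is $(-1)^{\lfloor m'/2\rfloor}u^{(\odd(m'),\,n'+m'-\odd(m'))}$, matching the first term of \eqref{uderiv:relation}, and the leading $f$-contributions telescope into $\sum_{\ell=1}^{\lfloor m'/2\rfloor}(-1)^{\ell}f^{(m'-2\ell,\,n'+2\ell-2)}/a^{(0,0)}$ exactly as claimed.

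Collecting the surviving $u$- and $f$-terms defines the coefficients $A^{u}_{m',n',m,n}$ and $A^{f}_{m',n',m,n}$, and their rational form \eqref{AuAf1}--\eqref{AuAf2}, with denominator a single power of $a^{(0,0)}$, follows because the only divisions performed are by $a^{(0,0)}$ at each use of the equation, while each numerator factor is a product of base-point derivatives $a^{(i,j)}$ with $(i,j)\in\ind_{m'+n'-1}$, respectively $\ind_{m'+n'-3}$. Uniqueness of all the constants is then forced by matching the coefficients of the independent quantities $u^{(m,n)}$ and $f^{(m,n)}$ in the identity \eqref{u20}. I expect the main obstacle to be precisely this bookkeeping, namely verifying that after full reduction every $u$-term lands in $\ind_{m'+n'-1}^1$ and every $f$-term in $\ind_{m'+n'-3}$ with the claimed power-of-$a^{(0,0)}$ denominators, rather than the conceptually routine reduction itself.
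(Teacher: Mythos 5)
Your proposal is correct and follows essentially the same route as the paper: solve the expanded equation for $u_{xx}$ (the identity \eqref{u20}), differentiate it by $\partial_x^{m'-2}\partial_y^{n'}$, and recursively eliminate all $u$-derivatives of $x$-order at least two, which is exactly the paper's recursion through $u^{(2,n')}, u^{(3,n')}, \ldots, u^{(M+1,0)}$ organized as an induction on $m'$. Your explicit verification that the leading terms telescope to $(-1)^{\lfloor m'/2\rfloor}u^{(\odd(m'),n'+m'-\odd(m'))}$ and to the alternating $f$-sum is a detail the paper leaves implicit, but the substance is identical.
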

See \cref{fig:umn1} and \cref{fig:amn1ANDfmn1} for an illustration
of the quantities $u^{(m,n)}$ with $(m,n)\in \ind_{M+1}^1$,
$u^{(m,n)}$ with $(m,n)\in \ind_{M+1}^2$, $a^{(m,n)}$ with $(m,n)\in \ind_{M}$ and $f^{(m,n)}$ with $(m,n)\in \ind_{M-1}$ in \cref{lem:uderiv} with $M=4$.
\begin{proof} By our assumption, we have $au_{xx}+au_{yy}+a_xu_x+a_yu_y=-f$ in $\Omega\setminus \Gamma$, i.e.,
		\be \label{u20}
	 u^{(2,0)}=-\frac{a^{(1,0)}u^{(1,0)}+a^{(0,1)}u^{(0,1)}}{a^{(0,0)}}-u^{(0,2)}-\frac{f^{(0,0)}}{a^{(0,0)}}.
	\ee
Then it is clear that for all $2+n'\le M+1$,
%%	%
%%
%\be \label{u2n}
%\begin{split}
\[
u^{(2,n')}=-u^{(0,n'+2)}+\sum_{(m,n)\in \ind_{n'+1}^1}
A^{u}_{2,n',m,n}u^{(m,n)}
-\frac{f^{(0,n')}}{a^{(0,0)}} +\sum_{(m,n)\in \ind_{n'-1}}
A^{f}_{2,n',m,n}f^{(m,n)}.
%,\qquad \forall \ 2+n'\le M+1,
\]
%\end{split}
%\ee	
	where $A^{u}_{2,n',m,n}$ and $A^{f}_{2,n',m,n}$ are defined in \eqref{AuAf1} and \eqref{AuAf2} respectively.
	Similarly to \eqref{u20}, we have  $a_xu_{xx}+au_{xxx}+a_xu_{yy}+au_{xyy}+a_{xx}u_x+a_xu_{xx}+a_{xy}u_y+a_yu_{xy}=-f_x$ in $\Omega\setminus \Gamma$. So
%%	
%%%
		\be \label{u30}
u^{(3,0)}
=\frac{2a^{(1,0)}u^{(2,0)}+a^{(1,0)}u^{(0,2)}+a^{(2,0)}u^{(1,0)}+a^{(1,1)}u^{(0,1)}+a^{(0,1)}u^{(1,1)}}{-a^{(0,0)}}-u^{(1,2)}-\frac{f^{(1,0)}}{a^{(0,0)}}.
\ee
Plugging \eqref{u20} into the right hand of \eqref{u30}, we obtain
%%
%%
%		\be \label{originaformula3}
\[
u^{(3,0)}=-u^{(1,2)}+\sum_{(m,n)\in \ind_{2}^1}
A^{u}_{3,0,m,n}u^{(m,n)} -\frac{f^{(1,0)}}{a^{(0,0)}} +\sum_{(m,n)\in \ind_{0}}
A^{f}_{3,0,m,n}f^{(m,n)}.
\]
%\ee	
%%
Then for all $3+n'\le M+1$,
%%
%%	
%		\be \label{originaformula4}
%		\begin{split}
\[
u^{(3,n')}=-u^{(1,n'+2)}+\sum_{(m,n)\in \ind_{n'+2}^1}
A^{u}_{3,n',m,n}u^{(m,n)}
 -\frac{f^{(1,n')}}{a^{(0,0)}}  +\sum_{(m,n)\in \ind_{n'}}
A^{f}_{3,n',m,n}f^{(m,n)}.
%,\qquad \forall \ 3+n'\le M+1.
\]
%\end{split}
%\ee		
%%
%%	
%%%
	Calculate the left $u^{(m',n')}, (m',n')\in \ind_{M+1}^2$ by the order $\{u^{(4,0)},u^{(4,1)},\dots, u^{(4,M-3)}\}$,
	$\{u^{(5,0)},u^{(5,1)},\dots,$\\ $u^{(5,M-4)}\}$,  $\dots$, $\{u^{(M+1,0)}\}$ and use the above identities recursively, to obtain \eqref{uderiv:relation}.
\end{proof}
%%%%
%%%%
%%%%
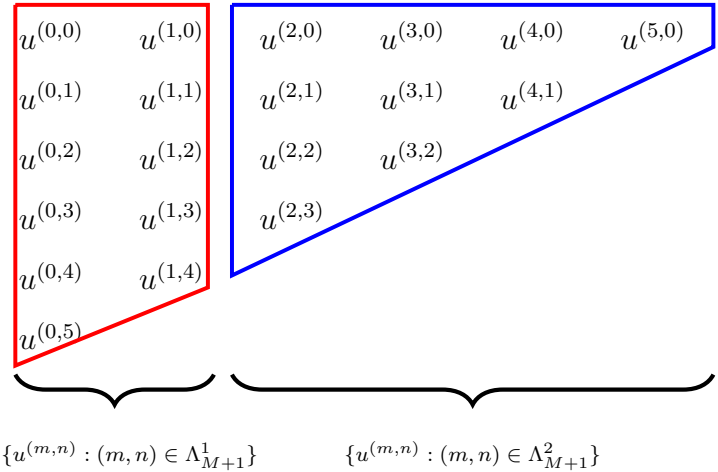
\begin{figure}[h]
	\centering
	\hspace{12mm}	
	\begin{tikzpicture}[scale = 0.8]
		\node (A) at (0,7) {$u^{(0,0)}$};
		\node (A) at (0,6) {$u^{(0,1)}$};
		\node (A) at (0,5) {$u^{(0,2)}$};
		\node (A) at (0,4) {$u^{(0,3)}$};
		\node (A) at (0,3) {$u^{(0,4)}$};
		\node (A) at (0,2) {$u^{(0,5)}$};
		 %%%%%%%%%%%%%%%%%%%%%%%%%%%%%%%%%%%%%
		\node (A) at (2,7) {$u^{(1,0)}$};
		\node (A) at (2,6) {$u^{(1,1)}$};
		\node (A) at (2,5) {$u^{(1,2)}$};
		\node (A) at (2,4) {$u^{(1,3)}$};
		\node (A) at (2,3) {$u^{(1,4)}$};
		 %%%%%%%%%%%%%%%%%%%%%%%%%%%%%%%%%%%%%
		\node (A) at (4,7) {$u^{(2,0)}$};
		\node (A) at (4,6) {$u^{(2,1)}$};
		\node (A) at (4,5) {$u^{(2,2)}$};
		\node (A) at (4,4) {$u^{(2,3)}$};
		 %%%%%%%%%%%%%%%%%%%%%%%%%%%%%%%%%%%%%
		\node (A) at (6,7) {$u^{(3,0)}$};
		\node (A) at (6,6) {$u^{(3,1)}$};
		\node (A) at (6,5) {$u^{(3,2)}$};
		 %%%%%%%%%%%%%%%%%%%%%%%%%%%%%%%%%%%%%
		\node (A) at (8,7) {$u^{(4,0)}$};
		\node (A) at (8,6) {$u^{(4,1)}$};
		 %%%%%%%%%%%%%%%%%%%%%%%%%%%%%%%%%%%%%
		\node (A) at (10,7) {$u^{(5,0)}$};    	
		\draw[line width=1.5pt, red]  plot [tension=0.8]
		coordinates {(-0.6,7.5) (-0.6,1.5) (2.6,2.8) (2.6,7.5) (-0.6,7.5)};	
		\draw[line width=1.5pt, blue]  plot [tension=0.8]
		coordinates {(3,7.5)  (3,3) (11,6.8) (11,7.5) (3,7.5)};
		 %%%%%%%%%%%%%%%%%%%%%%%%%%%%%%%%%%%%%%%%%
		 \draw[decorate,decoration={brace,mirror,amplitude=4mm},xshift=0pt,yshift=10pt,ultra thick] (-0.6,1) -- node [black,midway,yshift=0.6cm]{} (2.7,1);
    	\node (A) at (1.3,0)	 {\tiny{$\{u^{(m,n)}:(m,n)\in \ind_{M+1}^1\}$}};
		 %%%%%%%%%%%%%%%%%%%%%%%%%%%%%%%%%%%%%%%%%
        \draw[decorate,decoration={brace,mirror,amplitude=4mm},xshift=0pt,yshift=10pt,ultra thick] (3,1) -- node [black,midway,yshift=0.6cm]{} (11,1);
       \node (A) at (7,0)	 {\tiny{$\{u^{(m,n)}:(m,n)\in \ind_{M+1}^2\}$}};
	\end{tikzpicture}
	\caption
	{Red trapezoid: $\{u^{(m,n)}:(m,n)\in \ind_{M+1}^1\}$ with $M=4$. Blue trapezoid: $\{u^{(m,n)}:(m,n)\in \ind_{M+1}^2\}$ with $M=4$. Note that $\ind_{M+1}=\ind_{M+1}^1 \cup\ind_{M+1}^2$.}
	\label{fig:umn1}
\end{figure}
%%%
%%%
%%%%
%%%%
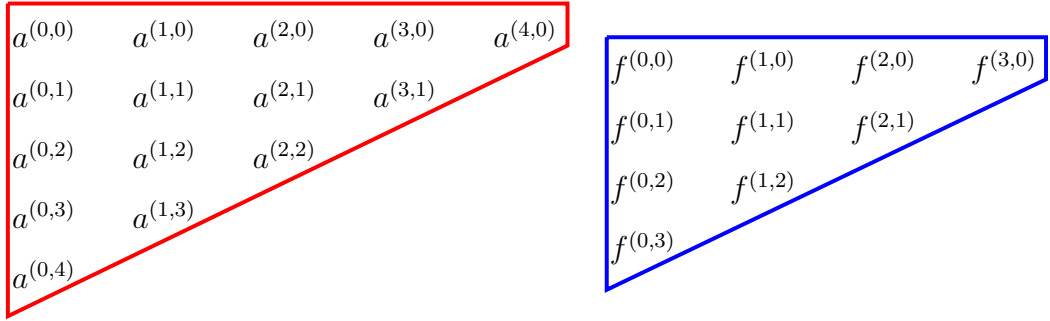
\begin{figure}[h]
	\centering
	\hspace{12mm}	
	\begin{tikzpicture}[scale = 0.8]
		\node (A) at (0,7) {$a^{(0,0)}$};
		\node (A) at (0,6) {$a^{(0,1)}$};
		\node (A) at (0,5) {$a^{(0,2)}$};
		\node (A) at (0,4) {$a^{(0,3)}$};
		\node (A) at (0,3) {$a^{(0,4)}$};
		 %%%%%%%%%%%%%%%%%%%%%%%%%%%%%%%%%%%%%
		\node (A) at (2,7) {$a^{(1,0)}$};
		\node (A) at (2,6) {$a^{(1,1)}$};
		\node (A) at (2,5) {$a^{(1,2)}$};
		\node (A) at (2,4) {$a^{(1,3)}$};
		 %%%%%%%%%%%%%%%%%%%%%%%%%%%%%%%%%%%%%
		\node (A) at (4,7) {$a^{(2,0)}$};
		\node (A) at (4,6) {$a^{(2,1)}$};
		\node (A) at (4,5) {$a^{(2,2)}$};
		 %%%%%%%%%%%%%%%%%%%%%%%%%%%%%%%%%%%%%
		\node (A) at (6,7) {$a^{(3,0)}$};
		\node (A) at (6,6) {$a^{(3,1)}$};
		 %%%%%%%%%%%%%%%%%%%%%%%%%%%%%%%%%%%%%
		\node (A) at (8,7) {$a^{(4,0)}$};  	
		 %%%%%%%%%%%%%%%%%%%%%%%%%%%%%%%%%%%%%%%%%
		\draw[line width=1.5pt, red]  plot [tension=0.8]
		coordinates {(-0.6,7.5) (-0.6,2.3) (8.7,6.8) (8.7,7.5) (-0.6,7.5)};
	\end{tikzpicture}
	%%%%%%%%%%%%%%%%%%%%%%%%%%%%%%%%%%%%%%%
	\begin{tikzpicture}[scale = 0.8]
		\node (A) at (0,7) {$f^{(0,0)}$};
		\node (A) at (0,6) {$f^{(0,1)}$};
		\node (A) at (0,5) {$f^{(0,2)}$};
		\node (A) at (0,4) {$f^{(0,3)}$};
		\node (A) at (0,3) {$\hspace{14mm}$};
		 %%%%%%%%%%%%%%%%%%%%%%%%%%%%%%%%%%%%%
		\node (A) at (2,7) {$f^{(1,0)}$};
		\node (A) at (2,6) {$f^{(1,1)}$};
		\node (A) at (2,5) {$f^{(1,2)}$};
		\node (A) at (2,4) {$\hspace{14mm}$};
		 %%%%%%%%%%%%%%%%%%%%%%%%%%%%%%%%%%%%%
		\node (A) at (4,7) {$f^{(2,0)}$};
		\node (A) at (4,6) {$f^{(2,1)}$};
		\node (A) at (4,5) {$\hspace{14mm}$};
		 %%%%%%%%%%%%%%%%%%%%%%%%%%%%%%%%%%%%%
		\node (A) at (6,7) {$f^{(3,0)}$};
		\node (A) at (6,6) {$\hspace{14mm}$};	
		 %%%%%%%%%%%%%%%%%%%%%%%%%%%%%%%%%%%%%%%%%
		\draw[line width=1.5pt, blue]  plot [tension=0.8]
		coordinates {(-0.6,7.5) (-0.6,3.3) (6.7,6.8) (6.7,7.5) (-0.6,7.5)};
	\end{tikzpicture}
	\caption
	{Red trapezoid: $\{a^{(m,n)}:(m,n)\in \ind_{M}\}$ with $M=4$. Blue trapezoid: $\{f^{(m,n)}:(m,n)\in \ind_{M-1}\}$ with $M=4$.}
	\label{fig:amn1ANDfmn1}
\end{figure}
%%%%
%%%
%%
%%
%%
%%
%By the definitions of \eqref{Sk} and \eqref{Sk1}, we have
%\[
%\ind_{0}^1\subset \ind_{1}^1 \subset \ind_{2}^1 \subset \dots \ind_{k}^1 \subset \ind_{k+1}^1 \subset \dots,  \qquad \ind_{0}\subset \ind_{1} \subset \ind_{2} \subset \dots \ind_{k} \subset \ind_{k+1} \subset \dots
%\]
%%%
%\be \label{indrelation02}
%\ind_{K}^1=\bigcup\limits_{i=1}^{K} \ind_{i}^1 \qquad \ind_{K}=\bigcup\limits_{i=1}^{K} \ind_{i}.
%\ee
%%
%%
%%%%%%
Note that $\ind_{M+1}=\ind_{M+1}^1 \cup\ind_{M+1}^2$.
The identities in \eqref{uderiv:relation} of \cref{lem:uderiv}
show that every $u^{(m,n)}$ in
$\{u^{(m,n)}: (m,n)\in \ind_{M+1}\}$
can be written as a combination of
the quantities $\{u^{(m,n)}: (m,n)\in \ind_{M+1}^1\}$, $\{a^{(m,n)}: (m,n)\in \ind_{M}\}$ and $\{f^{(m,n)}: (m,n)\in \ind_{M-1}\}$.
As the coefficient $a$ and the source term $f$  are available in \eqref{Qeques1}, \eqref{uderiv:relation} could reduce the number of constraints on $\{u^{(m,n)}: (m,n)\in \ind_{M+1}\}$ to
$\{u^{(m,n)}: (m,n)\in \ind_{M+1}^1\}$.
By (2.9) and (2.10) in \cite{FengHanMinev21} and \eqref{uderiv:relation} of this paper, the approximation of $u(x+x_i^*,y+y_j^*)$ in \eqref{u:approx} can be written as
{\footnotesize{
\begin{align*}
&\sum_{(m,n)\in \ind_{M+1}} \frac{u^{(m,n)}}{m!n!} x^m y^{n}=\sum_{(m,n)\in \ind_{M+1}^1} \frac{u^{(m,n)}}{m!n!} x^m y^{n}+\sum_{(m',n')\in \ind_{M+1}^2} \frac{u^{(m',n')}}{m'!n'!} x^{m'} y^{n'}\\
&=\sum_{(m',n')\in \ind_{M+1}^2} \sum_{(m,n)\in \ind_{m'+n'-1}^1}
A^{u}_{m',n',m,n}u^{(m,n)}\frac{ x^{m'} y^{n'}}{m'!n'!}  +\sum_{(m',n')\in \ind_{M+1}^2} \sum_{(m,n)\in \ind_{m'+n'-3}}
A^{f}_{m',n',m,n}f^{(m,n)}\frac{ x^{m'} y^{n'}}{m'!n'!}\\
&+ \sum_{(m',n')\in \ind_{M+1}^2} (-1)^{\lfloor\frac{m'}{2}\rfloor}
u^{(\odd(m'),n'+m'-\odd(m'))} \frac{ x^{m'} y^{n'}}{m'!n'!}+\sum_{(m',n')\in \ind_{M+1}^2} \sum_{\ell=1}^{\lfloor m'/2\rfloor}
\frac{(-1)^{\ell} f^{(m'-2\ell,n'+2\ell-2)}}{a^{(0,0)}}\frac{ x^{m'} y^{n'}}{m'!n'!}\\
&+\sum_{(m,n)\in \ind_{M+1}^1} \frac{u^{(m,n)}}{m!n!} x^m y^{n}\\
&= \sum_{(m,n)\in \ind_{M}^1}\Bigg(\sum_{(m',n')\in \ind_{M+1}^2 \atop m'+n' \ge m+n+1 }
A^{u}_{m',n',m,n}\frac{ x^{m'} y^{n'}}{m'!n'!}\Bigg)  u^{(m,n)}+ \sum_{(m,n)\in \ind_{M-2}} \Bigg(\sum_{(m',n')\in \ind_{M+1}^2 \atop m'+n' \ge m+n+3}
A^{f}_{m',n',m,n}\frac{ x^{m'} y^{n'}}{m'!n'!}\Bigg)f^{(m,n)}\\
&+\sum_{(m,n)\in \ind_{M+1}^1}\Bigg(\sum_{\ell=0}^{\lfloor \frac{n}{2}\rfloor}
\frac{(-1)^\ell x^{m+2\ell} y^{n-2\ell}}{(m+2\ell)!(n-2\ell)!}\Bigg)  u^{(m,n)}+\sum_{(m,n)\in \ind_{M-1}}\Bigg( \sum_{\ell=1-\lfloor \frac{m}{2}\rfloor}^{1+\lfloor \frac{n}{2}\rfloor} \frac{(-1)^{\ell} x^{m+2\ell} y^{n-2\ell+2}}{(m+2\ell)!(n-2\ell+2)!}\frac{1}{a^{(0,0)}} \Bigg) f^{(m,n)}\\
&=\sum_{(m,n)\in \ind_{M+1}^1}
u^{(m,n)} G_{m,n}(x,y) +\sum_{(m,n)\in \ind_{M-1}}
f^{(m,n)} H_{m,n}(x,y),
\end{align*}
}}
where for all $(m,n)\in \ind_{M+1}^1$,
%%%
%%%
%%
\be\label{Gmn}
G_{m,n}(x,y):=\sum_{\ell=0}^{\lfloor \frac{n}{2}\rfloor}
 \frac{(-1)^\ell x^{m+2\ell} y^{n-2\ell}}{(m+2\ell)!(n-2\ell)!}+\sum_{(m',n')\in \ind_{M+1}^2 \setminus \ind_{m+n}^2 }A^{u}_{m',n',m,n} \frac{ x^{m'} y^{n'}}{m'!n'!}
%, \quad \forall (m,n)\in \ind_{M+1}^1,
\ee
and for all $(m,n)\in \ind_{M-1}$,
\be\label{Hmn}
\begin{split}
H_{m,n}(x,y):=\sum_{\ell=1-\lfloor \frac{m}{2}\rfloor}^{1+\lfloor \frac{n}{2}\rfloor} \frac{(-1)^{\ell} x^{m+2\ell} y^{n-2\ell+2}}{(m+2\ell)!(n-2\ell+2)!}\frac{1}{a^{(0,0)}}
+\sum_{(m',n')\in \ind_{M+1}^2 \setminus \ind_{m+n+2}^2 }A^{f}_{m',n',m,n} \frac{ x^{m'} y^{n'}}{m'!n'!}.
%, \quad \forall (m,n)\in \ind_{M-1}.
\end{split}
\ee
%%%
%%%%
From \eqref{Gmn} and  \eqref{Hmn}, we observe that
$G_{m,n}(x,y)$ and $H_{m,n}(x,y)$ are  homogeneous polynomials of total degree $M+1$ for all $(m,n)\in \ind_{M+1}^1$ and all $(m,n)\in \ind_{M-1}$, respectively. Moreover, each entry of ${G}_{m,n}(x,y)$ is a homogeneous polynomial of degree $\ge m+n$ and each entry of ${H}_{m,n}(x,y)$ is a homogeneous polynomial of degree $\ge m+n+2$.
Thus, the approximation in \eqref{u:approx} becomes
\be \label{u:approx:key}
u(x+x_i^*,y+y_j^*)
=
\sum_{(m,n)\in \ind_{M+1}^1}
u^{(m,n)} G_{m,n}(x,y) +\sum_{(m,n)\in \ind_{M-1}}
f^{(m,n)} H_{m,n}(x,y)+\bo(h^{M+2}),
\ee
for $x,y\in (-2h,2h)$, where  $u$ is the exact solution for \eqref{Qeques1} and  $(x_i^*,y_j^*)$ is the base point.  Note that \eqref{u:approx:key} is the key point to derive compact difference schemes for regular and irregular points with the maximum accuracy order.

\section{A high order compact finite difference scheme for computing $u$}
%for the  numerical approximated solutions and gradients
\label{sec:Solution}
In this section, we construct the compact finite difference scheme for the numerical solution of the elliptic equation at regular and irregular points.
\subsection{Regular points\label{sec:Solution:Regular}}
%%%
%%%
In this subsection, we discuss the derivation of a compact scheme centered at a regular point $(x_i,y_j)$.
For the sake of brevity, we choose $(x_i^*,y_j^*)=(x_i,y_j)$, i.e.,
$(x_i^*,y_j^*)$ is defined in \eqref{base:pt} with $v_0=w_0=0$. Consider the following equation:
\be \label{stencil:s1s2:regular1}
\begin{split}
\sum_{k=-1}^1 \sum_{\ell=-1}^1
	C_{k,\ell}(h) u(x_i+kh,y_j+\ell h)=
	\sum_{(m,n)\in \ind_{M-1}} f^{(m,n)}C_{f,m,n}(h)+\bo(h^{M+2}),\qquad h\to 0,
\end{split}
\ee
where  $u(x,y)$ is defined in \eqref{u:approx:key}, the nontrivial $C_{k,\ell}(h)$ and $C_{f,m,n}(h)$ are to-be-determined polynomials of $h$ with degree less than $M+2$. Precisely,
\be\label{Cfij:regular}
C_{k,\ell}(h)=\sum_{i=0}^{M+1} c_{k,\ell,i}h^i, \qquad C_{f,m,n}(h)=\sum_{j=0}^{M+1} c_{f,m,n,j}h^j,
\ee
where all $c_{k,\ell,i}$ and $c_{f,m,n,j}$ are to-be-determined constants. Similar to \cite{FengHanMinev21}, we observe that the coefficients of a compact scheme are nontrivial if $C_{k,\ell}(0)\ne 0$ for at least some $k,\ell=-1,0,1$, that is, $c_{k,\ell,0}\ne 0$ for at least some $k,\ell=-1,0,1$.
Similar to  Eq.(7.31) to Eq.(7.34) in \cite[Section~7.2.1]{LiIto06}, Eq.(5) and Eq.(6) in \cite{WZ09} and Theorem 3.2 in \cite{HanMW2021}, \eqref{stencil:s1s2:regular1} and \eqref{Cfij:regular} together imply
\[
-\nabla \cdot( a\nabla u)\Big|_{(x,y)=(x_i,y_j)}=f\Big|_{(x,y)=(x_i,y_j)}+\bo(h^{M}),\qquad h\to 0.
\]
Thus, we can achieve an  accuracy order $M$ for the numerical approximated solution.

%%
%%%
{
%\color{black}
%%
Substituting \eqref{u:approx:key} into \eqref{stencil:s1s2:regular1} with $(x_i^*,y_j^*)=(x_i,y_j)$, we obtain:
\[\begin{split}
	\sum_{k=-1}^1 \sum_{\ell=-1}^1
	C_{k,\ell}(h)\Big(\sum_{(m,n)\in \ind_{M+1}^1}
	u^{(m,n)} G_{m,n}(kh, \ell h) &+\sum_{(m,n)\in \ind_{M-1}}
	f^{(m,n)} H_{m,n}(kh, \ell h) \Big)\\
	&=\sum_{(m,n)\in \ind_{M-1}} f^{(m,n)}C_{f,m,n}(h)+\bo(h^{M+2}),\qquad h\to 0,
\end{split}\]
\[\begin{split}
\sum_{(m,n)\in \ind_{M+1}^1} \Big(	 \sum_{k=-1}^1 \sum_{\ell=-1}^1
C_{k,\ell}(h)
	 G_{m,n}(kh, \ell h)\Big)u^{(m,n)} &+\sum_{(m,n)\in \ind_{M-1}} 	 \Big(\sum_{k=-1}^1 \sum_{\ell=-1}^1
	C_{k,\ell}(h)
	 H_{m,n}(kh, \ell h)\Big) f^{(m,n)}\\
	&=\sum_{(m,n)\in \ind_{M-1}} f^{(m,n)}C_{f,m,n}(h)+\bo(h^{M+2}),\qquad h\to 0,
\end{split}\]
Thus,
the conditions in \eqref{stencil:s1s2:regular1} can be rewritten as
\be \label{stencil:s1s2:regular:2}
\sum_{(m,n)\in \ind_{M+1}^1}
u^{(m,n)} I_{m,n}(h)+
\sum_{(m,n)\in \ind_{M-1}} f^{(m,n)}
\left(J_{m,n}(h)-C_{f,m,n}(h)\right)
=\bo(h^{M+2}),
\ee
where
%
%\be \label{I:s1s2:regular1}
\be \label{J:s1s2:regular}
I_{m,n}(h):=\sum_{k=-1}^1 \sum_{\ell=-1}^1 C_{k,\ell}(h) G_{m,n}(kh, \ell h)
\quad \mbox{and}\quad
%
%\be \label{J:s1s2:regular}
J_{m,n}(h):=\sum_{k=-1}^1 \sum_{\ell=-1}^1 C_{k,\ell}(h) H_{m,n}(kh, \ell h).
\ee
}

Because \eqref{stencil:s1s2:regular:2} must hold for all unknowns in $\{u^{(m,n)}: (m,n)\in \ind_{M+1}^1\}$, to find the nontrivial  $C_{k,\ell}(h)$ for $k,\ell=-1,0,1$, solving \eqref{stencil:s1s2:regular:2} is equivalent to solving
\be \label{stencil:s1s2:regular:u}
I_{m,n}(h)=\bo(h^{M+2}) \quad h\to 0,\; \mbox{ for all }\; (m,n)\in \ind_{M+1}^1,
\ee
and
\be \label{stencil:s1s2:regular:f}
C_{f,m,n}(h)=J_{m,n}(h) +\bo(h^{M+2}),\qquad h\to 0, \; \mbox{ for all }\; (m,n)\in \ind_{M-1}.
\ee

By calculation, the largest integer $M$ for the linear system in \eqref{stencil:s1s2:regular:u} to have a nontrivial solution $\{C_{k,\ell}(h)\}_{k,\ell=-1,0,1}$ is $M=6$. Because in this paper we are only interested in $M=4$, one nontrivial solution $\{C_{k,\ell}(h)\}_{k,\ell=-1,0,1}$ to \eqref{stencil:s1s2:regular:u} with $M=4$ is explicitly given by
{\tiny{
\be\label{Ch1:M=4}
\begin{split}
C_{-1,-1}(h)
& = (((2a^{(1, 2)}+a^{(2, 1)}+a^{(0, 3)}+2a^{(3, 0)})a^{(1, 0)}-a^{(0, 1)}(a^{(2, 1)}+a^{(0, 3)}))(a^{(0, 0)})^2+((-2a^{(0, 2)}-8a^{(2, 0)}\\
&-3a^{(1, 1)})(a^{(1, 0)})^2-(2(a^{(0, 2)}+(3/2)a^{(2, 0)}+(3/2)a^{(1, 1)}))a^{(0, 1)}a^{(1, 0)}+2(a^{(0, 1)})^2(a^{(0, 2)}+(3/2)a^{(2, 0)}))a^{(0, 0)}\\
&-(a^{(0, 1)})^4-2(a^{(0, 1)})^2(a^{(1, 0)})^2+4a^{(0, 1)}(a^{(1, 0)})^3+7(a^{(1, 0)})^4)h^4-a^{(0, 0)}((a^{(1, 2)}-a^{(2, 1)}-a^{(0, 3)}+a^{(3, 0)})(a^{(0, 0)})^2\\
&+((a^{(0, 2)}-6a^{(2, 0)}+3a^{(1, 1)})a^{(1, 0)}+(5(a^{(2, 0)}-(3/5)a^{(1, 1)}))a^{(0, 1)})a^{(0, 0)}+(-a^{(1, 0)}+a^{(0, 1)})((a^{(0, 1)})^2+3a^{(0, 1)}a^{(1, 0)}\\
&-6(a^{(1, 0)})^2))h^3+(-2(a^{(0, 1)})^2+2(a^{(1, 0)})^2)(a^{(0, 0)})^2h^2-4a^{(1, 0)}h(a^{(0, 0)})^3+4(a^{(0, 0)})^4,
\end{split}
\ee
}}
{\tiny{
		\be\label{Ch2:M=4}
		\begin{split}
C_{-1,0}(h) &= (((-a^{(1, 2)}+a^{(2, 1)}+a^{(0, 3)}-a^{(3, 0)})a^{(0, 1)}-a^{(1, 0)}(a^{(2, 1)}+a^{(0, 3)}))(a^{(0, 0)})^2+((-2a^{(0, 2)}-3a^{(2, 0)}+a^{(1, 1)})(a^{(0, 1)})^2\\
&+3a^{(1, 0)}(a^{(0, 2)}+(7/3)a^{(2, 0)}+(1/3)a^{(1, 1)})a^{(0, 1)}+(a^{(1, 0)})^2a^{(1, 1)})a^{(0, 0)}+a^{(0, 1)}((a^{(0, 1)})^3-(a^{(0, 1)})^2a^{(1, 0)}\\
&+3a^{(0, 1)}(a^{(1, 0)})^2-7(a^{(1, 0)})^3))h^4-2a^{(0, 0)}((a^{(2, 1)}+a^{(0, 3)})(a^{(0, 0)})^2+(-5a^{(0, 1)}a^{(2, 0)}-2a^{(1, 0)}(a^{(0, 2)}-a^{(2, 0)}\\
&+(3/2)a^{(1, 1)}))a^{(0, 0)}-(a^{(0, 1)})^3-(a^{(0, 1)})^2a^{(1, 0)}+7a^{(0, 1)}(a^{(1, 0)})^2-(a^{(1, 0)})^3)h^3-8a^{(1, 0)}(-a^{(1, 0)}+a^{(0, 1)})h^2(a^{(0, 0)})^2\\
&+(8(a^{(0, 1)}-2a^{(1, 0)}))h(a^{(0, 0)})^3+16(a^{(0, 0)})^4,
		\end{split}
		\ee
}}
{\tiny{
		\be\label{Ch3:M=4}
		\begin{split}
C_{-1,1}(h)& = -a^{(0, 0)}(((a^{(1, 2)}-a^{(2, 1)}-a^{(0, 3)}+a^{(3, 0)})(a^{(0, 0)})^2+((a^{(0, 2)}-6a^{(2, 0)}-a^{(1, 1)})a^{(1, 0)}+5a^{(0, 1)}(a^{(2, 0)}\\
&+(1/5)a^{(1, 1)}))a^{(0, 0)}+(a^{(0, 1)})^3-7a^{(0, 1)}(a^{(1, 0)})^2+6(a^{(1, 0)})^3)h^3+(4(a^{(0, 0)})^2a^{(1, 1)}-2a^{(0, 0)}(a^{(1, 0)})^2)h^2-(4(-a^{(1, 0)}\\
&+a^{(0, 1)}))h(a^{(0, 0)})^2-4(a^{(0, 0)})^3),
		\end{split}
		\ee
}}
{\tiny{
		\be\label{Ch4:M=4}
		\begin{split}
C_{0,-1}(h)& = ((a^{(0, 1)}-2a^{(1, 0)})(a^{(1, 2)}+a^{(3, 0)})(a^{(0, 0)})^2+((2a^{(0, 2)}+8a^{(2, 0)}+2a^{(1, 1)})(a^{(1, 0)})^2-a^{(0, 1)}(a^{(0, 2)}+4a^{(2, 0)}\\
&-2a^{(1, 1)})a^{(1, 0)}-(a^{(0, 1)})^2a^{(1, 1)})a^{(0, 0)}+a^{(1, 0)}((a^{(0, 1)})^3-(a^{(0, 1)})^2a^{(1, 0)}+3a^{(0, 1)}(a^{(1, 0)})^2-7(a^{(1, 0)})^3))h^4\\
&-2a^{(0, 0)}((a^{(2, 1)}+a^{(0, 3)})(a^{(0, 0)})^2+(-3a^{(1, 0)}a^{(1, 1)}-2a^{(0, 1)}(a^{(0, 2)}+(3/2)a^{(2, 0)}))a^{(0, 0)}-2a^{(0, 1)}a^{(1, 0)}(a^{(0, 1)}\\
&-3a^{(1, 0)}))h^3+4(a^{(0, 0)})^2((a^{(0, 2)}-a^{(2, 0)})a^{(0, 0)}-(3/2)(a^{(0, 1)})^2+(3/2)(a^{(1, 0)})^2)h^2-8a^{(1, 0)}h(a^{(0, 0)})^3+16(a^{(0, 0)})^4,
			\end{split}
		\ee
}}
%%
%%%
%%%
{\tiny{
		\be\label{Ch5:M=4}
		\begin{split}
C_{0,0}(h) & = 2a^{(0, 0)}(((a^{(1, 2)}+a^{(2, 1)}+a^{(0, 3)}+a^{(3, 0)})(a^{(0, 0)})^2+((-a^{(0, 2)}-4a^{(2, 0)}-5a^{(1, 1)})a^{(1, 0)}-2a^{(0, 1)}(a^{(0, 2)}+(3/2)a^{(2, 0)}\\
&+(1/2)a^{(1, 1)}))a^{(0, 0)}-2(a^{(0, 1)})^2a^{(1, 0)}+5a^{(0, 1)}(a^{(1, 0)})^2+5(a^{(1, 0)})^3)h^3-4a^{(0, 0)}((a^{(0, 2)}-a^{(2, 0)}-a^{(1, 1)})a^{(0, 0)}\\
&-(a^{(0, 1)})^2-(3/2)a^{(0, 1)}a^{(1, 0)}+3(a^{(1, 0)})^2)h^2-(20(-a^{(1, 0)}+a^{(0, 1)}))h(a^{(0, 0)})^2-40(a^{(0, 0)})^3),
		\end{split}
		\ee
}}
{\tiny{
		\be\label{Ch6:M=4}
		\begin{split}
			C_{0,1}(h)& = (4(((a^{(0, 2)}-a^{(2, 0)})a^{(0, 0)}+(1/2(-a^{(1, 0)}+a^{(0, 1)}))(a^{(0, 1)}-3a^{(1, 0)}))h^2+(4(a^{(0, 1)}-(1/2)a^{(1, 0)}))a^{(0, 0)}h+4(a^{(0, 0)})^2))\\
			&\times (a^{(0, 0)})^2,
		\end{split}
		\ee
}}
{\tiny{
\be	\label{Ch789:M=4}
	\begin{split}
&C_{1,-1}(h) = -4(a^{(0, 0)})^2(-(a^{(0, 0)})^2+h^2a^{(0, 0)}a^{(1, 1)}+\frac{1}{2}h^2a^{(0, 1)}(a^{(0, 1)}-2a^{(1, 0)})),\\
&C_{1,0}(h) = 8(a^{(0, 0)})^3(ha^{(0, 1)}+2a^{(0, 0)}),\quad
C_{1,1}(h) = 4(a^{(0, 0)})^3(ha^{(0, 1)}+a^{(0, 0)}).
\end{split}
\ee
}}
Substitute \eqref{Ch1:M=4} to \eqref{Ch789:M=4} into \eqref{J:s1s2:regular}.
All $C_{f,m,n}(h)$ satisfying \eqref{stencil:s1s2:regular:f} can be calculated by
\be \label{Cfmn:M=4}
C_{f,m,n}(h):=
\sum_{k=-1}^1 \sum_{\ell=-1}^1 C_{k,\ell}(h)
H_{m,n}(h), \qquad (m,n)\in \ind_3.
\ee
Thus, for a regular point $(x_i,y_j)$, the following theorem proves a fourth order of accuracy for the compact scheme.   This result is well known in the literature (e.g., see \cite{SDKS13,ZFH13,WZ09,WangGuoWu14,Medina2019,MaGe2019,MaGe2020,ZHANG98}).

\begin{theorem}\label{thm:regular}
Let   $(x_i, y_j)$ be a regular point and $(u_{h})_{i,j}$ be the numerical approximation of the exact solution $u$ of the partial differential equation \eqref{Qeques1} at $(x_i, y_j)$. Then
the following compact scheme centered at the regular point $(x_i,y_j):$
\begin{equation}\label{stencil:regular}
\sum_{k=-1}^1 \sum_{\ell=-1}^1 C_{k,\ell}(h)(u_{h})_{i+k,j+\ell}
=C_{f,m,n}(h),
\end{equation}
has a fourth order consistency error at the regular point $(x_i,y_j)$, i.e., the accuracy order for $u_h$ is four, where  $C_{k,\ell}(h)$ are defined in \eqref{Ch1:M=4} to \eqref{Ch789:M=4},  $C_{f,m,n}(h)$ is defined in \eqref{Cfmn:M=4}, $a^{(m,n)}:=\frac{\partial^{m+n} a}{ \partial^m x \partial^n y}(x_i,y_j)$ and $f^{(m,n)}:=\frac{\partial^{m+n} f}{ \partial^m x \partial^n y}(x_i,y_j)$.
Furthermore, the maximum accuracy order $M$ for the numerical approximated solution at the regular point of the compact finite difference scheme is $M=6$.
\end{theorem}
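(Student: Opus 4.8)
The plan is to separate the statement into two independent claims: first, that the explicit coefficients in \eqref{Ch1:M=4}--\eqref{Ch789:M=4} together with \eqref{Cfmn:M=4} really define a scheme of accuracy order four; and second, that no compact nine-point scheme centered at a regular point can exceed accuracy order six.

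For the first claim I would lean entirely on the reduction already performed before the statement. By \cref{lem:uderiv} and the expansion \eqref{u:approx:key}, imposing \eqref{stencil:s1s2:regular1} with $M=4$ is equivalent to the two decoupled families \eqref{stencil:s1s2:regular:u} and \eqref{stencil:s1s2:regular:f}: namely $I_{m,n}(h)=\bo(h^{6})$ for all $(m,n)\in\ind_{5}^{1}$ (ten conditions, as $\ind_{5}^{1}$ has ten elements) and $C_{f,m,n}(h)=J_{m,n}(h)+\bo(h^{6})$ for all $(m,n)\in\ind_{3}$. Since \eqref{Cfmn:M=4} \emph{defines} $C_{f,m,n}$ to equal $J_{m,n}$, the second family holds with no remainder, and the whole proof of the first claim reduces to a finite symbolic check: substitute \eqref{Ch1:M=4}--\eqref{Ch789:M=4} into the definition of $I_{m,n}$ in \eqref{J:s1s2:regular}, expand in powers of $h$ with coefficients regarded as rational expressions in the $a^{(i,j)}$, and confirm that the coefficients of $h^{0},\dots,h^{5}$ vanish identically for each of the ten multi-indices in $\ind_{5}^{1}$. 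This forces \eqref{stencil:s1s2:regular1} to hold to $\bo(h^{6})$, and the normalization relating \eqref{stencil:s1s2:regular1} to the residual $-\nabla\cdot(a\nabla u)-f$, recalled just before the statement, then yields a local truncation error of $\bo(h^{4})$.

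For the second claim I would recast accuracy order $M$ as a solvability question for the homogeneous system \eqref{stencil:s1s2:regular:u}. With $C_{k,\ell}(h)=\sum_{i=0}^{M+1}c_{k,\ell,i}h^{i}$ as in \eqref{Cfij:regular}, the conditions $I_{m,n}(h)=\bo(h^{M+2})$, $(m,n)\in\ind_{M+1}^{1}$, form a homogeneous linear system in the $9(M+2)$ unknowns $c_{k,\ell,i}$ with coefficients in the field $\R(a^{(i,j)})$, and a scheme is admissible only when the solution is \emph{nontrivial}, i.e. the $c_{k,\ell,0}$ do not all vanish. A direct count gives $(M+2)^{2}-1$ scalar conditions, so the system is under-determined exactly for $M\le 7$; pure counting therefore cannot rule out order seven, and the claim must be settled at the level of the actual rank over $\R(a^{(i,j)})$. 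Concretely, I would exhibit an admissible solution for $M=6$ (its construction parallels the sixth-order compact Poisson scheme of \cite{FengHanMinev21}), and show that for $M=7$ every solution of \eqref{stencil:s1s2:regular:u} is forced to have $c_{k,\ell,0}=0$ for all $k,\ell\in\{-1,0,1\}$, so that no admissible order-seven scheme exists.

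The decisive and hardest step is this $M=7$ non-existence. Because the dimension count leaves a one-dimensional slack, the null space of the order-seven system is nonempty, and the point is to prove that it consists \emph{only} of degenerate solutions with all $c_{k,\ell,0}=0$ (which merely reproduce a lower-order scheme multiplied by $h$). Making this rigorous requires computing the exact rank of the coefficient matrix over $\R(a^{(i,j)})$ and checking that adjoining the nontriviality constraint renders the system inconsistent, which is the symbolic computation the authors refer to. As a cross-check I would specialize to constant coefficients, where $-\nabla\cdot(a\nabla u)=f$ becomes $-a\Delta u=f$ and the lowest-degree parts of the $G_{m,n}$ in \eqref{Gmn} are harmonic polynomials; reducing these modulo the vanishing ideal $(x^{3}-x,\,y^{3}-y)$ of the nine stencil nodes shows they span only an eight-dimensional subspace of the nine-dimensional space $\mathbb{Q}_{2}=\mathrm{span}\{x^{a}y^{b}:a,b\le 2\}$, so the admissible leading stencils form a one-dimensional family, namely the classical compact nine-point stencil whose maximal order is the well-known value six. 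The remaining subtlety is that specializing $a^{(i,j)}$ cannot be used naively to transfer this bound to generic coefficients (a specialization may hit a pole of the solution or annihilate its leading stencil), which is precisely why the symbolic rank computation over the function field is unavoidable.
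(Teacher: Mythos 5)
Your proposal is correct and follows essentially the same route as the paper: the paper's justification of \cref{thm:regular} is exactly the preceding reduction of \eqref{stencil:s1s2:regular1} to the decoupled families \eqref{stencil:s1s2:regular:u}--\eqref{stencil:s1s2:regular:f}, the explicit coefficients \eqref{Ch1:M=4}--\eqref{Ch789:M=4} with \eqref{Cfmn:M=4}, and an unelaborated ``by calculation'' for the maximality claim $M=6$, which your symbolic verification and rank computation over $\R(a^{(i,j)})$ (including the $M=7$ non-existence step and the caution about naive specialization) simply make explicit. One small caveat: by \cref{fig:umn1} and the requirement that $\ind_{M+1}^1$ and $\ind_{M+1}^2$ partition $\ind_{M+1}$ with $\ind_{M+1}^2$ consisting of the indices whose first entry is at least $2$, the set $\ind_{5}^{1}$ should contain eleven elements (including $(1,4)$), not ten --- the upper limit $M+1-\ell$ in \eqref{Sk1} appears to be a typo for $M+1$ --- so your list of conditions and your count $(M+2)^2-1$ each omit the single scalar equation coming from $I_{1,M}(h)=\bo(h^{M+2})$; this does not affect the structure of your argument.
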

%%
%%%%%%%%%%%%%%%%%

\subsection{Irregular points\label{sec:Solution:Irregular}}

Let $(x_i,y_j)$ be an irregular  point and we can take a base point $(x^*_i,y^*_j)\in \Gamma \cap (x_i-h,x_i+h)\times (y_j-h,y_j+h)$ on the interface $\Gamma$ and inside $(x_i-h,x_i+h)\times (y_j-h,y_j+h)$.
That is, as in \eqref{base:pt},
\be \label{xiyj}
x_i=x_i^*+v_0h  \quad \mbox{and}\quad y_j=y_j^*+w_0h \quad \mbox{with}\quad
-1<v_0, w_0<1 \quad \mbox{and}\quad (x_i^*,y_j^*)\in \Gamma.
\ee
Let $a_{\pm}$, $u_{\pm}$ and $f_{\pm}$ represent the coefficient $a$, the solution $u$ and source term $f$ in $\Omega^{\pm}$.
As in \eqref{ufmn},  we define
\be\label{aufpm}
a_{\pm}^{(m,n)}:=\frac{\partial^{m+n} a_{\pm}}{ \partial^m x \partial^n y}(x^*_i,y^*_j),\qquad  u_{\pm}^{(m,n)}:=\frac{\partial^{m+n} u_{\pm}}{ \partial^m x \partial^n y}(x^*_i,y^*_j),\qquad f_{\pm}^{(m,n)}:=\frac{\partial^{m+n} f_{\pm}}{ \partial^m x \partial^n y}(x^*_i,y^*_j),
\ee
and
\be\label{g1g2pm}
g_1^{(m,n)}:=\frac{\partial^{m+n} g_1}{ \partial^m x \partial^n y}(x^*_i,y^*_j),\qquad
g_2^{(m,n)}:=\frac{\partial^{m+n} g_2}{ \partial^m x \partial^n y}(x^*_i,y^*_j).
\ee
Similarly as the discussion for the irregular points in \cite{FengHanMinev21},
the identities in \eqref{uderiv:relation} and \eqref{u:approx:key} hold by replacing $a$, $u$ and $f$ by $a_\pm$, $u_\pm$ and $f_\pm$, i.e.,
\be \label{u:approx:ir:key}
u_\pm (x+x_i^*,y+y_j^*)
=\sum_{(m,n)\in \ind_{M+1}^1}
u_\pm^{(m,n)} G^{\pm}_{m,n}(x,y) +\sum_{(m,n)\in \ind_{M-1}}
f_\pm ^{(m,n)} H^{\pm}_{m,n}(x,y)+\bo(h^{M+2}),
\ee
for $x,y\in (-2h,2h)$, where the index sets $\ind_{M+1}^1$ and $\ind_{M-1}$ are defined in \eqref{Sk1} and \eqref{Sk}, respectively,  and the polynomials $G^{\pm}_{m,n}(x,y)$ and $H^{\pm}_{m,n}(x,y)$ are defined in  \eqref{Gmn} and \eqref{Hmn} by replacing $a$, $u$ and $f$ by $a_\pm$, $u_\pm$ and $f_\pm$.

{
%\color{black}
	In \cref{sec:Solution:Regular}, we use \eqref{stencil:s1s2:regular1} to approximate the operator 	
	$
	-\nabla \cdot( a\nabla u)=f.
	$
	In this section, we need to use the two jump conditions in \eqref{Qeques1}.
	According to \eqref{stencil:s1s2:regular1} and the two jump functions $g_1$ and $g_2$, we consider the following equation:
{\small{\be
		 \begin{split}\label{stencil:irregular:s1s2}
		\sum_{k=-1}^1 \sum_{\ell=-1}^1C_{k,\ell}(h) u(x_i+kh,&y_j+\ell h)=\sum_{(m,n)\in \ind_{M-1}} C_{f_{+},m,n}(h) f_+^{(m,n)}+\sum_{(m,n)\in \ind_{M-1}} C_{f_{-},m,n}(h)f_-^{(m,n)}\\
		&+\sum_{(m,n)\in \ind_{M+1}} C_{g_1,m,n}(h) g_1^{(m,n)}+\sum_{(m,n)\in \ind_{M}} C_{g_2,m,n}(h) g_2^{(m,n)}+\bo(h^{M+2}),
		\end{split}
		\ee
}}
where  $h\to 0$, the nontrivial $C_{k,\ell}(h), C_{f_{\pm},m,n}(h)$,
$C_{g_1,m,n}(h)$ and $C_{g_2,m,n}(h)$
are to-be-determined polynomials of $h$ having degree less than $M+2$. Precisely,
\be\label{Cfij:irregular}
\begin{split}
&C_{k,\ell}(h)=\sum_{i=0}^{M+1} c_{k,\ell,i}h^i, \qquad C_{f_{\pm},m,n}(h)=\sum_{i=0}^{M+1} c_{f_{\pm},m,n,i}h^i,  \\
&C_{g_{1},m,n}(h)=\sum_{i=0}^{M+1} c_{g_{1},m,n,i}h^i, \qquad C_{g_{2},m,n}(h)=\sum_{i=0}^{M+1} c_{g_{2},m,n,i}h^i,
\end{split}
\ee
where all $c_{k,\ell,i}$, $c_{f_{\pm},m,n,i}$, $c_{g_{1},m,n,i}$ and $c_{g_{2},m,n,i}$ are to-be-determined constants. Similarly to \cref{sec:Solution:Regular}, the coefficients of a compact scheme are nontrivial if  $c_{k,\ell,0}\ne 0$ for at least some $k,\ell=-1,0,1$.

Similarly to  the derivation of Eq.(4.37), Theorem 4.1 and Theorem 4.2 in \cite{CFL19},  Eq.(7.73) in \cite[Section~7.2.7]{LiIto06}, \cite[Section~3.3]{HanMW2021} and \cite[Section~2]{Angelova2007}, we find that
 \eqref{u:approx:ir:key}, \eqref{stencil:irregular:s1s2} and \eqref{Cfij:irregular}
 can achieve accuracy order $M+1$ for the numerical approximated solution. We can observe that  for the same integer $M$, the accuracy order at irregular points is one order higher than at the regular points. More details about this phenomenon can be found in  \cite{LiIto06,CFL19,HanMW2021,Angelova2007}.
}

As in \cite{FengHanMinev21}, consider one of the following two simple parametric representations of $\Gamma$:
\be\label{parametric:2 simple}
x=t+x_i^*, \quad y=r(t)+y_j^* \qquad \mbox{or}\quad
x=r(t)+x_i^*,\quad y=t+y_j^*, \quad \mbox{for}\;\; t\in (-\epsilon,\epsilon) \quad \mbox{with}\quad \epsilon>0,
\ee
for the base point $(x^*_i,y^*_j)$ and a smooth function $r(t)$, where $r(0)=0$. Note that from the Implicit Function Theorem one can derive  $\frac{d^n(r(0))}{dt^n}$  without knowing the explicit formula for  $r(t)$. To cover the above two cases of parametric equations in \eqref{parametric:2 simple} for $\Gamma$ together, we discuss the following general parametric equation for $\Gamma$:
\be \label{parametric}
x=r(t)+x_i^*,\quad y=s(t)+y_j^*,\quad
(r'(t))^2+(s'(t))^2>0
\quad \mbox{for}\;\; t\in (-\epsilon,\epsilon) \quad \mbox{with}\quad \epsilon>0.
\ee
Note that the parameter $t$ for the base point $(x_i^*,y_j^*)$ is $t=0$, and $r(0)=s(0)=0$.

According to the two jump conditions for the solution and flux  in \eqref{Qeques1},
we can link the two sets
$\{u_-^{(m,n)}:(m,n)\in \ind_{M+1}^1\}$ and $\{u_+^{(m,n)}:(m,n)\in \ind_{M+1}^1\}$ by the following theorem, whose proof is given in \cref{sec:proof}.

\begin{theorem}\label{thm:interface}
Let $u$ be the exact solution to the elliptic interface problem in \eqref{Qeques1}, and assume that the base point $(x_i^*,y_j^*)\in \Gamma$,  $\Gamma$ being parameterized near $(x_i^*,y_j^*)$ by \eqref{parametric}.
Then
\be \label{tranmiss:cond}
\begin{split}
u_-^{(m',n')}&=\sum_{(m,n)\in \ind_{M+1}^1}T^{u_{+}}_{m',n',m,n}u_+^{(m,n)}+\sum_{(m,n)\in \ind_{M-1}} \left(T^+_{m',n',m,n} f_+^{(m,n)}
+ T^-_{m',n',m,n} f_{-}^{(m,n)}\right)\\
&+\sum_{(m,n)\in \ind_{M+1}} T^{g_1}_{m',n',m,n} g_{1}^{(m,n)}
+\sum_{(m,n)\in \ind_{M}} T^{g_2}_{m',n',m,n} g_2^{(m,n)},\qquad \forall\; (m',n')\in \ind_{M+1}^1,
\end{split}
\ee
where all the transmission coefficients $T^{u_+}, T^\pm, T^{g_1}, T^{g_2}$ are uniquely determined by
$r^{(k)}(0)$, $s^{(k)}(0)$  for $k=0,\ldots,M+1$ and $a_{\pm}^{(m,n)}$ for $(m,n)\in \ind_{M}$.%, and can be easily obtained by the recursive formulas given in \eqref{interface:u:0} and \eqref{transmissioncoef}.
\end{theorem}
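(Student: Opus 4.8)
The plan is to turn the two interface conditions in \eqref{Qeques1} into a square system of linear relations among the reduced one-sided derivative sets $\{u_\pm^{(m,n)}:(m,n)\in\ind_{M+1}^1\}$, and then to invert for the minus-side unknowns. First I would invoke the surrogate expansion \eqref{u:approx:ir:key}: near the base point $(x_i^*,y_j^*)\in\Gamma$ each of $u_+$ and $u_-$ is determined, up to $\bo(h^{M+2})$, by its reduced derivatives $\{u_\pm^{(m,n)}:(m,n)\in\ind_{M+1}^1\}$ together with the known data $\{f_\pm^{(m,n)}:(m,n)\in\ind_{M-1}\}$. Hence it suffices to produce $|\ind_{M+1}^1|$ independent linear equations linking the two sides, which I extract by restricting the conditions to the curve \eqref{parametric} and matching $t$-Taylor coefficients.

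Next I would substitute $x=r(t),\,y=s(t)$ into the solution jump $[u]=g_1$, giving $u_+(r(t)+x_i^*,s(t)+y_j^*)-u_-(\cdots)=g_1(\cdots)$, and differentiate $j$ times at $t=0$ to obtain one relation for each $j=0,\dots,M+1$. For the flux condition I would take the unnormalized normal $(-s'(t),r'(t))$, multiply $[a\nabla u\cdot\nv]=g_2$ by $\sqrt{(r')^2+(s')^2}$ to clear the normalization, and differentiate $j$ times for $j=0,\dots,M$; the right-hand sides stay smooth in $t$ and expand into $g_1^{(m,n)},\,g_2^{(m,n)}$ with coefficients built from $r^{(k)}(0),s^{(k)}(0)$. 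Each tangential differentiation produces $u$-derivatives of order higher than the reduced set, which I eliminate using \cref{lem:uderiv}; the orientation of $\nv$ (towards $\Op$) only fixes an overall sign and is immaterial below. The equation counts match exactly: $(M+2)+(M+1)$ relations against the minus-side unknowns $\{u_-^{(m,n)}:(m,n)\in\ind_{M+1}^1\}$.

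The heart of the argument, and the step I expect to be the main obstacle, is proving this square system is invertible so the minus-side unknowns solve uniquely. Here I would exploit that the leading (top total-degree) symbol of the reduction in \cref{lem:uderiv} is exactly the rule $\partial_x^2\equiv-\partial_y^2$, i.e.\ formally $\partial_x\mapsto\mathrm{i}\,\partial_y$ with $\mathrm{i}$ the imaginary unit. Under this substitution the tangential operator $r'\partial_x+s'\partial_y$ has symbol $z\,\partial_y$ with $z:=s'(0)+\mathrm{i}\,r'(0)\ne0$, and the normal operator $-s'\partial_x+r'\partial_y$ has symbol $-\mathrm{i}z\,\partial_y$. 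Consequently the order-$j$ equation from $[u]=g_1$ has leading part $\mathrm{Re}(z^j)\,[u^{(0,j)}]+\mathrm{Im}(z^j)\,[u^{(1,j-1)}]$, and the order-$(j-1)$ flux equation has leading part $a^{(0,0)}\bigl(\mathrm{Im}(z^{j})\,[u^{(0,j)}]-\mathrm{Re}(z^{j})\,[u^{(1,j-1)}]\bigr)$, with $a_\pm^{(0,0)}$ on the respective sides. Ordering unknowns and equations by total degree $d$ makes the coefficient matrix block lower-triangular: every non-leading contribution (from derivatives of $r,s$ and from the \cref{lem:uderiv} corrections, which strictly lower the total degree) involves only unknowns of strictly smaller degree. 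The diagonal $2\times2$ block in the minus-side pair $(u_-^{(0,d)},u_-^{(1,d-1)})$ then has determinant $-a_-^{(0,0)}|z|^{2d}$, nonzero since $a_-^{(0,0)}>0$ and $(r'(0))^2+(s'(0))^2>0$, while the degree-$0$ block is the scalar $-1$; hence the matrix is invertible.

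Finally I would back-solve the triangular system degree by degree to obtain the explicit relation \eqref{tranmiss:cond}, reading off that each transmission coefficient is a rational expression in the block determinants $-a_-^{(0,0)}|z|^{2d}$, the entries $\mathrm{Re}(z^j),\mathrm{Im}(z^j)$, the reduction coefficients $A^{u},A^{f}$ of \cref{lem:uderiv}, and the $f_\pm,g_1,g_2$ multipliers. Tracing these dependencies shows they involve only $r^{(k)}(0),s^{(k)}(0)$ for $k\le M+1$ and $a_\pm^{(m,n)}$ for $(m,n)\in\ind_{M}$, as claimed. The genuinely laborious portion is the bookkeeping of the lower-degree corrections and of the data contributions $T^{\pm},T^{g_1},T^{g_2}$, but none of this affects solvability, which is governed entirely by the leading blocks.
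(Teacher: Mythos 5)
Your proposal follows essentially the same route as the paper's proof: restrict both jump conditions to the parametrization \eqref{parametric}, Taylor-expand in $t$, use \cref{lem:uderiv} to reduce everything to the derivative set $\ind_{M+1}^1$, observe that the resulting system is block lower-triangular in the total degree with $2\times 2$ diagonal blocks pairing the order-$p$ solution-jump relation with the order-$(p-1)$ flux relation, and back-solve recursively. Your complex-symbol computation of the block determinants (a nonzero multiple of $a_-^{(0,0)}\big((r'(0))^2+(s'(0))^2\big)^{p}$) is simply a more explicit justification of the paper's key inequality \eqref{nonzero}, which the paper instead imports from the proof of Theorem~2.3 in \cite{FengHanMinev21}; otherwise the two arguments coincide.
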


Now we discuss how to find a compact scheme at an irregular point $(x_i,y_j)$ with the supposed accuracy order for the numerical approximated solution. As the set $\{-1,0,1\}^2$ is the disjoint union of $d_{i,j}^+$ and $d_{i,j}^-$, we have
\[
\begin{split}
	&\sum_{k=-1}^1 \sum_{\ell=-1}^1
	C_{k,\ell}(h) u(x_i+kh,y_j+\ell h)\\
	&=\sum_{(k,\ell)\in d_{i,j}^+}
	C_{k,\ell}(h) u(x_i^*+(v_0+k)h,y_j^*+(w_0+\ell) h)
	+\sum_{(k,\ell)\in d_{i,j}^-}
	C_{k,\ell}(h) u(x_i^*+(v_0+k)h,y_j^*+(w_0+\ell) h).
\end{split}
\]
By \eqref{u:approx:ir:key},
\[
\begin{split}
	\sum_{(k,\ell)\in d_{i,j}^{\pm}}
	C_{k,\ell}(h) u(x_i^*+v_0h+kh,y_j^*+w_0h+\ell h)&=\sum_{(m,n)\in \ind_{M+1}^1}
	u_\pm^{(m,n)} I^\pm_{m,n}(h)
	\\
	&+\sum_{(m,n)\in \ind_{M-1}}
	f_\pm ^{(m,n)} J^{\pm,0}_{m,n}(h)+\bo(h^{M+2}),
\end{split}
\]
where
\be \label{IJpm}
\begin{split}
&I^\pm_{m,n}(h):=\sum_{(k,\ell)\in d_{i,j}^\pm}
C_{k,\ell}(h) G^{\pm}_{m,n}(v_0h+kh,w_0h+\ell h),\\
&J^{\pm,0}_{m,n}(h):=\sum_{(k,\ell)\in d_{i,j}^\pm} C_{k,\ell}(h) H^{\pm}_{m,n}(v_0h +kh,w_0h+\ell h).
\end{split}
\ee
Using \eqref{tranmiss:cond} in \cref{thm:interface}, we obtain
\begin{align*}
	\sum_{(m',n')\in \ind_{M+1}^1}
	u_-^{(m',n')} I^-_{m',n'}(h)
	=&\sum_{(m,n)\in \ind_{M+1}^1} u_+^{(m,n)}
	J^{u_+,T}_{m,n}(h)+
	\sum_{(m,n)\in \ind_{M-1}}
	\left(f_+^{(m,n)} J^{+,T}_{m,n}(h)+
	f_-^{(m,n)} J^{-,T}_{m,n}(h)\right)\\
	&+\sum_{(m,n)\in \ind_{M+1}} g_1^{(m,n)}J^{g_1}_{m,n}(h)+
	\sum_{(m,n)\in \ind_M} g_2^{(m,n)}J^{g_2}_{m,n}(h),
\end{align*}
where
\be
\label{JTno}
\begin{split}
	&J^{u_+,T}_{m,n}(h):=
	\sum_{(m',n')\in \ind_{M+1}^1} I^-_{m',n'}(h) T^{u_+}_{m',n',m,n}, \quad J^{\pm,T}_{m,n}(h):=
	\sum_{(m',n')\in \ind_{M+1}^1} I^-_{m',n'}(h) T^\pm_{m',n',m,n},\\
	&J^{g_1}_{m,n}(h):=
	\sum_{(m',n')\in \ind_{M+1}^1} I^-_{m',n'}(h) T^{g_1}_{m',n',m,n},\quad
	J^{g_2}_{m,n}(h):=
	\sum_{(m',n')\in \ind_{M+1}^1} I^-_{m',n'}(h) T^{g_2}_{m',n',m,n}.
\end{split}
\ee
Consequently,
\be \label{u:sum:s1s2}
\begin{split}
	&	\sum_{k=-1}^1 \sum_{\ell=-1}^1
	C_{k,\ell}(h) u(x_i+kh,y_j+\ell h)=\sum_{(m,n)\in \ind_{M+1}^1}
	u_+^{(m,n)} I_{m,n}(h)+
	\sum_{(m,n)\in \ind_{M-1} } f_+^{(m,n)}J^+_{m,n}(h)\\
	&+\sum_{(m,n)\in \ind_{M-1}} f_-^{(m,n)}J^-_{m,n}(h)+\sum_{(m,n)\in \ind_{M+1}} g_1^{(m,n)}J^{g_1}_{m,n}(h)
	+\sum_{(m,n)\in \ind_{M}} g_2^{(m,n)}J^{g_2}_{m,n}(h),
\end{split}
\ee
where
%
%%%
\be\label{IJ:ir1:s1s2}
	 \hspace{3cm} I_{m,n}(h):=I^+_{m,n}(h)+J^{u_+,T}_{m,n}(h),\quad J^{\pm}_{m,n}(h):=
	J_{m,n}^{\pm,0}(h)+J^{\pm,T}_{m,n}(h).
\ee
%%%
%
Since $\{f_\pm^{(m,n)}:(m,n)\in \ind_{M-1}\}$ , $\{g_1^{(m,n)}:(m,n)\in \ind_{M+1}\}$ and $\{g_2^{(m,n)}:(m,n)\in \ind_M\}$  are available and all the unknowns in \eqref{u:sum:s1s2} only belong to the set $\{u_+^{(m,n)}:(m,n)\in \ind_{M+1}^1\}$, \eqref{stencil:irregular:s1s2} can be equivalently written as
\begin{align}
	&I_{m,n}(h)=\bo(h^{M+2}),  &&h\to 0, \; \mbox{ for all }\; (m,n)\in \ind_{M+1}^1, \label{stencil:regular:u:ir}\\
	 &J^\pm_{m,n}(h)=C_{f_\pm,m,n}(h)+\bo(h^{M+2}),
	&&h\to 0,\; \mbox{ for all }\; (m,n)\in \ind_{M-1},\label{stencil:regular:f:ir}\\
	 &J^{g_1}_{m,n}(h)=C_{g_1,m,n}(h)+\bo(h^{M+2}),
	&&h\to 0,\; \mbox{ for all }\; (m,n)\in \ind_{M+1},\label{stencil:regular:g1:ir}\\
	 &J^{g_2}_{m,n}(h)=C_{g_2,m,n}(h)+\bo(h^{M+2}),
	&&h\to 0,\; \mbox{ for all }\; (m,n)\in \ind_{M}.\label{stencil:regular:g2:ir}
\end{align}
Then similar to \eqref{Cfmn:M=4}, substituting $\{C_{k,\ell}(h)\}_{k,\ell=-1,0,1}$ of \eqref{stencil:regular:u:ir} into \eqref{stencil:regular:f:ir}-\eqref{stencil:regular:g2:ir}, all other coefficients of the compact scheme can be calculated by
\begin{align}
	&C_{f_\pm,m,n}(h):=J_{m,n}^\pm(h), \quad (m,n)\in \ind_{M-1},\label{coeff:f:ir:gradient}\\
	&C_{g_1,m,n}(h):=J^{g_1}_{m,n}(h),\quad (m,n)\in \ind_{M+1}\quad
	\mbox{and}\quad
	C_{g_2,m,n}(h)=J^{g_2}_{m,n}(h),\quad (m,n)\in \ind_M.\label{coeff:g:ir:gradient}
\end{align}
We can check that the maximum $M$, such that a nontrivial
solution $\{C_{k,\ell}(h)\}_{k,\ell=-1,0,1}$ exists for \eqref{stencil:regular:u:ir}, is $M=2$.
Thus, we obtain the following theorem for a compact scheme at irregular points.

\begin{theorem}\label{thm:irregular}
Let   $(x_i, y_j)$ be an irregular point and $(u_{h})_{i,j}$ be the numerical approximation of the exact solution $u$ of the partial differential equation \eqref{Qeques1} at $(x_i, y_j)$. Pick a base point
$(x_i^*,y_j^*)$ as in \eqref{base:pt}.
Then
the following compact scheme centered at the irregular point $(x_i,y_j):$
\begin{equation}\label{stencil:irregular1}
\begin{split}
    &\sum_{k=-1}^1 \sum_{\ell=-1}^1 C_{k,\ell}(h)(u_{h})_{i+k,j+\ell}=\sum_{(m,n)\in \ind_{1} } f_+^{(m,n)}J^+_{m,n}(h)+\sum_{(m,n)\in \ind_{1}} f_-^{(m,n)}J^-_{m,n}(h)\\
	 &\hspace{4cm}+\sum_{(m,n)\in \ind_{3}} g_1^{(m,n)}J^{g_1}_{m,n}(h)
	 +\sum_{(m,n)\in \ind_{2}} g_2^{(m,n)}J^{g_2}_{m,n}(h),
\end{split}
\end{equation}
has a third order consistency error at the irregular point $(x_i,y_j)$, i.e., the accuracy order for $u_h$ is three,
where the quantities $\{C_{k,\ell}(h)\}_{k,\ell=-1,0,1}$ are the nontrivial solutions of  \eqref{stencil:regular:u:ir} with $M=2$, $J^\pm_{m,n}, (m,n)\in \ind_1$, $J^{g_1}_{m,n}, (m,n)\in \ind_3$ and $J^{g_2}_{m,n}, (m,n)\in \ind_2$ are given in  \eqref{IJ:ir1:s1s2} and \eqref{JTno}.
\end{theorem}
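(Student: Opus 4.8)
The plan is to show that substituting the exact solution $u$ into the left-hand side of the stencil \eqref{stencil:irregular1} produces a residual of size $\bo(h^{M+2})$ with $M=2$, and then to convert this local residual into the claimed global accuracy order $M+1=3$ via the established interface order-lifting mechanism. First I would substitute the exact solution into $\sum_{k,\ell} C_{k,\ell}(h)\,u(x_i+kh,y_j+\ell h)$ and split the nine-point sum along the disjoint decomposition $\{-1,0,1\}^2=d_{i,j}^+\cup d_{i,j}^-$. Applying the local representation \eqref{u:approx:ir:key} separately on each side of $\Gamma$ rewrites the contribution of each group in terms of the jet unknowns $\{u_\pm^{(m,n)}:(m,n)\in\ind_{M+1}^1\}$ and the available source data $\{f_\pm^{(m,n)}\}$, generating exactly the quantities $I^\pm_{m,n}(h)$ and $J^{\pm,0}_{m,n}(h)$ of \eqref{IJpm}, each carrying a remainder $\bo(h^{M+2})$.

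Next I would eliminate the $\Om$-side jets $u_-^{(m',n')}$ in favour of the $\Op$-side jets and the data by inserting the transmission identities \eqref{tranmiss:cond} of \cref{thm:interface}; this is the step that couples the two sides of the interface and turns $\sum_{(m',n')\in\ind_{M+1}^1} u_-^{(m',n')} I^-_{m',n'}(h)$ into the aggregated coefficients $J^{u_+,T}_{m,n}$, $J^{\pm,T}_{m,n}$, $J^{g_1}_{m,n}$, $J^{g_2}_{m,n}$ of \eqref{JTno}. Collecting terms gives precisely \eqref{u:sum:s1s2}, in which the only remaining unknowns are $\{u_+^{(m,n)}:(m,n)\in\ind_{M+1}^1\}$ and every other term is multiplied by known data. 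Because the right-hand side of \eqref{stencil:irregular1} is defined through \eqref{coeff:f:ir:gradient}--\eqref{coeff:g:ir:gradient} to equal exactly the data sums $\sum f_\pm^{(m,n)}J^\pm_{m,n}(h)+\sum g_1^{(m,n)}J^{g_1}_{m,n}(h)+\sum g_2^{(m,n)}J^{g_2}_{m,n}(h)$, these cancel and the residual collapses to $\sum_{(m,n)\in\ind_{M+1}^1} u_+^{(m,n)} I_{m,n}(h)+\bo(h^{M+2})$. The defining conditions \eqref{stencil:regular:u:ir} force $I_{m,n}(h)=\bo(h^{M+2})$ for every $(m,n)\in\ind_{M+1}^1$, so with $M=2$ the residual is $\bo(h^{4})$.

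It then remains to turn this $\bo(h^{M+2})$ local residual into the stated order. Here I would appeal to the interface phenomenon documented in \cite{LiIto06,CFL19,HanMW2021,Angelova2007}: after dividing the stencil by the appropriate power of $h$ and the leading coefficient one recovers $-\nabla\cdot(a\nabla u)-f$, but since the irregular points are only $\bo(h^{-1})$ in number and are confined to the one-dimensional curve $\Gamma$, the discrete Green's-function (or maximum-principle) estimate lifts their contribution to the global error by one order, so that a residual of $\bo(h^{M+2})$ yields global accuracy $M+1$. With $M=2$ this is exactly third order for $u_h$. To finish, I would separately confirm that \eqref{stencil:regular:u:ir} indeed possesses a nontrivial solution $\{C_{k,\ell}(h)\}_{k,\ell=-1,0,1}$ with $c_{k,\ell,0}\ne 0$ for some $(k,\ell)$ when $M=2$; this is a finite linear-algebra check on the coefficient matrix whose entries are polynomials in $v_0,w_0$, the jet $\{r^{(k)}(0),s^{(k)}(0)\}$ of the parametrization \eqref{parametric}, and $\{a_\pm^{(m,n)}\}$.

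The main obstacle I anticipate is precisely this existence/rank verification. Unlike the regular case, where the nine coefficients are available in closed form \eqref{Ch1:M=4}--\eqref{Ch789:M=4}, at an irregular point the homogeneous system \eqref{stencil:regular:u:ir} depends on the entire local geometry through the transmission coefficients $T^{u_+},T^\pm,T^{g_1},T^{g_2}$ supplied by \cref{thm:interface}, so one must argue that its kernel is nontrivial \emph{uniformly} in these parameters at $M=2$. A count of the $9(M+2)$ unknown coefficients $c_{k,\ell,i}$ against the vanishing conditions imposed by $I_{m,n}(h)=\bo(h^{M+2})$ over $(m,n)\in\ind_{M+1}^1$ --- exploiting that each $G^\pm_{m,n}$ contributes only monomials of total degree $\ge m+n$, so that many low-order coefficients of $I_{m,n}(h)$ vanish automatically --- should show the system is underdetermined at $M=2$; making this genuinely uniform in the geometric data, and (for the sharpness statement deferred to \cref{thm:Max:Order}) showing the kernel collapses for $M\ge 3$, is where the real work lies.
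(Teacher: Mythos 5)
Your proposal is correct and follows essentially the same route as the paper: the paper's justification of \cref{thm:irregular} is precisely the derivation in \cref{sec:Solution:Irregular} (split the nine-point sum over $d_{i,j}^{\pm}$, apply \eqref{u:approx:ir:key}, eliminate $u_-^{(m,n)}$ via \cref{thm:interface} to reach \eqref{u:sum:s1s2}, and reduce to \eqref{stencil:regular:u:ir}--\eqref{stencil:regular:g2:ir}), with the order-lifting at irregular points likewise delegated to the cited references and the existence of a nontrivial $\{C_{k,\ell}(h)\}$ at $M=2$ left as a computational check. You have also correctly identified that this last existence/rank verification is the one step the paper asserts rather than proves (only the sharpness $M\le 2$ is argued separately, in \cref{thm:Max:Order}).
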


\begin{theorem}\label{thm:Max:Order}
The maximum accuracy order for the  numerical approximation $u_h$ at an irregular point of a compact finite difference scheme is three, i.e., the largest $M$ such that the nontrivial
solution $\{C_{k,\ell}(h)\}_{k,\ell=-1,0,1}$ exists for \eqref{stencil:regular:u:ir} is $M=2$.
\end{theorem}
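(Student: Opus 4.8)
The plan is to read \eqref{stencil:regular:u:ir} as one homogeneous linear system in the stencil coefficients and to show that a solution with nonvanishing leading part exists precisely up to $M=2$. Inserting $C_{k,\ell}(h)=\sum_{i=0}^{M+1}c_{k,\ell,i}h^i$ from \eqref{Cfij:irregular} into $I_{m,n}(h)=I^+_{m,n}(h)+J^{u_+,T}_{m,n}(h)$ (see \eqref{IJpm}, \eqref{JTno}, \eqref{IJ:ir1:s1s2}) and collecting powers of $h$, the requirement $I_{m,n}(h)=\bo(h^{M+2})$ for every $(m,n)\in\ind_{M+1}^1$ becomes $\mathbf{A}_M\mathbf{c}=0$, where $\mathbf{c}=(c_{k,\ell,i})$ is the vector of $9(M+2)$ unknowns. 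The entries of $\mathbf{A}_M$ are explicit polynomials in the base-point offsets $v_0,w_0$, in the interface jet $r^{(k)}(0),s^{(k)}(0)$ from \eqref{parametric}, and in the one-sided data $a_\pm^{(m,n)}$; the latter two enter only through the transmission coefficients $T^{u_+}$ of \cref{thm:interface} and through the split of the stencil into $d_{i,j}^{\pm}$. By the normalization recorded after \eqref{Cfij:irregular}, a \emph{nontrivial} scheme means a $\mathbf{c}\in\ker\mathbf{A}_M$ whose leading part $(c_{k,\ell,0})$ is not identically zero.

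The existence direction costs nothing: \cref{thm:irregular} already produces a $\mathbf{c}\in\ker\mathbf{A}_2$ with nonzero leading part, so it remains to rule out $M\ge3$, and it suffices to treat $M=3$. Indeed, the low-degree structure makes the passage from $M=3$ to larger $M$ automatic. For $d\le4$ the degree-$d$ homogeneous part of each $G^{\pm}_{m,n}$ is independent of $M$, since the coefficients $A^{u}_{m',n',m,n}$ in \eqref{Gmn} are the intrinsic quantities of \cref{lem:uderiv}, and the relevant $T^{u_+}$ are likewise $M$-independent. Hence the coefficients of $h^0,\dots,h^4$ in $I_{m,n}(h)$ for $(m,n)\in\ind_4^1$ are the same functions of $(c_{k,\ell,i})_{i\le4}$ in every $M$-system. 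Thus any $\mathbf{c}\in\ker\mathbf{A}_M$ with $M\ge3$ truncates (drop all $c_{k,\ell,i}$ with $i\ge5$) to an element of $\ker\mathbf{A}_3$ with the same leading part, so triviality at $M=3$ propagates to all larger $M$.

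For $M=3$ I would prove that every $\mathbf{c}\in\ker\mathbf{A}_3$ has $(c_{k,\ell,0})=0$. The computation is organized by a triangular grading in $h$: the degree bounds after \eqref{Hmn} say $G^{\pm}_{m,n}$ carries no monomial below degree $m+n$, and \eqref{tranmiss:cond} gives $u_-^{(0,0)}=u_+^{(0,0)}-g_1^{(0,0)}$, i.e.\ $T^{u_+}_{0,0,m,n}=\delta_{(m,n),(0,0)}$; consequently the newly introduced coefficients $(c_{k,\ell,p})$ affect the order-$h^p$ relations only through the single equation indexed by $(m,n)=(0,0)$, while every order-$h^p$ equation indexed by $(m,n)\ne(0,0)$ involves only the $c_{k,\ell,i}$ with $i<p$. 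Eliminating the $(0,0)$-equations stage by stage leaves a reduced homogeneous system carried entirely by the $(m,n)\ne(0,0)$ relations, and the claim is that its only solution has $(c_{k,\ell,0})=0$. This is a finite rank check: one shows that the submatrix expressing these reduced relations in the leading unknowns $(c_{k,\ell,0})$, after substituting the explicit $T^{u_+}$ and the $d_{i,j}^{\pm}$-split, has trivial kernel.

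The difficulty, and the reason the irregular threshold collapses from the value $6$ of the regular case (\cref{thm:regular}) down to $2$, is that this reduced leading-order matrix must be shown nonsingular \emph{for every admissible irregular configuration} at once. Through \cref{thm:interface} it depends on the full interface jet and on both one-sided coefficient jets $a_\pm^{(m,n)}$, and through $d_{i,j}^{\pm}$ on the combinatorial type of the stencil split, so no constant-coefficient moment count applies and the transmission coupling between the $+$ and $-$ stencil points is precisely what manufactures the extra rank that kills the leading part. The clean route I would take is symbolic: realize the determinant of a maximal leading-order block as a polynomial in $v_0,w_0,r^{(k)}(0),s^{(k)}(0),a_\pm^{(m,n)}$, and show it is not identically zero by evaluating it at a reference configuration — a straight interface along a grid line with piecewise-constant $a_\pm$ — where $T^{u_+}$ and the geometry degenerate and the determinant reduces to an explicit nonzero number; a nonzero polynomial cannot vanish on the open admissible set $\{-1<v_0,w_0<1,\ a_\pm^{(0,0)}>0,\ (r'(0))^2+(s'(0))^2>0\}$. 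The genuinely delicate point is that the determinant must be checked not to degenerate on the finitely many combinatorial split-types $d_{i,j}^{\pm}$ that actually arise from a nine-point stencil cut by a smooth curve, and handling these cases is where the real work lies.
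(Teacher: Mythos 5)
Your decisive step coincides with the paper's own proof: \cref{thm:Max:Order} is proved there by fixing a single explicit configuration ($\psi(x,y)=2x-y$, base point equal to the centre grid point, $f_\pm=g_1=g_2=0$) and checking that the $M=3$ instance of \eqref{stencil:regular:u:ir} forces $c_{k,\ell,0}=0$ for all $k,\ell$, so that only the trivial solution survives; your evaluation of the leading-order block at a straight-interface, piecewise-constant reference configuration is exactly this finite computation, and your truncation argument passing from $M\ge 3$ down to $M=3$ is a worthwhile supplement that the paper leaves implicit. The part of your plan that goes beyond the paper is, however, both unnecessary and logically flawed. It is unnecessary because a compact scheme must be available at \emph{every} irregular point, so exhibiting one configuration on which the $M=3$ system admits only trivial solutions already caps the attainable order at three; there is no need to prove nonsingularity ``for every admissible irregular configuration at once,'' nor to examine all combinatorial split-types $d_{i,j}^{\pm}$, which you single out as the real work. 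It is flawed because the justification you offer --- the determinant is a polynomial in $v_0,w_0,r^{(k)}(0),s^{(k)}(0),a_\pm^{(m,n)}$ that is not identically zero, hence ``cannot vanish on the open admissible set'' --- is false as stated: a nonzero polynomial can vanish at individual points of an open set, so your argument yields nonsingularity only for \emph{generic} configurations, not for all of them. Finally, in either version the substantive content is the deferred symbolic verification at the chosen reference configuration (the paper's ``it is easy to check''); you would still need to carry that computation out, and to confirm that your particular reference interface (a grid line, rather than the paper's line $y=2x$ through the centre point) does not itself make the leading-order block degenerate.
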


\begin{proof}
Let us consider the following simple case:  $\Gamma=\{(x,y)\in \Omega\; :\; \psi(x,y)=0\}$ with
$\psi(x,y)=2x-y$, $x_i=y_j=0$, $x_{i-1}=y_{j-1}=-h$,
 $x_{i+1}=y_{j+1}=h$,
 $x_i^*=x_i=0$, $y_j^*=y_j=0$ and $\nv=\frac{(2,-1)}{\sqrt{5}}$ (see \cref{fig:simple:case} for an illustration). From \eqref{stencil:regular:u:ir},  the source term $f_{\pm}$ and the two jump functions $g_1$ and $g_2$ do not affect the existence of the nontrivial solution $\{C_{k,\ell}(h)\}_{k,\ell=-1,0,1}$ of \eqref{stencil:regular:u:ir}. To further simplify the calculation, we can assume that
$f_{\pm}=g_1=g_2=0.$ Then it is easy to check that  all $\{C_{k,\ell}(h)\}_{k,\ell=-1,0,1}$ of  \eqref{stencil:regular:u:ir} are zeros for $M=3$ and $h=0$. So \eqref{stencil:regular:u:ir} only has a trivial solution for $M=3$.
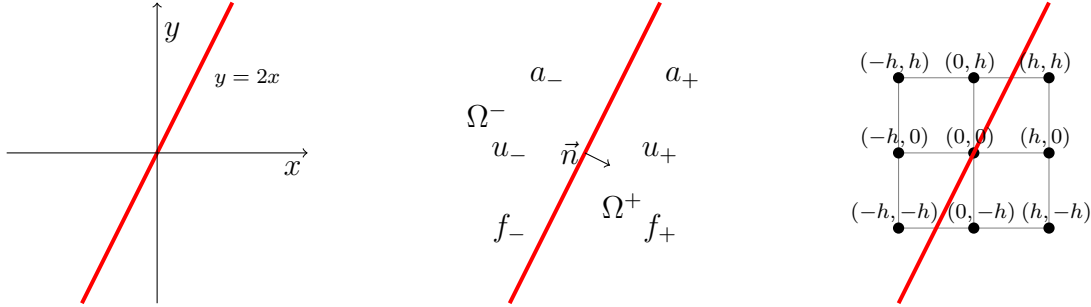
\begin{figure}[h]
	\centering
		\hspace{14mm}
%%%%%%%%%%%%%%%%%%%%%%%%%%%%%%%%%%%%%%%%%%%%%%		 
\begin{subfigure}[b]{0.3\textwidth}
		\begin{tikzpicture}[scale = 1]
			\draw[line width=1.5pt, red]  plot [smooth,tension=0.8]
			coordinates {(1,0) (2,2) (3,4)};
			\draw[->] (0,2)--(4,2);
			\draw[->] (2,0)--(2,4);
		  \node (A) at (3.8,1.8) {$x$};
			\node (A) at (2.2,3.6) {$y$};
				\node (A) at (3.2,3) {\tiny{$y=2x$}};
		\end{tikzpicture}
	\end{subfigure}
   \begin{subfigure}[b]{0.3\textwidth}
   	\hspace{3mm}
		\begin{tikzpicture}[scale = 1]
    \draw[line width=1.5pt, red]  plot [smooth,tension=0.8]
coordinates {(1,0) (2,2) (3,4)};
	\node (A) at (1,2) {$u_{-}$};
    \node (A) at (3,2) {$u_{+}$};
  	\node (A) at (1.5,3) {$a_{-}$};
   \node (A) at (3.3,3) {$a_{+}$};
   \node (A) at (1,1) {$f_{-}$};
   \node (A) at (3,1) {$f_{+}$};
   	\node (A) at (0.7,2.5) {$\Omega^{-}$};
   \node (A) at (2.5,1.3) {$\Omega^{+}$};
   	\node (A) at (1.8,2) {$\nv$};
   \draw[->] (2,2)--(2+1/3,2-1/6);
		\end{tikzpicture}
	\end{subfigure}
%%%%%%%%%%%%%%%%%%%%%%%%%%%%%%%%%%%%%%%%%%%%%
	\begin{subfigure}[b]{0.3\textwidth}
		\begin{tikzpicture}[scale = 1]
			\draw[help lines,step = 1]
			(1,1) grid (3,3);
			\node at (1,1)[circle,fill,inner sep=1.5pt,color=black]{};
			\node at (1,2)[circle,fill,inner sep=1.5pt,color=black]{};
			\node at (1,3)[circle,fill,inner sep=1.5pt,color=black]{};	
			\node at (2,1)[circle,fill,inner sep=1.5pt,color=black]{};
			\node at (2,2)[circle,fill,inner sep=1.5pt,color=black]{};
			\node at (2,3)[circle,fill,inner sep=1.5pt,color=black]{};	
			\node at (3,1)[circle,fill,inner sep=1.5pt,color=black]{};
			\node at (3,2)[circle,fill,inner sep=1.5pt,color=black]{};
			\node at (3,3)[circle,fill,inner sep=1.5pt,color=black]{};	
			\draw[line width=1.5pt, red]  plot [smooth,tension=0.8]
			coordinates {(1,0) (2,2) (3,4)};
  %%%%%%%%%%%%%%%%%%%%%%%%%%%%%%%%%%%%%%%%%%%%%%%%%%%%%%%
	\node (A) at (0.95,1.2) {\tiny{$(-h,-h)$}};
	\node (A) at (0.95,2.2) {\tiny{$(-h,0)$}};
    \node (A) at (0.95,3.2) {\tiny{$(-h,h)$}};
   %%%%%%%%%%%%%%%%%%%%%%%%%%%%%%%%%%%%%%%%%%%%%%%%%%%%%%%
    \node (A) at (2.1,1.2) {\tiny{$(0,-h)$}};
    \node (A) at (1.95,2.2) {\tiny{$(0,0)$}};
    \node (A) at (1.95,3.2) {\tiny{$(0,h)$}};
      %%%%%%%%%%%%%%%%%%%%%%%%%%%%%%%%%%%%%%%%%%%%%%%%%%%%%%%
    \node (A) at (3.1,1.2) {\tiny{$(h,-h)$}};
    \node (A) at (2.95,2.2) {\tiny{$(h,0)$}};
    \node (A) at (2.95,3.2) {\tiny{$(h,h)$}};
		\end{tikzpicture}
	\end{subfigure}
	\caption
	{\footnotesize{One simple example for irregular points. The curve in red color is the interface curve $\Gamma=\{(x,y)\in \Omega\; :\; 2x-y=0\}$, the left of $\Gamma$ is $\Omega^{-}$ and the right of $\Gamma$ is $\Op$.}}
	\label{fig:simple:case}
\end{figure}
\end{proof}

\section{A High order compact approximation for computing $\nabla u$
\label{sec:Gradient}}

In \cref{sec:Solution}, we derived a high order compact finite difference scheme for the elliptic interface problem. After obtaining the numerical solution defined  by \cref{thm:regular} and \cref{thm:irregular}, we could locally compute the  gradient approximation  without constructing and solving a global linear system. For the convenience of the readers, in this section,  we also derive the high order compact approximation for the gradient by using the already computed numerical solution in \cref{sec:Solution}.

\subsection{Regular points\label{sec:Gradient:Regular}}
In this section, we will discuss the derivation of a compact approximation of the gradient at regular points.  The scheme is local and does not require the solution of a global linear system.
As in \cref{sec:Solution:Regular}, we choose $(x_i^*,y_j^*)=(x_i,y_j)$, i.e.,
$v_0=w_0=0$ in \eqref{base:pt}, and consider the following equation:
\be \label{stencil:theta:regular1}
\begin{split}
	h(u^{(1,0)},u^{(0,1)})\cdot (\cos(\theta),\sin(\theta))&=\sum_{k=-1}^1 \sum_{\ell=-1}^1
	C_{k,\ell}(h) u(x_i+kh,y_j+\ell h)\\
	&-
	\sum_{(m,n)\in \ind_{M-1}} f^{(m,n)}C_{f,m,n}(h)+\bo(h^{M+2}),\qquad h\to 0,
\end{split}
\ee
where $C_{k,\ell}(h)$ and $C_{f,m,n}(h)$ are to-be-determined polynomials of $h$ with degree less than $M+2$. Note that $\theta \in [0,2\pi]$ and $u(x,y)$ is defined in \eqref{u:approx:key}.

For $(\cos(\theta),\sin(\theta))=(1,0)$ with $\theta=0$ or $(0,1)$ with $\theta=\pi/2$, if the coefficients $\{C_{k,\ell}(h)\}_{k,\ell=-1,0,1}$ and $\{C_{f,m,n}(h)\}_{(m,n)\in \ind_{M-1}}$ satisfy \eqref{stencil:theta:regular1}, we readily obtain:
\[
\begin{split}
	u^{(1,0)} \quad  \mbox{or} \quad u^{(0,1)}&=\sum_{k=-1}^1 \sum_{\ell=-1}^1
	\frac{1}{h}C_{k,\ell}(h) u(x_i+kh,y_j+\ell h)\\
	&-
	\sum_{(m,n)\in \ind_{M-1}} \frac{1}{h}f^{(m,n)}C_{f,m,n}(h)+\bo(h^{M+1}),\qquad h\to 0.
\end{split}
\]
In other words, the gradient can be computed locally with an accuracy of $M+1$. Moreover, \eqref{stencil:theta:regular1} yields the same order for the approximated gradient in any direction corresponding to $\theta \in [0,2\pi]$.

As in \cref{sec:Solution:Regular}, it is straightforward to show that \eqref{stencil:theta:regular1} is equivalent to :
\be \label{stencil:theta:regular:2}
\sum_{(m,n)\in \ind_{M+1}^1}
u^{(m,n)} I_{m,n}(h)+
\sum_{(m,n)\in \ind_{M-1}} f^{(m,n)}
\left(J_{m,n}(h)-C_{f,m,n}(h)\right)
=\bo(h^{M+2}),
\ee
where
\be \label{I:theta:regular1}
\begin{split}
&I_{m,n}(h):=\sum_{k=-1}^1 \sum_{\ell=-1}^1 C_{k,\ell}(h) G_{m,n}(kh, \ell h), \quad \mbox{for} \quad m+n\ne 1,\\
&I_{m,n}(h):=\sum_{k=-1}^1 \sum_{\ell=-1}^1 C_{k,\ell}(h) G_{m,n}(kh, \ell h)-h(m,n)\cdot (\cos(\theta),\sin(\theta)), \quad \mbox{for} \quad m+n= 1,
\end{split}
\ee
and $J_{m,n}(h)$ is defined in \eqref{J:s1s2:regular}. Furthermore, \eqref{stencil:theta:regular:2} is equivalent to
\be \label{stencil:theta:regular:u}
I_{m,n}(h)=\bo(h^{M+2}), \quad h\to 0,\; \mbox{ for all }\; (m,n)\in \ind_{M+1}^1,
\ee
and
\be \label{stencil:theta:regular:f}
C_{f,m,n}(h)=J_{m,n}(h) +\bo(h^{M+2}),\qquad h\to 0, \; \mbox{ for all }\; (m,n)\in \ind_{M-1}.
\ee
We can check that the largest integer $M$ for the linear system in \eqref{stencil:theta:regular:u} with $(\cos(\theta),\sin(\theta))=(1,0)$ and $(0,1)$ to have a nontrivial solution $\{C_{k,\ell}(h)\}_{k,\ell=-1,0,1}$ is $M=3$.
One nontrivial solution $\{C_{k,\ell}(h)\}_{k,\ell=-1,0,1}$ to \eqref{stencil:theta:regular:u} with $M=3$ and $(\cos(\theta),\sin(\theta))=(1,0)$ is given by
{\tiny{
		\be\label{Ch1:gradient:M=3}
		\begin{split}
			C_{-1,-1}(h)
			& = ((-11(a^{(1, 2)})-11(a^{(2, 1)})-11(a^{(0, 3)})-11(a^{(3, 0)}))(a^{(0, 0)})^2+((22(a^{(0, 2)})+35(a^{(2, 0)})+11(a^{(1, 1)}))(a^{(0, 1)})\\
			&+(11((a^{(0, 2)})+(46/11)(a^{(2, 0)})+(a^{(1, 1)})))(a^{(1, 0)}))(a^{(0, 0)})-11(a^{(0, 1)})^3+(a^{(0, 1)})^2(a^{(1, 0)})-47(a^{(0, 1)})(a^{(1, 0)})^2\\
			&-59(a^{(1, 0)})^3)h^3+(22(((a^{(0, 2)})-(12/11)(a^{(2, 0)})+(23/11)(a^{(1, 1)}))(a^{(0, 0)})+(1/22)(a^{(0, 1)})^2-(71/22)(a^{(0, 1)})(a^{(1, 0)})\\
			&+(24/11)(a^{(1, 0)})^2))(a^{(0, 0)})h^2+2h((a^{(0, 1)})-11(a^{(1, 0)}))(a^{(0, 0)})^2+20(a^{(0, 0)})^3,
		\end{split}
		\ee
}}
{\tiny{
		\be\label{Ch2:gradient:M=3}
		\begin{split}
			C_{-1,0}(h)
			&= ((11(a^{(0, 3)})+11(a^{(2, 1)}))(a^{(0, 0)})^2+((-22(a^{(0, 2)})-35(a^{(2, 0)}))(a^{(0, 1)})-11(a^{(1, 0)})(a^{(1, 1)}))(a^{(0, 0)})+11(a^{(0, 1)})^3\\
			&-12(a^{(0, 1)})^2(a^{(1, 0)})+59(a^{(0, 1)})(a^{(1, 0)})^2)h^3-44(a^{(0, 0)})(((a^{(0, 2)})-(a^{(2, 0)})+(12/11)(a^{(1, 1)}))(a^{(0, 0)})+(1/22)(a^{(0, 1)})^2\\
			&-(24/11)(a^{(0, 1)})(a^{(1, 0)})+(1/2)(a^{(1, 0)})^2)h^2-88h(a^{(0, 0)})^2(a^{(1, 0)})+80(a^{(0, 0)})^3,	
		\end{split}
		\ee
}}
{\tiny{
		\be\label{Ch3:gradient:M=3}
		\begin{split}
			C_{-1,1}(h)
			& =22(a^{(0, 0)})((((a^{(0, 2)})-(12/11)(a^{(2, 0)})+(1/11)(a^{(1, 1)}))(a^{(0, 0)})+(1/22((a^{(0, 1)})-(a^{(1, 0)})))((a^{(0, 1)})-48(a^{(1, 0)})))h^2\\
			&+h((a^{(0, 1)})-(a^{(1, 0)}))(a^{(0, 0)})+(10/11)(a^{(0, 0)})^2),
		\end{split}
		\ee
}}
{\tiny{
		\be\label{Ch4:gradient:M=3}
		\begin{split}
			C_{0,-1}(h)
			& =((11(a^{(3, 0)})+11(a^{(1, 2)}))(a^{(0, 0)})^2+((-11(a^{(0, 2)})-46(a^{(2, 0)}))(a^{(1, 0)})-11(a^{(0, 1)})(a^{(1, 1)}))(a^{(0, 0)})+11(a^{(0, 1)})^2\\
			&\times (a^{(1, 0)})-12(a^{(0, 1)})(a^{(1, 0)})^2+59(a^{(1, 0)})^3)h^3-48(a^{(0, 0)})((a^{(0, 0)})(a^{(1, 1)})+(1/2)(a^{(0, 1)})((a^{(0, 1)})-4(a^{(1, 0)})))h^2\\
			&-44(a^{(0, 0)})^2((a^{(0, 1)})+(12/11)(a^{(1, 0)}))h+88(a^{(0, 0)})^3,
		\end{split}
		\ee
}}
%%
%%%
%%%
{\tiny{
		\be\label{Ch5:gradient:M=3}
		\begin{split}
			C_{0,0}(h)
			& = 4(a^{(0, 0)})((((a^{(2, 0)})+12(a^{(1, 1)}))(a^{(0, 0)})+6(a^{(0, 1)})^2-18(a^{(0, 1)})(a^{(1, 0)})-(37/2)(a^{(1, 0)})^2)h^2-12(a^{(0, 0)})((a^{(0, 1)})\\
			&-(19/4)(a^{(1, 0)}))h-110(a^{(0, 0)})^2),
		\end{split}
		\ee
}}
{\tiny{
		\be\label{Ch6:gradient:M=3}
		\begin{split}
			C_{0,1}(h)& = 44(a^{(0, 0)})^2(2(a^{(0, 0)})+h((a^{(0, 1)})-(12/11)(a^{(1, 0)}))),
		\end{split}
		\ee
}}
\vspace{-3mm}
{\tiny{
		\be	\label{Ch789:gradient:M=3}
		\begin{split}
			&C_{1,-1}(h) = 24(a^{(0, 0)})^3, \quad C_{1,0}(h) =96(a^{(0, 0)})^3,\\
			&C_{1,1}(h) =24(a^{(0, 0)})^2(h(a^{(0, 1)})+(a^{(0, 0)})).
		\end{split}
		\ee
}}

Similarly to \eqref{Cfmn:M=4}, we have:
\be \label{Cfmn:gradient:M=3}
C_{f,m,n}(h):=
\sum_{k=-1}^1 \sum_{\ell=-1}^1 C_{k,\ell}(h)
H_{m,n}(h), \qquad (m,n)\in \ind_2.
\ee
These observations prove the following theorem.

\begin{theorem}\label{thm:gradient:regular}
		Let   $(x_i, y_j)$ be a regular point and $\big((u_{h})_x\big)_{i,j},\big((u_{h})_y\big)_{i,j}$ be the numerical approximation of the exact gradient $u_x$ and $u_y$ at $(x_i, y_j)$.	 Then
	the following compact  approximation to the gradient of the solution of problem (\ref{Qeques1}) at $(x_i,y_j):$
	 \begin{equation}\label{stencil:gradient:regular}
		 \big((u_{h})_x\big)_{i,j}=\frac{1}{h}\sum_{k=-1}^1 \sum_{\ell=-1}^1 C_{k,\ell}(h)(u_{h})_{i+k,j+\ell}
		-\frac{1}{h}C_{f,m,n}(h),
	\end{equation}
	achieves fourth  order of accuracy for the approximation $(u_{h})_x$ at the regular point $(x_i,y_j)$,
where $(u_{h})_{i,j}$ is the numerical solution at $(x_i,y_j)$ from \cref{sec:Solution},  $\{C_{k,\ell}(h)\}_{k,\ell=-1,0,1}$ is defined in \eqref{Ch1:gradient:M=3} to \eqref{Ch789:gradient:M=3},  $C_{f,m,n}(h)$ is defined in \eqref{Cfmn:gradient:M=3}, $a^{(m,n)}:=\frac{\partial^{m+n} a}{ \partial^m x \partial^n y}(x_i,y_j)$ and $f^{(m,n)}:=\frac{\partial^{m+n} f}{ \partial^m x \partial^n y}(x_i,y_j)$.
	Furthermore, the compact finite difference scheme of fourth order of accuracy for $(u_{h})_y$ at the regular point $(x_i,y_j)$ can be obtained similarly.  The maximum order of accuracy $M+1$ for the gradient  approximation  at a regular point is four.
\end{theorem}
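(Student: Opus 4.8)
The plan is to follow the reduction already carried out in the text: after substituting the key expansion \eqref{u:approx:key} into the directional-derivative ansatz \eqref{stencil:theta:regular1}, the consistency requirement is equivalent to the algebraic system \eqref{stencil:theta:regular:u}--\eqref{stencil:theta:regular:f}. The theorem then splits into an achievability claim (order four is attained at $M=3$) and a maximality claim ($M=4$ admits no nontrivial stencil), and these call for genuinely different arguments.

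For the achievability part with $(\cos\theta,\sin\theta)=(1,0)$, I would verify directly that the explicit polynomials \eqref{Ch1:gradient:M=3}--\eqref{Ch789:gradient:M=3} solve \eqref{stencil:theta:regular:u} with $M=3$; that is, $I_{m,n}(h)=\bo(h^{5})$ for every $(m,n)\in \ind_4^1$, where $I_{1,0}$ carries the subtracted directional-derivative term of \eqref{I:theta:regular1}, and $c_{k,\ell,0}\ne 0$ for some $k,\ell$ so the stencil is nontrivial. Since each $I_{m,n}(h)$ is a polynomial in $h$ whose lowest-order term is $h^{m+n}$, this is a finite list of scalar identities in the data $a^{(m,n)}$, checkable by plugging in $G_{m,n}$ from \eqref{Gmn}. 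Defining the source coefficients $C_{f,m,n}(h)$ by \eqref{Cfmn:gradient:M=3} then settles \eqref{stencil:theta:regular:f}, so \eqref{stencil:theta:regular1} holds with remainder $\bo(h^{5})$; dividing through by $h$ costs exactly one order and gives \eqref{stencil:gradient:regular} with consistency error $\bo(h^{4})$. The companion statement for $(u_h)_y$ follows by running the identical construction with $(\cos\theta,\sin\theta)=(0,1)$, now normalizing the $h^1$-coefficient of $I_{0,1}$; solving \eqref{stencil:theta:regular:u} again (with the $x\leftrightarrow y$ roles exchanged) yields an explicit nontrivial stencil of the same order.

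The maximality claim is the crux, and I would recast it as a linear-dependence question among the constraint functionals of \eqref{stencil:theta:regular:u}. Writing each scalar equation as $\langle \phi_\alpha,(C_{k,\ell})\rangle=b_\alpha$, the gradient system differs from the homogeneous solution-scheme system only in a single slot: the functional $\phi_{\alpha_0}$ extracting the $h^1$-coefficient of $\sum_{k,\ell}C_{k,\ell}(h)G_{1,0}(kh,\ell h)$ has target $b_{\alpha_0}=1$ rather than $0$, while all other $b_\alpha=0$. A nontrivial gradient stencil at level $M$ exists precisely when $\phi_{\alpha_0}$ is linearly independent of the remaining constraint functionals; conversely, if $\phi_{\alpha_0}=\sum_{\alpha\ne\alpha_0}\lambda_\alpha\phi_\alpha$, then $\langle\phi_{\alpha_0},(C_{k,\ell})\rangle=\sum_{\alpha\ne\alpha_0}\lambda_\alpha b_\alpha=0$ for every admissible stencil, so the required value $1$ is unreachable. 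Thus maximality reduces to showing that this independence holds at $M=3$ (witnessed by the explicit solution above) but fails at $M=4$, where $\phi_{\alpha_0}$ falls into the span of the others. I would establish the failure by computing the rank of the $M=4$ constraint matrix and exhibiting the dependence coefficients $\lambda_\alpha$ as an explicit certificate, a finite symbolic computation entirely analogous to the one giving $M=6$ for the solution scheme in \cref{thm:regular}.

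The main obstacle is precisely this rank computation. Unlike achievability, maximality cannot be demonstrated by producing a stencil; one must certify that no nontrivial solution exists. The delicate point is that the naive tally of $35$ scalar equations against $54$ stencil coefficients is uninformative — the system is underdetermined yet still obstructed — so the argument must rest on the actual rank deficiency of the constraint matrix and on the inhomogeneous target lying outside its range, which is detectable only through the explicit structure of the $G_{m,n}$ and not by dimension counting alone.
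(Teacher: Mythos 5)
Your proposal is correct and takes essentially the same route as the paper: Section~4.1 reduces \eqref{stencil:theta:regular1} to the system \eqref{stencil:theta:regular:u}--\eqref{stencil:theta:regular:f}, exhibits the explicit nontrivial $M=3$ solution \eqref{Ch1:gradient:M=3}--\eqref{Ch789:gradient:M=3} (fourth order after dividing by $h$), and settles the maximality claim by the same finite symbolic check you describe, for which your Fredholm-alternative certificate is simply a cleaner packaging of what the paper leaves as ``we can check.'' The only nit is the count: for $M=4$ there are $36$ rather than $35$ scalar equations, since the eleven indices $(m,n)\in\ind_{5}^{1}$ each contribute $6-(m+n)$ equations, but this does not affect your argument.
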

%%%%%%%%%%%%%

\subsection{Irregular points\label{sec:Gradient:Irregular}}

In this section, we will discuss the derivation of the compact scheme for the  local computation of  the  gradient approximation  at irregular points.
Similarly to \cref{sec:Solution:Irregular}, in case of an irregular  point $(x_i,y_j)$,  the base point is taken to be on the interface $\Gamma$ i.e.  $(x^*_i,y^*_j)\in \Gamma \cap (x_i-h,x_i+h)\times (y_j-h,y_j+h)$. We assume that
\eqref{xiyj}, \eqref{aufpm}, \eqref{g1g2pm} and \eqref{u:approx:ir:key} hold. To simplify the calculation, we also assume that $(x_i,y_j)\in \Op$.

Let us consider that following equation:
{\small{\be
		 \begin{split}\label{stencil:theta:irregular}
			h\nabla \big(u_+ (x_i^*+v_0h,y_j^*&+w_0h)\big)\cdot (\cos(\theta),\sin(\theta))=\sum_{k=-1}^1 \sum_{\ell=-1}^1C_{k,\ell}(h) u(x_i+kh,y_j+\ell h)\\
			&-\sum_{(m,n)\in \ind_{M-1}} C_{f_{+},m,n}(h) f_+^{(m,n)}-\sum_{(m,n)\in \ind_{M-1}} C_{f_{-},m,n}(h)f_-^{(m,n)}\\
			&-\sum_{(m,n)\in \ind_{M+1}} C_{g_1,m,n}(h) g_1^{(m,n)}-\sum_{(m,n)\in \ind_{M}} C_{g_2,m,n}(h) g_2^{(m,n)}+\bo(h^{M+2}),
		\end{split}
		\ee
}}
where  $h\to 0$, $\theta \in [0,2\pi]$, $C_{k,\ell}(h), C_{f_{\pm},m,n}(h)$,
$C_{g_1,m,n}(h)$ and $C_{g_2,m,n}(h)$
are to-be-determined polynomials of $h$ having degrees less than $M+2$.

Similarly to the discussion in \cref{sec:Gradient:Regular}, for $(\cos(\theta),\sin(\theta))=(1,0)$ and (0,1), it can be shown that \eqref{stencil:theta:irregular} has an accuracy of order $M+1$ for the gradient approximation.

According to \eqref{uxy:approx:ir:key},
\[
\begin{split}
h	\nabla \big(u_+ (x_i^*+v_0h,y_j^*+w_0h)\big)\cdot (\cos(\theta),\sin(\theta))
&=\sum_{(m,n)\in \ind_{M+1}^1}
u_+^{(m,n)}I^{+,\theta}_{m,n}(h)\\
&+\sum_{(m,n)\in \ind_{M-1}}
f_+ ^{(m,n)} J^{+,\theta}_{m,n}(h)+\bo(h^{M+2}),
\end{split}
\]
%%%
where
\[
\begin{split}
&I^{+,\theta}_{m,n}(h)=h\nabla\big(G^{+}_{m,n}(v_0h,w_0h)\big) \cdot (\cos(\theta),\sin(\theta)),\ \ J^{+,\theta}_{m,n}(h)=h\nabla\big( H^{+}_{m,n}(v_0h,w_0h)\big) \cdot (\cos(\theta),\sin(\theta)).
\end{split}
\]
%%%
Similarly to \cref{sec:Solution:Irregular}, we also have:
{\small{
\[
\begin{split}
	&\sum_{k=-1}^1 \sum_{\ell=-1}^1
	C_{k,\ell}(h) u(x_i+kh,y_j+\ell h)-h\nabla \big(u_+ (v_0h+x_i^*,w_0h+y_j^*)\big)\cdot (\cos(\theta),\sin(\theta))\\
	&=\sum_{(k,\ell)\in d_{i,j}^+}
	C_{k,\ell}(h) u(x_i^*+(v_0+k)h,y_j^*+(w_0+\ell) h)
	+\sum_{(k,\ell)\in d_{i,j}^-}
	C_{k,\ell}(h) u(x_i^*+(v_0+k)h,y_j^*+(w_0+\ell) h)\\
	&-h\nabla \big(u_+ (x_i^*+v_0h,y_j^*+w_0h)\big)\cdot (\cos(\theta),\sin(\theta)),
\end{split}
\]
}}
and
\[
\begin{split}
	&	\sum_{k=-1}^1 \sum_{\ell=-1}^1
	C_{k,\ell}(h) u(x_i+kh,y_j+\ell h)-h\nabla \big(u_+ (x_i^*+v_0h,y_j^*+w_0h)\big)\cdot (\cos(\theta),\sin(\theta))\\
	&	=\sum_{(m,n)\in \ind_{M+1}^1}
	u_+^{(m,n)} I_{m,n}(h)+
	\sum_{(m,n)\in \ind_{M-1} } f_+^{(m,n)}J^+_{m,n}(h)\\
	&+\sum_{(m,n)\in \ind_{M-1}} f_-^{(m,n)}J^-_{m,n}(h)
	+\sum_{(m,n)\in \ind_{M+1}} g_1^{(m,n)}J^{g_1}_{m,n}(h)
	+\sum_{(m,n)\in \ind_{M}} g_2^{(m,n)}J^{g_2}_{m,n}(h),
\end{split}
\]
where
%
%%%
\begin{align}
	& \hspace{3cm} I_{m,n}(h):=I^+_{m,n}(h)+J^{u_+,T}_{m,n}(h)-I^{+,\theta}_{m,n}(h),\label{IJ:ir1:theta}\\
	&J^{-}_{m,n}(h):=
	J_{m,n}^{-,0}(h)+J^{-,T}_{m,n}(h), \quad  J^{+}_{m,n}(h):=
	J_{m,n}^{+,0}(h)+J^{+,T}_{m,n}(h)-J^{+,\theta}_{m,n}(h),\label{IJ:ir2:theta}
\end{align}
and $I^+_{m,n}(h)$, $J_{m,n}^{\pm,0}(h)$, $J^{u_+,T}_{m,n}(h)$, $J^{\pm,T}_{m,n}(h)$, $J^{g_1}_{m,n}(h)$ and $J^{g_2}_{m,n}(h)$ are defined in \eqref{IJpm} and \eqref{JTno}.
%%%
%
Due to the same arguments as the ones provided in \cref{sec:Solution:Irregular}, \eqref{stencil:theta:irregular} is equivalent to:
\begin{align}
	&I_{m,n}(h)=\bo(h^{M+2}),  \qquad h\to 0, \; \mbox{ for all }\; (m,n)\in \ind_{M+1}^1, \label{stencil:regular:u:ir:theta}\\
	&C_{f_\pm,m,n}(h):=J_{m,n}^\pm(h), \quad (m,n)\in \ind_{M-1},\label{coeff:f:ir:gradient:theta}\\
    &C_{g_1,m,n}(h):=J^{g_1}_{m,n}(h),\quad (m,n)\in \ind_{M+1}\quad
\mbox{and}\quad
C_{g_2,m,n}(h)=J^{g_2}_{m,n}(h),\quad (m,n)\in \ind_M.\label{coeff:g:ir:gradient:theta}
\end{align}
The following theorem summarizes the results above that guarantee the third order of accuracy of  the  gradient approximation at irregular points.

\begin{theorem}\label{thm:gradient:irregular}
		Let   $(x_i, y_j)$ be an irregular point and $\big((u_{h})_x\big)_{i,j},\big((u_{h})_y\big)_{i,j}$ be the numerical approximation of the exact gradient $u_x$ and $u_y$ at $(x_i, y_j)$.	 Then
the following compact  approximation to the gradient of the solution of problem (\ref{Qeques1}) at $(x_i,y_j):$
\begin{equation}\label{stencil:ux:irregular}
	\begin{split}
		\big((u_{h})_x\big)_{i,j}&=\sum_{k=-1}^1 \sum_{\ell=-1}^1 \frac{1}{h}C_{k,\ell}(h)(u_{h})_{i+k,j+\ell}-\sum_{(m,n)\in \ind_{1} } \frac{1}{h}f_+^{(m,n)}J^+_{m,n}(h)-\sum_{(m,n)\in \ind_{1}} \frac{1}{h}f_-^{(m,n)}J^-_{m,n}(h)\\
		&-\sum_{(m,n)\in \ind_{3}} \frac{1}{h}g_1^{(m,n)}J^{g_1}_{m,n}(h)-\sum_{(m,n)\in \ind_{2}} \frac{1}{h}g_2^{(m,n)}J^{g_2}_{m,n}(h) \ \mbox{ with }\   (\cos(\theta),\sin(\theta))=(1,0),
	\end{split}
\end{equation}
\begin{equation}\label{stencil:uy:irregular}
	\begin{split}
		\big((u_{h})_y\big)_{i,j}&=\sum_{k=-1}^1 \sum_{\ell=-1}^1 \frac{1}{h}C_{k,\ell}(h)(u_{h})_{i+k,j+\ell}-\sum_{(m,n)\in \ind_{1} } \frac{1}{h}f_+^{(m,n)}J^+_{m,n}(h)-\sum_{(m,n)\in \ind_{1}} \frac{1}{h}f_-^{(m,n)}J^-_{m,n}(h)\\
		&-\sum_{(m,n)\in \ind_{3}} \frac{1}{h}g_1^{(m,n)}J^{g_1}_{m,n}(h)-\sum_{(m,n)\in \ind_{2}} \frac{1}{h}g_2^{(m,n)}J^{g_2}_{m,n}(h) \ \mbox{ with } \ (\cos(\theta),\sin(\theta))=(0,1),
	\end{split}
\end{equation}
	achieves third  order of accuracy for the gradient approximation $(u_{h})_x$ and $(u_{h})_y$ at the irregular point $(x_i,y_j)$,
	where $(u_{h})_{i,j}$ is the numerical solution at $(x_i,y_j)$ from \cref{sec:Solution}, $\{C_{k,\ell}(h)\}_{k,\ell=-1,0,1}$ is the nontrivial solution of  \eqref{stencil:regular:u:ir:theta} with $M=2$, $(\cos(\theta),\sin(\theta))=(1,0)$ or $(0,1)$, $J^\pm_{m,n}, (m,n)\in \ind_1$, $J^{g_1}_{m,n}, (m,n)\in \ind_3$, and $J^{g_2}_{m,n}, (m,n)\in \ind_2$ are given in  \eqref{IJ:ir2:theta} and \eqref{JTno}.
\end{theorem}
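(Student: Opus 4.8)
The plan is to assemble the identities collected in this subsection into a consistency statement for the prescriptions \eqref{stencil:ux:irregular} and \eqref{stencil:uy:irregular}. The natural starting point is the ansatz \eqref{stencil:theta:irregular}, whose left-hand side is $h$ times the directional derivative $\nabla u_+\cdot(\cos\theta,\sin\theta)$ of the exact solution evaluated at the grid point $(x_i,y_j)\in\Op$. First I would substitute the local expansion \eqref{u:approx:ir:key} into each of the nine stencil values $u(x_i+kh,y_j+\ell h)$, splitting the double sum according to whether $(k,\ell)\in d_{i,j}^+$ or $(k,\ell)\in d_{i,j}^-$, exactly as in \cref{sec:Solution:Irregular}. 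This rewrites the stencil combination in terms of the two families $\{u_\pm^{(m,n)}\}$ and the data $\{f_\pm^{(m,n)}\}$ through the quantities $I^\pm_{m,n}(h)$ and $J^{\pm,0}_{m,n}(h)$ of \eqref{IJpm}, while \eqref{uxy:approx:ir:key} expands the left-hand side into the terms $I^{+,\theta}_{m,n}(h)$ and $J^{+,\theta}_{m,n}(h)$.

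The second step is to eliminate the unknowns $u_-^{(m,n)}$. For this I would invoke the transmission identities \eqref{tranmiss:cond} of \cref{thm:interface}, which express every $u_-^{(m',n')}$ as a linear combination of $u_+^{(m,n)}$, $f_\pm^{(m,n)}$, $g_1^{(m,n)}$ and $g_2^{(m,n)}$ with coefficients depending only on the interface data $r^{(k)}(0),s^{(k)}(0)$ and on $a_\pm^{(m,n)}$. After this substitution the whole relation becomes a single expansion in the \emph{independent} unknowns $\{u_+^{(m,n)}:(m,n)\in\ind_{M+1}^1\}$ together with the available quantities, the coefficients being exactly the $I_{m,n}(h)$, $J^\pm_{m,n}(h)$, $J^{g_1}_{m,n}(h)$ and $J^{g_2}_{m,n}(h)$ of \eqref{IJ:ir1:theta}, \eqref{IJ:ir2:theta} and \eqref{JTno}. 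Matching orders in $h$ then shows that \eqref{stencil:theta:irregular} holds up to $\bo(h^{M+2})$ precisely when the homogeneous conditions \eqref{stencil:regular:u:ir:theta} on $\{C_{k,\ell}(h)\}_{k,\ell=-1,0,1}$ are met, while \eqref{coeff:f:ir:gradient:theta}--\eqref{coeff:g:ir:gradient:theta} merely \emph{define} the remaining coefficients once the stencil weights are fixed.

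It then remains to produce a nontrivial solution of \eqref{stencil:regular:u:ir:theta} at $M=2$, and to pass from the consistency residual to the stated accuracy. For the former I would carry out the same symbolic elimination used for the solution scheme in \cref{thm:irregular} and \cref{thm:Max:Order}; the only structural change is the extra term $-I^{+,\theta}_{m,n}(h)$ in \eqref{IJ:ir1:theta}, which perturbs only the coefficients indexed by $m+n=1$. Once admissible weights $\{C_{k,\ell}(h)\}$ are in hand, setting $\theta=0$ gives $(\cos\theta,\sin\theta)=(1,0)$, so the left-hand side of \eqref{stencil:theta:irregular} is $h\,u_x(x_i,y_j)$; dividing the whole identity by $h$ turns the $\bo(h^{M+2})=\bo(h^4)$ residual into an $\bo(h^{3})$ error and yields \eqref{stencil:ux:irregular}. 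The choice $\theta=\pi/2$ gives $(0,1)$ and, identically, \eqref{stencil:uy:irregular}. The one-order loss is exactly the factor $1/h$, as in the regular case of \cref{thm:gradient:regular}. Finally, replacing the exact nodal values by the already computed solution $(u_h)_{i+k,j+\ell}$ from \cref{sec:Solution} produces the explicit local formulas asserted in the theorem.

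The hard part will be the solvability check for \eqref{stencil:regular:u:ir:theta} at $M=2$: because inserting the directional-derivative constraint lowered the maximal attainable order in the regular case (from $M=6$ for the solution to $M=3$ for the gradient in \cref{thm:gradient:regular}), one cannot take for granted that the interface-modified system still has a nontrivial null space at $M=2$, and this must be confirmed by explicit computation, uniformly over admissible base-point positions $(v_0,w_0)$ and interface orientations. Everything else parallels \cref{sec:Solution:Irregular} and \cref{sec:Gradient:Regular} and is routine bookkeeping.
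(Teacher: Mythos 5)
Your proposal follows the paper's own derivation step for step: expand the stencil values via \eqref{u:approx:ir:key} and the directional derivative via \eqref{uxy:approx:ir:key}, split the nine-point sum over $d_{i,j}^{+}$ and $d_{i,j}^{-}$, eliminate the $u_-^{(m,n)}$ through the transmission identities of \cref{thm:interface}, reduce \eqref{stencil:theta:irregular} to the system \eqref{stencil:regular:u:ir:theta}--\eqref{coeff:g:ir:gradient:theta}, confirm nontrivial solvability at $M=2$ by explicit computation, and divide by $h$ to convert the $\bo(h^{M+2})=\bo(h^{4})$ consistency residual into the stated $\bo(h^{3})$ gradient error for $\theta=0$ and $\theta=\pi/2$. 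The only inaccuracy is the side remark that the extra term $-I^{+,\theta}_{m,n}(h)$ in \eqref{IJ:ir1:theta} perturbs only the coefficients with $m+n=1$: unlike the regular case, at an irregular point the base point is shifted (so $v_0,w_0$ need not vanish) and $h\nabla\big(G^{+}_{m,n}(v_0h,w_0h)\big)\cdot(\cos\theta,\sin\theta)$ is generically nonzero for every $(m,n)$; this does not damage your argument, since you defer the $M=2$ solvability check to explicit computation in any case.
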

%%%%%%%%%%%%%%%%%%%%%%%%%%%%%%%%%%%%%%%%%%%%%%%%%%%%%%%%%%%%%%%%%%

%%%%%%%%%%%%%%%%%%%%%%%%%%%%%%%%%%%%%%%%%%%%%%%%%%%%%%%%%%%%%%%%%
\section{Numerical experiments}\label{sec:Numeri}

Let $\Omega=(l_1,l_2)\times(l_3,l_4)$ with
$l_4-l_3=N_0(l_2-l_1)$ for some positive integer $N_0$. For a given $J\in \NN$, we define
$h:=(l_2-l_1)/N_1$ with $N_1:=2^J$ and let
$x_i=l_1+ih$ and
$y_j=l_3+jh$ for $i=1,2,\dots,N_1-1$ and $j=1,2,\dots,N_2-1$ with $N_2:=N_0N_1$ and $N:=(N_1-1)(N_2-1)$.
%For the numerical approximated solutions, \cref{thm:regular} achieves fourth order accuracy  at regular points and \cref{thm:gradient:regular} achieves third order accuracy  at irregular points.
%For the numerical approximated gradients, \cref{thm:irregular} achieves fourth order accuracy  at regular points and \cref{thm:gradient:irregular} achieves third order accuracy  at irregular points.
%In order to understand the effects of third accuracy order at the irregular points for the whole domain and check the numerical accuracy order global and local, we define that
%%%
%
%%
Consider the following sets of grid points:
%\be \label{Regular:Set}
\begin{align*}
&\ind_{R}:=\{(i,j): \; 1\le i \le N_1-1,
\; 1\le j \le N_2-1 \mbox{ and } (x_i,y_j)\mbox{ is a regular point} \},\\
%
%%
%%
%\be \label{Irregular:Set}
&\ind_{I}:=\{(i,j): \; 1\le i \le N_1-1,
\; 1\le j \le N_2-1 \mbox{ and } (x_i,y_j)\mbox{ is an irregular point} \},\\
%
%%
%%
%\be \label{IandR:Set}
&\ind_{\Omega}:=\{(i,j): \; 1\le i \le N_1-1 \mbox{ and }
\; 1\le j \le N_2-1  \}.
\end{align*}
Let
$u$ be the exact solution of \eqref{Qeques1} and $(u_{h})_{i,j}$ be its numerical approximation at $(x_i, y_j)$ on a grid with a  mesh size $h$.
Consider the following approximation of the $L^2$ norm of a given function $f$ :
%%%
\[
\|f\|^2_{2,\Omega,h}:=\int_{\Omega}|f(x,y)|^2 dxdy\approx h^2\sum_{(i,j)\in \ind_{\Omega}} |f(x_i,y_j)|^2 .
\]
If the exact solution is available, the accuracy of the scheme is verified by the relative error
$\frac{\|u_{h}-u\|_{2,\ind}}{\|u\|_{2,\ind,h}}$, where
\[
\begin{split}
&\|u_{h}-u\|_{2,\ind}^2:=h^2\sum_{(i,j)\in \ind} \left((u_h)_{i,j}-u(x_i,y_j)\right)^2, \qquad \|u\|_{2,\ind,h}^2:=h^2\sum_{(i,j)\in \ind} \left(u(x_i,y_j)\right)^2,\\
%&\mbox{order}=\log_2 \left(\tfrac{\|u_{h}-u\|_{2,\ind}
%/\|u\|_{2,\ind,h}}{\|u_{h/2}-u\|_{2,\ind}/\|u\|_{2,\ind,h/2}} \right),
\end{split}
\]
and compute the order of convergence as follows:
\[
\mbox{order}=\log_2 \left(\tfrac{\|u_{h}-u\|_{2,\ind}/\|u\|_{2,\ind,h}}{\|u_{h/2}-u\|_{2,\ind}/\|u\|_{2,\ind,h/2}} \right),
\]
with $\ind$=$\ind_{R}$, $\ind_{I}$, or $\ind_{\Omega}$.
Otherwise, we quantify the error by
 $\|u_{h}-u_{h/2}\|_{2,\ind}$, where:
 \[
 \|u_{h}-u_{h/2}\|_{2,\ind}^2:= h^2\sum_{(i,j)\in \ind} \left((u_{h})_{i,j}-(u_{h/2})_{2i,2j}\right)^2, 
 \]
 and compute the order of convergence as follows:
 \[
\mbox{order}=\log_2 \left(\tfrac{\|u_{h}-u_{h/2}\|_{2,\ind}}{\|u_{h/2}-u_{h/4}\|_{2,\ind}} \right),
 \]
with $\ind$=$\ind_{\Omega}$.
Let
$(u_x(x,y),u_y(x,y))$ be the exact gradient of the solution of problem \eqref{Qeques1} and $\left(((u_{h})_x)_{i,j},((u_{h})_y)_{i,j}\right)$ be its numerical approximation at $(x_i, y_j)$ using the mesh size $h$.
If the exact solution $u$ is available, the convergence rate of the numerical approximation of the gradient is verified by the relative error
$\frac{|u_{h}-u|_{H^1,\ind}}{|u|_{H^1,\ind,h}}$, where
\begin{align*}
&|u_{h}-u|_{H^1,\ind}^2:= h^2\sum_{(i,j)\in \ind} \left(\big((u_h)_x\big)_{i,j}-u_x(x_i,y_j)\right)^2
+\left(\big((u_h)_y\big)_{i,j}-u_y(x_i,y_j)\right)^2,
\\
&|u|_{H^1,\ind,h}^2:=h^2 \sum_{(i,j)\in \ind} \left(u_x(x_i,y_j)\right)^2+ \left(u_y(x_i,y_j)\right)^2, \quad \mbox{order}=\log_2 \left(\tfrac{|u_{h}-u|_{H^1,\ind}/|u|_{H^1,\ind,h}}
{|u_{h/2}-u|_{H^1,\ind}/|u|_{H^1,\ind,h/2}} \right),
\end{align*}
with $\ind$=$\ind_{R}$, $\ind_{I}$ and $\ind_{\Omega}$.
If it is not, we  quantify the error by
 $|u_{h}-u_{h/2}|_{H^1,\ind}$, where
%
%%%
%%
%%
%%
%
%%
 %%
\[
\begin{split}
&|u_{h}-u_{h/2}|_{H^1,\ind}^2:= h^2\sum_{(i,j)\in \ind} \Big(\big((u_h)_x\big)_{i,j}-\big((u_{h/2})_x\big)_{2i,2j}
\Big)^2+\Big(\big((u_h)_y\big)_{i,j}-\big((u_{h/2})_y\big)_{2i,2j}\Big)^2,\\
&\mbox{order}=\log_2 \left(\tfrac{|u_{h}-u_{h/2}|_{H^1,\ind}}{|u_{h/2}-u_{h/4}|_{H^1,\ind}} \right),
\end{split}
\]
with $\ind$=$\ind_{\Omega}$.
 Since the flux $(au_x,au_y)$ represents the velocity of the fluid flow through a porous medium, we also provide the relative error   $\frac{|u_{h}-u|_{V,\ind}}{|u|_{V,\ind,h}}$ for the velocity
  if the exact solution $u$ is available, where
   %%
   %%
   %%%%%%%%%%%%%%%%%%%%%%%%%%%%%%%%%%%%%%%%%%%%%%%%%%
   %%%%%%%%%%%%%%%%%%%%%%%%%%%%%%%%%%%%%%%%%%%%%%%%%%
   \[
   |u_{h}-u|_{V,\ind}^2:= h^2\sum_{(i,j)\in \ind} a^2(x_i,y_j)\Big(\Big(\big((u_h)_x\big)_{i,j}
   -u_x(x_i,y_j)\Big)^2+\Big(\big((u_h)_y\big)_{i,j}
   -u_y(x_i,y_j)\Big)^2\Big),
   \]
   {\small{
   \[|u|_{V,\ind,h}^2:=h^2 \sum_{(i,j)\in \ind} a^2(x_i,y_j)\left(\left(u_x(x_i,y_j)\right)^2+ \left(u_y(x_i,y_j)\right)^2\right), \ \mbox{order}=\log_2 \left(\tfrac{|u_{h}-u|_{V,\ind}/|u|_{V,\ind,h}}
   {|u_{h/2}-u|_{V,\ind}/|u|_{V,\ind,h/2}} \right),\]
   }}
with $\ind$=$\ind_{R}$, $\ind_{I}$ and $\ind_{\Omega}$.
If it is not, we  quantify the error by
   $|u_{h}-u_{h/2}|_{V,\ind}$, where
   %
   %%%
   {\small{
\begin{align*}
   	&|u_{h}-u_{h/2}|_{V,\ind}^2:= h^2\sum_{(i,j)\in \ind} a^2(x_i,y_j)\Big(\Big(\big((u_h)_x\big)_{i,j}-\big((u_{h/2})_x\big)_{2i,2j}\Big)^2
   +\Big(\big((u_h)_y\big)_{i,j}-\big((u_{h/2})_y\big)_{2i,2j}\Big)^2\Big),\\
   	&\mbox{order}=\log_2 \left(\tfrac{|u_{h}-u_{h/2}|_{V,\ind}}{|u_{h/2}-u_{h/4}|_{V,\ind}} \right),
\end{align*}
}}
    with $\ind$=$\ind_{\Omega}$.
   In addition, $\kappa$ denotes the condition number of the coefficient matrix.

\subsection{Numerical examples with $u$ known and $\Gamma \cap \partial \Omega=\emptyset$}
%%%
In this subsection, we provide numerical results of five test problems with an available exact solution $u$ of \eqref{Qeques1}. 
%The interface curve $\Gamma$ does not touch the boundary of $\Omega$.
%%

%%%%%%%%%%%%%%%%%%%%%%%%%%%%%%%%%%%%%%%%%%%%%%%%%%%%%%%%%%%%%%%%%%%%%%%%%%%%%%%%%%%%%%%%%%%%%%%%%%%%%
%%
%%
\begin{example}\label{Drafex2}
	\normalfont
	Let $\Omega=(-3,3)^2$ and
	the interface curve be given by
	$\Gamma:=\{(x,y)\in \Omega \; :\; \psi(x,y)=0\}$ with
	$\psi (x,y)=x^4+2y^4-2$. Note that $\Gamma \cap \partial \Omega=\emptyset$, the coefficient $a$ and	the exact solution $u$ of \eqref{Qeques1} are given by
	\begin{align*}
		 &a_{+}=a\chi_{\Op}=\frac{2+\cos(x)\cos(y)}{10},
		\qquad a_{-}=a\chi_{\Om}=10(2+\cos(x)\cos(y)),\\
		 &u_{+}=u\chi_{\Op}=10\sin(3.5x)(x^4+2y^4-2),
		\qquad u_{-}=u\chi_{\Om}=\frac{\sin(3.5x)(x^4+2y^4-2)}{10}+100.
	\end{align*}
	All the functions $f,g_1,g_2,g$ in \eqref{Qeques1} can be obtained by plugging the above coefficient and exact solution into \eqref{Qeques1}. In particular,
	$g_1=-100$ and $g_2=0$.
	The numerical results are presented in \cref{table:QSp2} and \cref{fig:figure2}.
\end{example}

\begin{table}[h!]
	\caption{\tiny{Performance in \cref{Drafex2}  of the proposed high order compact finite difference scheme in \cref{thm:regular,thm:gradient:regular,thm:irregular,thm:gradient:irregular} on uniform Cartesian meshes with $h=2^{-J}\times6$. $\kappa$ is the condition number of the coefficient matrix.}}
	\centering
	\setlength{\tabcolsep}{2mm}{
		\begin{tabular}{c|c|c|c|c|c|c|c}
			\hline
$J$
& $\frac{\|u_{h}-u\|_{2,\ind_{\Omega}}}{\|u\|_{2,\ind_{\Omega},h}}$

&order &$\frac{|u_{h}-u|_{H^1,\ind_{\Omega}}}{|u|_{H^1,\ind_{\Omega},h}}$

&order &  $\frac{|u_{h}-u|_{V,\ind_{\Omega}}}{|u|_{V,\ind_{\Omega},h}}$

&order &  $\kappa$ \\
\hline
			3    &2.4313E+00    &0    &2.8246E+00    &0    &3.3445E+02    &0    &1.6573E+04\\
			4    &1.0232E-01    &4.571    &5.8051E-02    &5.605    &2.0212E-01    &10.692    &4.9143E+06\\
			5    &5.1329E-03    &4.317    &3.9490E-03    &3.878    &1.2196E-02    &4.051    &1.2735E+05\\
			6    &2.3932E-04    &4.423    &2.7211E-04    &3.859    &1.3618E-03    &3.163    &8.4325E+05\\
			7    &1.9677E-05    &3.604    &2.6162E-05    &3.379    &1.4113E-04    &3.270    &4.8504E+06\\
			8    &1.0251E-06    &4.263    &1.8239E-06    &3.842    &1.4893E-05    &3.244    &9.0675E+06\\
			\hline
			
			 %%%%%%%%%%%%%%%%%%%%%%%%%%%%%%%%%%%%%%%%%%%%%%%%%%%%%%%%%%%%%%%%%%%%%%%%%%%%%%%%%%%%%%%%%%%%%%%%%%%%%%%%%	 

			$J$
			& $\frac{\|u_{h}-u\|_{2,\ind_{R}}}{\|u\|_{2,\ind_{R},h}}$
			
			&order &$\frac{|u_{h}-u|_{H^1,\ind_{R}}}{|u|_{H^1,\ind_{R},h}}$

			&order &  $\frac{|u_{h}-u|_{V,\ind_{R}}}{|u|_{V,\ind_{R},h}}$
			
			&order &  $\kappa$  \\
			\hline
			3    &8.5123E-01    &0    &1.3474E+00    &0    &3.4272E+01    &0    &1.6573E+04\\
			4    &7.7318E-02    &3.461    &3.2533E-02    &5.372    &1.4823E-01    &7.853    &4.9143E+06\\
			5    &4.4903E-03    &4.106    &1.9071E-03    &4.092    &9.8595E-03    &3.910    &1.2735E+05\\
			6    &2.2137E-04    &4.342    &1.2998E-04    &3.875    &1.1574E-03    &3.091    &8.4325E+05\\
			7    &1.8978E-05    &3.544    &9.6595E-06    &3.750    &1.3093E-04    &3.144    &4.8504E+06\\
			8    &1.0049E-06    &4.239    &5.9421E-07    &4.023    &1.4082E-05    &3.217    &9.0675E+06\\
			\hline
			$J$
			& $\frac{\|u_{h}-u\|_{2,\ind_{I}}}{\|u\|_{2,\ind_{I},h}}$
			
			&order &$\frac{|u_{h}-u|_{H^1,\ind_{I}}}{|u|_{H^1,\ind_{I},h}}$

			&order &  $\frac{|u_{h}-u|_{V,\ind_{I}}}{|u|_{V,\ind_{I},h}}$
			
			&order   &  $\kappa$ \\
			\hline
			3    &1.2501E+01    &0    &1.5013E+01    &0    &1.7014E+03    &0    &1.6573E+04\\
			4    &7.1859E-01    &4.121    &2.2993E+00    &2.707    &5.5292E+00    &8.265    &4.9143E+06\\
			5    &4.4345E-02    &4.018    &2.9401E-01    &2.967    &5.2016E-01    &3.410    &1.2735E+05\\
			6    &2.5440E-03    &4.124    &4.3955E-02    &2.742    &1.0321E-01    &2.333    &8.4325E+05\\
			7    &2.1169E-04    &3.587    &6.9868E-03    &2.653    &1.0925E-02    &3.240    &4.8504E+06\\
			8    &1.1939E-05    &4.148    &8.0020E-04    &3.126    &1.5374E-03    &2.829    &9.0675E+06\\
			\hline
			
			 %%%%%%%%%%%%%%%%%%%%%%%%%%%%%%%%%%%%%%%%%%%%%%%%%%%%%%%%%%%%%%%%%%%%%%%%%%%%%%%%%%%%%%%%%%%%%%%%		 
	\end{tabular}}
	\label{table:QSp2}
\end{table}

\begin{remark} \normalfont
	(i)	For $u_h$, our proposed scheme  achieves third order at irregular points and fourth order at regular points respectively, while note that $u_{h}$ is solved globally. Thus, from \cref{table:QSp2}, we observe that the numerical orders for $\frac{\|u_{h}-u\|_{2,\ind_{\Omega}}}{\|u\|_{2,\ind_{\Omega},h}}$ ,$\frac{\|u_{h}-u\|_{2,\ind_{R}}}{\|u\|_{2,\ind_{R},h}}$  and $\frac{\|u_{h}-u\|_{2,\ind_{I}}}{\|u\|_{2,\ind_{I},h}}$ are all concentrated around $4$.\\
(ii) For $\nabla u_h$, our proposed scheme also achieves third order at irregular points and fourth order at regular points and $\nabla u_{h}$ is obtained locally.  Thus we observe that the numerical orders for $\frac{|u_{h}-u|_{H^1,\ind_{\Omega}}}{|u|_{H^1,\ind_{\Omega},h}}$ and $\frac{|u_{h}-u|_{H^1,\ind_{R}}}{|u|_{H^1,\ind_{R},h}}$ are both concentrated around $4$, while  the numerical orders for $\frac{|u_{h}-u|_{H^1,\ind_{I}}}{|u|_{H^1,\ind_{I},h}}$ are concentrated around $3$.
\end{remark}

\begin{figure}[htbp]
	\centering
	\begin{subfigure}[b]{0.3\textwidth}
		 \includegraphics[width=5.7cm,height=4.cm]{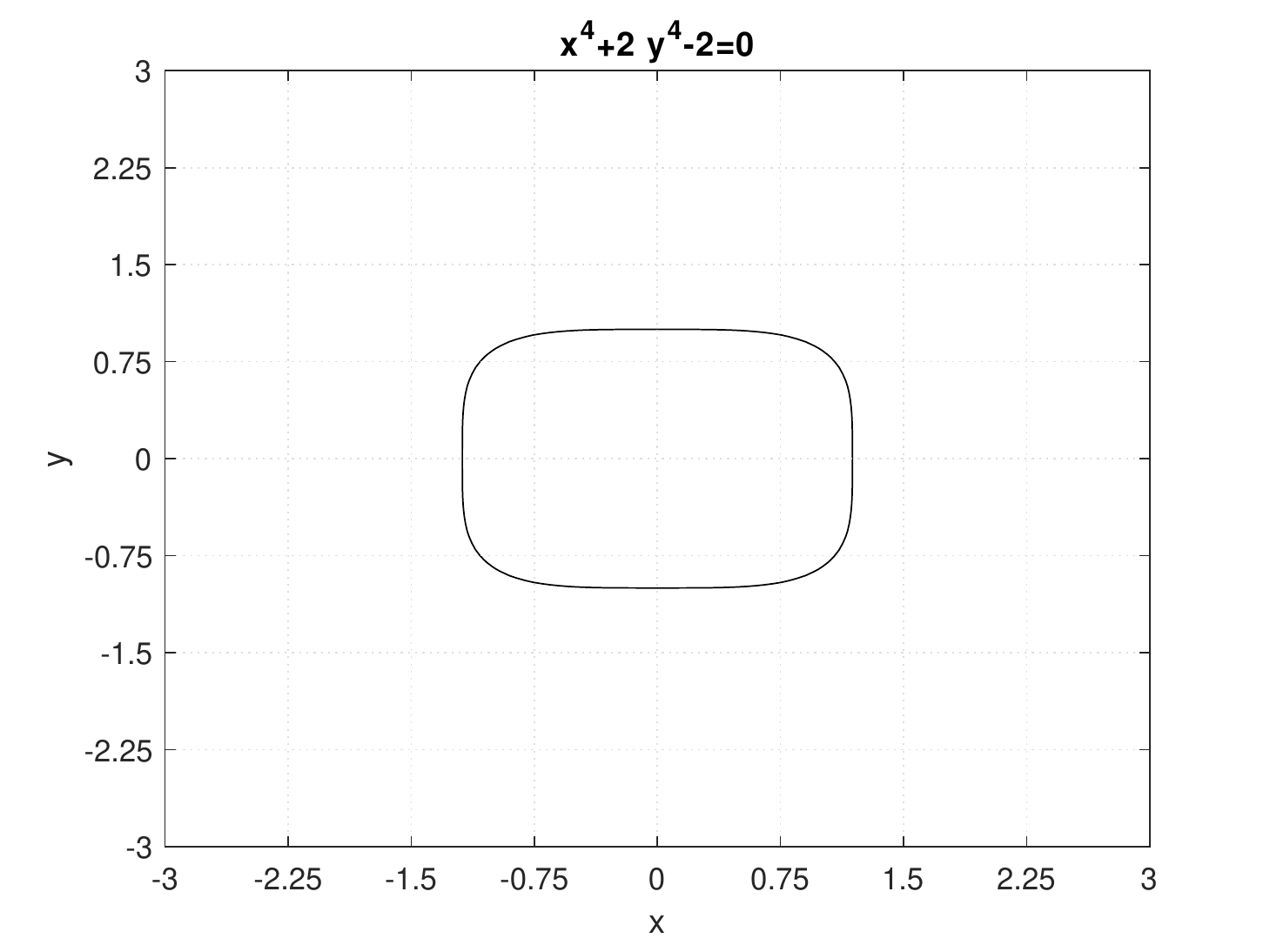}
	\end{subfigure}
	\begin{subfigure}[b]{0.3\textwidth}
		 \includegraphics[width=5.7cm,height=4.5cm]{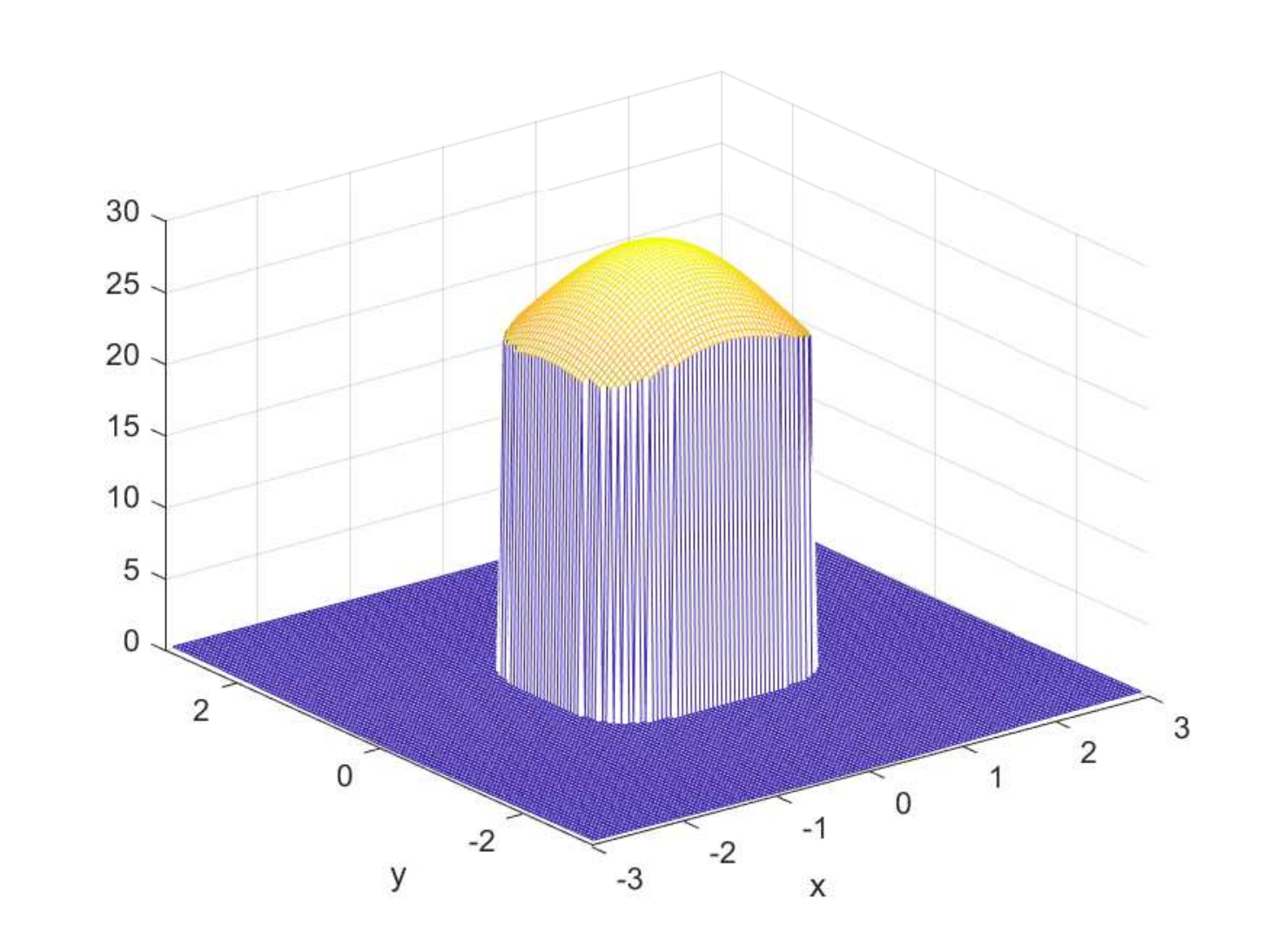}
	\end{subfigure}
	\begin{subfigure}[b]{0.3\textwidth}
		 \includegraphics[width=5.7cm,height=4.5cm]{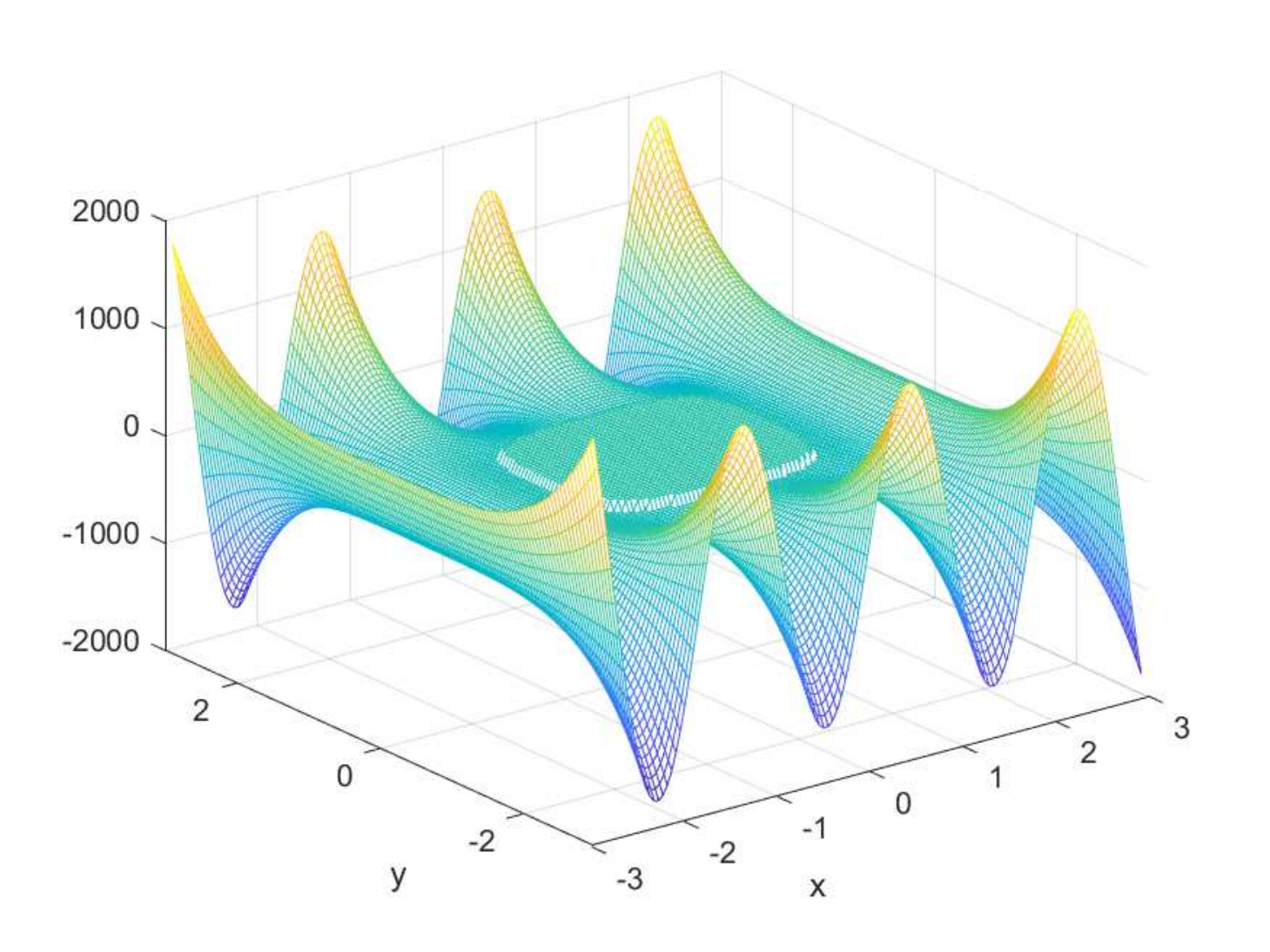}
	\end{subfigure}
	\begin{subfigure}[b]{0.3\textwidth}
		 \includegraphics[width=5.7cm,height=4.5cm]{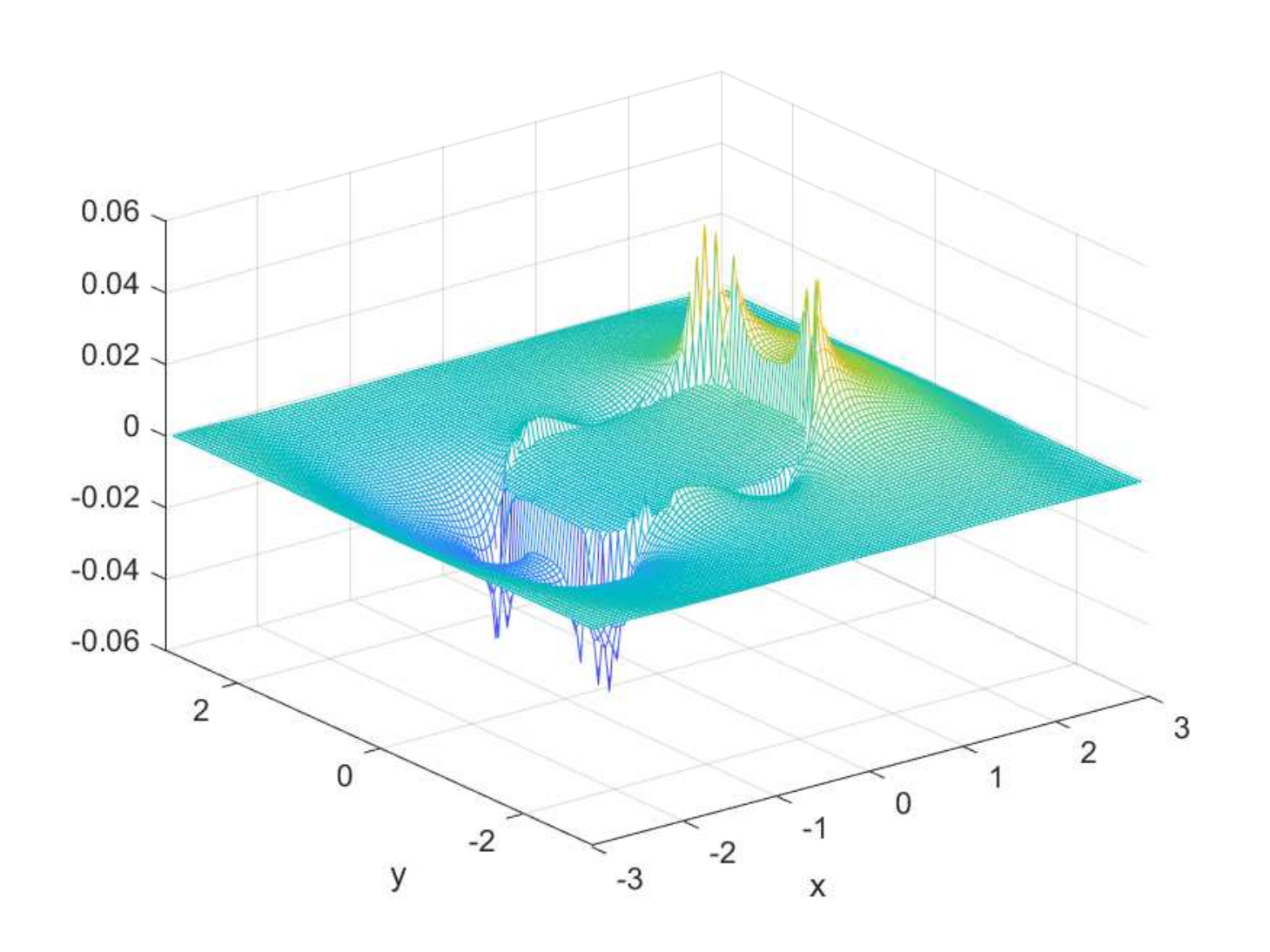}
	\end{subfigure}
	\begin{subfigure}[b]{0.3\textwidth}
		 \includegraphics[width=5.7cm,height=4.5cm]{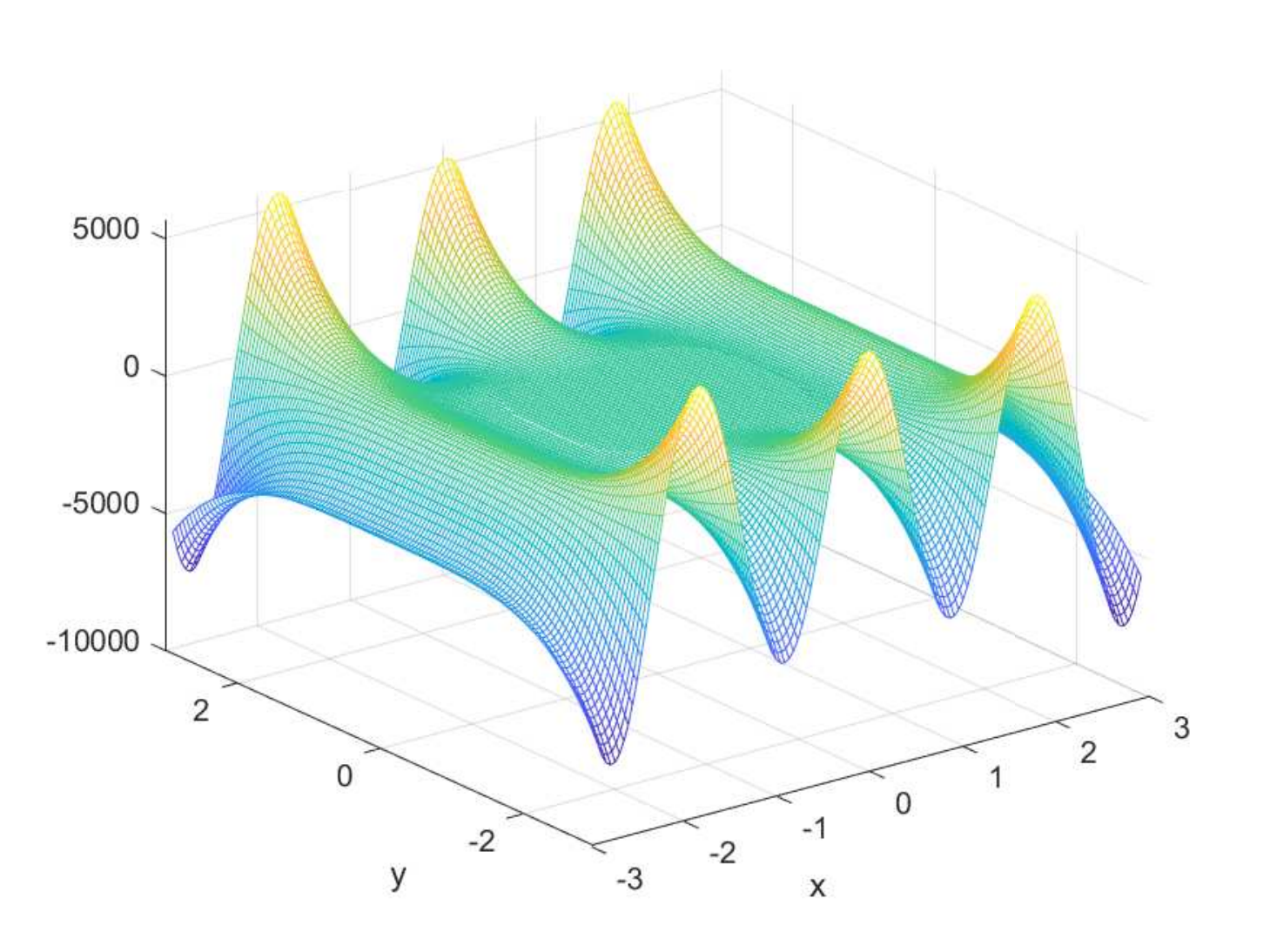}
	\end{subfigure}
	\begin{subfigure}[b]{0.3\textwidth}
		 \includegraphics[width=5.7cm,height=4.5cm]{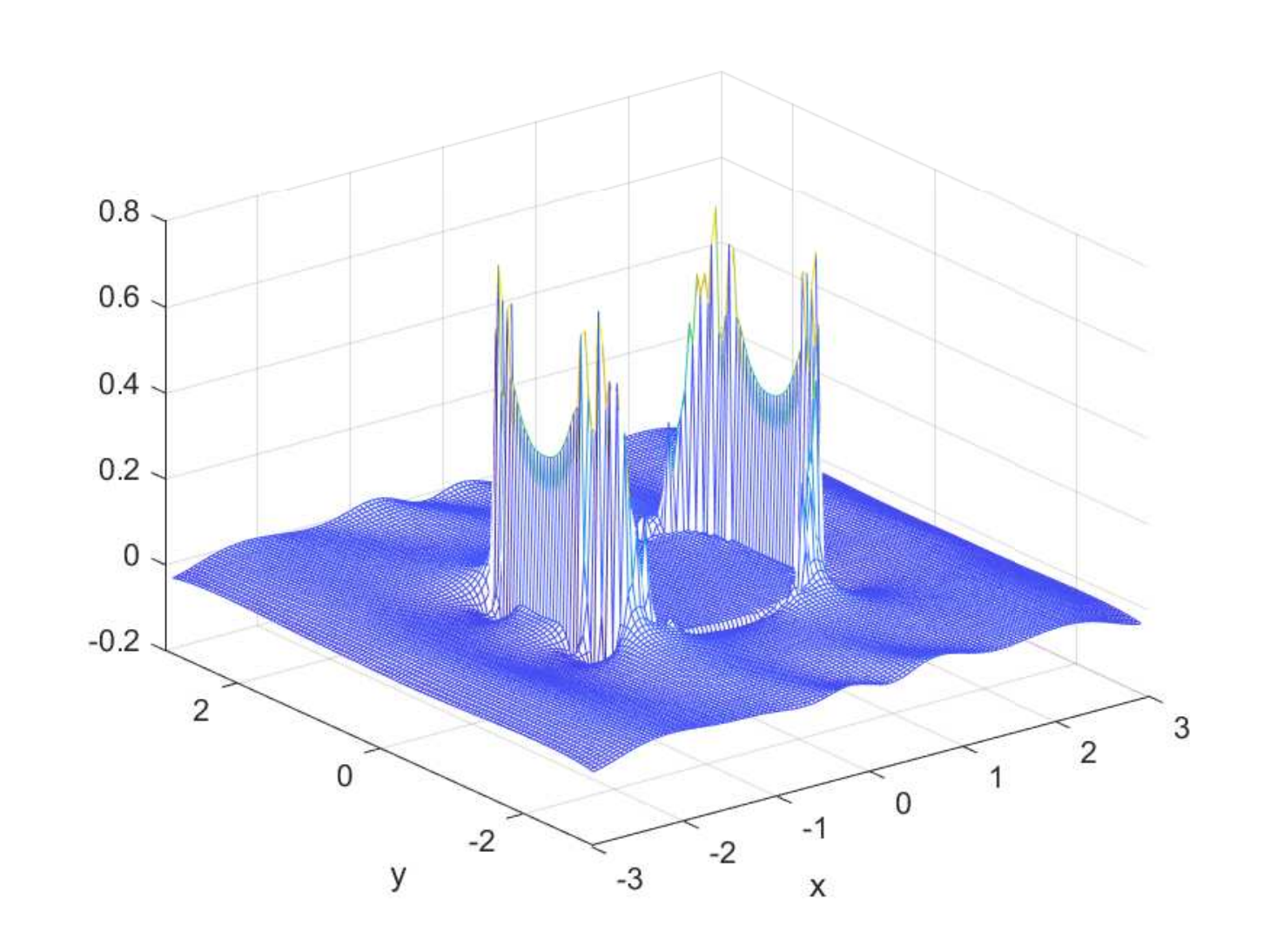}
	\end{subfigure}
	\caption
	{\tiny{Top row for \cref{Drafex2}: the interface curve $\Gamma$ (left), the coefficient $a(x,y)$ (middle)	and
		the numerical solution $u_h$ (right) with $h=2^{-7}\times 6$. Bottom row for \cref{Drafex2}: the error $u_h-u$ (left), the numerical $(u_h)_x$ (middle) and the error $(u_h)_x-u_x$ (right) with $h=2^{-7}\times 6$.}}
	\label{fig:figure2}
\end{figure}
%%
%%
%%%%%%%%%%%%%%%%%%%%%%%%%%%%%%%%%%%%%%%%%%%%%%%%%%%%%%%%%%%%%%%%%%%%%%%%%%%%%%%%%%%%%%%%%%%%%%%%%%%%%
%%
%%
\begin{example}\label{Drafex1}
\normalfont
Let $\Omega=(-\pi,\pi)^2$ and
the interface curve be given by
$\Gamma:=\{(x,y)\in \Omega \; :\; \psi(x,y)=0\}$ with
$\psi (x,y)=x^2+y^2-2$. Note that $\Gamma \cap \partial \Omega=\emptyset$,
the coefficient $a$ and	the exact solution $u$ of \eqref{Qeques1} are given by
\begin{align*}
	 &a_{+}=a\chi_{\Op}=\frac{2+\sin(x)\sin(y)}{100},
	\qquad a_{-}=a\chi_{\Om}=10(2+\sin(x)\sin(y)),\\
	 &u_{+}=u\chi_{\Op}=100\sin(-2x)(x^2+y^2-2),
	\qquad u_{-}=u\chi_{\Om}=\frac{\sin(-2x)(x^2+y^2-2)}{10}-100.
\end{align*}
All the functions $f,g_1,g_2,g$ in \eqref{Qeques1} can be obtained by plugging the above coefficient and exact solution into \eqref{Qeques1}. In particular,
$g_1=100$ and $g_2=0$.
The numerical results are presented in \cref{table:QSp1} and \cref{fig:figure1}.
\end{example}
\begin{table}[htbp]
	\caption{\tiny{Performance in \cref{Drafex1}  of the proposed high order compact finite difference scheme in \cref{thm:regular,thm:gradient:regular,thm:irregular,thm:gradient:irregular} on uniform Cartesian meshes with $h=2^{-J}\times2\pi$. $\kappa$ is the condition number of the coefficient matrix.}}
	\centering
	\setlength{\tabcolsep}{2mm}{
		\begin{tabular}{c|c|c|c|c|c|c|c}
			\hline
			$J$
			& $\frac{\|u_{h}-u\|_{2,\ind_{\Omega}}}{\|u\|_{2,\ind_{\Omega},h}}$
			
			&order &$\frac{|u_{h}-u|_{H^1,\ind_{\Omega}}}{|u|_{H^1,\ind_{\Omega},h}}$

			&order &  $\frac{|u_{h}-u|_{V,\ind_{\Omega}}}{|u|_{V,\ind_{\Omega},h}}$
			
			&order &  $\kappa$ \\
			\hline
3   &1.3309E+01   &0   &5.9842E+00   &0   &9.9219E+01   &0   &9.2851E+06\\
4   &3.5796E-02   &8.538   &4.5479E-02   &7.040   &1.6452E-01   &9.236   &2.7593E+06\\
6   &1.1989E-04   &4.111   &3.7322E-04   &3.465   &2.3716E-03   &3.058   &5.6730E+06\\
7   &5.6264E-06   &4.413   &2.9690E-05   &3.652   &1.6035E-04   &3.887   &2.9228E+07\\
8   &3.7420E-07   &3.910   &2.4650E-06   &3.590   &1.6760E-05   &3.258   &6.5794E+07\\
			\hline

	\end{tabular}}
	\label{table:QSp1}
\end{table}
\begin{figure}[htbp]
	\centering
	\begin{subfigure}[b]{0.3\textwidth}
		 \includegraphics[width=5.7cm,height=4.cm]{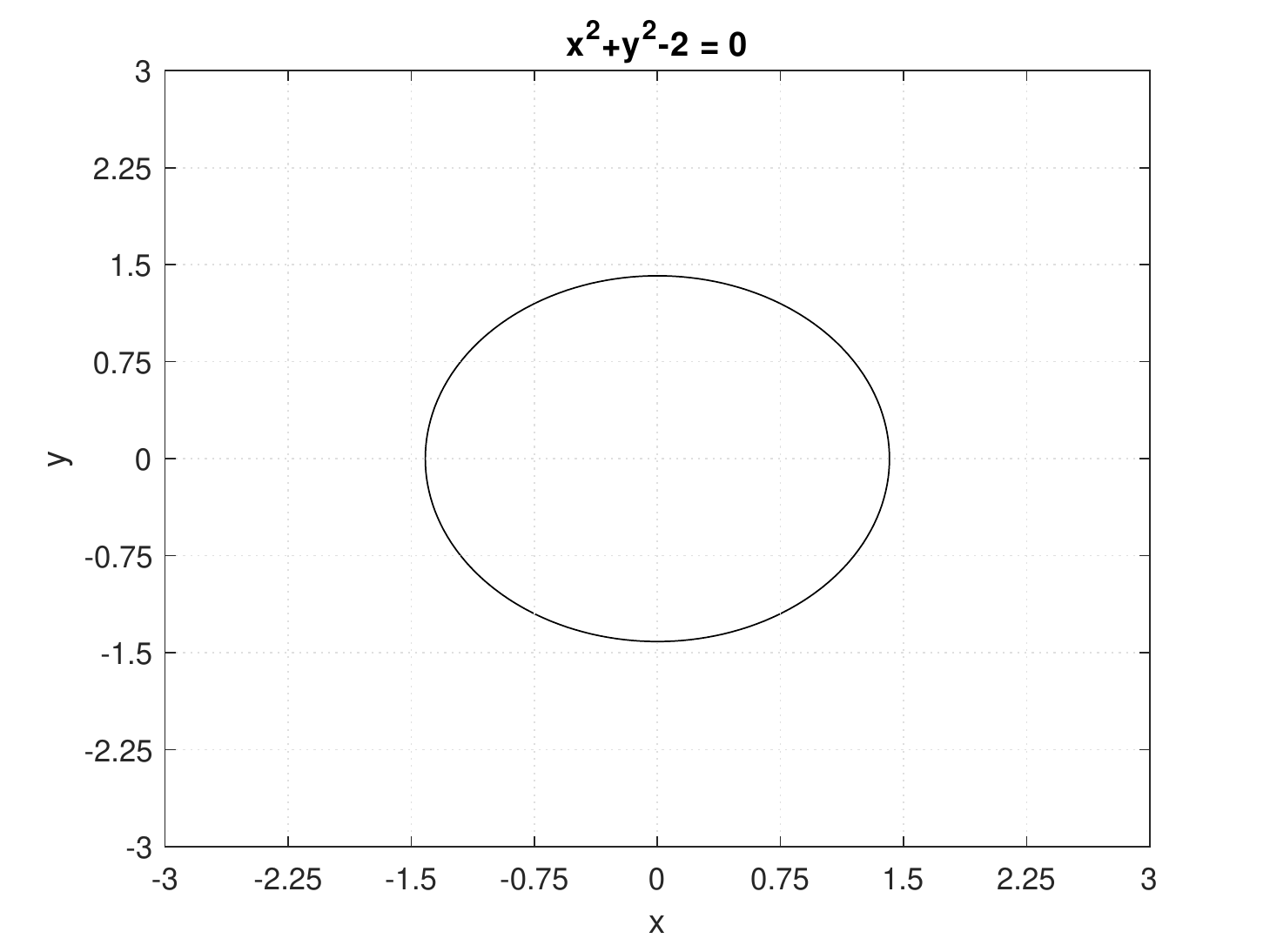}
	\end{subfigure}
	\begin{subfigure}[b]{0.3\textwidth}
		 \includegraphics[width=5.7cm,height=4.5cm]{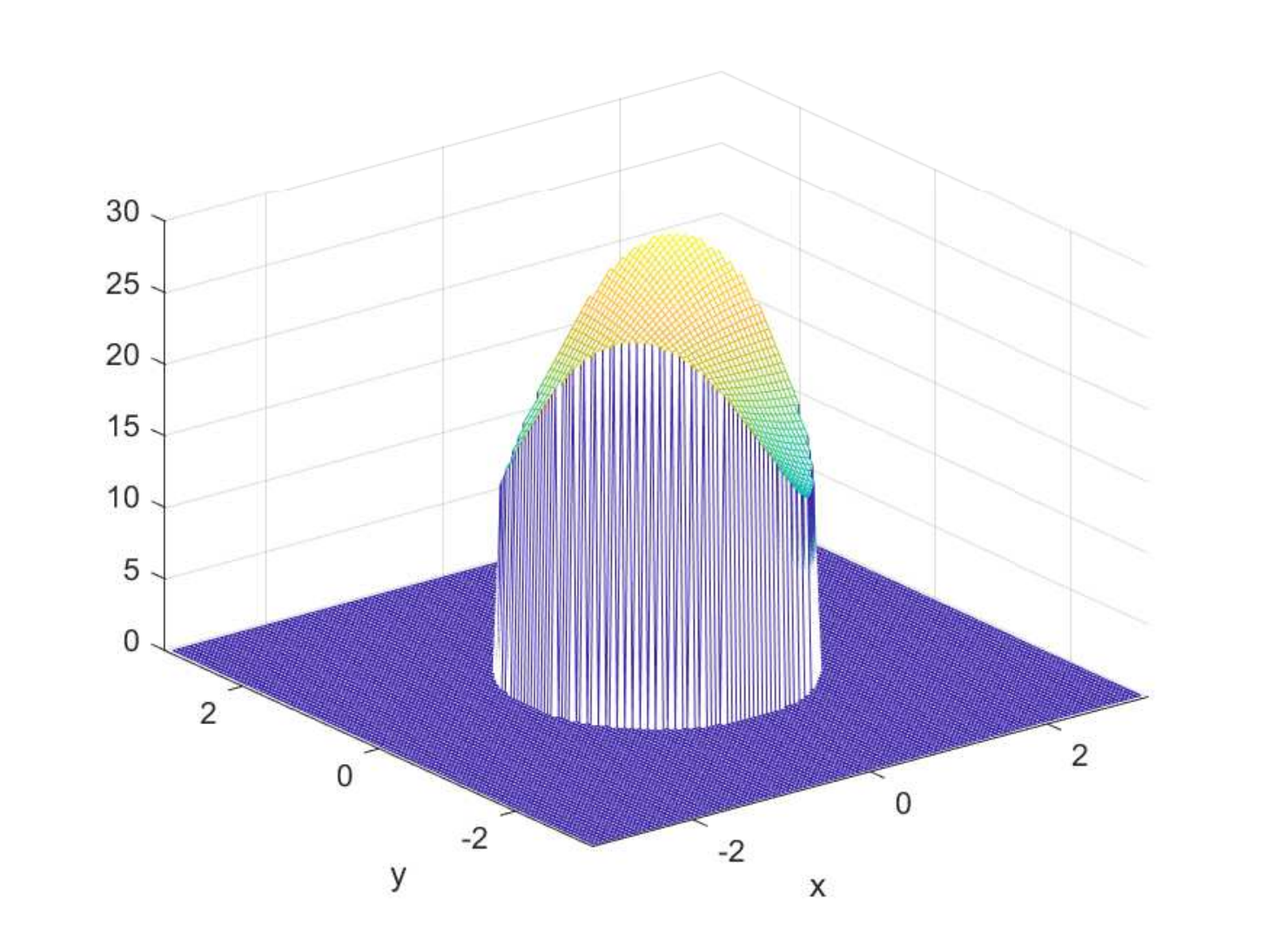}
	\end{subfigure}
	\begin{subfigure}[b]{0.3\textwidth}
		 \includegraphics[width=5.7cm,height=4.5cm]{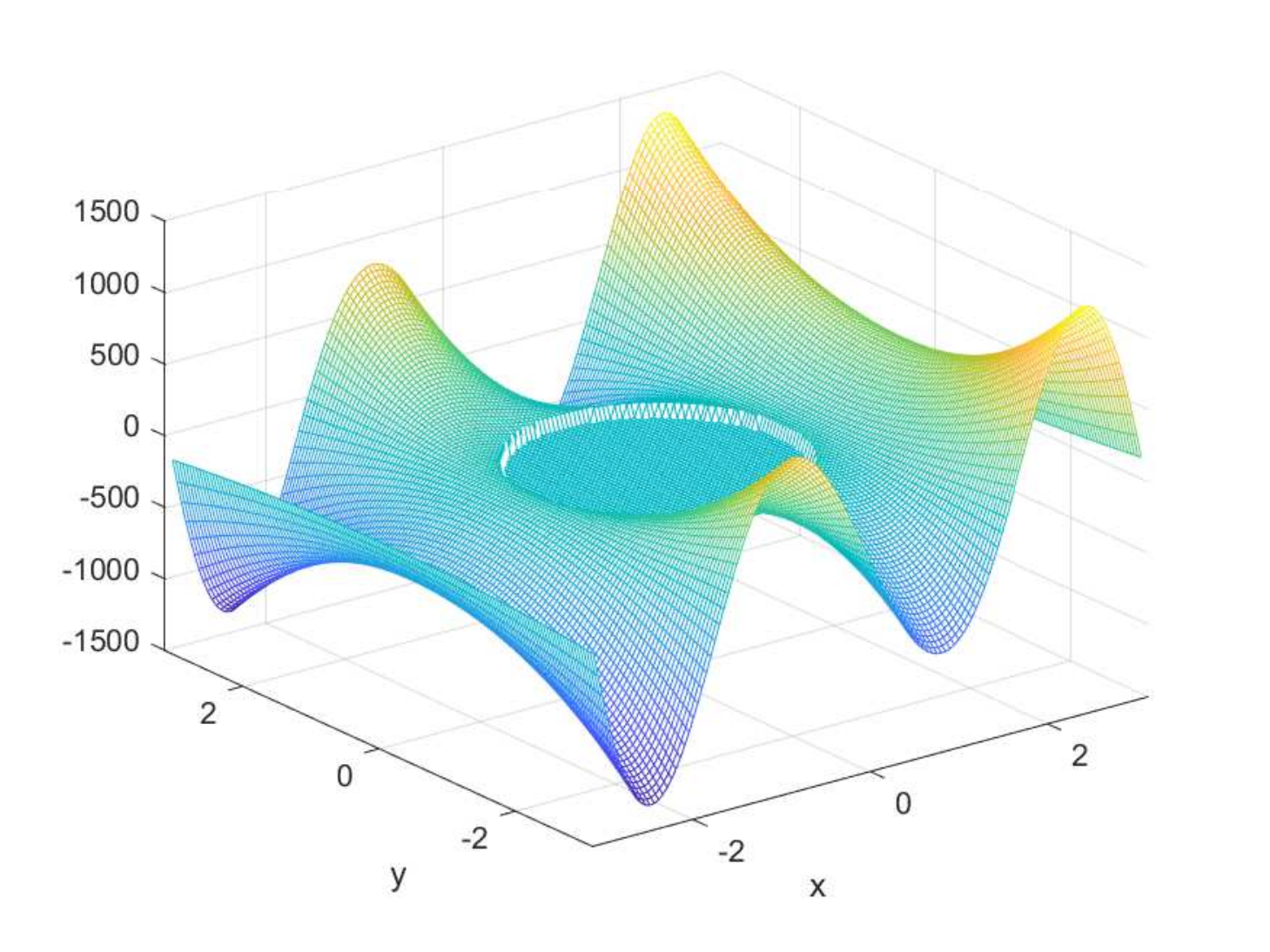}
	\end{subfigure}
	\begin{subfigure}[b]{0.3\textwidth}
		 \includegraphics[width=5.7cm,height=4.5cm]{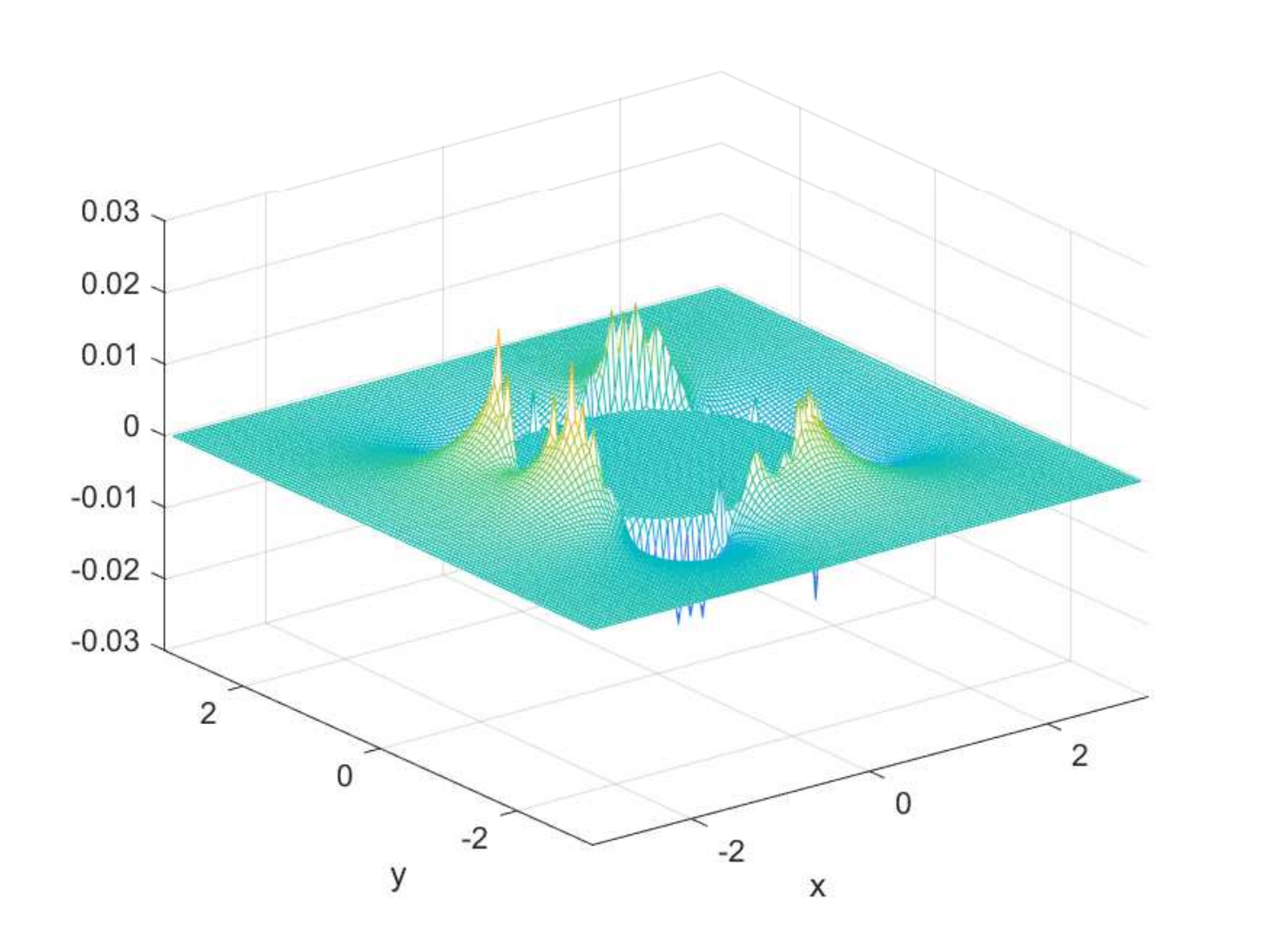}
	\end{subfigure}
	\begin{subfigure}[b]{0.3\textwidth}
		 \includegraphics[width=5.7cm,height=4.5cm]{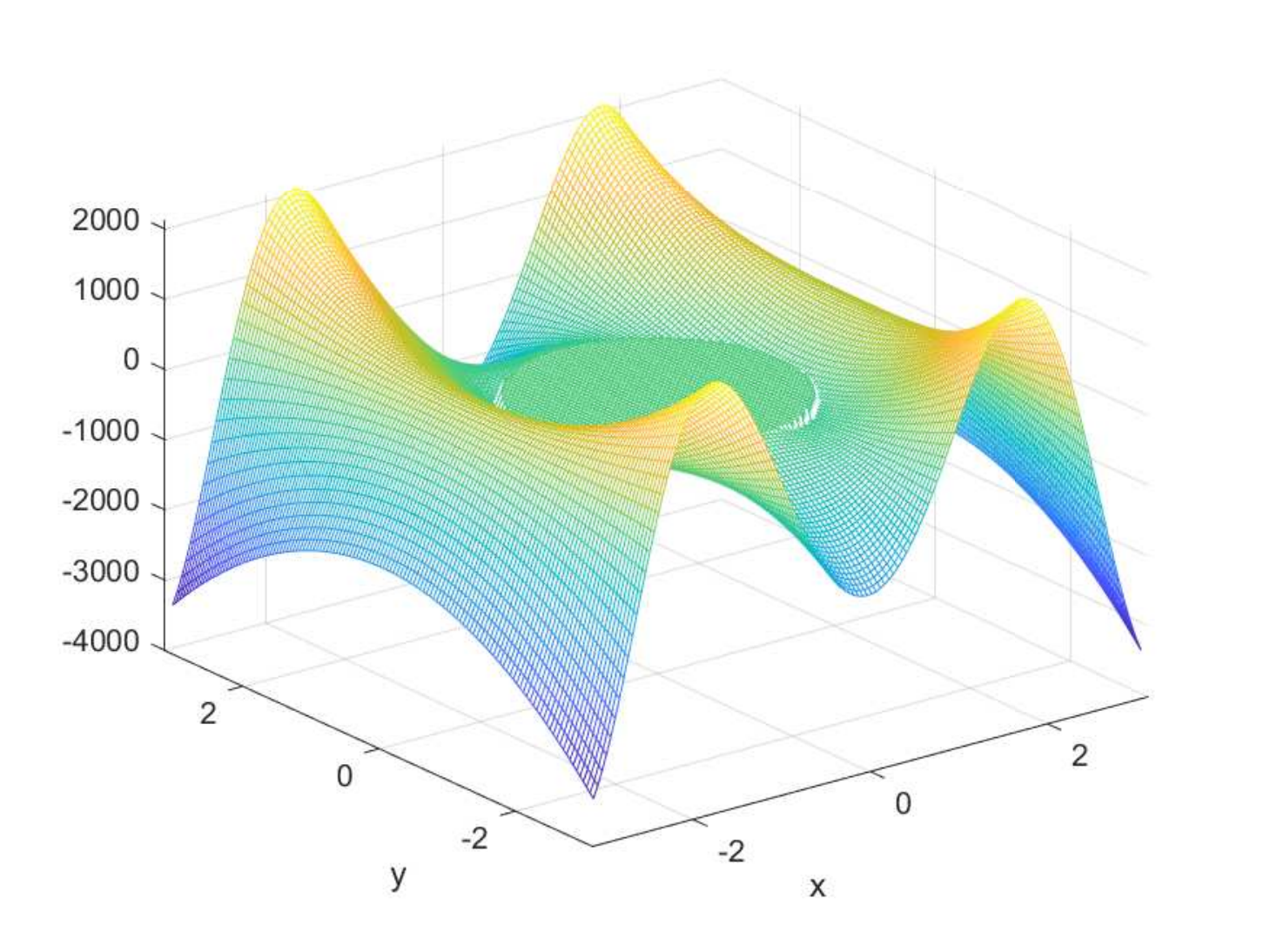}
	\end{subfigure}
	\begin{subfigure}[b]{0.3\textwidth}
		 \includegraphics[width=5.7cm,height=4.5cm]{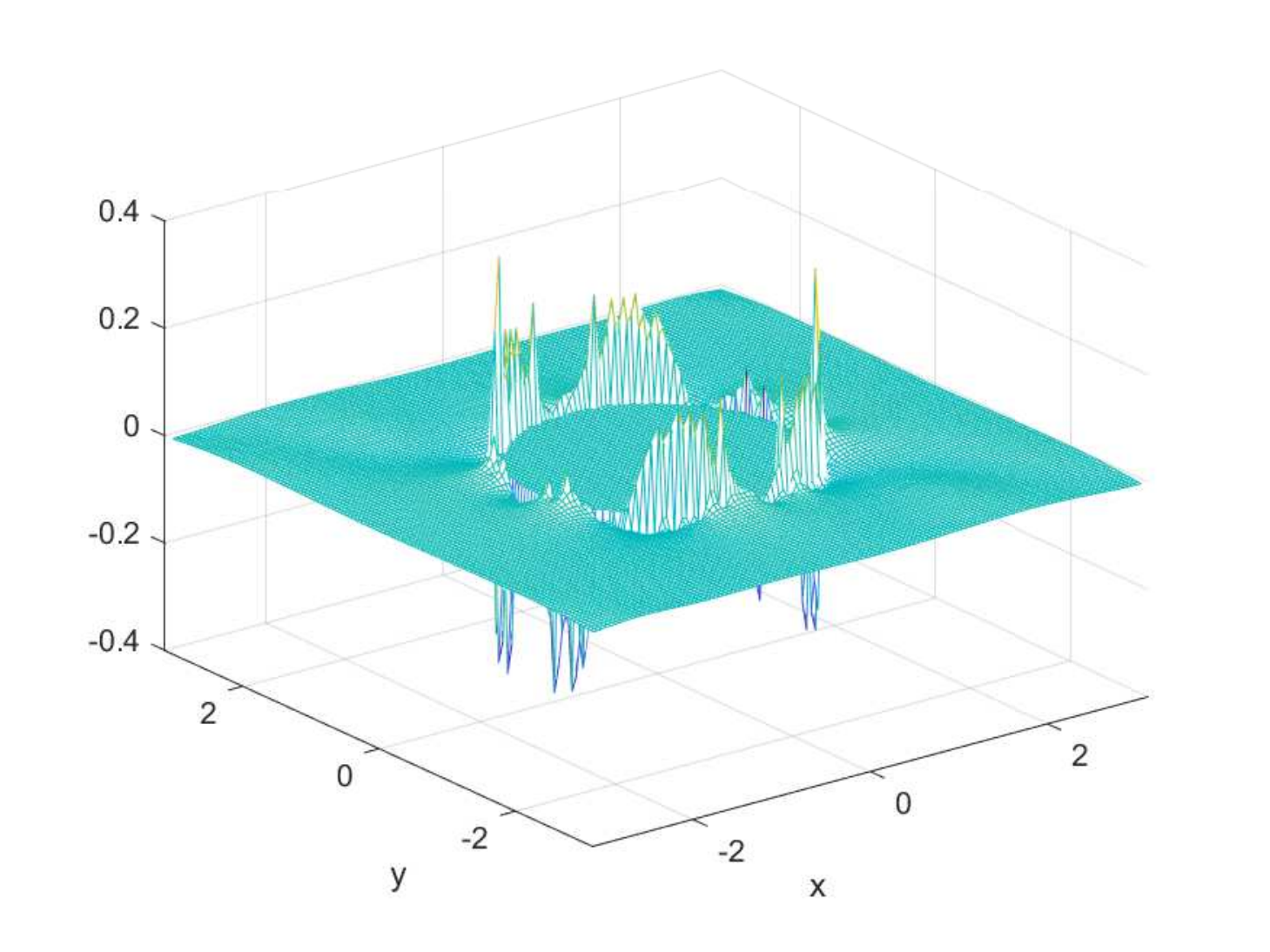}
	\end{subfigure}
	\caption
	{\tiny{Top row for \cref{Drafex1}: the interface curve $\Gamma$ (left), the coefficient $a(x,y)$ (middle)	and
		the numerical solution $u_h$ (right) with $h=2^{-7}\times 2\pi$. Bottom row for \cref{Drafex1}: the error $u_h-u$ (left), the numerical $(u_h)_x$ (middle) and the error $(u_h)_x-u_x$ (right) with $h=2^{-7}\times 2\pi$.}}
	\label{fig:figure1}
\end{figure}
%%
%%
%%
%%
%%%%%%%%%%%%%%%%%%%%%%%%%%%%%%%%%%%%%%%%%%%%%%%%%%%%%%%%%%%%%%%%%%%%%%%%%%%%%%%%%%%%%%%%%%%%%%%%%%%%%
%%
%%
\begin{example}\label{Drafex3}
	\normalfont
	Let $\Omega=(-\pi,\pi)^2$ and
	the interface curve be given by
	$\Gamma:=\{(x,y)\in \Omega \; :\; \psi(x,y)=0\}$ with
	$\psi (x,y)=y^2-2x^2+x^4-1$. Note that $\Gamma \cap \partial \Omega=\emptyset$,
the coefficient $a$ and	the exact solution $u$ of \eqref{Qeques1} are given by
	\begin{align*}
		 &a_{+}=a\chi_{\Op}=10(10+\sin(x+y)),
		\qquad a_{-}=a\chi_{\Om}=\frac{10+\sin(x+y)}{1000},\\
		 &u_{+}=u\chi_{\Op}=\frac{\sin(2y)(y^2-2x^2+x^4-1)}{10},
		\qquad u_{-}=u\chi_{\Om}=1000\sin(2y)(y^2-2x^2+x^4-1)+100.
	\end{align*}
	All the functions $f,g_1,g_2,g$ in \eqref{Qeques1} can be obtained by plugging the above coefficient and exact solution into \eqref{Qeques1}. In particular,
	$g_1=-100$ and $g_2=0$.
	The numerical results are presented in \cref{table:QSp3}  and \cref{fig:figure3}.	
\end{example}
\begin{table}[htbp]
	\caption{\tiny{Performance in \cref{Drafex3}  of the proposed  high order compact finite difference scheme in \cref{thm:regular,thm:gradient:regular,thm:irregular,thm:gradient:irregular} on uniform Cartesian meshes with $h=2^{-J}\times2\pi$. $\kappa$ is the condition number of the coefficient matrix.}}
	\centering
	\setlength{\tabcolsep}{2mm}{
		\begin{tabular}{c|c|c|c|c|c|c|c}
			\hline
$J$
& $\frac{\|u_{h}-u\|_{2,\ind_{\Omega}}}{\|u\|_{2,\ind_{\Omega},h}}$

&order &$\frac{|u_{h}-u|_{H^1,\ind_{\Omega}}}{|u|_{H^1,\ind_{\Omega},h}}$

&order &  $\frac{|u_{h}-u|_{V,\ind_{\Omega}}}{|u|_{V,\ind_{\Omega},h}}$

&order &  $\kappa$ \\
\hline
3   &6.1884E+01   &0   &3.9779E+01   &0   &6.7918E+00   &0   &9.9817E+05\\
4   &4.9790E-01   &6.958   &1.0408E+00   &5.256   &2.7858E-01   &4.608   &2.6762E+05\\
5   &3.3387E-02   &3.899   &2.0705E-01   &2.330   &3.9791E-02   &2.808   &3.8920E+04\\
6   &1.5988E-03   &4.384   &7.9473E-03   &4.703   &3.6630E-03   &3.441   &8.2465E+04\\
7   &1.0671E-04   &3.905   &5.1411E-04   &3.950   &2.9131E-04   &3.652   &1.7768E+05\\
8   &7.5627E-06   &3.819   &4.1588E-04   &0.306   &3.2258E-05   &3.175   &4.1282E+05\\
			\hline

	\end{tabular}}
	\label{table:QSp3}
\end{table}
\begin{figure}[htbp]
	\centering
	\begin{subfigure}[b]{0.3\textwidth}
		 \includegraphics[width=5.7cm,height=4.cm]{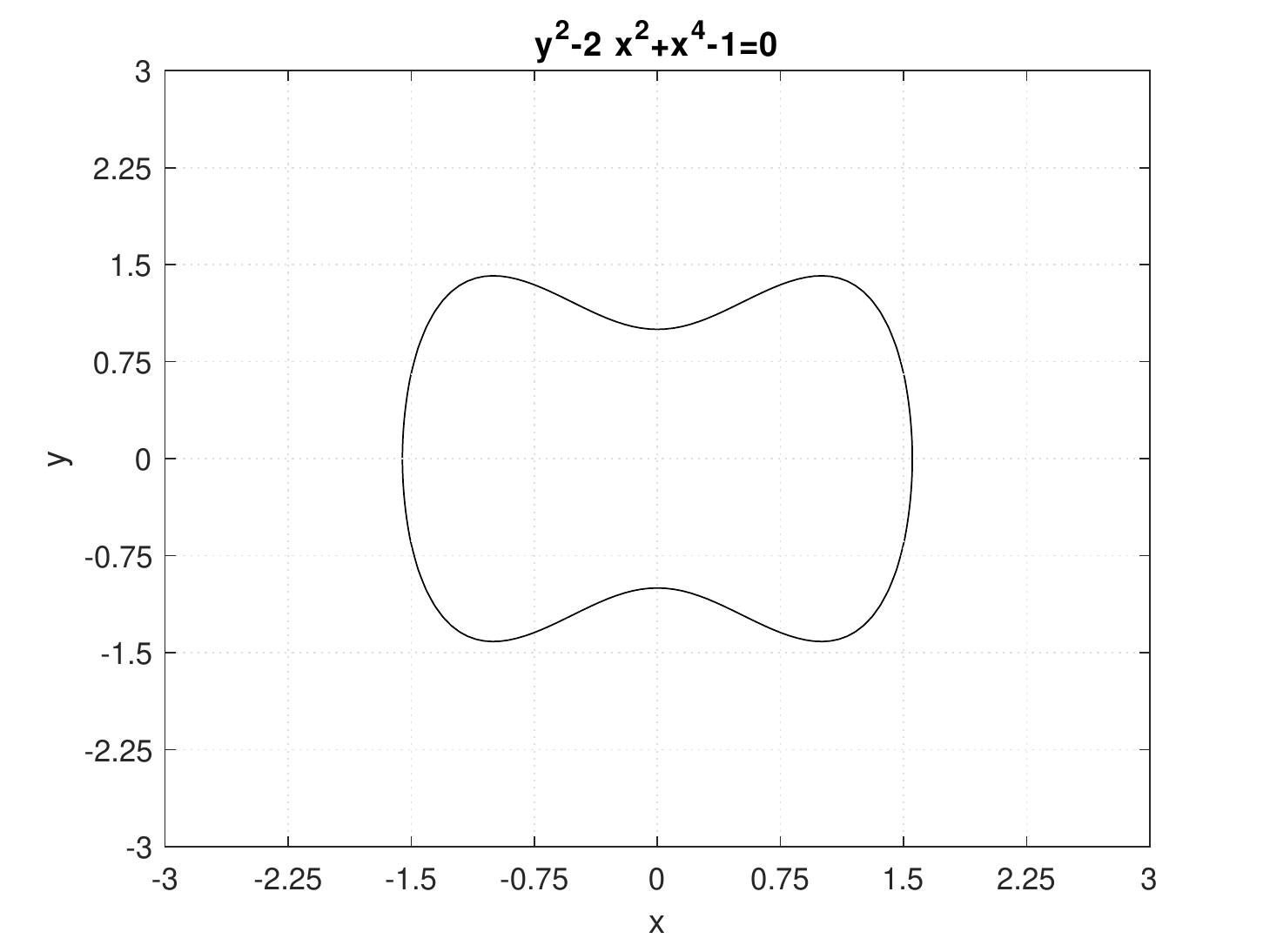}
	\end{subfigure}
	\begin{subfigure}[b]{0.3\textwidth}
		 \includegraphics[width=5.7cm,height=4.5cm]{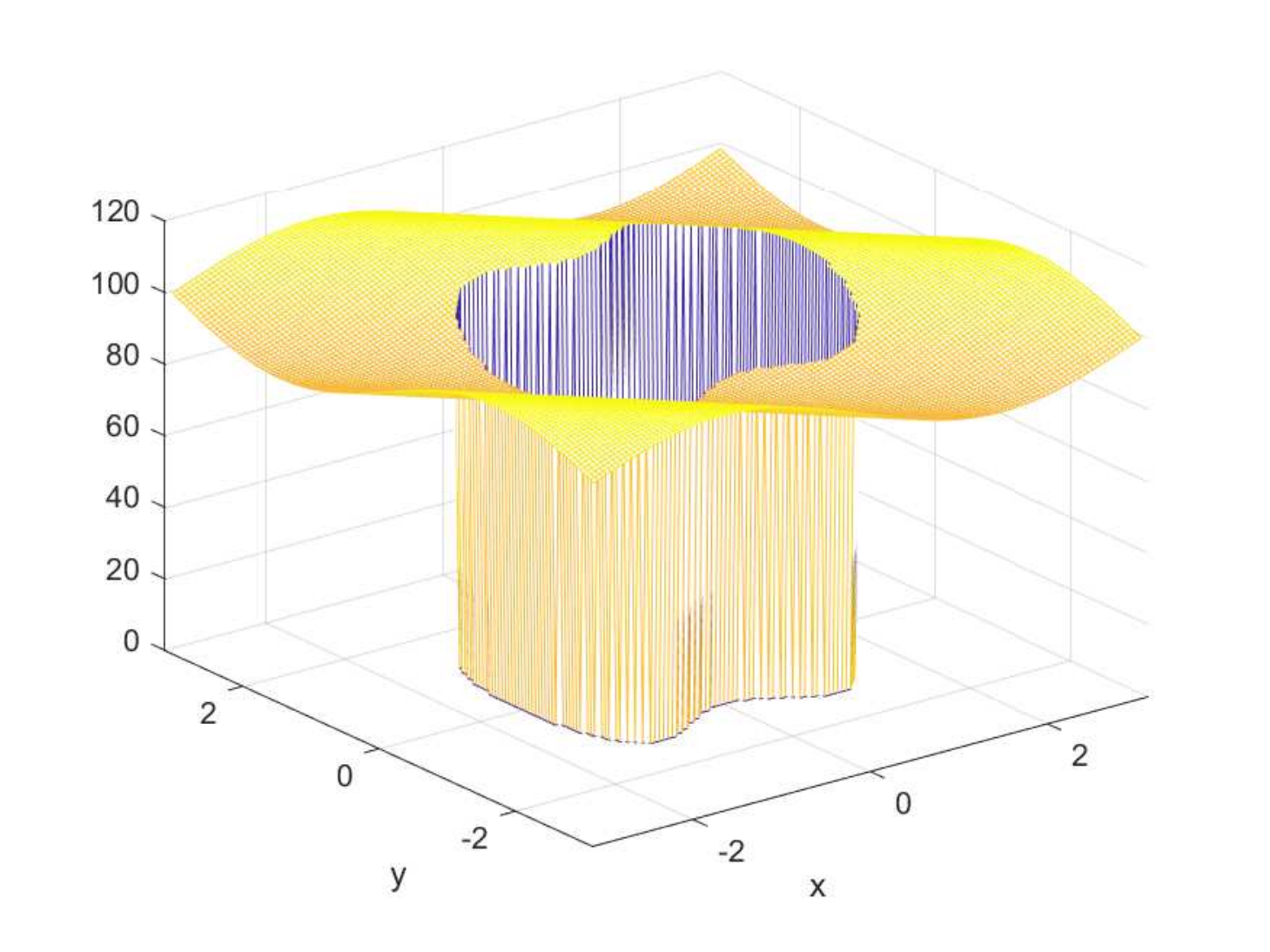}
	\end{subfigure}
	\begin{subfigure}[b]{0.3\textwidth}
		 \includegraphics[width=5.7cm,height=4.5cm]{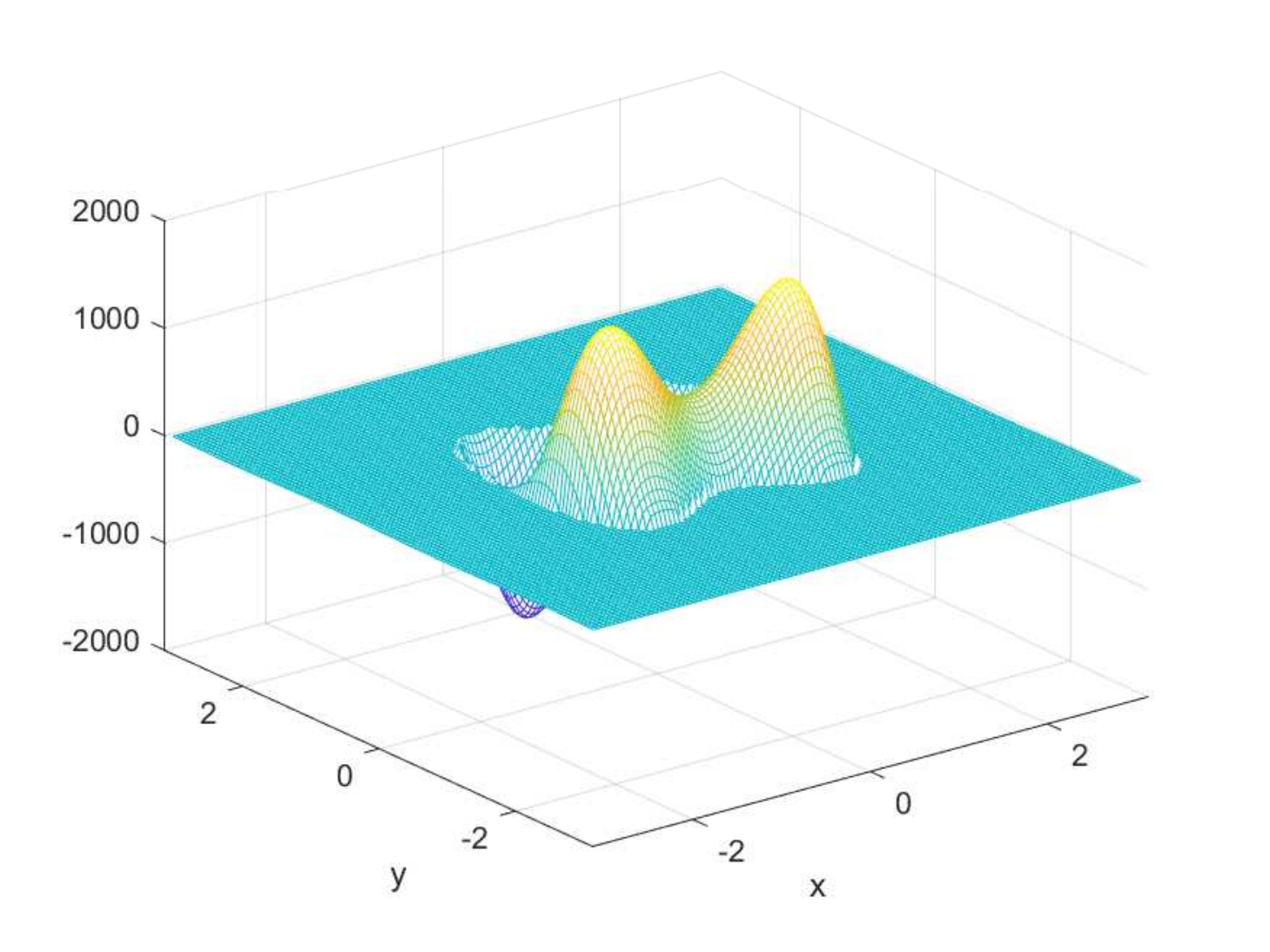}
	\end{subfigure}
	\begin{subfigure}[b]{0.3\textwidth}
		 \includegraphics[width=5.7cm,height=4.5cm]{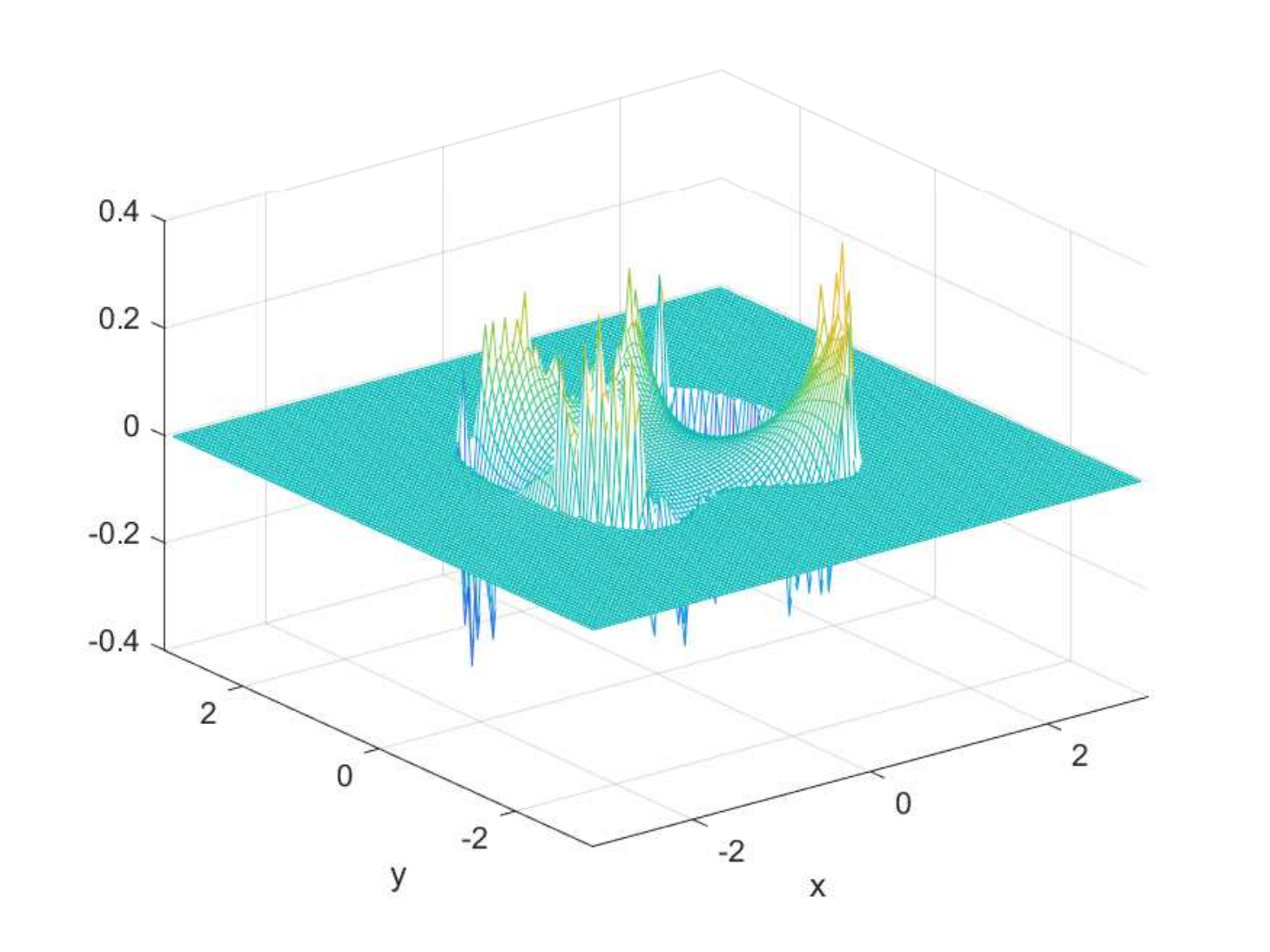}
	\end{subfigure}
	\begin{subfigure}[b]{0.3\textwidth}
		 \includegraphics[width=5.7cm,height=4.5cm]{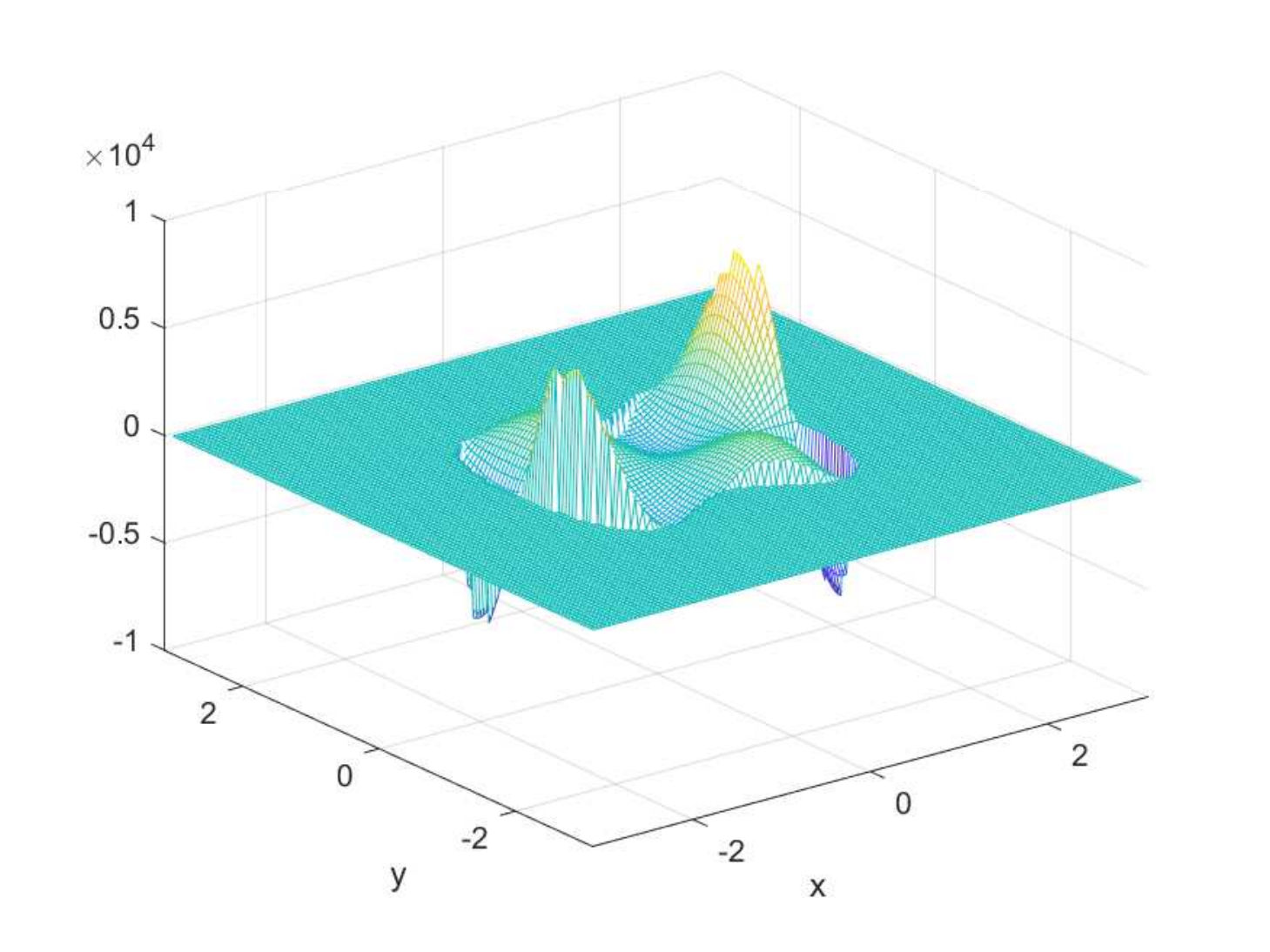}
	\end{subfigure}
	\begin{subfigure}[b]{0.3\textwidth}
		 \includegraphics[width=5.7cm,height=4.5cm]{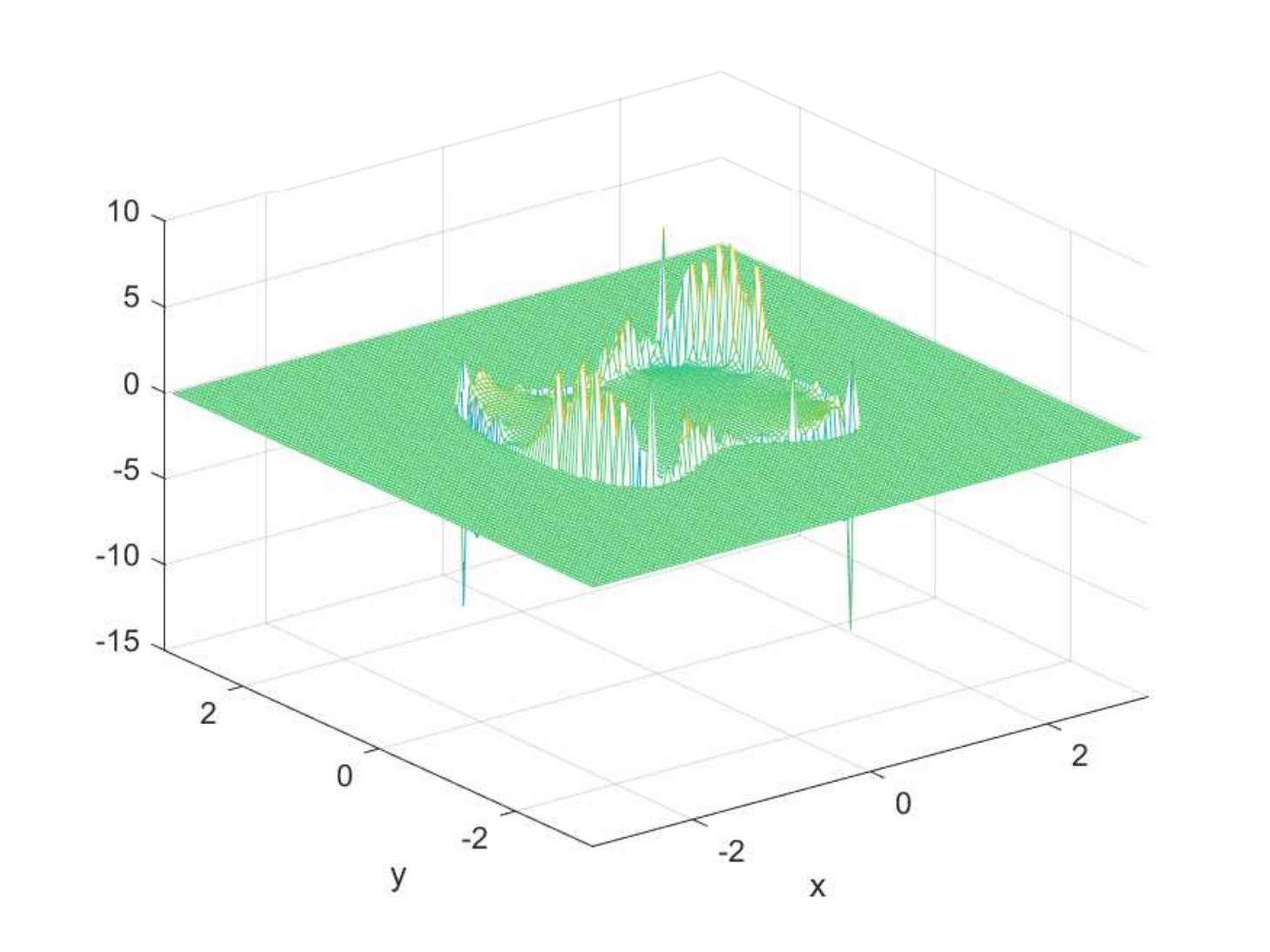}
	\end{subfigure}
	\caption
	{\tiny{Top row for \cref{Drafex3}: the interface curve $\Gamma$ (left), the coefficient $a(x,y)$ (middle)	and
		the numerical solution $u_h$ (right) with $h=2^{-7}\times 2\pi$. Bottom row for \cref{Drafex3}: the error $u_h-u$ (left), the numerical $(u_h)_x$ (middle) and the error $(u_h)_x-u_x$ (right) with $h=2^{-7}\times 2\pi$.}}
	\label{fig:figure3}
\end{figure}
%%
%%
%%
%%
%%
%%%%%%%%%%%%%%%%%%%%%%%%%%%%%%%%%%%%%%%%%%%%%%%%%%%%%%%%%%%%%%%%%%%%%%%%%%%%%%%%%%%%%%%%%%%%%%%%%%%%%
%%
%%
\begin{example}\label{Drafex4}
	\normalfont
	Let $\Omega=(-2.5,2.5)^2$ and
	the interface curve be given by
	$\Gamma:=\{(x,y)\in \Omega \; :\; \psi(x,y)=0\}$ with
	$\psi (x,y)=2x^4+y^2-1/2$. Note that $\Gamma \cap \partial \Omega=\emptyset$,
the coefficient $a$ and	the exact solution $u$ of \eqref{Qeques1} are given by
	\begin{align*}
		&a_{+}=a\chi_{\Op}=10(\exp(x-y)),
		\qquad a_{-}=a\chi_{\Om}=\frac{\exp(x-y)}{1000},\\
		 &u_{+}=u\chi_{\Op}=\frac{\cos(4x)(2x^4+y^2-1/2)}{10},
		\qquad u_{-}=u\chi_{\Om}=1000\cos(4x)(2x^4+y^2-1/2)+100.
	\end{align*}
	All the functions $f,g_1,g_2,g$ in \eqref{Qeques1} can be obtained by plugging the above coefficient and exact solution into \eqref{Qeques1}. In particular,
	$g_1=-100$ and $g_2=0$.
	The numerical results are presented in \cref{table:QSp4}  and \cref{fig:figure4}.
\end{example}
\begin{table}[htbp]
	\caption{\tiny{Performance in \cref{Drafex4}  of the proposed  high order compact finite difference scheme in \cref{thm:regular,thm:gradient:regular,thm:irregular,thm:gradient:irregular} on uniform Cartesian meshes with $h=2^{-J}\times 5$. $\kappa$ is the condition number of the coefficient matrix.}}
	\centering
	\setlength{\tabcolsep}{2mm}{
		\begin{tabular}{c|c|c|c|c|c|c|c}
			\hline
$J$
& $\frac{\|u_{h}-u\|_{2,\ind_{\Omega}}}{\|u\|_{2,\ind_{\Omega},h}}$

&order &$\frac{|u_{h}-u|_{H^1,\ind_{\Omega}}}{|u|_{H^1,\ind_{\Omega},h}}$

&order &  $\frac{|u_{h}-u|_{V,\ind_{\Omega}}}{|u|_{V,\ind_{\Omega},h}}$

&order &  $\kappa$ \\
\hline
3   &2.1935E+02   &0   &4.7152E+04   &0   &2.7271E+01   &0   &1.1680E+08\\
4   &2.5293E+00   &6.438   &1.7464E+00   &14.721   &7.9277E-02   &8.426   &1.6969E+04\\
5   &4.0984E-01   &2.626   &1.0324E+00   &0.758   &5.1915E-03   &3.933   &6.9287E+04\\
6   &3.6886E-02   &3.474   &4.9797E-02   &4.374   &3.0372E-04   &4.095   &5.8211E+04\\
7   &2.2165E-03   &4.057   &4.3521E-03   &3.516   &1.7765E-05   &4.096   &4.8458E+04\\
8   &1.3130E-04   &4.077   &3.8261E-04   &3.508   &1.7755E-06   &3.323   &1.4692E+05\\
			\hline

	\end{tabular}}
	\label{table:QSp4}
\end{table}
\begin{figure}[htbp]
	\centering
	\begin{subfigure}[b]{0.3\textwidth}
		 \includegraphics[width=5.7cm,height=4.cm]{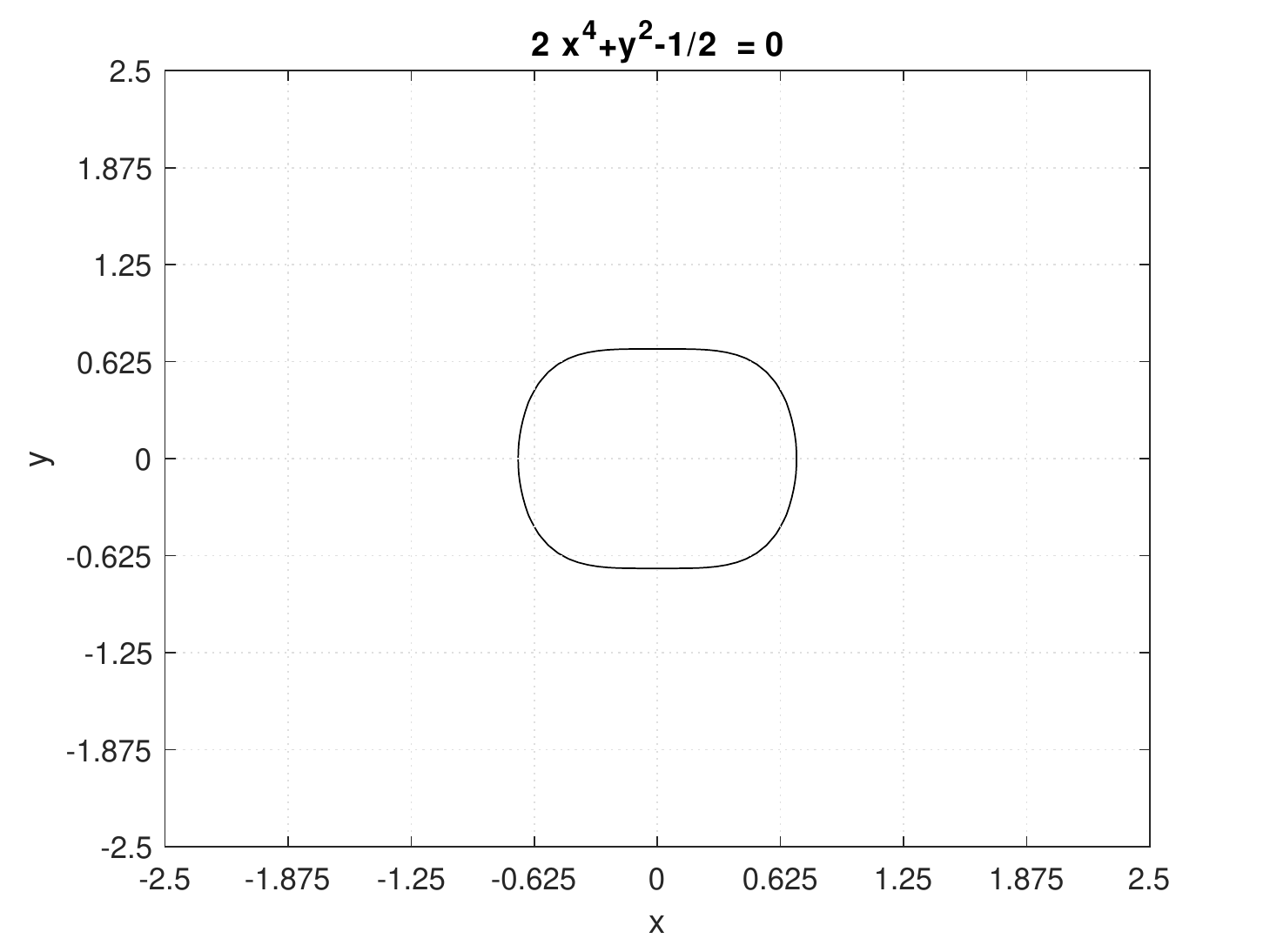}
	\end{subfigure}
	\begin{subfigure}[b]{0.3\textwidth}
		 \includegraphics[width=5.7cm,height=4.5cm]{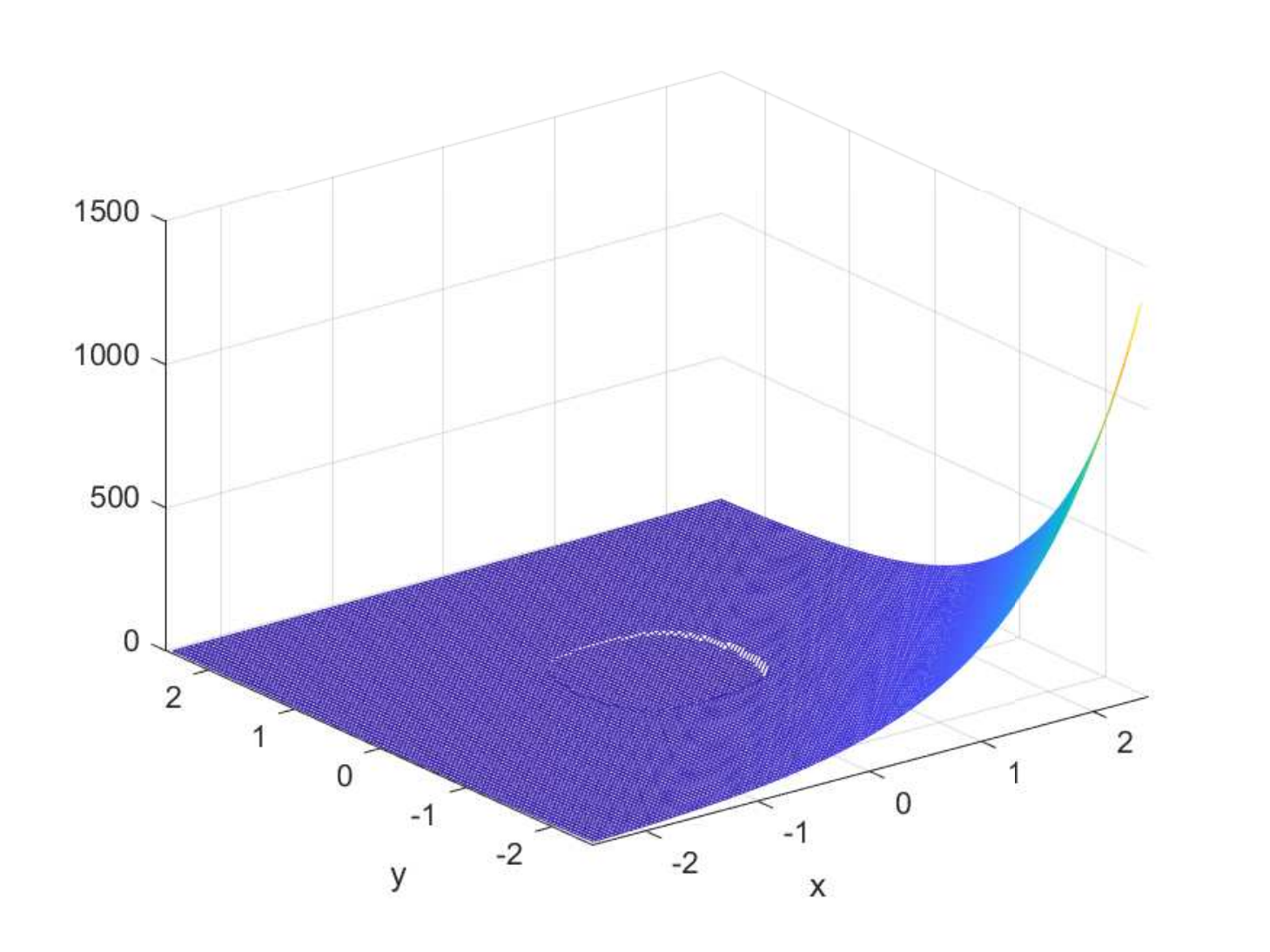}
	\end{subfigure}
	\begin{subfigure}[b]{0.3\textwidth}
		 \includegraphics[width=5.7cm,height=4.5cm]{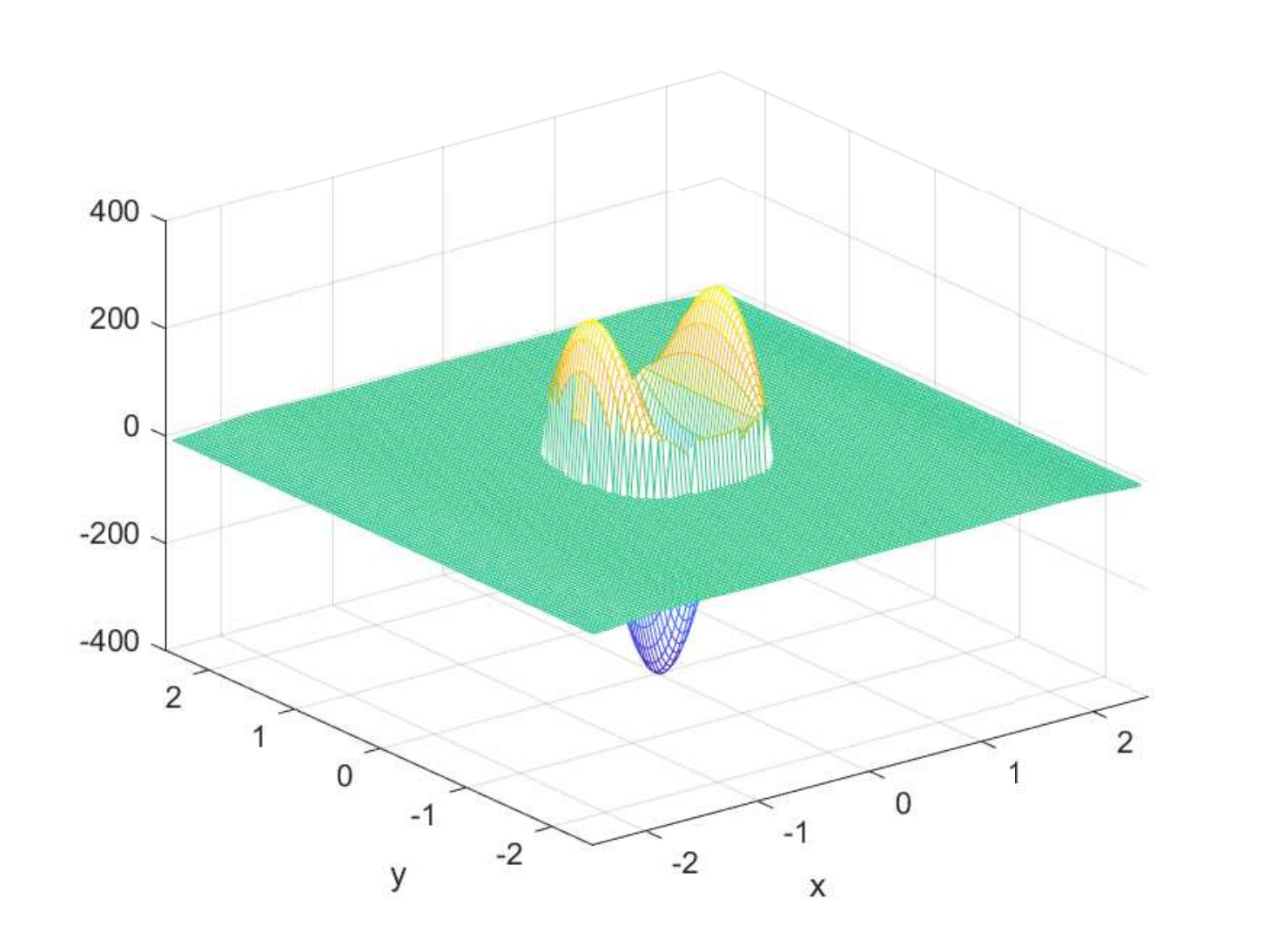}
	\end{subfigure}
	\begin{subfigure}[b]{0.3\textwidth}
		 \includegraphics[width=5.7cm,height=4.5cm]{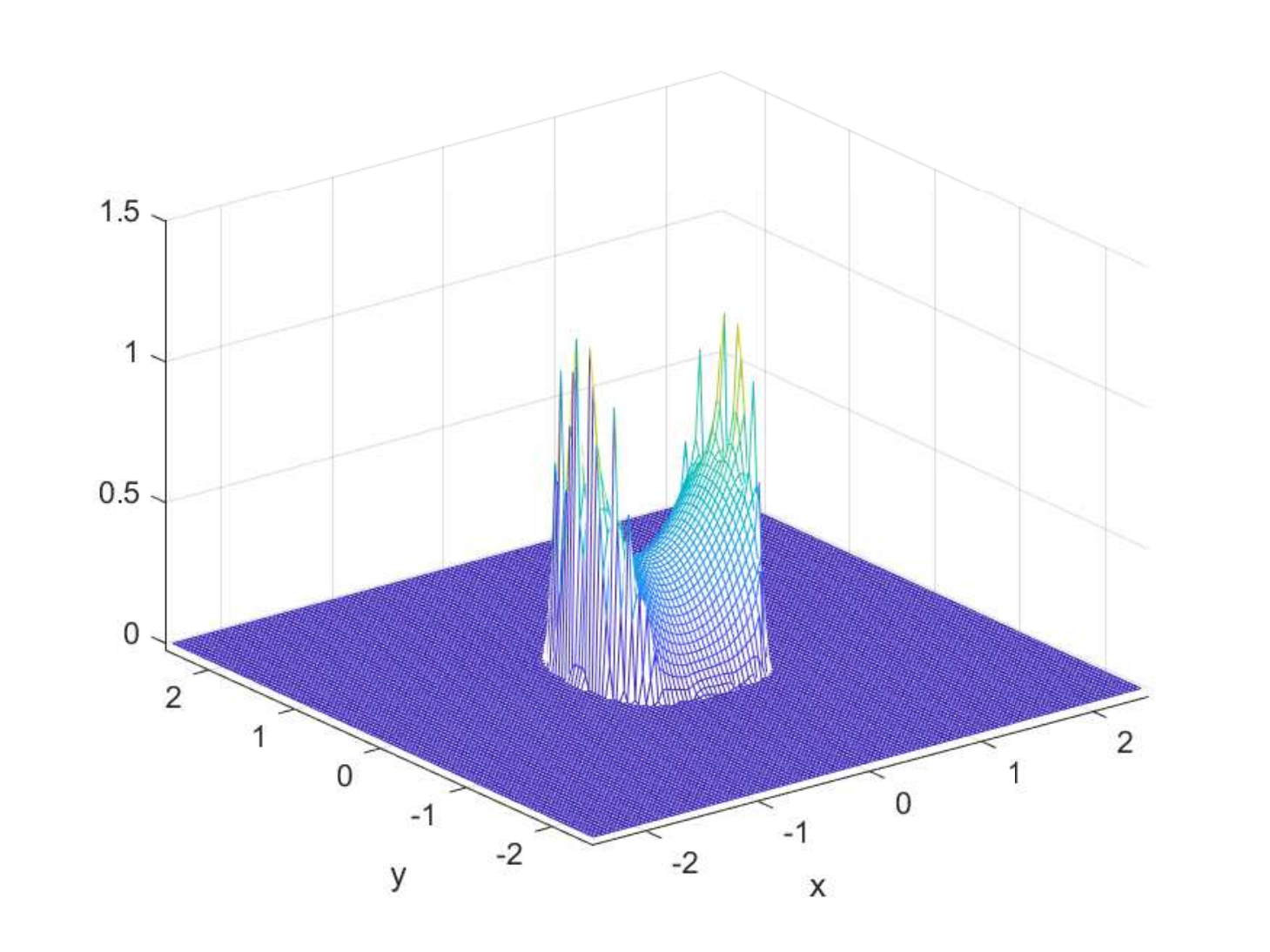}
	\end{subfigure}
	\begin{subfigure}[b]{0.3\textwidth}
		 \includegraphics[width=5.7cm,height=4.5cm]{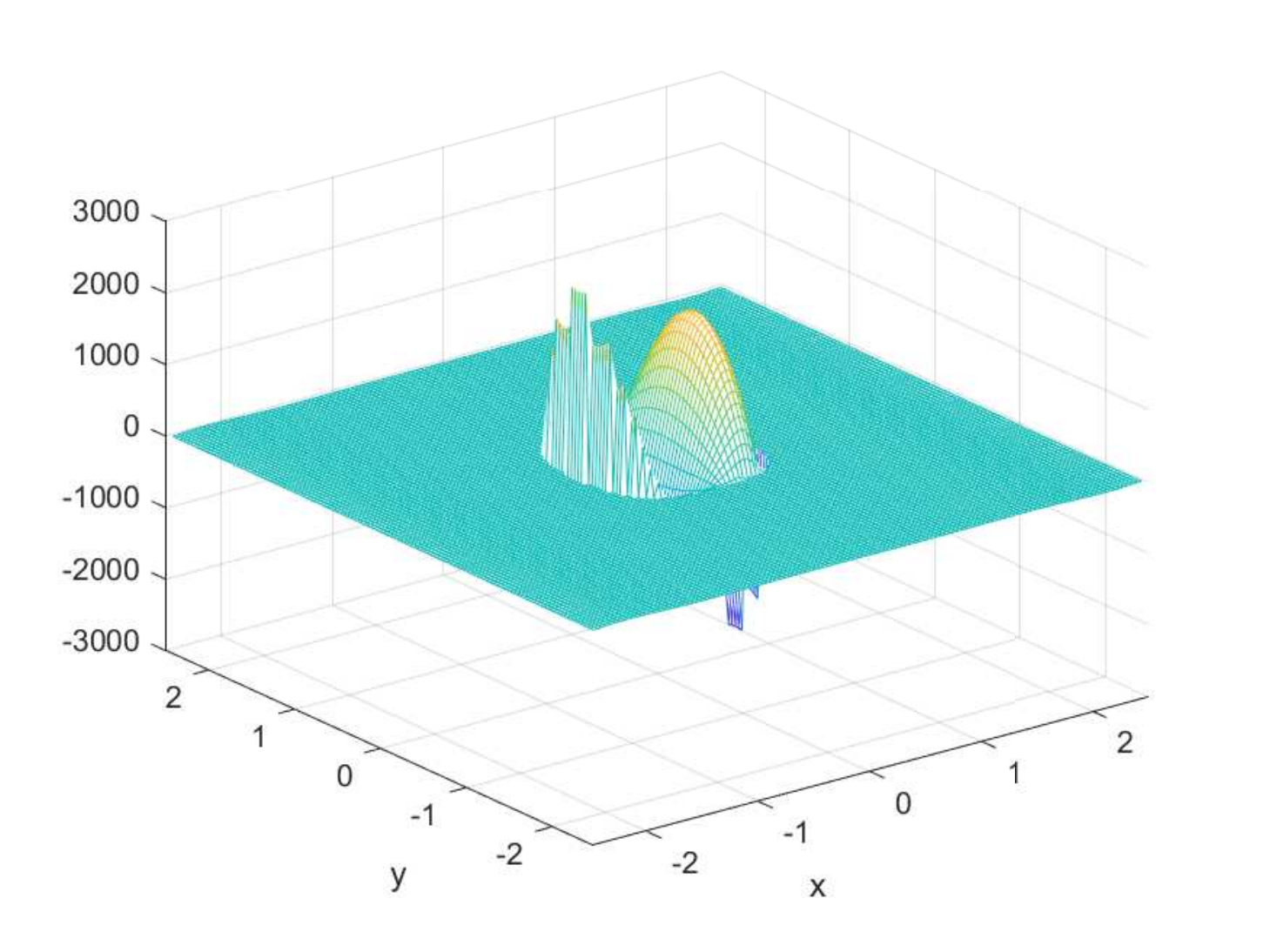}
	\end{subfigure}
	\begin{subfigure}[b]{0.3\textwidth}
		 \includegraphics[width=5.7cm,height=4.5cm]{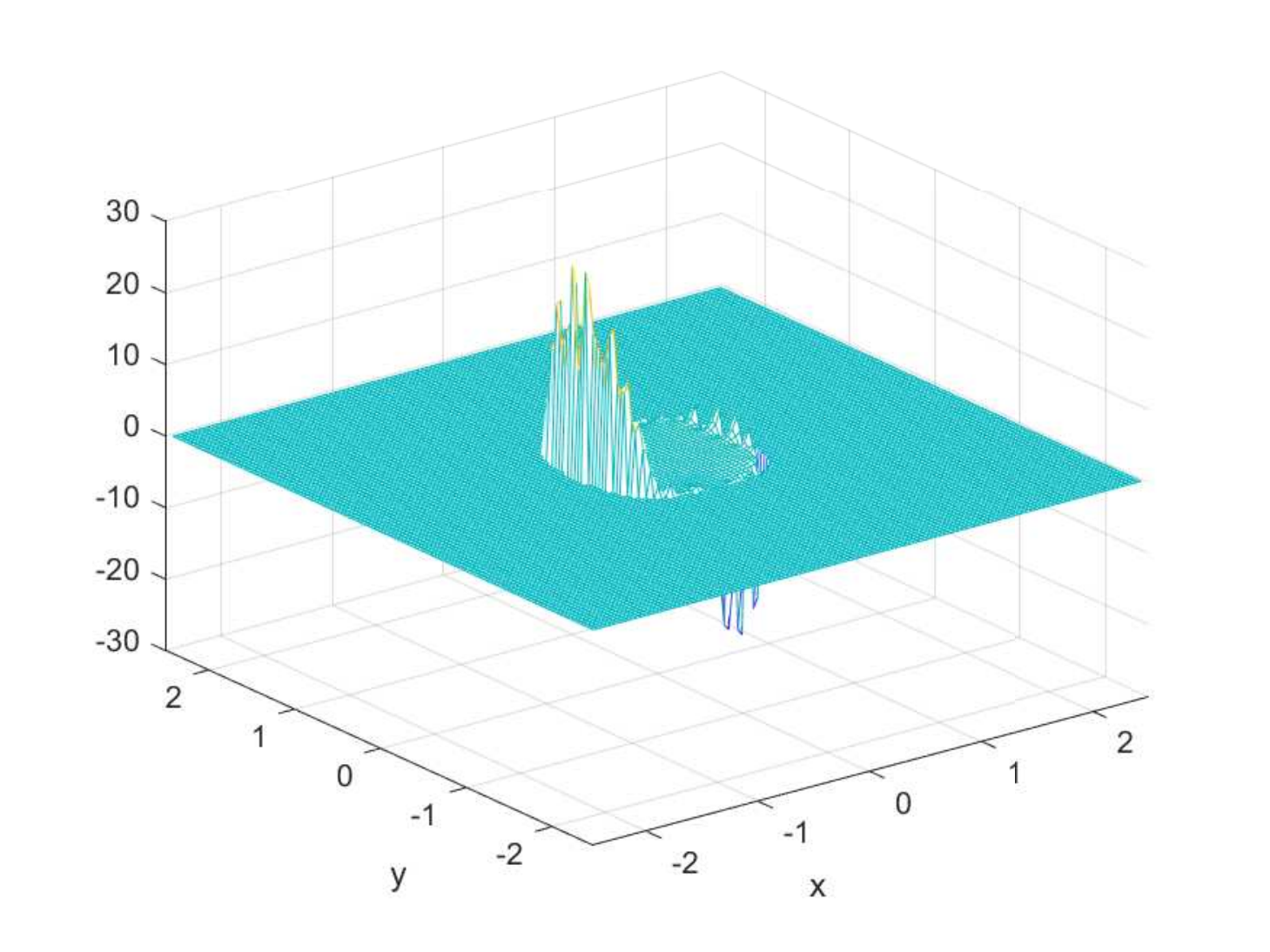}
	\end{subfigure}
	\caption
	{\tiny{Top row for \cref{Drafex4}: the interface curve $\Gamma$ (left), the coefficient $a(x,y)$ (middle)	and
		the numerical solution $u_h$ (right) with $h=2^{-7}\times 5$. Bottom row for \cref{Drafex4}: the error $u_h-u$ (left), the numerical $(u_h)_x$ (middle) and the error $(u_h)_x-u_x$ (right) with $h=2^{-7}\times 5$.}}
	\label{fig:figure4}
\end{figure}
%%
%%
%%
%%%%%%%%%%%%%%%%%%%%%%%%%%%%%%%%%%%%%%%%%%%%%%%%%%%%%%%%%%%%%%%%%%%%%%%%%%%%%%%%%%%%%%%%%%%%%%%%%%%%%
%%
%%
\begin{example}\label{Drafex5}
	\normalfont
	Let $\Omega=(-\frac{2\pi}{3},\frac{2\pi}{3})^2$ and
	the interface curve be given by
	$\Gamma:=\{(x,y)\in \Omega \; :\; \psi(x,y)=0\}$ with
	$\psi (x,y)=y^2+\frac{2x^2}{x^2+1}-1$. Note that $\Gamma \cap \partial \Omega=\emptyset$,
the coefficient $a$ and	the exact solution $u$ of \eqref{Qeques1} are given by
	\begin{align*}
		 &a_{+}=a\chi_{\Op}=100(2+\cos(x)\sin(y)),
		\qquad a_{-}=a\chi_{\Om}=\frac{2+\cos(x)\sin(y)}{10},\\
		 &u_{+}=u\chi_{\Op}=\frac{\cos(4x)(y^2(x^2+1)+x^2-1)}{100},
		\qquad u_{-}=u\chi_{\Om}=10\cos(4x)(y^2(x^2+1)+x^2-1)+100.
	\end{align*}
	All the functions $f,g_1,g_2,g$ in \eqref{Qeques1} can be obtained by plugging the above coefficient and exact solution into \eqref{Qeques1}. In particular,
	$g_1=-100$ and $g_2=0$.
	The numerical results are presented in \cref{table:QSp5}  and \cref{fig:figure5}.	
\end{example}
\begin{table}[htbp]
	\caption{\tiny{Performance in \cref{Drafex5}  of the proposed  high order compact finite difference scheme in \cref{thm:regular,thm:gradient:regular,thm:irregular,thm:gradient:irregular} on uniform Cartesian meshes with $h=2^{-J}\times \frac{4\pi}{3}$. $\kappa$ is the condition number of the coefficient matrix.}}
	\centering
	\setlength{\tabcolsep}{2mm}{
		\begin{tabular}{c|c|c|c|c|c|c|c}
			\hline
$J$
& $\frac{\|u_{h}-u\|_{2,\ind_{\Omega}}}{\|u\|_{2,\ind_{\Omega},h}}$

&order &$\frac{|u_{h}-u|_{H^1,\ind_{\Omega}}}{|u|_{H^1,\ind_{\Omega},h}}$

&order &  $\frac{|u_{h}-u|_{V,\ind_{\Omega}}}{|u|_{V,\ind_{\Omega},h}}$

&order &  $\kappa$ \\
\hline
3   &1.5378E-01   &0   &3.8692E+00   &0   &2.0796E+00   &0   &2.2711E+04\\
4   &9.3482E-02   &0.718   &1.3359E+00   &1.534   &1.9525E-01   &3.413   &1.7331E+04\\
5   &6.1310E-03   &3.930   &1.3409E-01   &3.317   &2.1612E-02   &3.175   &1.6113E+04\\
6   &2.9209E-04   &4.392   &1.0776E-02   &3.637   &3.5076E-03   &2.623   &3.8570E+04\\
7   &1.4985E-05   &4.285   &8.2753E-04   &3.703   &3.0513E-04   &3.523   &2.9413E+04\\
8   &1.3087E-06   &3.517   &9.3813E-05   &3.141   &3.4585E-05   &3.141   &6.0381E+04\\
			\hline
	
	\end{tabular}}
	\label{table:QSp5}
\end{table}
\begin{figure}[htbp]
	\centering
	\begin{subfigure}[b]{0.3\textwidth}
		 \includegraphics[width=5.7cm,height=4cm]{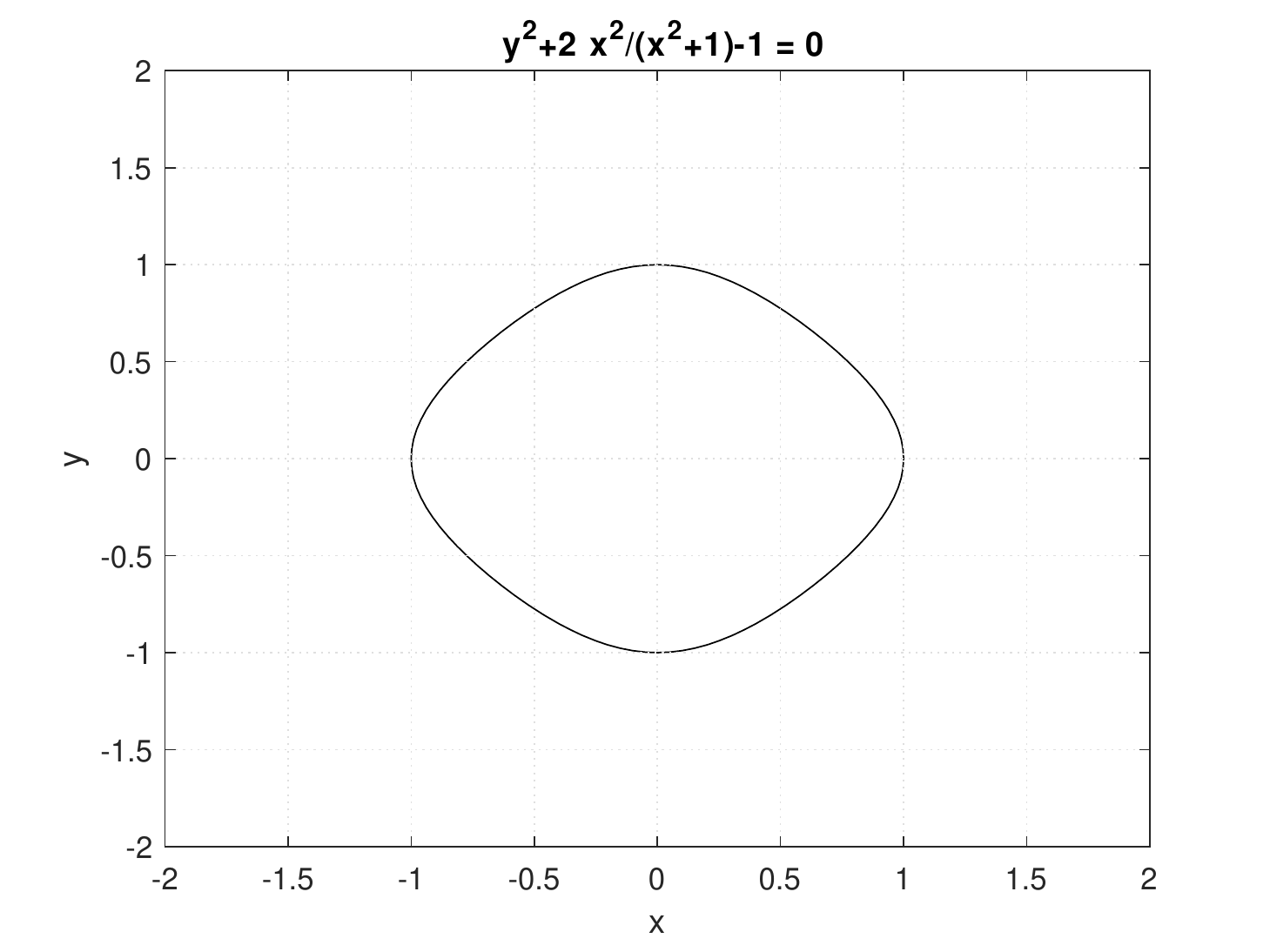}
	\end{subfigure}
	\begin{subfigure}[b]{0.3\textwidth}
		 \includegraphics[width=5.7cm,height=4.5cm]{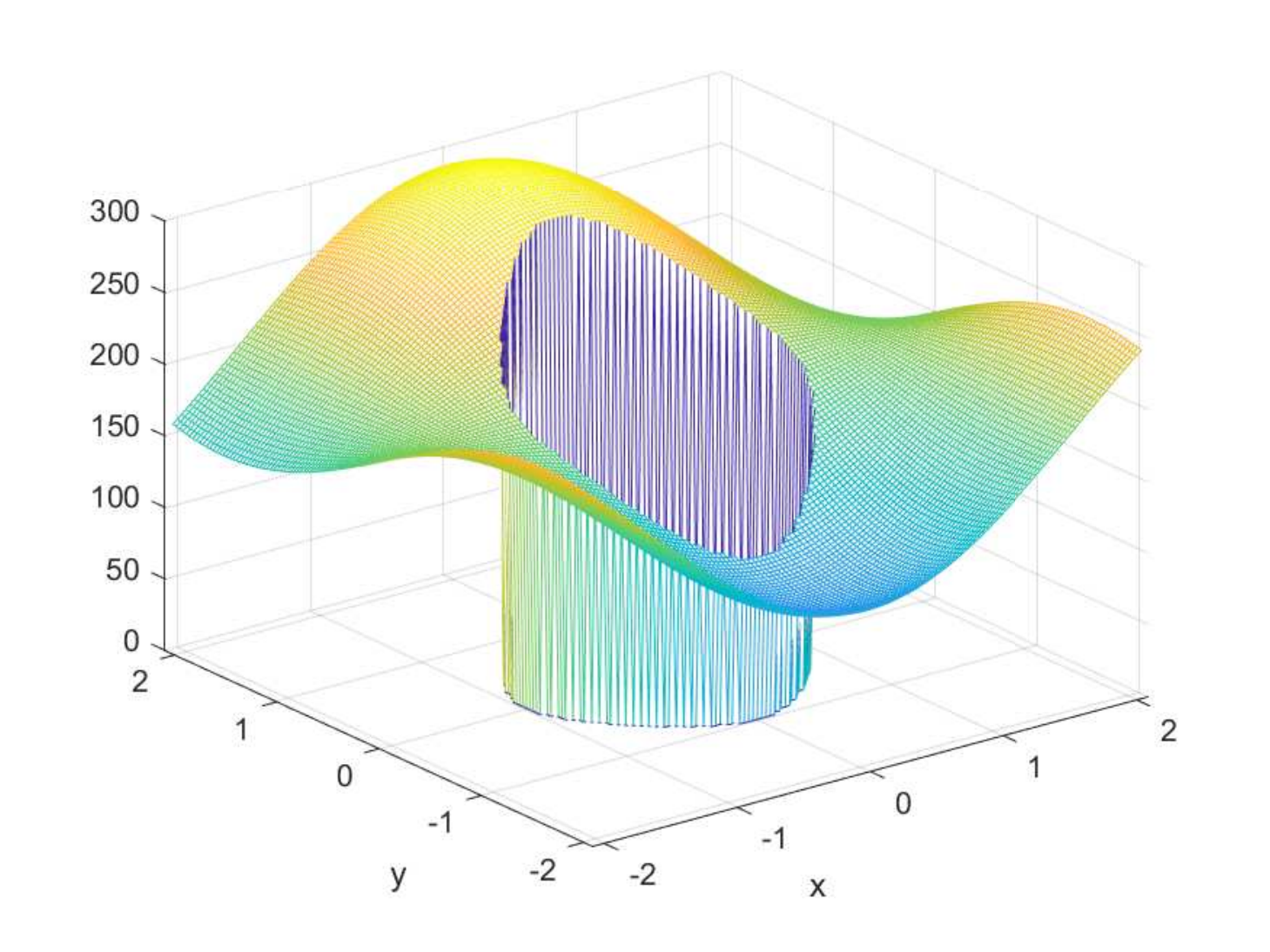}
	\end{subfigure}
	\begin{subfigure}[b]{0.3\textwidth}
		 \includegraphics[width=5.7cm,height=4.5cm]{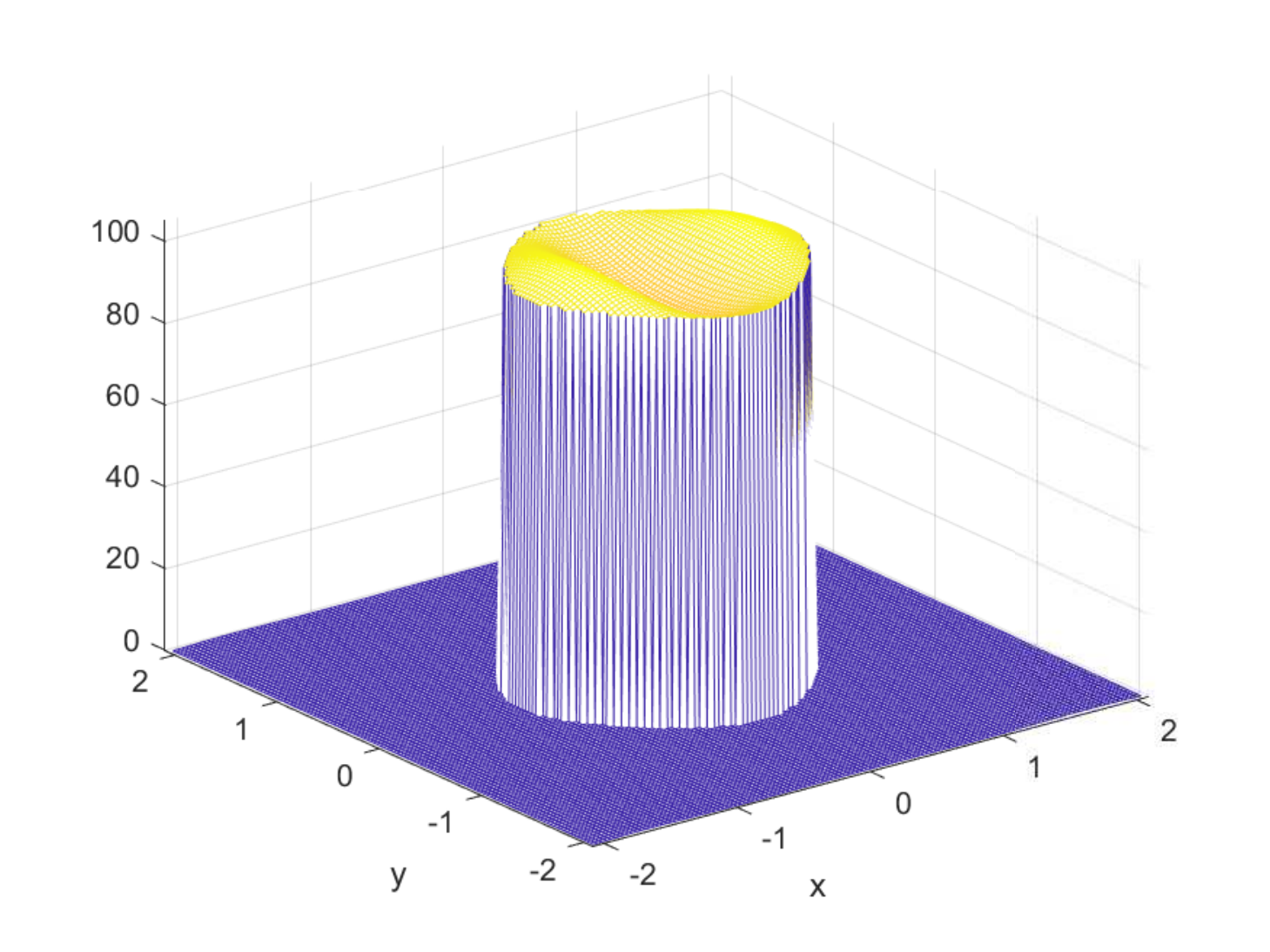}
	\end{subfigure}
	\begin{subfigure}[b]{0.3\textwidth}
		 \includegraphics[width=5.7cm,height=4.5cm]{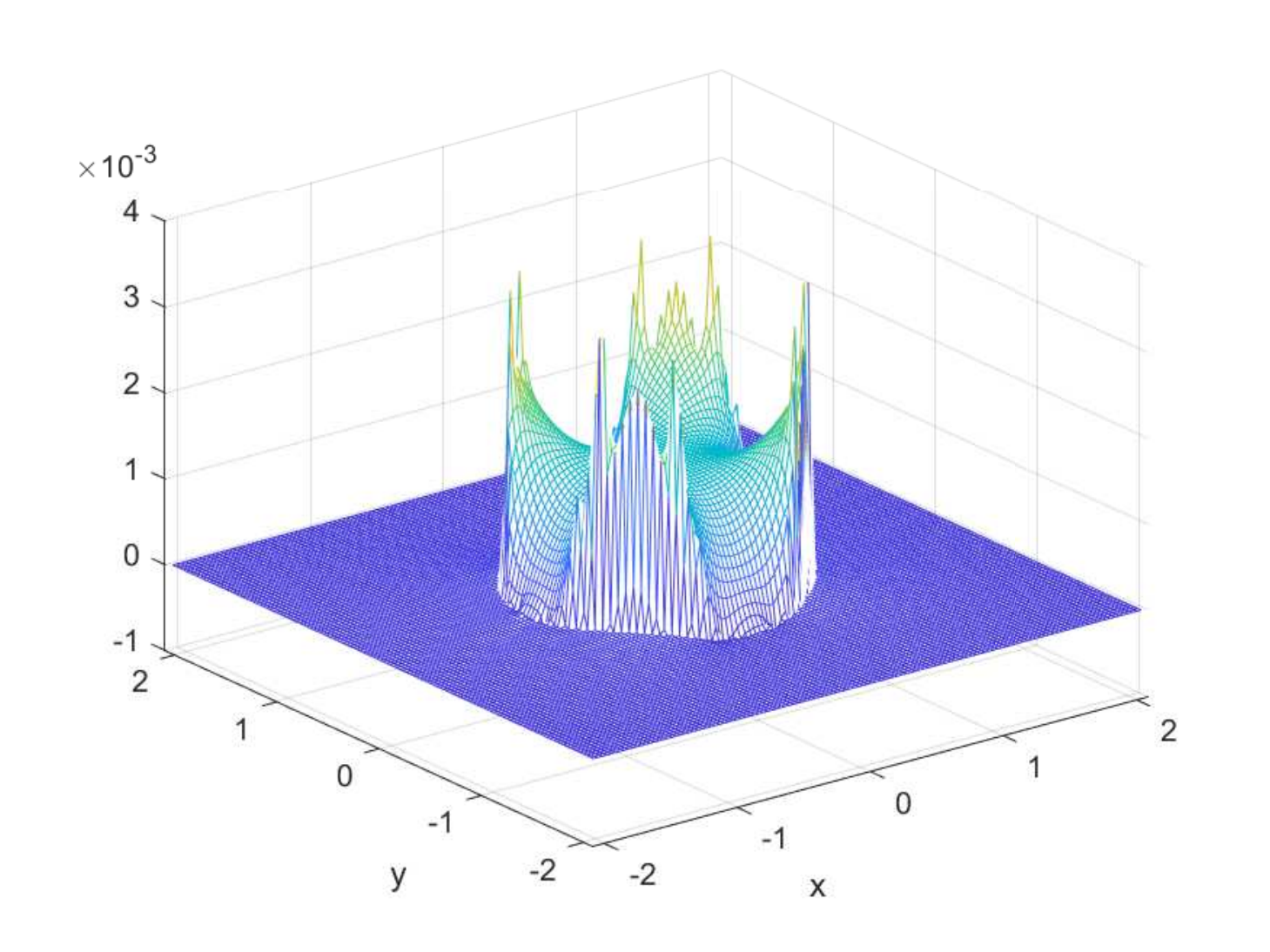}
	\end{subfigure}
	\begin{subfigure}[b]{0.3\textwidth}
		 \includegraphics[width=5.7cm,height=4.5cm]{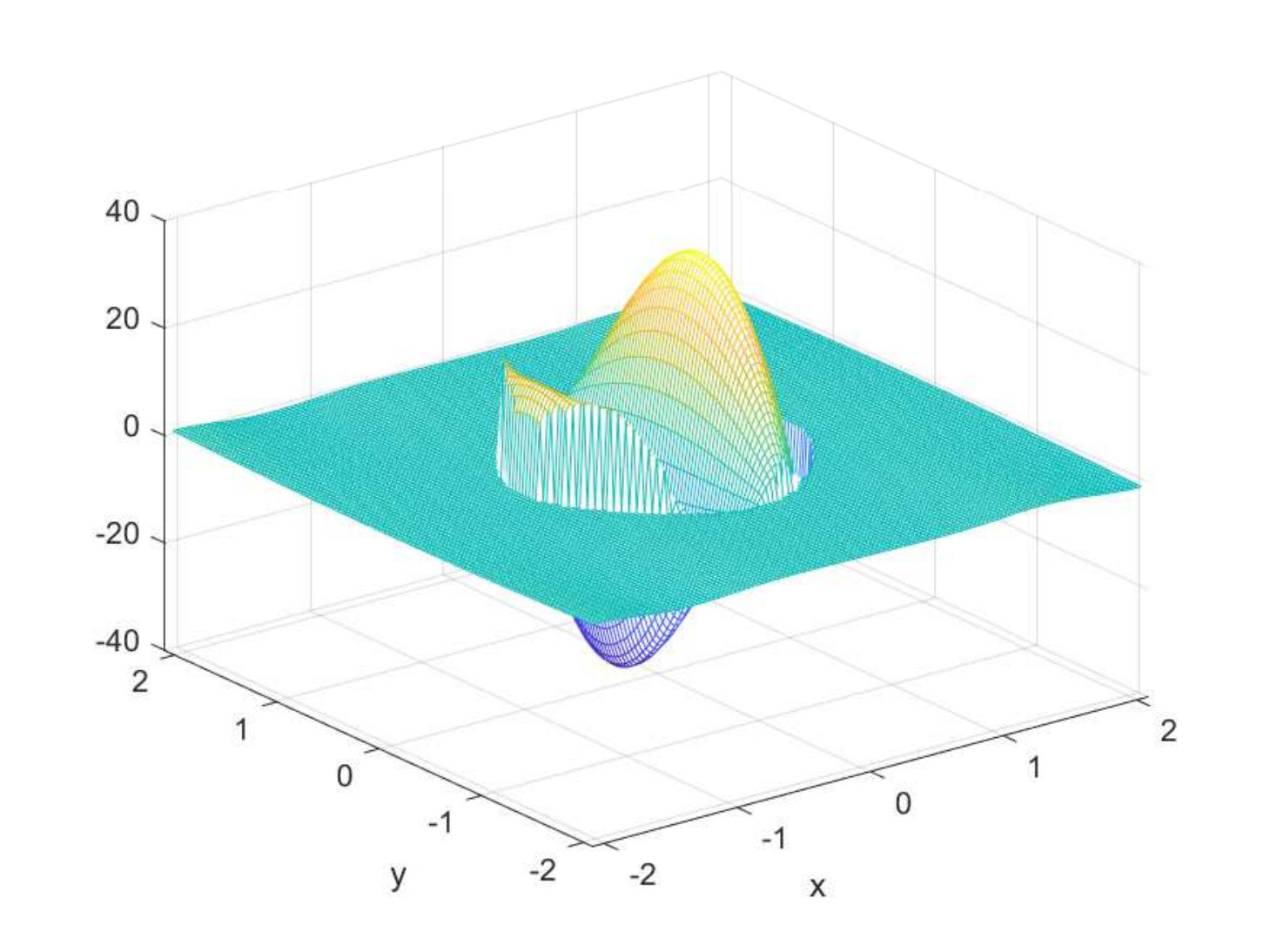}
	\end{subfigure}
	\begin{subfigure}[b]{0.3\textwidth}
		 \includegraphics[width=5.7cm,height=4.5cm]{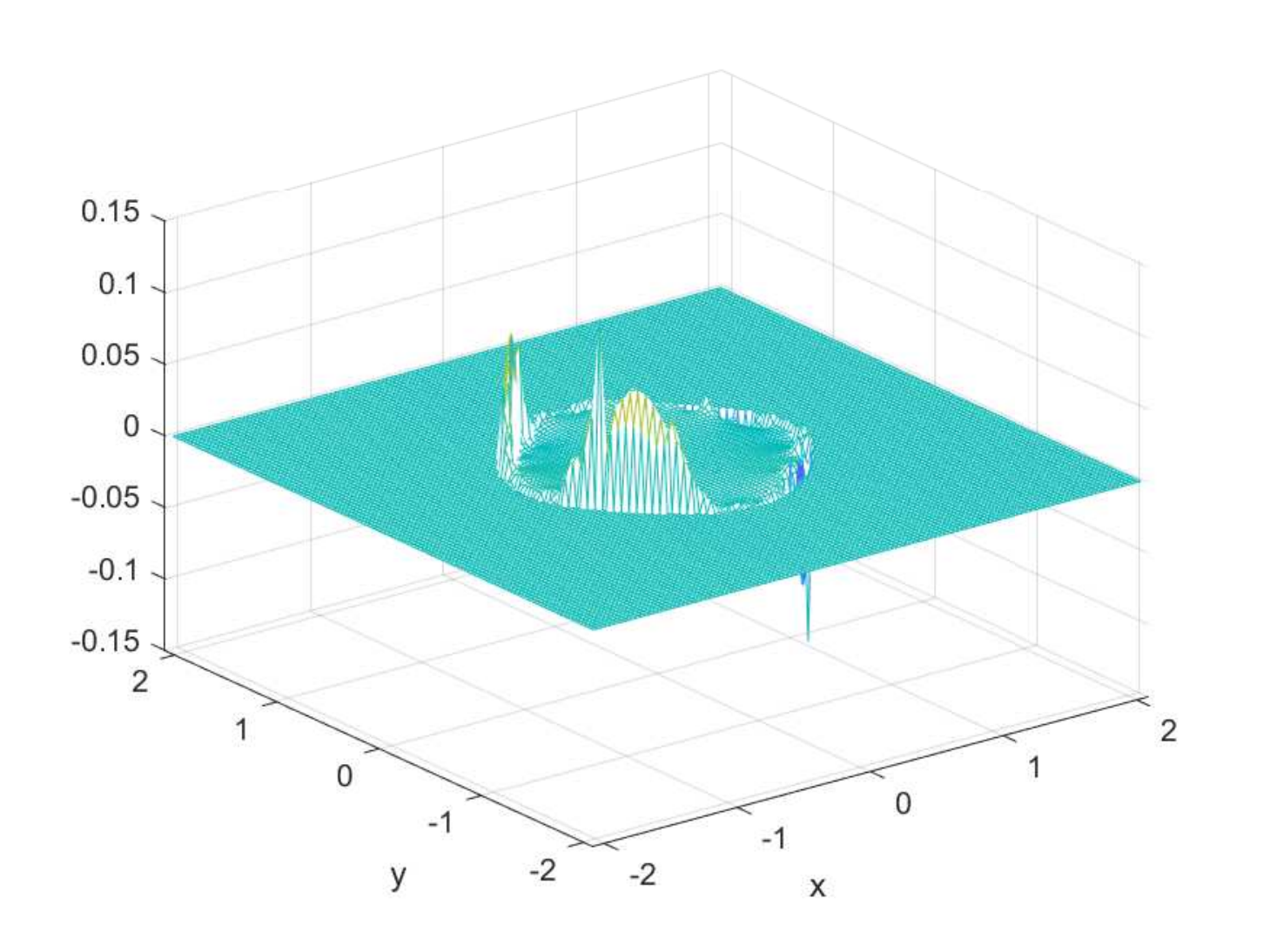}
	\end{subfigure}
	\caption
	{\tiny{Top row for \cref{Drafex5}: the interface curve $\Gamma$ (left), the coefficient $a(x,y)$ (middle)	and
		the numerical solution $u_h$ (right) with $h=2^{-7}\times \frac{4\pi}{3}$. Bottom row for \cref{Drafex5}: the error $u_h-u$ (left), the numerical $(u_h)_x$ (middle) and the error $(u_h)_x-u_x$ (right) with $h=2^{-7}\times \frac{4\pi}{3}$.}}
	\label{fig:figure5}
\end{figure}
\subsection{Numerical examples with $u$ unknown and $\Gamma \cap \partial \Omega=\emptyset$\label{NumeSect2}}
In this subsection, we provide 5 numerical experiments such that the exact solution $u$ of \eqref{Qeques1} is unknown.
%and the interface curve $\Gamma$ does not touch the boundary of $\Omega$.
%%%%%%%%%%%%%%%%%%%%%%%%%%%%%%%%%%%%%%%%%%%%%%%%%%%%%%%%%%%%%%%%%%%%%%%%%%%%%%%%%%%%%%%%%%%%%%%%%%%%%%%%%%%%%%%%%%%%%%%%%%%%%%%

\begin{example}\label{Drafex7}
	 %%%%%%%%%%%%%%%%%%%%%%%%%%%%%%%%%%%%%%%%%%%%%%%%%%	 
	\normalfont
	Let $\Omega=(-\pi,\pi)^2$ and
	the interface curve be given by
	$\Gamma:=\{(x,y)\in \Omega \; :\; \psi(x,y)=0\}$ with
	$\psi (x,y)=x^4+2y^4-2$. Note that $\Gamma \cap \partial \Omega=\emptyset$ and
    \eqref{Qeques1} is given by
	\begin{align*}
		 &a_{+}=a\chi_{\Op}=100(2+\sin(x)\cos(y)),
		\qquad a_{-}=a\chi_{\Om}=\frac{2+\sin(x)\cos(y)}{10},\\
		 &f_{+}=f\chi_{\Op}=\sin(2x)\sin(2y),
		\qquad f_{-}=f\chi_{\Om}=\cos(2x)\cos(2y), \\
		&g_1=\exp(x-y)-10, \qquad g_2=\cos(x+y),
		\qquad g=0.
	\end{align*}
	The numerical results are provided in \cref{table:QSp7}  and \cref{fig:figure7}.		
\end{example}
\begin{table}[htbp]
	\caption{\tiny{Performance in \cref{Drafex7}  of the proposed  high order compact finite difference scheme in \cref{thm:regular,thm:gradient:regular,thm:irregular,thm:gradient:irregular} on uniform Cartesian meshes with $h=2^{-J}\times 2\pi$. $\kappa$ is the condition number of the coefficient matrix.}}
	\centering
	\setlength{\tabcolsep}{2mm}{
		\begin{tabular}{c|c|c|c|c|c|c|c}
			\hline
			$J$
& $\|u_{h}-u_{h/2}\|_{2,\ind_{\Omega}}$

&order &$|u_{h}-u_{h/2}|_{H^1,\ind_{\Omega}}$

&order &  $|u_{h}-u_{h/2}|_{V,\ind_{\Omega}}$

&order &  $\kappa$ \\
\hline
3    &2.8284E-01    &0    &1.4650E+01    &0    &1.2366E+01    &0    &7.8789E+02\\
4    &5.3709E-02    &2.397    &3.6196E-01    &5.339    &2.6415E+00    &2.227    &2.7310E+03\\
5    &6.6858E-03    &3.006    &7.6924E-02    &2.234    &8.3040E-01    &1.669    &8.7219E+03\\
6    &3.9281E-04    &4.089    &5.4377E-03    &3.822    &6.2964E-02    &3.721    &4.5222E+04\\
7    &2.0733E-05    &4.244    &3.2159E-04    &4.080    &5.2101E-03    &3.595    &2.2395E+04\\
\hline
			
	\end{tabular}}
	\label{table:QSp7}
\end{table}
\begin{figure}[htbp]
	\centering
	\begin{subfigure}[b]{0.3\textwidth}
		 \includegraphics[width=5.7cm,height=4.cm]{AA2.pdf}
	\end{subfigure}
	\begin{subfigure}[b]{0.3\textwidth}
		 \includegraphics[width=5.7cm,height=4.5cm]{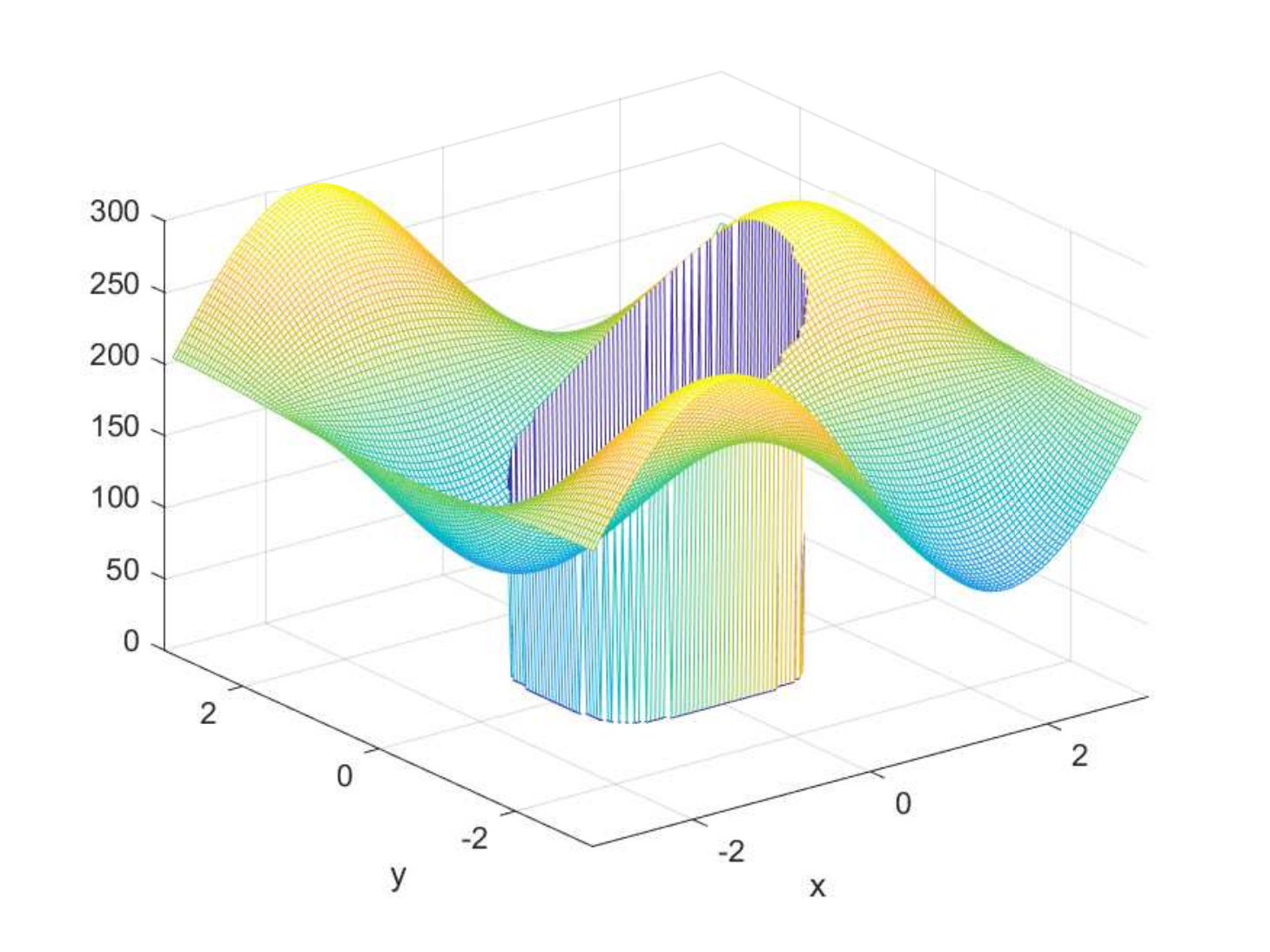}
	\end{subfigure}
	\begin{subfigure}[b]{0.3\textwidth}
		 \includegraphics[width=5.7cm,height=4.5cm]{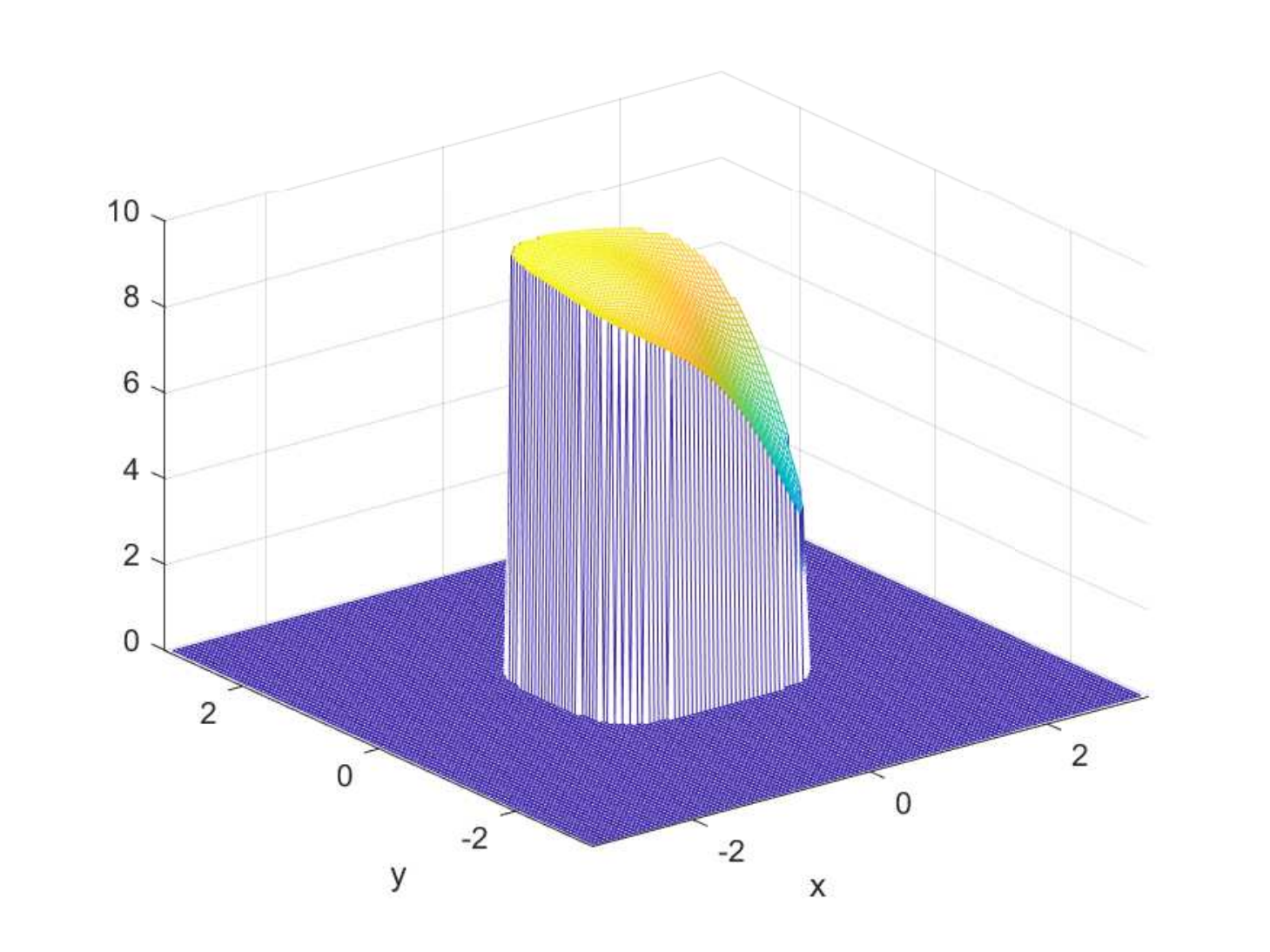}
	\end{subfigure}
	\begin{subfigure}[b]{0.3\textwidth}
		 \includegraphics[width=5.7cm,height=4.5cm]{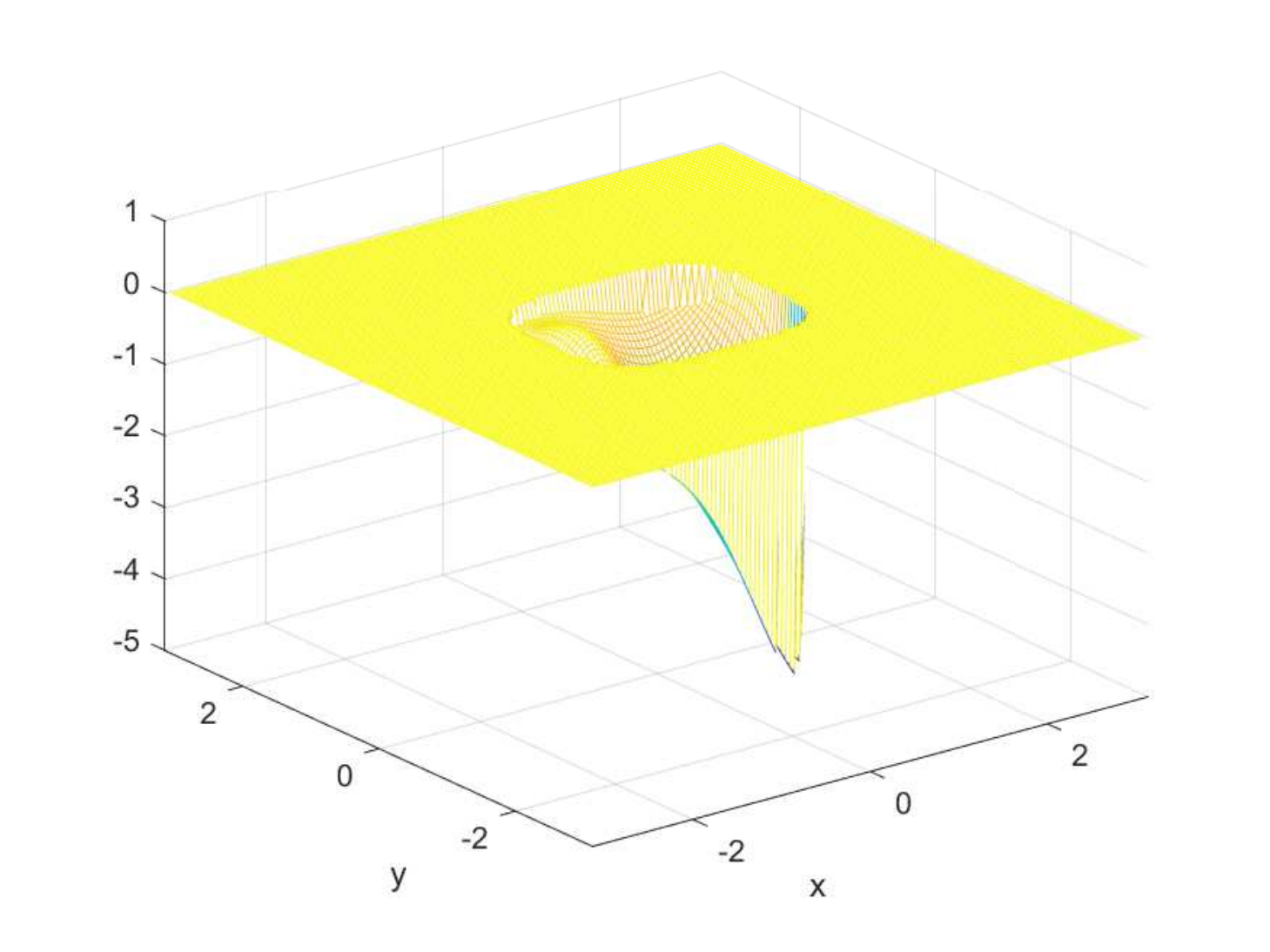}
	\end{subfigure}
	\begin{subfigure}[b]{0.3\textwidth}
	 \includegraphics[width=5.7cm,height=4.5cm]{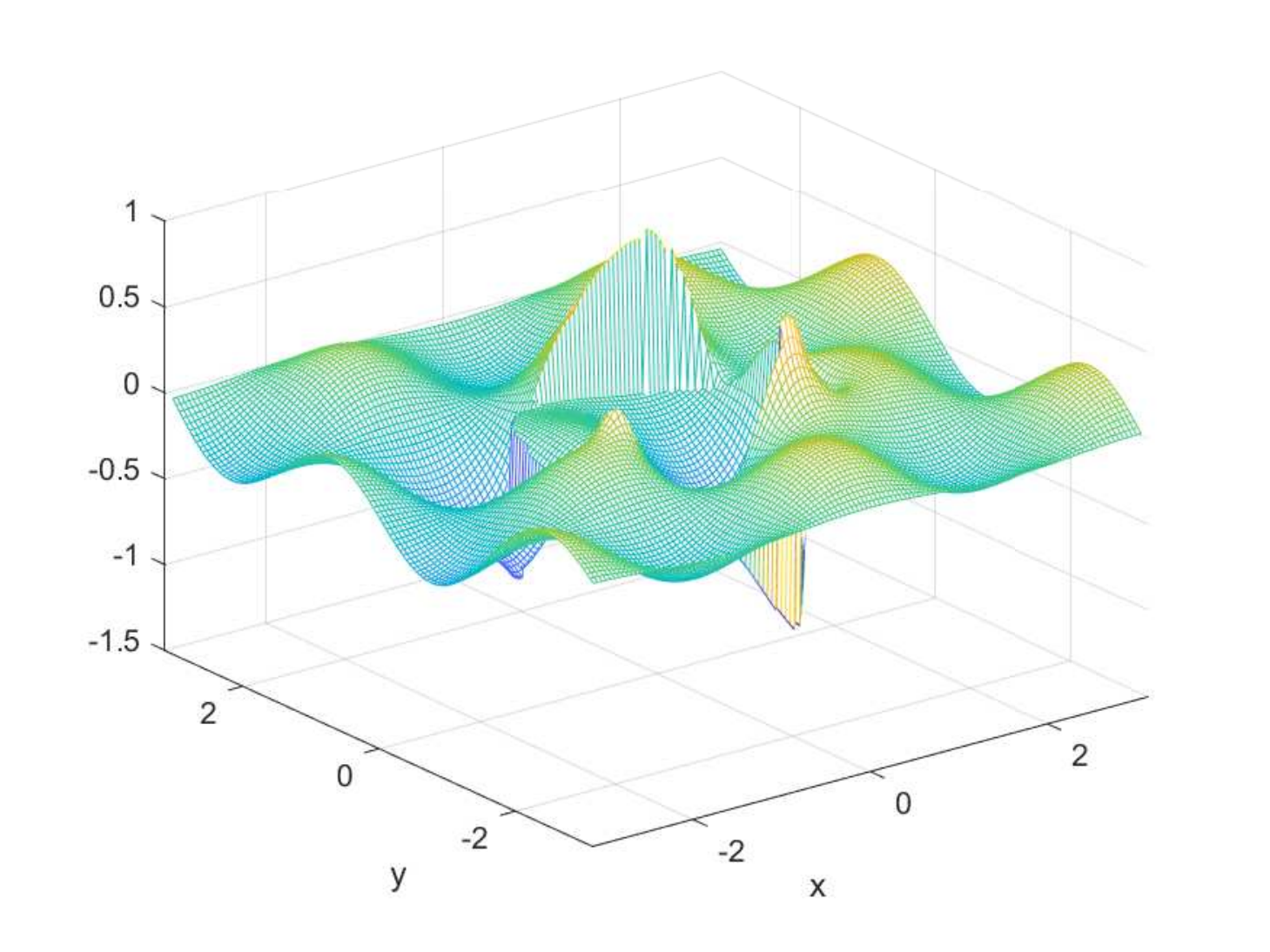}
\end{subfigure}
\begin{subfigure}[b]{0.3\textwidth}
	 \includegraphics[width=5.7cm,height=4.5cm]{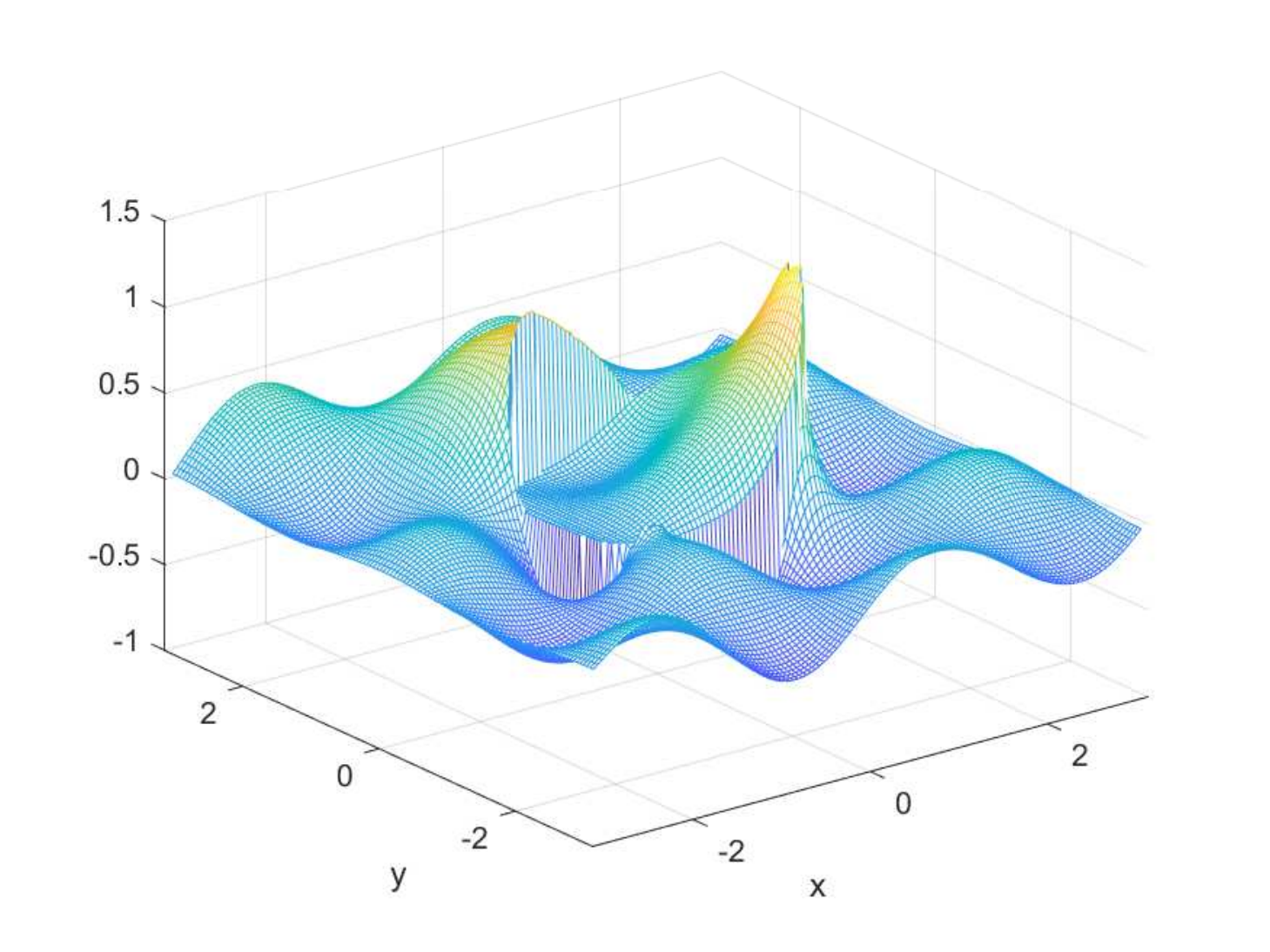}
\end{subfigure}
	\caption
	{\tiny{Top row for \cref{Drafex7}: the interface curve $\Gamma$ (left), the coefficient $a(x,y)$ (middle) and the numerical solution $u_h$ (right) with $h=2^{-7}\times 2\pi$. Bottom row for \cref{Drafex7}:  the numerical  $(u_h)_x$ (left), the numerical  $a(x,y)\times (u_h)_x$ (middle) and  the numerical  $a(x,y)\times(u_h)_y$ (right) with $h=2^{-7}\times 2\pi$.}}
	\label{fig:figure7}
\end{figure}
%%
%%
%%
%%
%%%%%%%%%%%%%%%%%%%%%%%%%%%%%%%%%%%%%%%%%%%%%%%%%%%%%%%%%%%%%%%%%%%%%%%%%%%%%%%%%%%%%%%%%%%%%%%%%%%%%%%%%%%%%%%%%%%%%%%%%%%%%%%
%%
%%
\begin{example}\label{Drafex8}
	 %%%%%%%%%%%%%%%%%%%%%%%%%%%%%%%%%%%%%%%%%%%%%%%%%%	 
	\normalfont
	Let $\Omega=(-\pi,\pi)^2$ and
	the interface curve be given by
	$\Gamma:=\{(x,y)\in \Omega \; :\; \psi(x,y)=0\}$ with
	$\psi (x,y)=y^2-2x^2+x^4-1$. Note that $\Gamma \cap \partial \Omega=\emptyset$ and
    \eqref{Qeques1} is given by
	\begin{align*}
		 &a_{+}=a\chi_{\Op}=\frac{10}{2+\cos(x+y)},
		\qquad a_{-}=a\chi_{\Om}=\frac{2+\sin(x+y)}{100},\\
		&f_{+}=f\chi_{\Op}=\sin(2x)\sin(y),
		\qquad f_{-}=f\chi_{\Om}=\exp(x+y)\sin(x), \\
		&g_1=\cos(x-y)-1, \qquad g_2=\sin(x-y),
		\qquad g=0.
	\end{align*}
	The numerical results are provided in \cref{table:QSp8}  and \cref{fig:figure8}.		
\end{example}
\begin{table}[htbp]
	\caption{\tiny{Performance in \cref{Drafex8}  of the proposed  high order compact finite difference scheme in \cref{thm:regular,thm:gradient:regular,thm:irregular,thm:gradient:irregular} on uniform Cartesian meshes with $h=2^{-J}\times 2\pi$. $\kappa$ is the condition number of the coefficient matrix.}}
	\centering
	\setlength{\tabcolsep}{2mm}{
		\begin{tabular}{c|c|c|c|c|c|c|c}
			\hline
			$J$
& $\|u_{h}-u_{h/2}\|_{2,\ind_{\Omega}}$

&order &$|u_{h}-u_{h/2}|_{H^1,\ind_{\Omega}}$

&order &  $|u_{h}-u_{h/2}|_{V,\ind_{\Omega}}$

&order &  $\kappa$ \\
\hline
3    &2.4355E+03    &0    &5.4333E+03    &0    &1.9856E+03    &0    &8.2409E+05\\
4    &1.0396E+01    &7.872    &3.3857E+02    &4.004    &9.7875E+00    &7.664    &4.3788E+03\\
5    &5.4539E-01    &4.253    &1.1565E+01    &4.872    &6.0239E-01    &4.022    &6.5855E+03\\
6    &3.3619E-02    &4.020    &2.7956E-01    &5.370    &7.6827E-02    &2.971    &4.3061E+03\\
7    &2.1505E-03    &3.967    &2.1704E-02    &3.687    &5.4389E-03    &3.820    &1.3806E+04\\
\hline
			
	\end{tabular}}
	\label{table:QSp8}
\end{table}
\begin{figure}[htbp]
	\centering
	\begin{subfigure}[b]{0.3\textwidth}
		 \includegraphics[width=5.7cm,height=4.cm]{AA3.pdf}
	\end{subfigure}
	\begin{subfigure}[b]{0.3\textwidth}
		 \includegraphics[width=5.7cm,height=4.5cm]{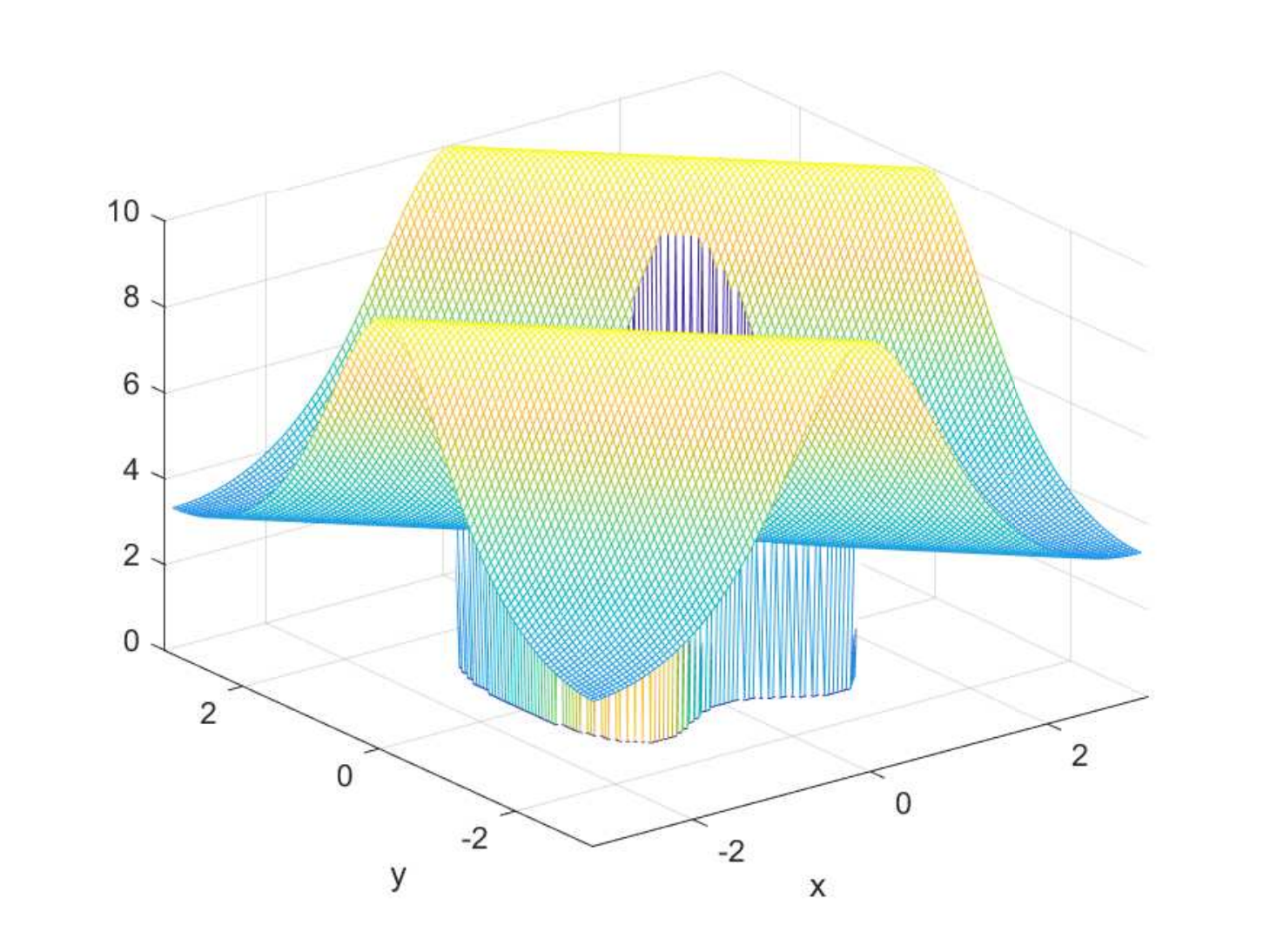}
	\end{subfigure}
	\begin{subfigure}[b]{0.3\textwidth}
		 \includegraphics[width=5.7cm,height=4.5cm]{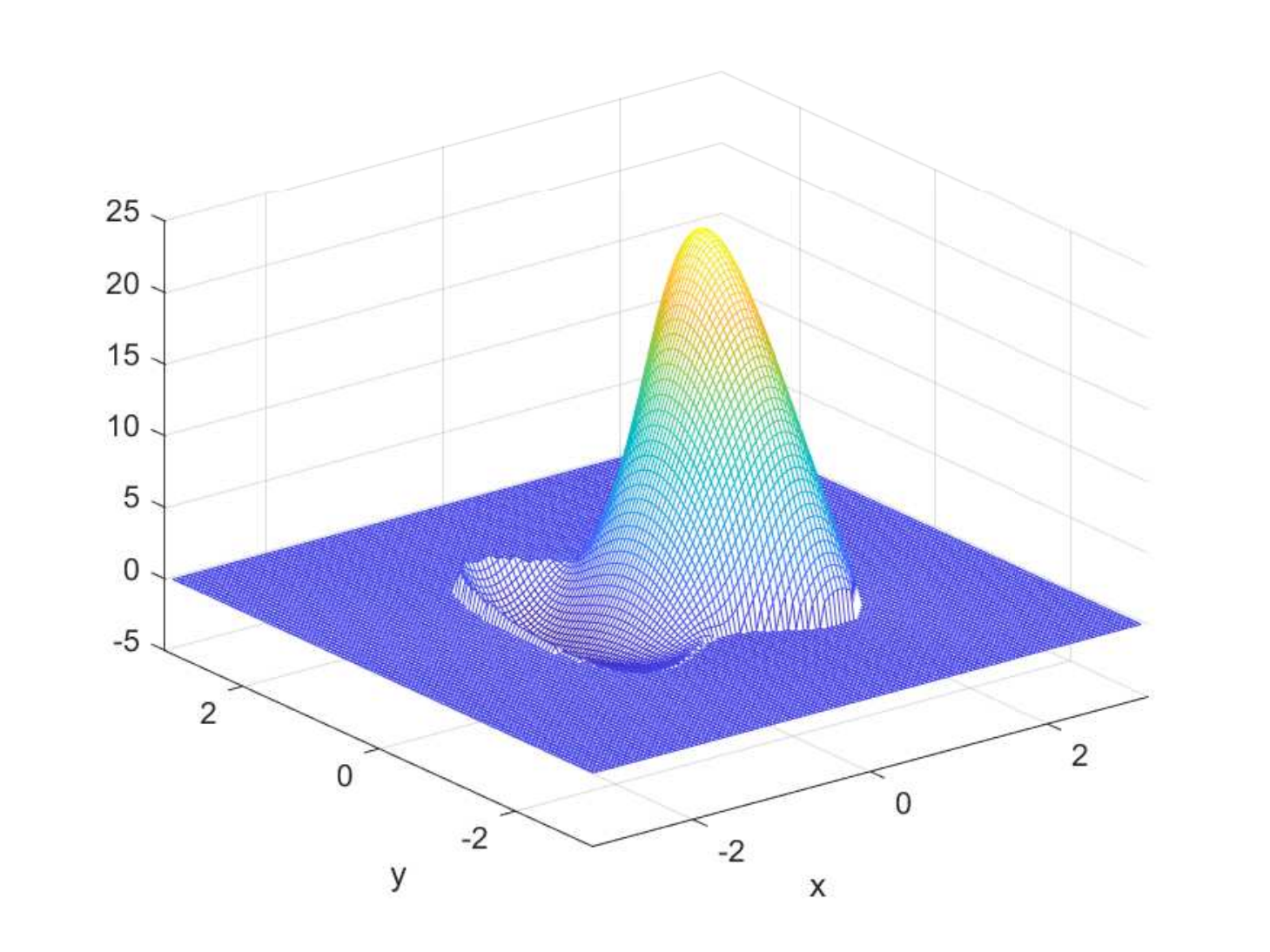}
	\end{subfigure}
	\begin{subfigure}[b]{0.3\textwidth}
		 \includegraphics[width=5.7cm,height=4.5cm]{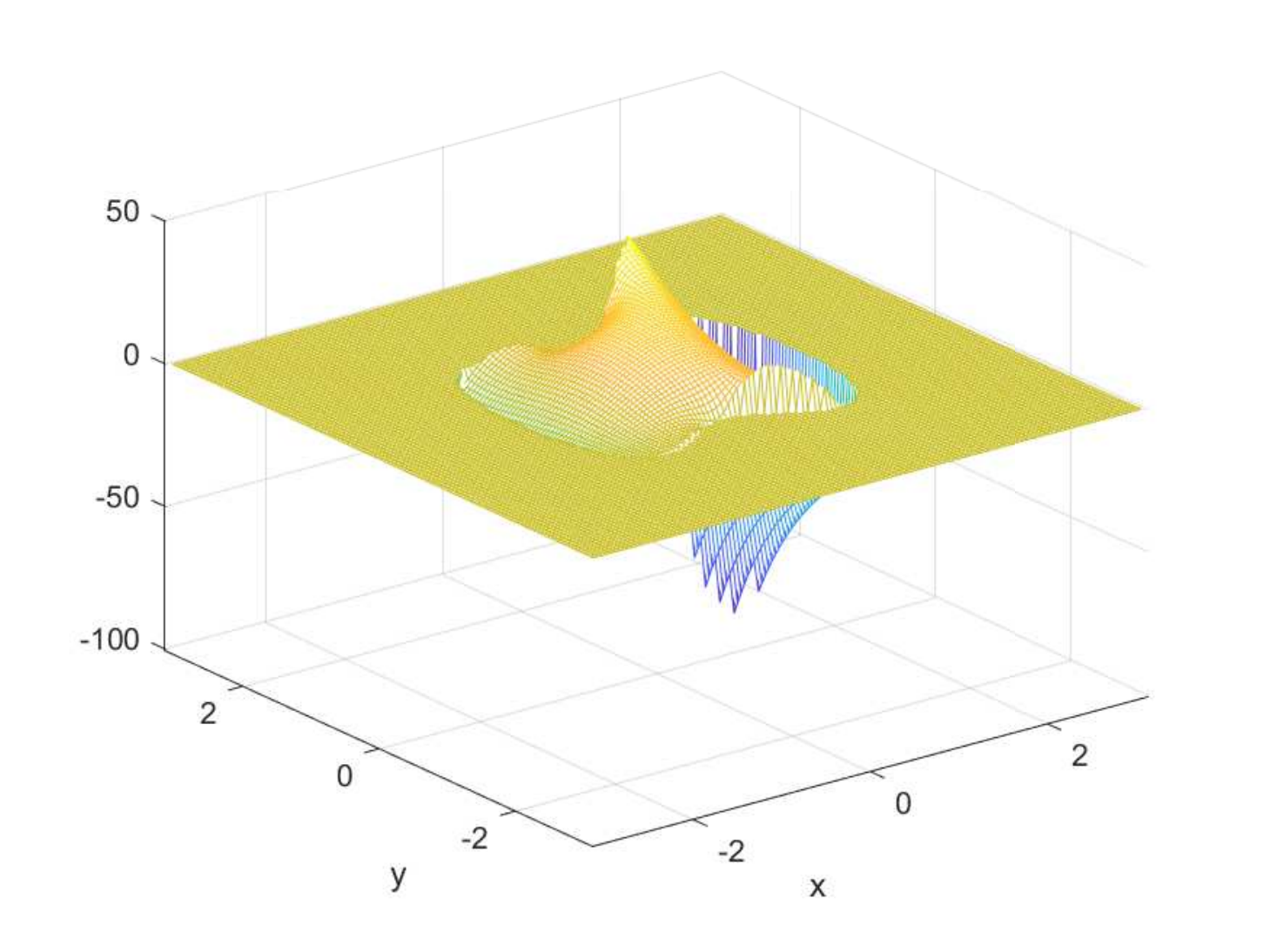}
	\end{subfigure}
	\begin{subfigure}[b]{0.3\textwidth}
	 \includegraphics[width=5.7cm,height=4.5cm]{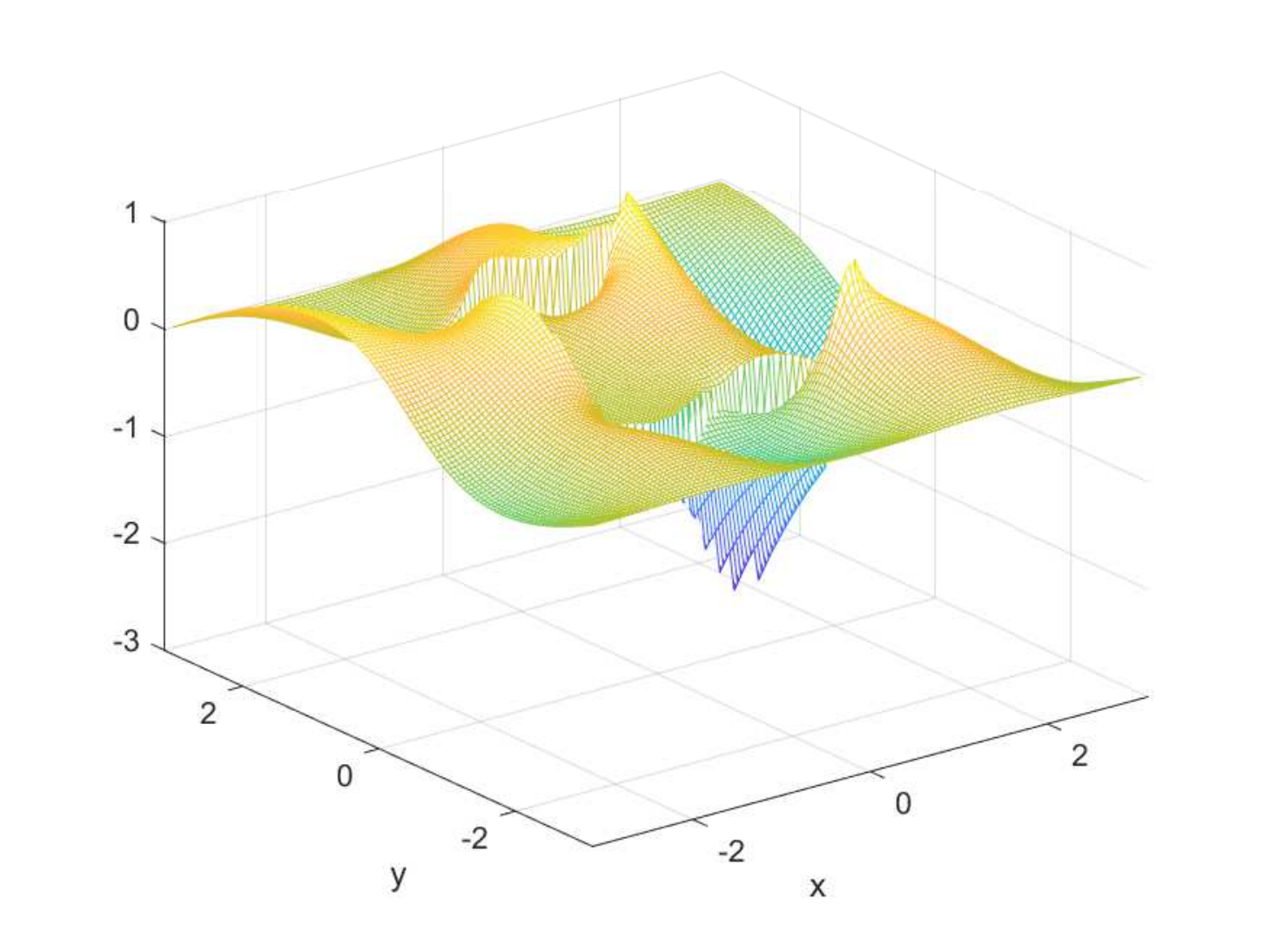}
\end{subfigure}
\begin{subfigure}[b]{0.3\textwidth}
	 \includegraphics[width=5.7cm,height=4.5cm]{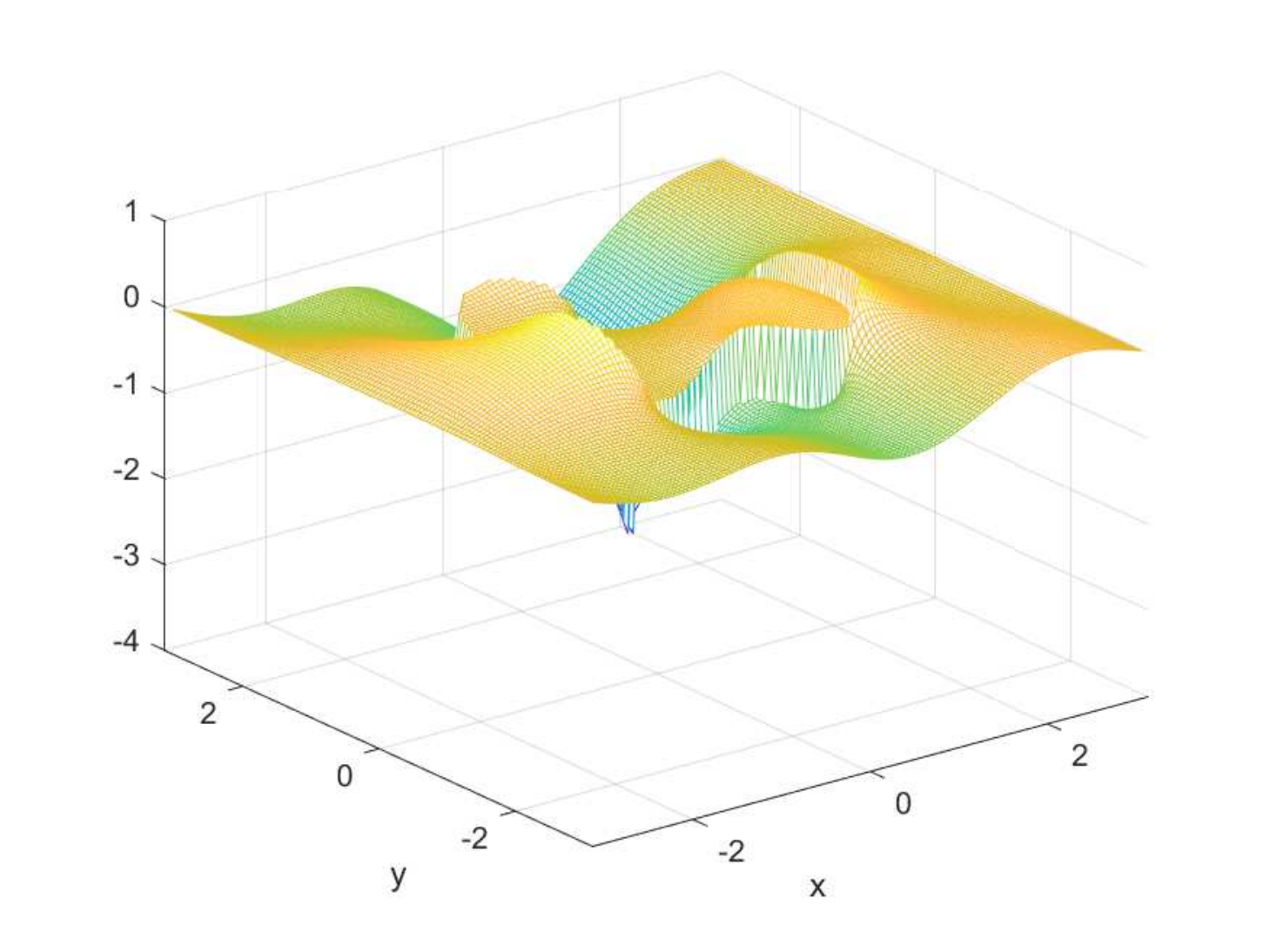}
\end{subfigure}
	\caption
	{\tiny{Top row for \cref{Drafex8}: the interface curve $\Gamma$ (left), the coefficient $a(x,y)$ (middle) and the numerical solution $u_h$ (right) with $h=2^{-7}\times 2\pi$. Bottom row for \cref{Drafex8}:  the numerical  $(u_h)_x$ (left), the numerical  $a(x,y)\times (u_h)_x$ (middle) and  the numerical  $a(x,y)\times(u_h)_y$ (right) with $h=2^{-7}\times 2\pi$.}}
	\label{fig:figure8}
\end{figure}
%%
%%
%%%%%%%%%%%%%%%%%%%%%%%%%%%%%%%%%%%%%%%%%%%%%%%%%%%%%%%%%%%%%%%%%%%%%%%%%%%%%%%%%%%%%%%%%%%%%%%%%%%%%%%%%%%%%%%%%%%%%%%%%%%%%%%
%%
%%
\begin{example}\label{Drafex9}
	 %%%%%%%%%%%%%%%%%%%%%%%%%%%%%%%%%%%%%%%%%%%%%%%%%%	 
	\normalfont
	Let $\Omega=(-2,2)^2$ and
	the interface curve be given by
	$\Gamma:=\{(x,y)\in \Omega \; :\; \psi(x,y)=0\}$ with
	$\psi (x,y)=2x^4+y^2-1/2$. Note that $\Gamma \cap \partial \Omega=\emptyset$ and
    \eqref{Qeques1} is given by
	\begin{align*}
		 &a_{+}=a\chi_{\Op}=100(2+\sin(x)\cos(y)),
		\qquad a_{-}=a\chi_{\Om}=\frac{2+\cos(x-y)}{10},\\
		&f_{+}=f\chi_{\Op}=\sin(\pi x)\sin(\pi y),
		\qquad f_{-}=f\chi_{\Om}=\cos(\pi x)\cos(\pi y), \\
		&g_1=\sin(x)\cos(y)-2, \qquad g_2=\cos(x)\sin(y),
		\qquad g=0.
	\end{align*}
	The numerical results are provided in \cref{table:QSp9}  and \cref{fig:figure9}.		
\end{example}
\begin{table}[htbp]
	\caption{\tiny{Performance in \cref{Drafex9}  of the proposed  high order compact finite difference scheme in \cref{thm:regular,thm:gradient:regular,thm:irregular,thm:gradient:irregular} on uniform Cartesian meshes with $h=2^{-J}\times 4$. $\kappa$ is the condition number of the coefficient matrix.}}
	\centering
	\setlength{\tabcolsep}{2mm}{
		\begin{tabular}{c|c|c|c|c|c|c|c}
			\hline
			$J$
& $\|u_{h}-u_{h/2}\|_{2,\ind_{\Omega}}$

&order &$|u_{h}-u_{h/2}|_{H^1,\ind_{\Omega}}$

&order &  $|u_{h}-u_{h/2}|_{V,\ind_{\Omega}}$

&order &  $\kappa$ \\
\hline

3    &1.9911E-01    &0    &5.6734E+00    &0    &4.7783E+00    &0    &6.3170E+02\\
4    &1.7085E-02    &3.543    &1.9762E-01    &4.843    &2.3331E+00    &1.034    &1.0358E+04\\
5    &1.7892E-03    &3.255    &9.9839E-03    &4.307    &9.2170E-02    &4.662    &2.6379E+03\\
6    &1.0570E-04    &4.081    &9.6792E-04    &3.367    &8.5957E-03    &3.423    &4.3995E+03\\
7    &6.3279E-06    &4.062    &8.2529E-05    &3.552    &6.2430E-04    &3.783    &1.4608E+04\\
\hline
			
	\end{tabular}}
	\label{table:QSp9}
\end{table}
\begin{figure}[htbp]
	\centering
	\begin{subfigure}[b]{0.3\textwidth}
		 \includegraphics[width=5.7cm,height=4.cm]{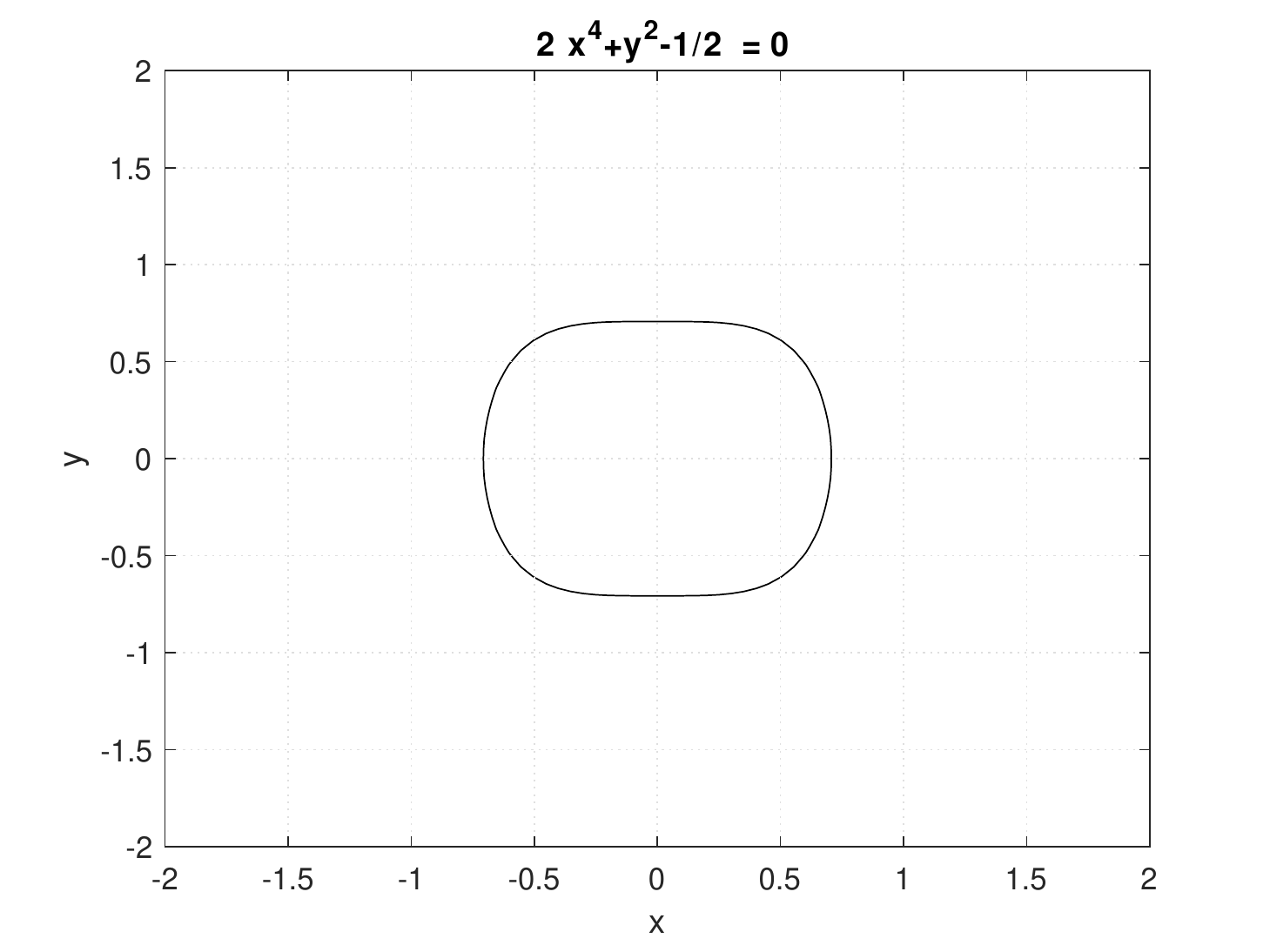}
	\end{subfigure}
	\begin{subfigure}[b]{0.3\textwidth}
		 \includegraphics[width=5.7cm,height=4.5cm]{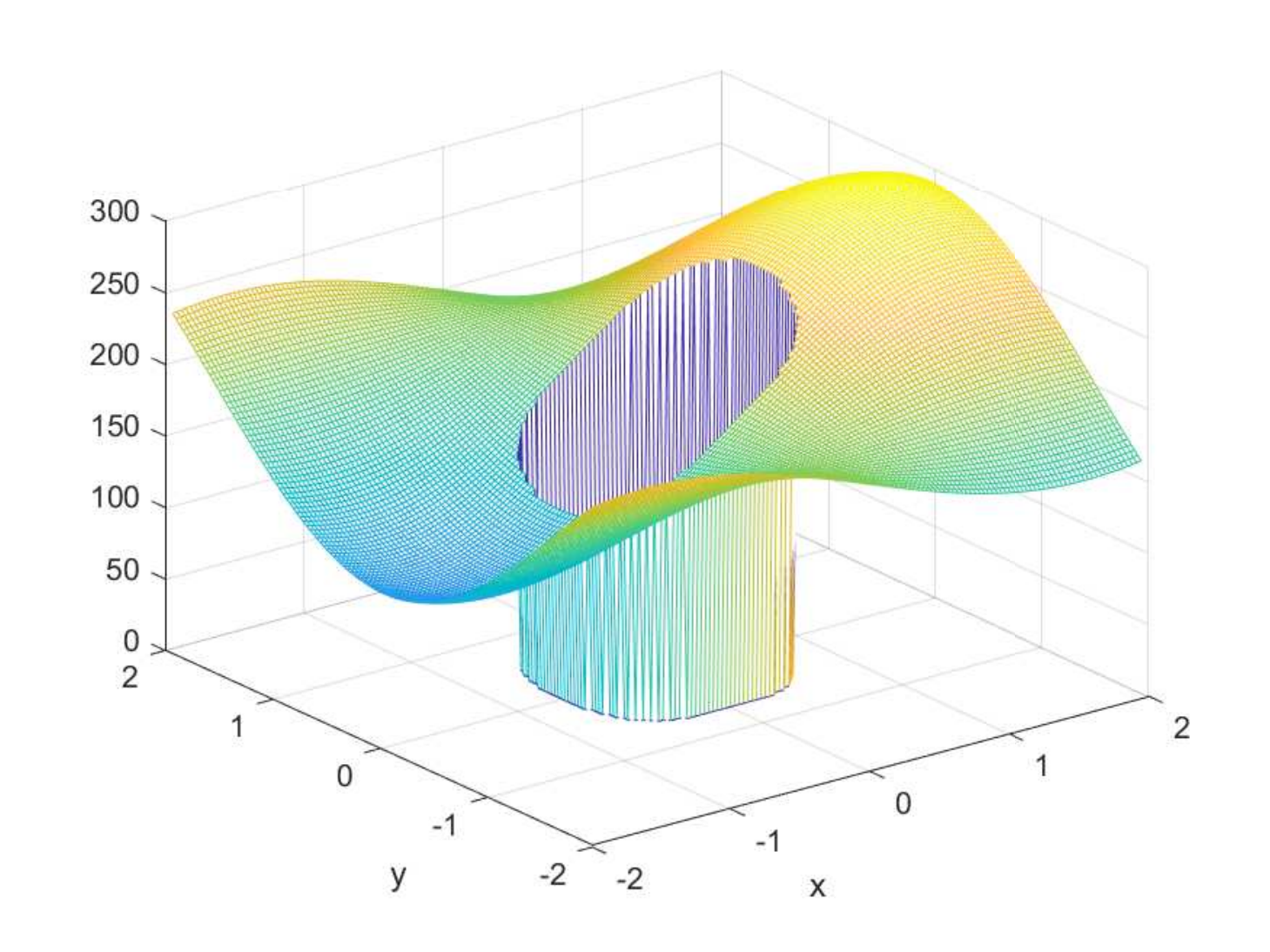}
	\end{subfigure}
	\begin{subfigure}[b]{0.3\textwidth}
		 \includegraphics[width=5.7cm,height=4.5cm]{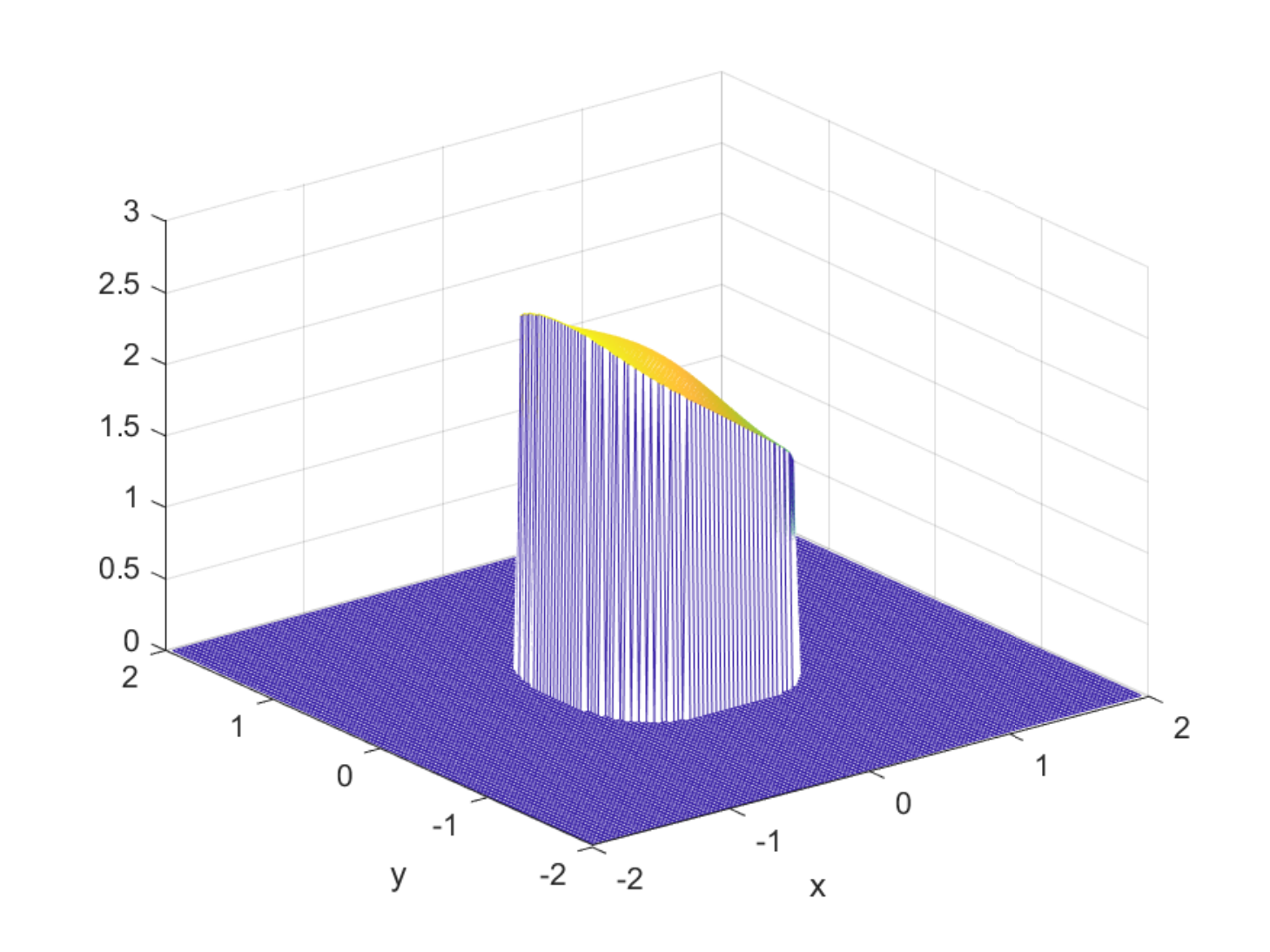}
	\end{subfigure}
	\begin{subfigure}[b]{0.3\textwidth}
		 \includegraphics[width=5.7cm,height=4.5cm]{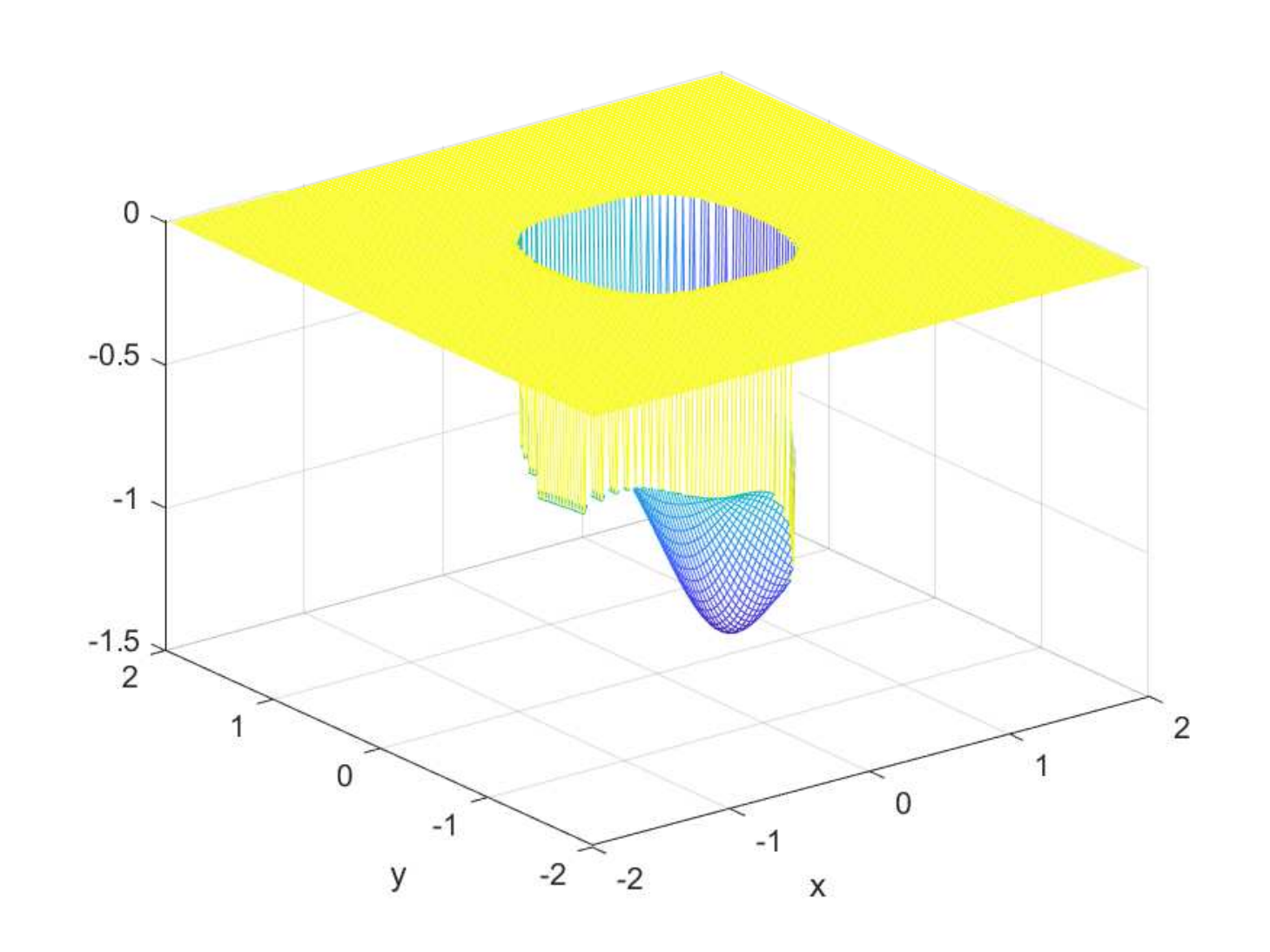}
	\end{subfigure}
	\begin{subfigure}[b]{0.3\textwidth}
	 \includegraphics[width=5.7cm,height=4.5cm]{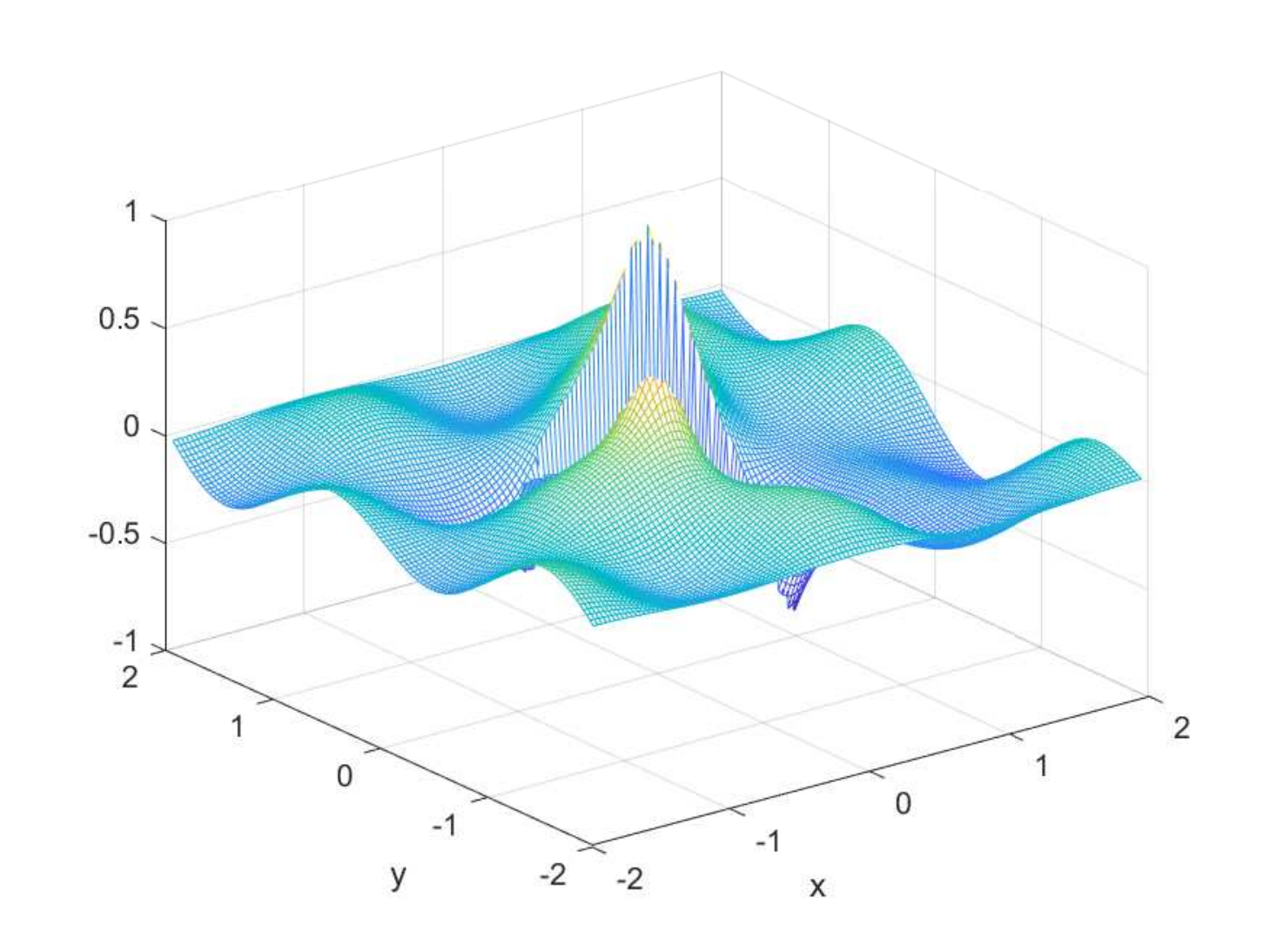}
\end{subfigure}
\begin{subfigure}[b]{0.3\textwidth}
	 \includegraphics[width=5.7cm,height=4.5cm]{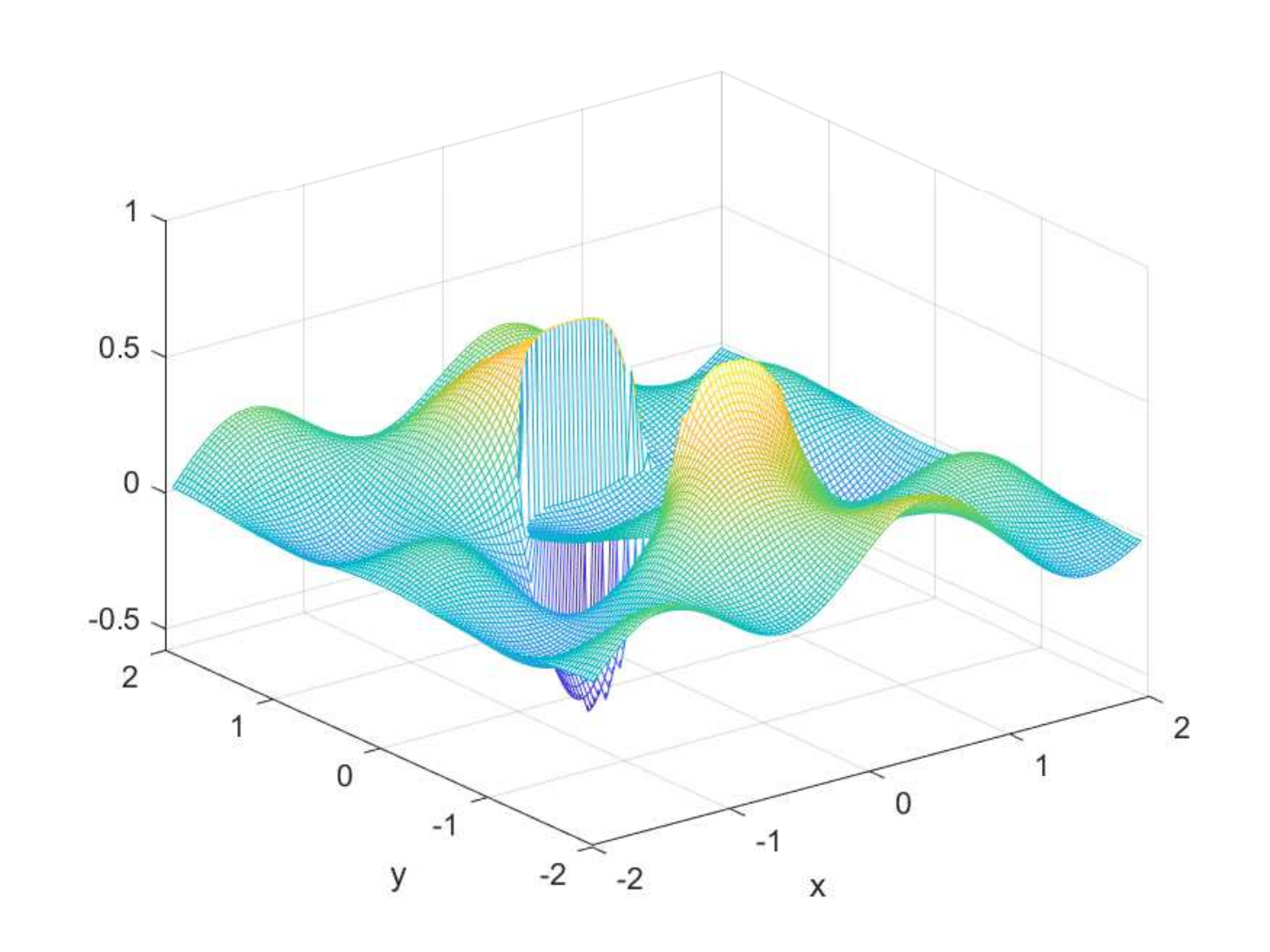}
\end{subfigure}
	\caption
	{\tiny{Top row for \cref{Drafex9}: the interface curve $\Gamma$ (left), the coefficient $a(x,y)$ (middle) and the numerical solution $u_h$ (right) with $h=2^{-7}\times 4$. Bottom row for \cref{Drafex9}:  the numerical  $(u_h)_x$ (left), the numerical  $a(x,y)\times (u_h)_x$ (middle) and  the numerical  $a(x,y)\times(u_h)_y$ (right) with $h=2^{-7}\times 4$.}}
	\label{fig:figure9}
\end{figure}
%%
%%%%%%%%%%%%%%%%%%%%%%%%%%%%%%%%%%%%%%%%%%%%%%%%%%%%%%%%%%%%%%%%%%%%%%%%%%%%%%%%%%%%%%%%%%%%%%%%%%%%%%%%%%%%%%%%%%%%%%%%%%%%%%%
%%%%%%%%%%%%%%%%%%%%%%%%%%%%%%%%%%%%%%%%%%%%%%%%%%%%%%%%%%%%%%%%%%%%%%%%%%%%%%%%%%%%%%%%%%%%%%%%%%%%%%%%%%%%%%%%%%%%%%%%%%%%%%%%%%%%%%%
%%
%%
%%
\begin{example}\label{Drafex11}
	 %%%%%%%%%%%%%%%%%%%%%%%%%%%%%%%%%%%%%%%%%%%%%%%%%%	 
	\normalfont
	Let $\Omega=(-\pi,\pi)^2$ and
	the interface curve be given by
	$\Gamma:=\{(x,y)\in \Omega \; :\; \psi(x,y)=0\}$ with
	$\psi (x,y)=x^2+y^2-2$. Note that $\Gamma \cap \partial \Omega=\emptyset$ and
    \eqref{Qeques1} is given by
	\begin{align*}
		 &a_{+}=a\chi_{\Op}=\frac{10+\sin(x)\cos(y)}{100},
		\qquad a_{-}=a\chi_{\Om}=10(10+\sin(x-y)),\\
		 &f_{+}=f\chi_{\Op}=\sin(2x)\sin(2y),
		\qquad f_{-}=f\chi_{\Om}=\sin(2x)\sin(2y), \\
		&g_1=\sin(x)\sin(y)+2, \qquad g_2=\cos(y),
		\qquad g=0.
	\end{align*}
	The numerical results are provided in \cref{table:QSp11}  and \cref{fig:figure11}.		 
\end{example}
\begin{table}[htbp]
	\caption{\tiny{Performance in \cref{Drafex11}  of the proposed  high order compact finite difference scheme in \cref{thm:regular,thm:gradient:regular,thm:irregular,thm:gradient:irregular} on uniform Cartesian meshes with $h=2^{-J}\times 2\pi$. $\kappa$ is the condition number of the coefficient matrix.}}
	\centering
	\setlength{\tabcolsep}{2mm}{
		\begin{tabular}{c|c|c|c|c|c|c|c}
			\hline
			$J$
& $\|u_{h}-u_{h/2}\|_{2,\ind_{\Omega}}$

&order &$|u_{h}-u_{h/2}|_{H^1,\ind_{\Omega}}$

&order &  $|u_{h}-u_{h/2}|_{V,\ind_{\Omega}}$

&order &  $\kappa$ \\
\hline
3    &3.6443E+01    &0    &2.6368E+01    &0    &5.5413E+01    &0    &3.5485E+06\\
4    &3.3319E+00    &3.451    &2.5138E+00    &3.391    &9.4984E-01    &5.866    &4.2304E+06\\
5    &5.1908E-01    &2.682    &3.9809E-01    &2.659    &9.3815E-01    &0.018    &2.0691E+09\\
6    &4.4040E-02    &3.559    &3.4551E-02    &3.526    &1.0907E-02    &6.426    &4.4599E+06\\
7    &1.5339E-03    &4.844    &1.4934E-03    &4.532    &7.1251E-04    &3.936    &1.5863E+07\\
\hline
			
	\end{tabular}}
	\label{table:QSp11}
\end{table}
\begin{figure}[htbp]
	\centering
	\begin{subfigure}[b]{0.3\textwidth}
		 \includegraphics[width=5.7cm,height=4.cm]{AA1.pdf}
	\end{subfigure}
	\begin{subfigure}[b]{0.3\textwidth}
		 \includegraphics[width=5.7cm,height=4.5cm]{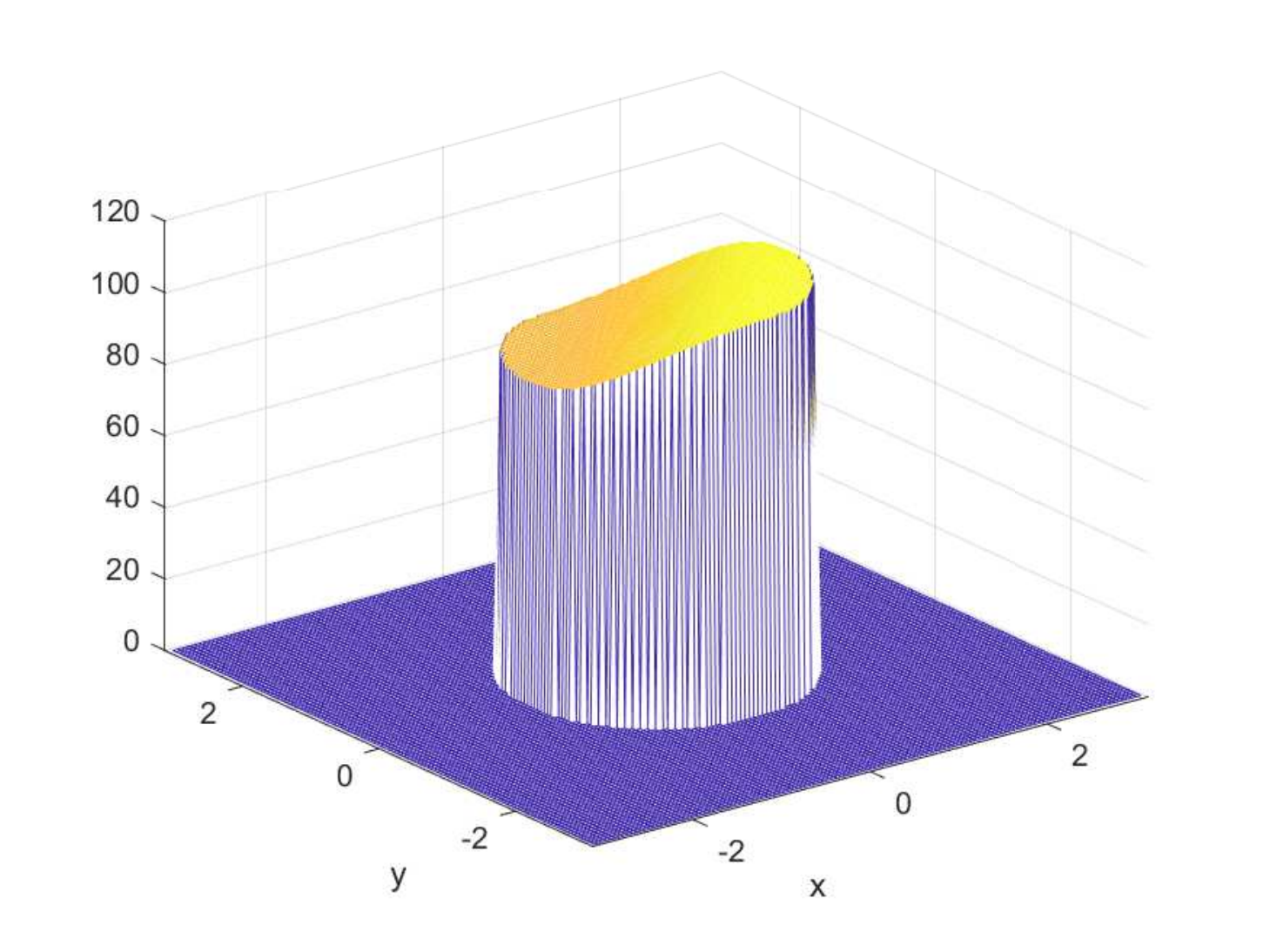}
	\end{subfigure}
	\begin{subfigure}[b]{0.3\textwidth}
		 \includegraphics[width=5.7cm,height=4.5cm]{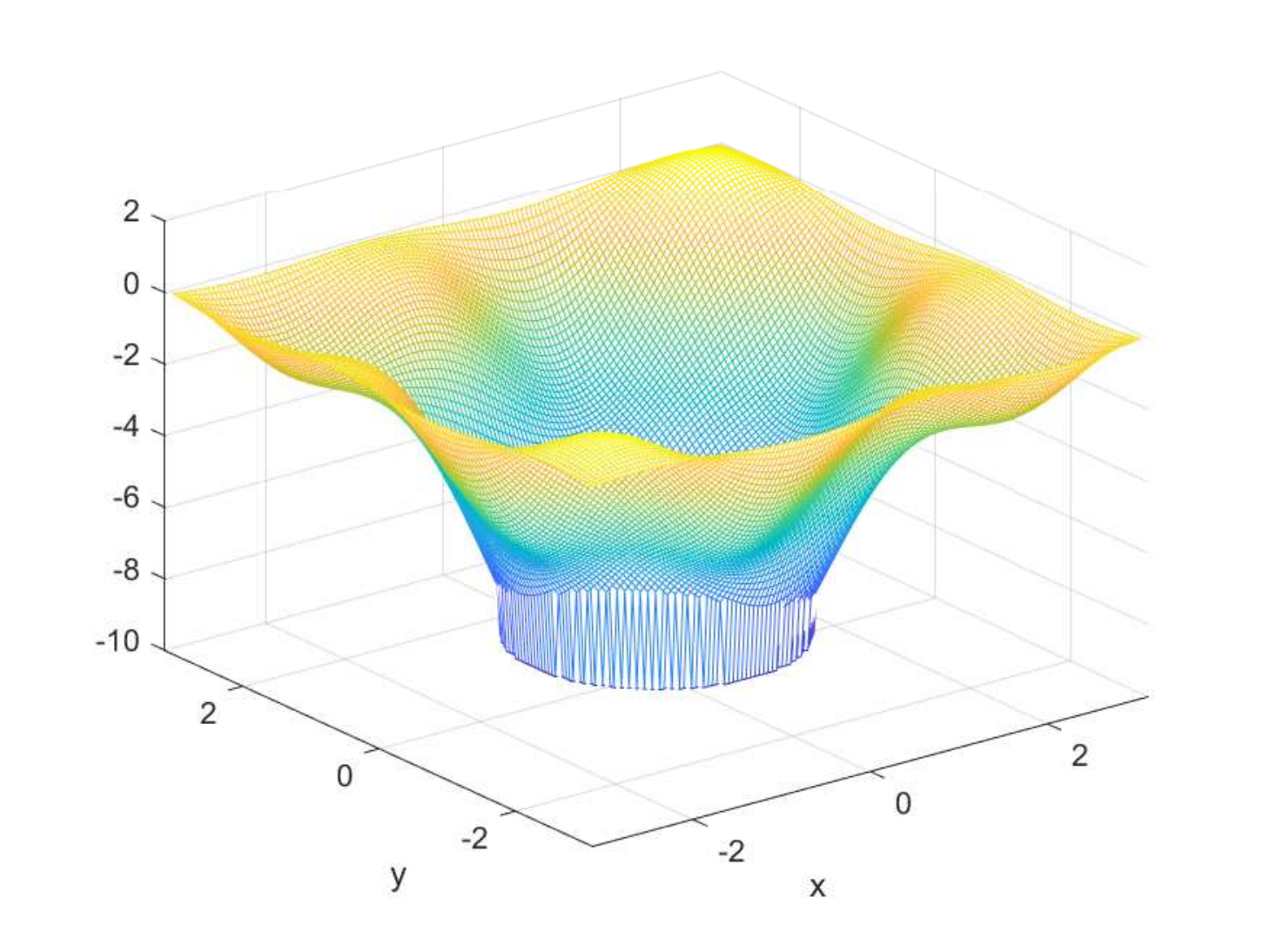}
	\end{subfigure}
	\begin{subfigure}[b]{0.3\textwidth}
		 \includegraphics[width=5.7cm,height=4.5cm]{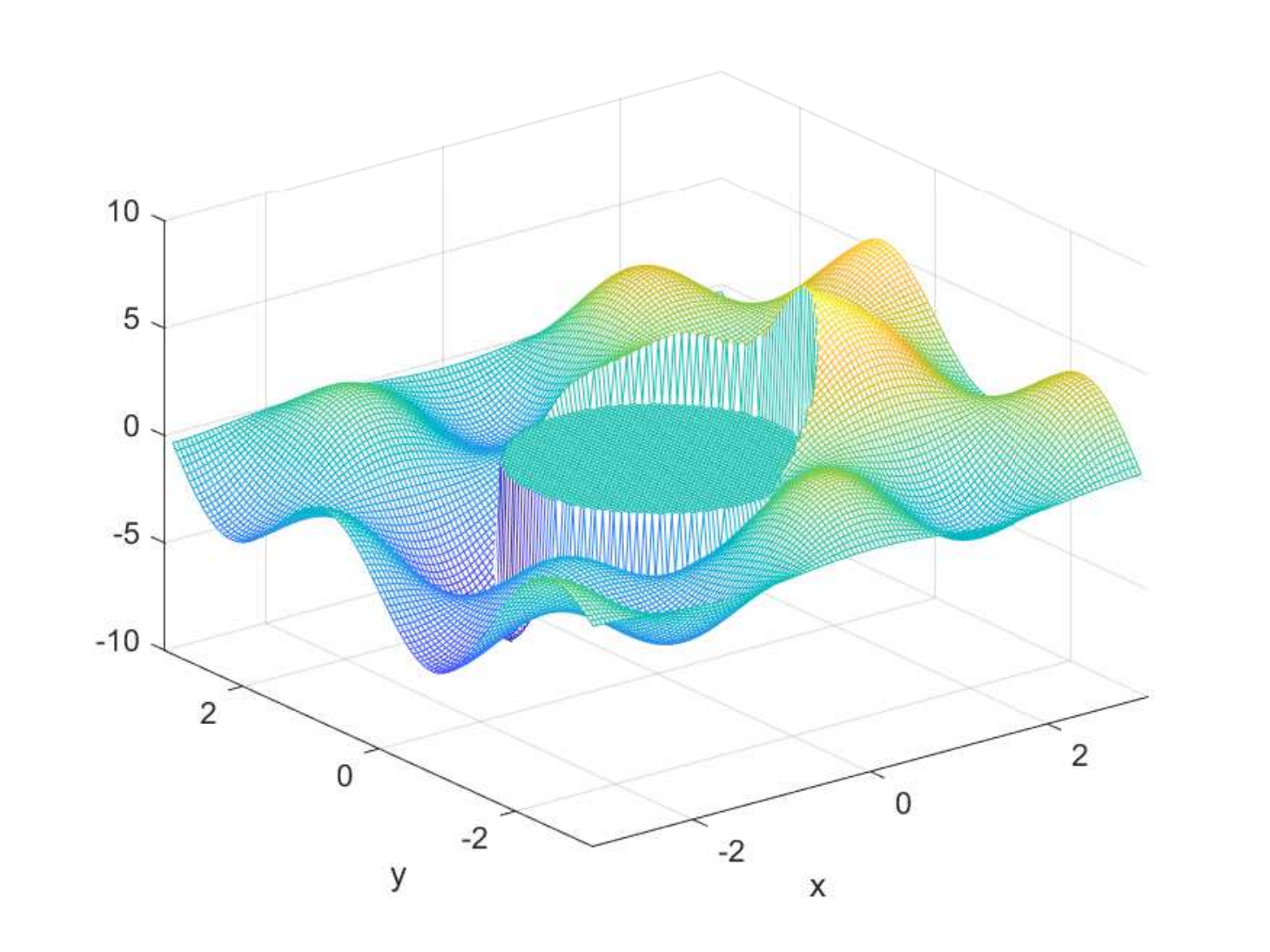}
	\end{subfigure}
	\begin{subfigure}[b]{0.3\textwidth}
	 \includegraphics[width=5.7cm,height=4.5cm]{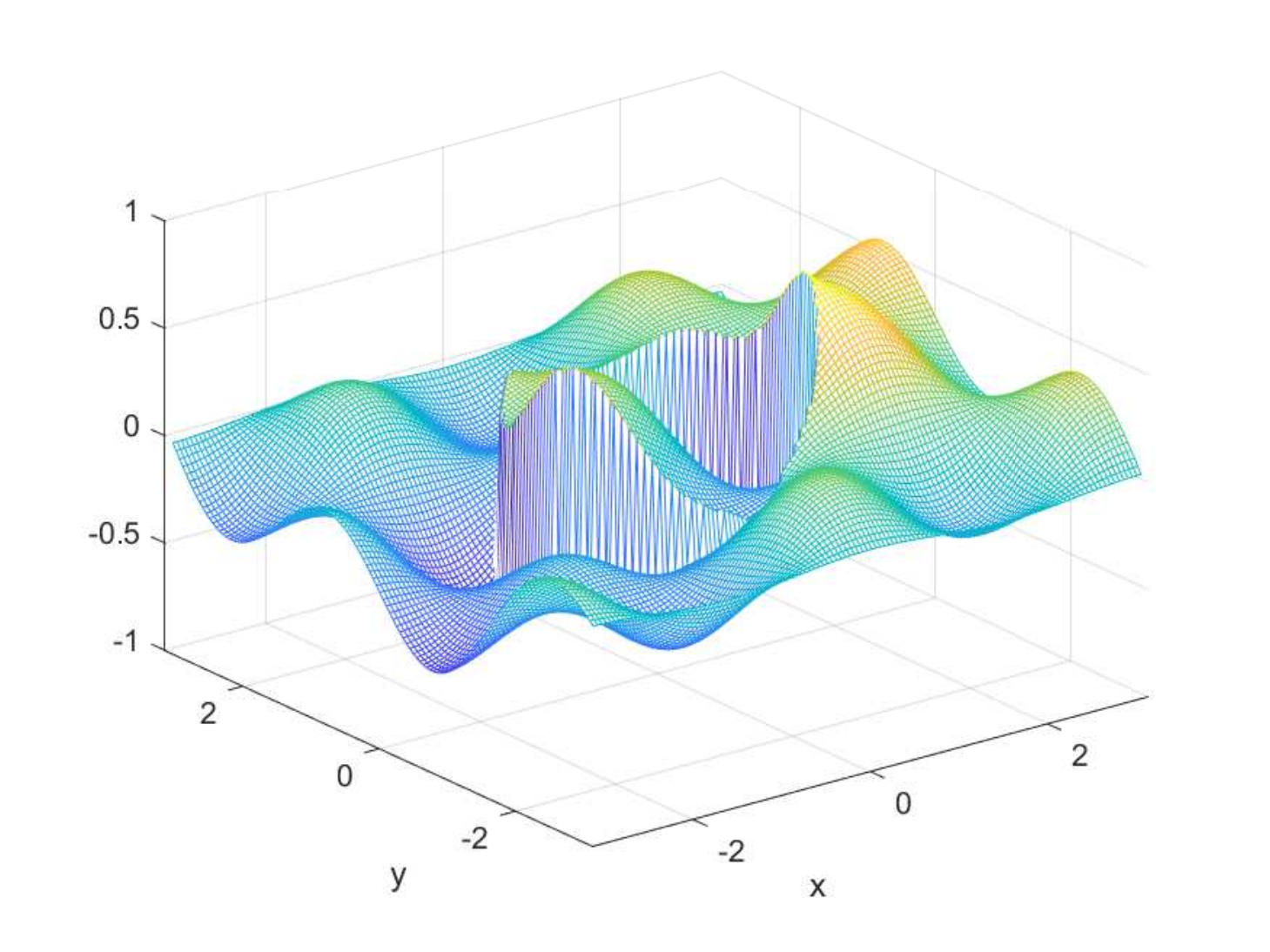}
\end{subfigure}
\begin{subfigure}[b]{0.3\textwidth}
	 \includegraphics[width=5.7cm,height=4.5cm]{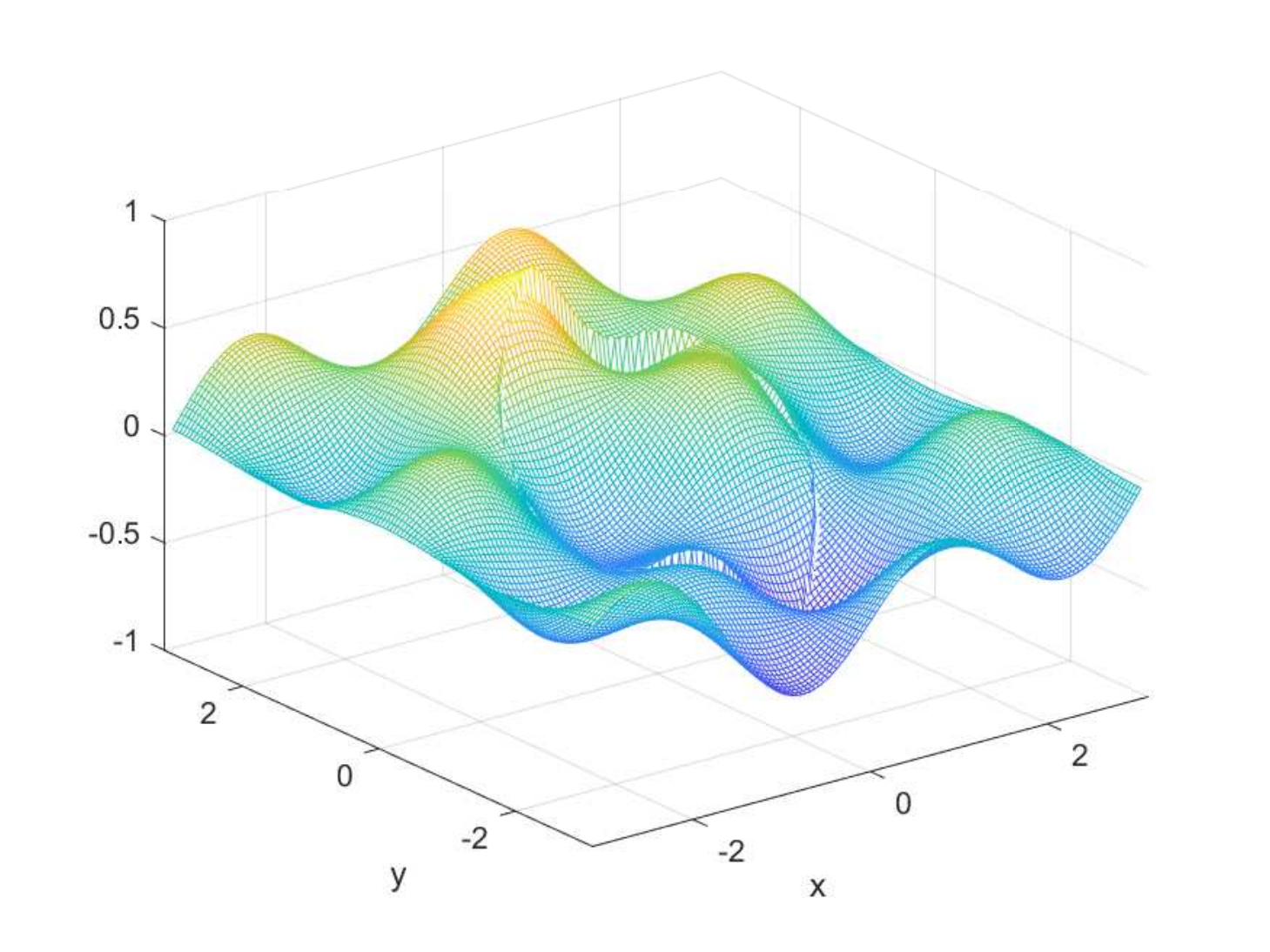}
\end{subfigure}
	\caption
	{\tiny{Top row for \cref{Drafex11}: the interface curve $\Gamma$ (left), the coefficient $a(x,y)$ (middle) and the numerical solution $u_h$ (right) with $h=2^{-7}\times 2\pi$. Bottom row for \cref{Drafex11}:  the numerical  $(u_h)_x$ (left), the numerical  $a(x,y)\times (u_h)_x$ (middle) and  the numerical  $a(x,y)\times(u_h)_y$ (right) with $h=2^{-7}\times 2\pi$.}}
	\label{fig:figure11}
\end{figure}
%%
%%
%%%
%%%%%%%%%%%%%%%%%%%%%%%%%%%%%%%%%%%%%%%%%%%%%%%%%%%%%%%%%%%%%%%%%%%%%%%%%%%%%%%%%%%%%%%%%%%%%%%%%%%%%%%%%%%%%%%%%%%%%%%%%%%%%%%
%%
%%
\begin{example}\label{Drafex12}
	 %%%%%%%%%%%%%%%%%%%%%%%%%%%%%%%%%%%%%%%%%%%%%%%%%%	 
	\normalfont
	Let $\Omega=(-2,2)^2$ and
	the interface curve be given by
	$\Gamma:=\{(x,y)\in \Omega \; :\; \psi(x,y)=0\}$ with
	$\psi (x,y)=2x^4+y^2-1/2$. Note that $\Gamma \cap \partial \Omega=\emptyset$ and
    \eqref{Qeques1} is given by
	\begin{align*}
		 &a_{+}=a\chi_{\Op}=\frac{10+\sin(x)\cos(y)}{100},
		\qquad a_{-}=a\chi_{\Om}=10(10+\cos(x-y)),\\
		&f_{+}=f\chi_{\Op}=\sin(\pi x)\sin(\pi y),
		\qquad f_{-}=f\chi_{\Om}=\sin(\pi x)\sin(\pi y), \\
		&g_1=-\sin(x)\sin(y)-2, \qquad g_2=-\cos(y),
		\qquad g=0.
	\end{align*}
	The numerical results are provided in \cref{table:QSp12}  and \cref{fig:figure12}.		 
\end{example}
\begin{table}[htbp]
	\caption{\tiny{Performance in \cref{Drafex12}  of the proposed  high order compact finite difference scheme in \cref{thm:regular,thm:gradient:regular,thm:irregular,thm:gradient:irregular} on uniform Cartesian meshes with $h=2^{-J}\times 4$. $\kappa$ is the condition number of the coefficient matrix.}}
	\centering
	\setlength{\tabcolsep}{2mm}{
		\begin{tabular}{c|c|c|c|c|c|c|c}
			\hline
			$J$
& $\|u_{h}-u_{h/2}\|_{2,\ind_{\Omega}}$

&order &$|u_{h}-u_{h/2}|_{H^1,\ind_{\Omega}}$

&order &  $|u_{h}-u_{h/2}|_{V,\ind_{\Omega}}$

&order &  $\kappa$ \\
\hline
3    &1.0502E+02    &0    &1.0875E+02    &0    &1.4944E+03    &0    &1.3769E+05\\
4    &4.1249E+00    &4.670    &4.9993E+00    &4.443    &1.9580E+01    &6.254    &5.5506E+04\\
5    &1.1253E+00    &1.874    &1.3475E+00    &1.891    &2.0402E-01    &6.585    &3.6392E+06\\
6    &8.9752E-02    &3.648    &1.0730E-01    &3.651    &3.3886E-02    &2.590    &1.7678E+08\\
7    &6.5737E-03    &3.771    &7.9995E-03    &3.746    &2.4428E-03    &3.794    &1.9945E+07\\
\hline
			
	\end{tabular}}
	\label{table:QSp12}
\end{table}
\begin{figure}[htbp]
	\centering
	\begin{subfigure}[b]{0.3\textwidth}
		 \includegraphics[width=5.7cm,height=4.cm]{AA9.pdf}
	\end{subfigure}
	\begin{subfigure}[b]{0.3\textwidth}
		 \includegraphics[width=5.7cm,height=4.5cm]{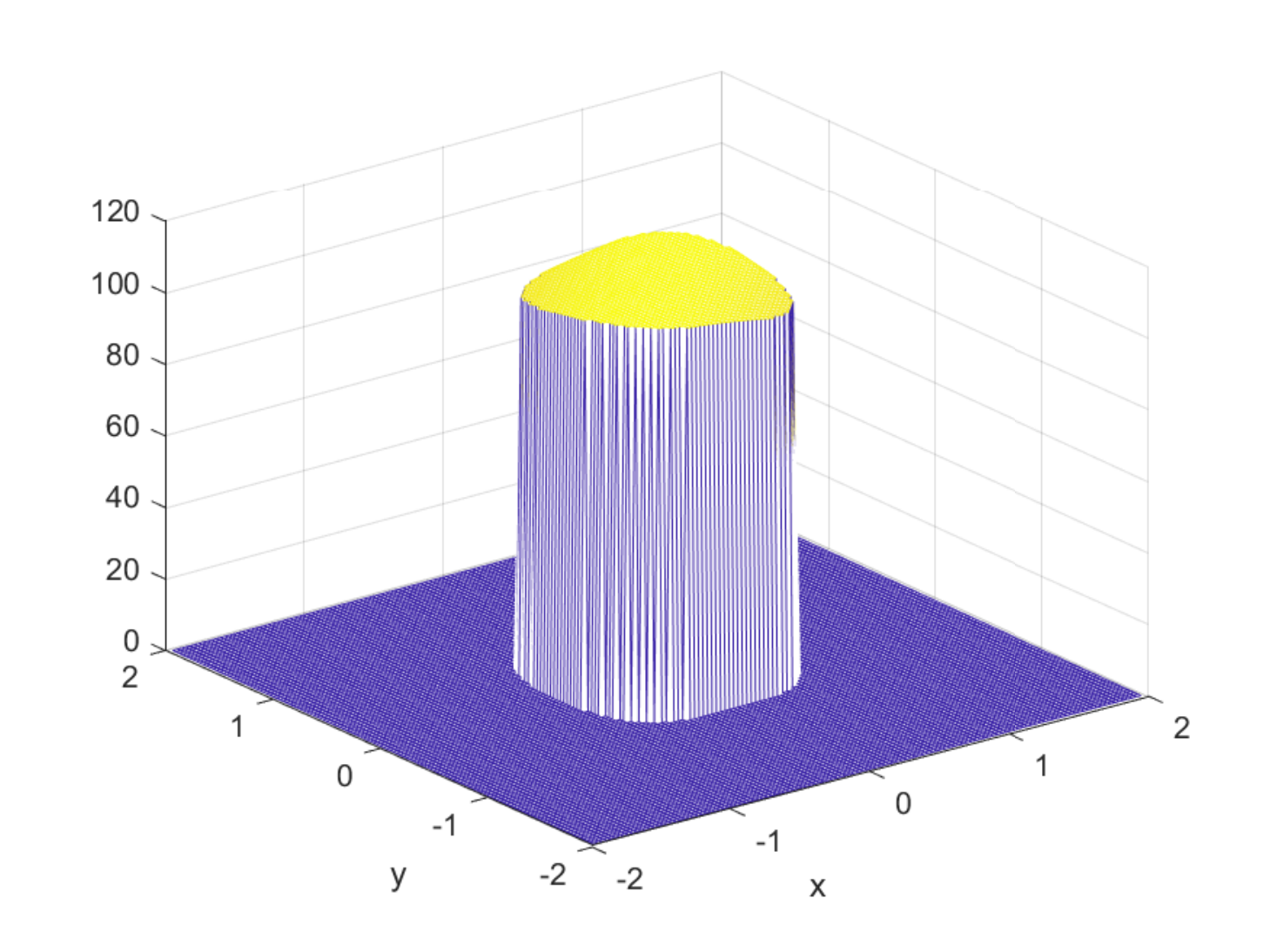}
	\end{subfigure}
	\begin{subfigure}[b]{0.3\textwidth}
		 \includegraphics[width=5.7cm,height=4.5cm]{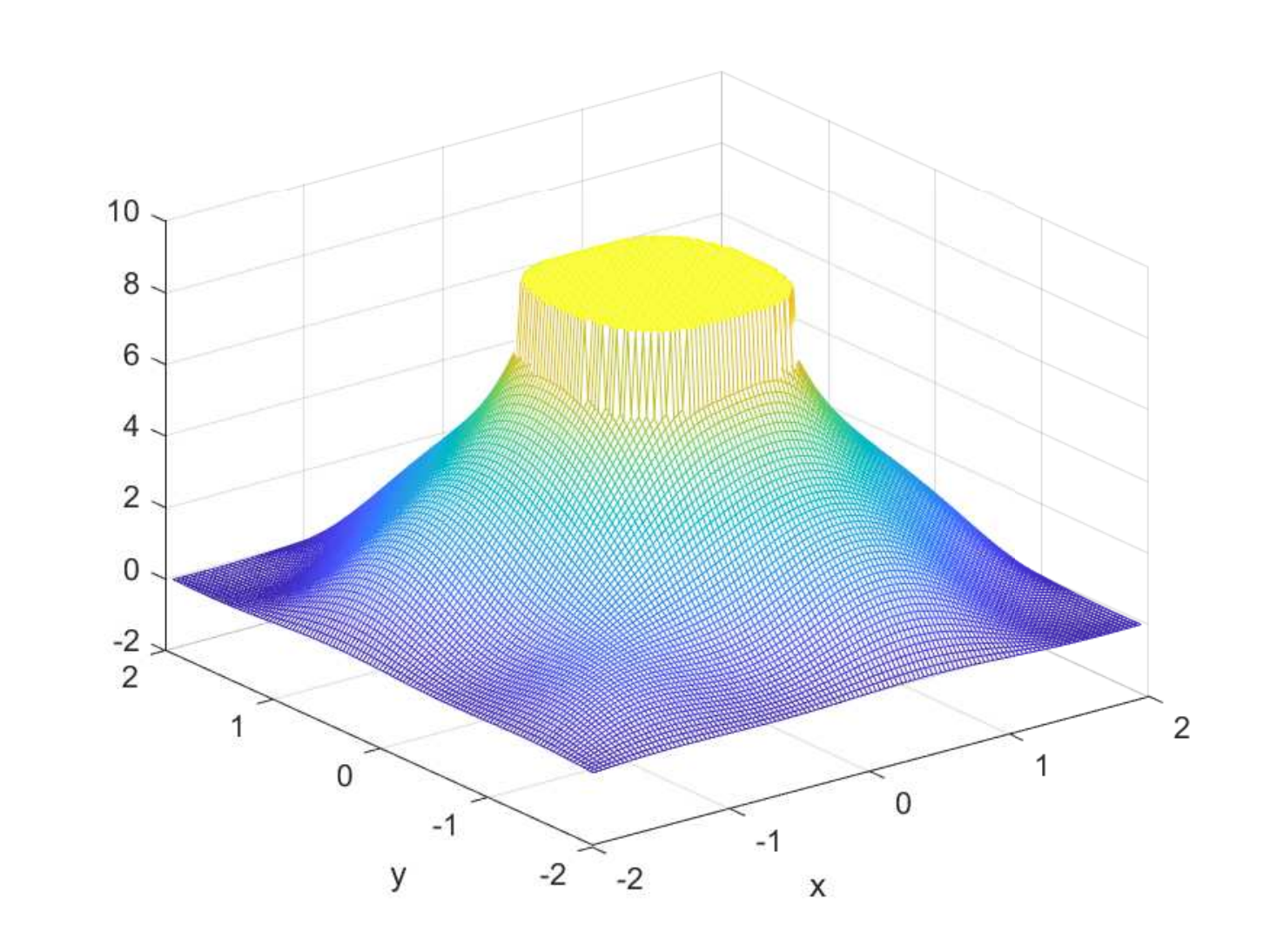}
	\end{subfigure}
	\begin{subfigure}[b]{0.3\textwidth}
		 \includegraphics[width=5.7cm,height=4.5cm]{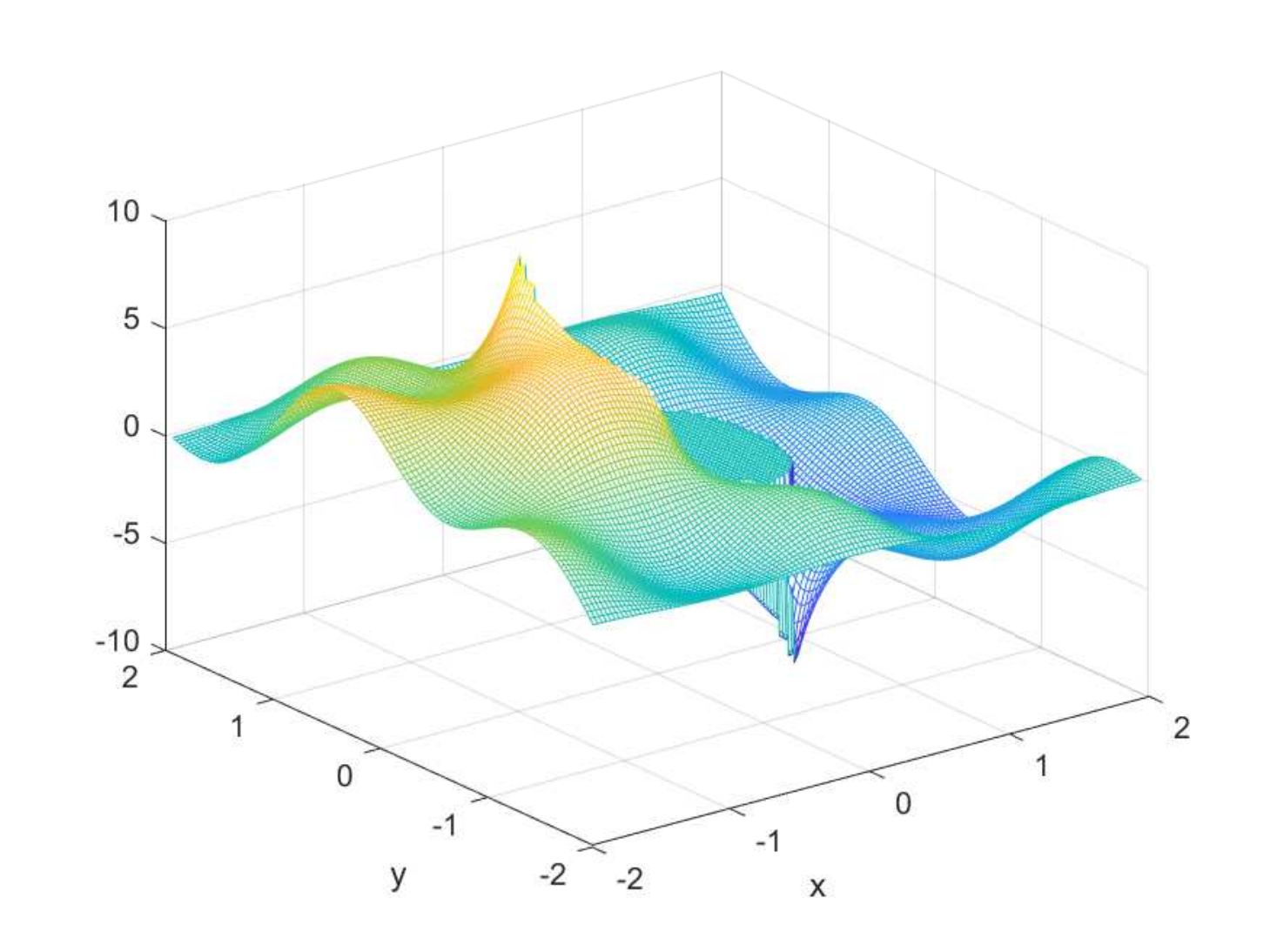}
	\end{subfigure}
	\begin{subfigure}[b]{0.3\textwidth}
	 \includegraphics[width=5.7cm,height=4.5cm]{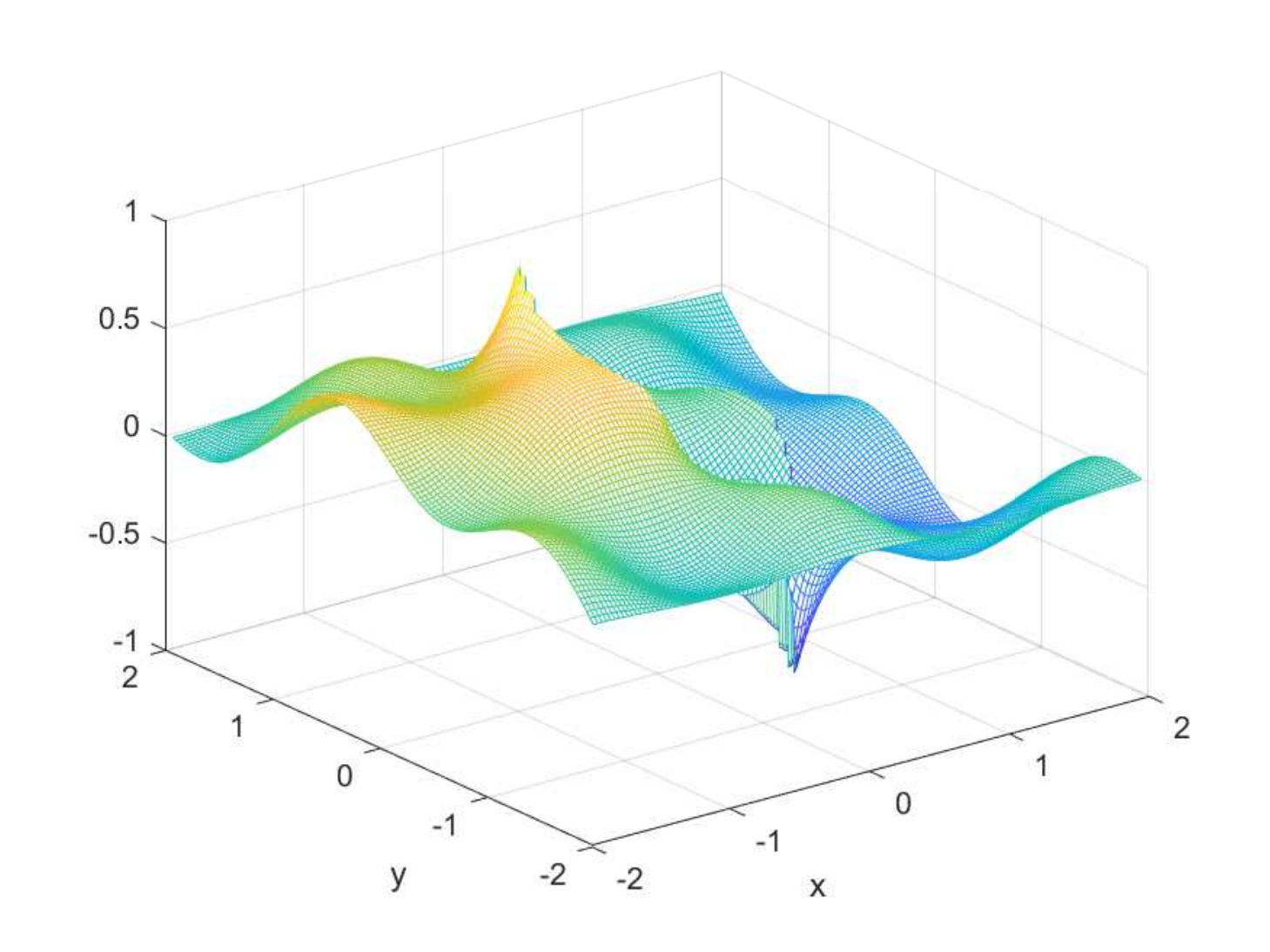}
\end{subfigure}
\begin{subfigure}[b]{0.3\textwidth}
	 \includegraphics[width=5.7cm,height=4.5cm]{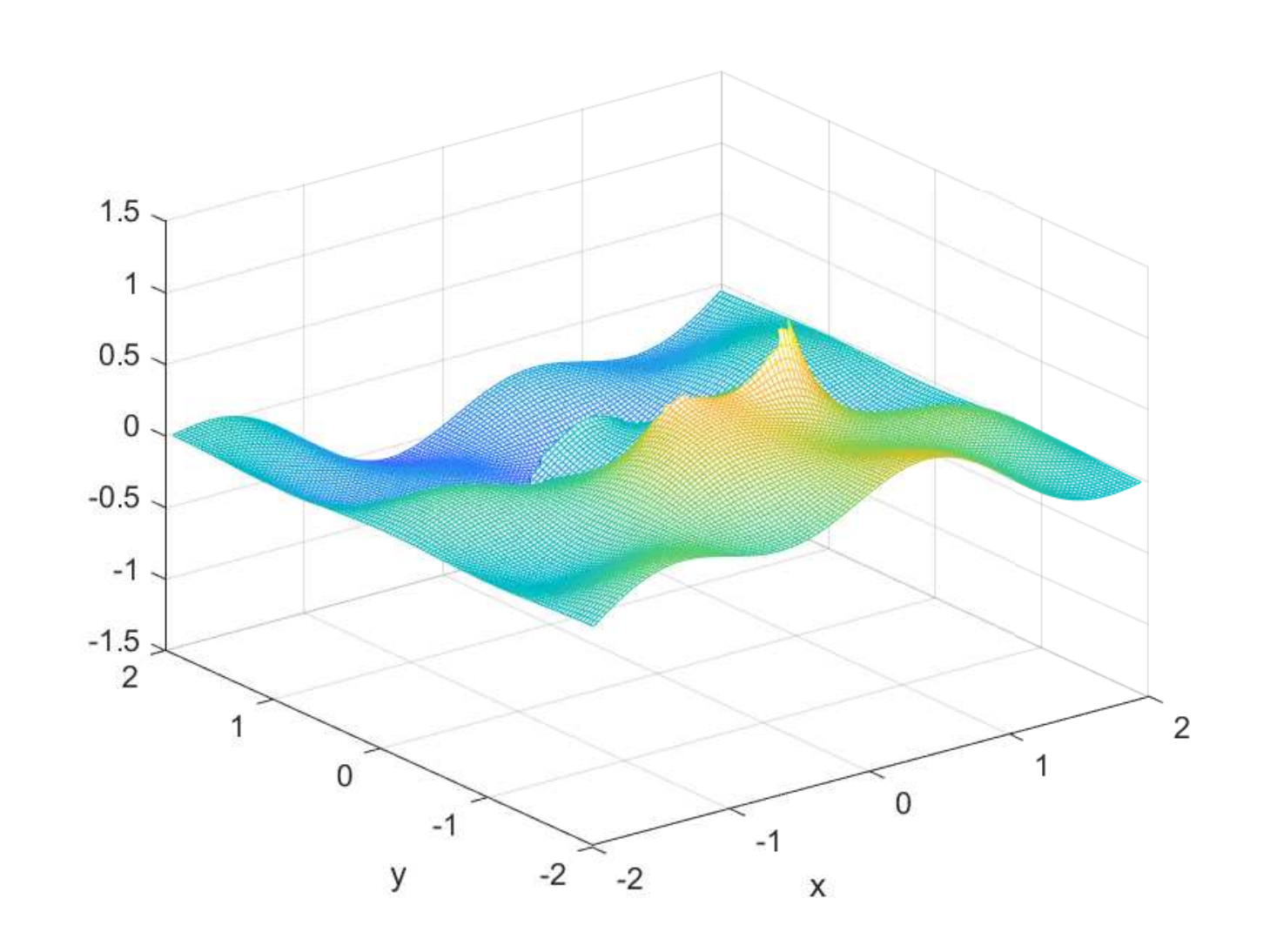}
\end{subfigure}
	\caption
	{\tiny{Top row for \cref{Drafex12}: the interface curve $\Gamma$ (left), the coefficient $a(x,y)$ (middle) and the numerical solution $u_h$ (right) with $h=2^{-7}\times 4$. Bottom row for \cref{Drafex12}:  the numerical  $(u_h)_x$ (left), the numerical  $a(x,y)\times (u_h)_x$ (middle) and  the numerical  $a(x,y)\times(u_h)_y$ (right) with $h=2^{-7}\times 4$.}}
	\label{fig:figure12}
\end{figure}

\section{Conclusion}\label{sec:Conclu}

To our best knowledge, so far there is only one fourth order compact finite difference scheme for the numerical approximated solution for the interface elliptic problems with piecewise  constant coefficients,  continuous source terms and
two homogeneous jump conditions in \cite[Section~7.2.7]{LiIto06} and \cite[Section~7.5.4]{LiIto06} provides the numerical results with $|a_+-a_-|=99$ or 9 for the proposed fourth order compact scheme in uniform and no-nested mesh grids. The compact schemes in \cite[Section~7.2.7]{LiIto06} are based on coordinate transformations and optimization problems.

Our contributions of this paper are as follows:
\begin{enumerate}
\item[(1)] We construct a high order compact finite difference scheme for the numerical  solution on uniform meshes for \eqref{Qeques1} with a discontinuous, piecewise  smooth and high-contrast coefficient (the ratio $\sup(a_+)/\inf(a_-)=10^{-3},10^{-2},10^{3},10^{4}$), discontinuous source terms and two non-homogeneous jump conditions. $a_+$ and $a_-$  can be linearly independent or dependent.

\item[(2)] Since we do not need to change coordinates into the local coordinates and solve an optimization problem to derive the scheme, it is simple for readers to understand the procedure, derive the schemes, and perform the implementations.

\item[(3)]  For the irregular points case,  Eq.(7.73) in \cite[Section~7.2.7]{LiIto06} expands the Taylor series of $u(x,y)$ to $\bo(h^{5})$, while we only need to expand the Taylor series of $u(x,y)$ to $\bo(h^{4})$, which significantly reduces the computational costs to calculate the coefficients of the proposed schemes.  Moreover, we also prove that the maximum order of the  compact finite difference schemes for the numerical approximated solutions at irregular points on uniform meshes is three.

\item[(4)] Since the gradients are crucial in the real world problems to analyze the speeds of fluids, we also derive a high order compact finite difference scheme for the numerical approximated gradients.
Our numerical experiments confirm the flexibility and the fourth order accuracy in numerical approximated $L^2$ norms for the numerical approximated solutions $u_h$, the numerical approximated gradients $\big((u_h)_x,(u_h)_y\big)$ and the numerical approximated velocities  $\big(a(u_h)_x,a(u_h)_y\big)$ of the proposed schemes.
\end{enumerate}

\section{Proof of \cref{thm:interface}}
\label{sec:proof}

\begin{proof}[Proof of \cref{thm:interface}]

	Similar as the proof of Theorem 2.3 in \cite{FengHanMinev21}, by \eqref{parametric}, two jump conditions in \eqref{Qeques1} can be written as
	\be \label{interface:u}
	 u_+(r(t)+x_i^*,s(t)+y_j^*)-u_-(r(t)+x_i^*,s(t)+y_j^*)=g_1(r(t)+x_i^*,s(t)+y_j^*),
	\ee
	\be \label{interface:flux}
	\begin{split}
		\big((a_+\nabla u_+)(r(t)+x_i^*,s(t)+y_j^*)-
		&(a_-\nabla u_-)(r(t)+x_i^*,s(t)+y_j^*)\big) \cdot (s'(t),-r'(t))\\
		 &=g_2(r(t)+x_i^*,s(t)+y_j^*)\sqrt{(r'(t))^2+(s'(t))^2},
	\end{split}
	\ee
	%%%
	for $t\in (-\epsilon,\epsilon)$.
	%%%
	Because all involved functions in \eqref{interface:u} and \eqref{interface:flux} are assumed to be smooth,
	to link the two sets $\{u_+^{(m,n)}: (m,n)\in \ind_{M+1}^1\}$ and $\{u_-^{(m,n)}: (m,n)\in \ind_{M+1}^1\}$,
	we now take the Taylor approximation of the above functions near the base parameter $t=0$.
	\eqref{u:approx:ir:key} implies
	{\small{
	\begin{align*}
		u_\pm (r(t)+x_i^*,s(t)+y_j^*)
		&=\sum_{(m,n)\in \ind_{M+1}^1}
		u_\pm^{(m,n)} G^{\pm}_{m,n}(r(t),s(t))
		+\sum_{(m,n)\in \ind_{M-1}} f_\pm^{(m,n)}
		H^{\pm}_{m,n}(r(t), s(t))+\bo(t^{M+2})\\
		&=\sum_{p=0}^{M+1}
		\left(\sum_{(m,n)\in \ind_{M+1}^1}
		u_\pm^{(m,n)} g^{\pm}_{m,n,p}
		+\sum_{(m,n)\in \ind_{M-1}} f_\pm^{(m,n)}
		h^{\pm}_{m,n,p} \right) t^p+\bo(t^{M+2}),
	\end{align*}
}}
	where
	\be\label{gmnhmn0}
	g^{\pm}_{m,n,p}:=\frac{1}{p!} \frac{d^p(G^{\pm}_{m,n}(r(t),s(t)))}{dt^p}\Big|_{t=0},\ \ h^{\pm}_{m,n,p}:=\frac{1}{p!}\frac{d^p(H^{\pm}_{m,n}(r(t),s(t)))}{dt^p}\Big|_{t=0},\quad p=0,\ldots,M+1.
	\ee
	%%%
	%%%
	Similarly,
	\begin{align*}
		g_1(r(t)+x_i^*,s(t)+y_j^*)
		&=\sum_{(m,n)\in \ind_{M+1}} \frac{g_1^{(m,n)}}{m!n!} (r(t))^m (s(t))^n
		+\bo(t^{M+2})\\
		 &=\sum_{p=0}^{M+1}\left(\sum_{(m,n)\in \ind_{M+1}} \frac{g_1^{(m,n)}}{m!n!} r_{m,n,p}\right) t^p+\bo(t^{M+2}),
	\end{align*}
	where the constants $r_{m,n,p}:=\frac{1}{p!}\frac{d^p ((r(t))^m (s(t))^n)}{d t^p}\Big|_{t=0}$ for $p=0,\ldots,M+1$.
	Since each entry of ${G}^{\pm}_{m,n}(x,y)$ is a homogeneous polynomial of degree $\ge m+n$  and $r(0)=s(0)=0$,
	we have $g^{\pm}_{m,n,p}=0$ for all $0\le p<m+n$ by \eqref{gmnhmn0}.
	Thus,  \eqref{interface:u} leads to
	\be \label{interface:u:01}
	\sum_{(m,n)\in \ind_{M+1}^1}u_+^{(m,n)}g^{+}_{m,n,p}-u_-^{(m,n)}g^{-}_{m,n,p}
	=F_p,\qquad p=0,\ldots,M+1,
	\ee
	where $F_0:=g_1^{(0,0)}$ and
	\[
	F_p:=\sum_{(m,n)\in \ind_{M-1}} f_-^{(m,n)}h^{-}_{m,n,p}-f_+^{(m,n)}
	h^{+}_{m,n,p}
	+\sum_{(m,n)\in \ind_{M+1}}
	\frac{g_1^{(m,n)}}{m!n!} r_{m,n,p}, \qquad p=1,\ldots,M+1.
	\]
	Clearly, $g^{\pm}_{0,0,0}=1$ and $g^{\pm}_{m,n,p}=0$ for all $0\le p<m+n$. We observe that the identities in \eqref{interface:u:01} become
	%
	%%%
	\be \label{interface:u:0}
	u_{-}^{(0,0)}
	=u_{+}^{(0,0)}-g_1^{(0,0)},
	\ee
	\be \label{interface:u:1}
	\begin{split}
	 &u_{-}^{(0,p)}g^{-}_{0,p,p}+u_{-}^{(1,p-1)}g^{-}_{1,p-1,p}
	 =u_{+}^{(0,p)}g^{+}_{0,p,p}+u_{+}^{(1,p-1)}g^{+}_{1,p-1,p}-F_p\\
	&+
	\sum_{(m,n)\in \ind_{p-1}^1}
	 u_{+}^{(m,n)}g^{+}_{m,n,p}-u_{-}^{(m,n)}g^{-}_{m,n,p},\qquad p=1,\ldots,M+1.
	\end{split}
	\ee
	%
		%%%
	By \eqref{Gmn},
	\be\label{Gmn:seperateT}
	 G^{\pm}_{m,n}(x,y):=G^{\pm,1}_{m,n}(x,y)+G^{\pm,2}_{m,n}(x,y),
	\ee
	%%%
	where
	\be\label{Gmn:seperate1}
	 G^{\pm,1}_{m,n}(x,y):=\sum_{\ell=0}^{\lfloor \frac{n}{2}\rfloor}
	\frac{(-1)^\ell x^{m+2\ell} y^{n-2\ell}}{(m+2\ell)!(n-2\ell)!},
	\ee
	\be\label{Gmn:seperate2}
	G^{\pm,2}_{m,n}(x,y):= \sum_{(m',n')\in \ind_{M+1}^2 \setminus \ind_{m+n}^2 }A^{u}_{m',n',m,n} \frac{ x^{m'} y^{n'}}{m'!n'!}, \quad \forall (m,n)\in \ind_{M+1}^1.
	\ee	
	Since each entry of $ G^{\pm,2}_{m,n}(x,y)$ is a homogeneous polynomial of degree $\ge m+n+1$ and $s(0)=r(0)=0$, \eqref{gmnhmn0} leads to
	\be\label{gmnhmn1}
	g^{\pm}_{m,n,p}:=\frac{1}{p!} \frac{d^p(G^{\pm,1}_{m,n}(r(t),s(t)))}{dt^p}\Big|_{t=0}, \quad (m,n)\in\{(0,p),(1,p-1)\}.
	\ee
	%%%
	For the flux jump condition \eqref{interface:flux}, \eqref{u:approx:ir:key} implies
		{\footnotesize{
	\be \label{uxy:approx:ir:key}
	\begin{split}
		\nabla \big(u_\pm (x+x_i^*,y+y_j^*))\big)
		&=\sum_{(m,n)\in \ind_{M+1}^1}
		u_\pm^{(m,n)} \nabla\big(G^{\pm}_{m,n}(x,y)\big) +\sum_{(m,n)\in \ind_{M-1}}
		f_\pm ^{(m,n)} \nabla\big( H^{\pm}_{m,n}(x,y)\big)+\bo(h^{M+1}),
	\end{split}
	\ee}}
	for $x,y\in (-2h,2h)$ and clearly
	\be \label{a:approx}
	a_{\pm}(x+x_i^*,y+y_j^*)=
	\sum_{(m,n)\in \ind_{M}} \frac{a_{\pm}^{(m,n)}}{m!n!}x^m y^{n}
	+\bo(h^{M+1}),
	\ee
	for $x,y\in (-2h,2h)$.
	By \eqref{uxy:approx:ir:key} and \eqref{a:approx},
{\small{
		\begin{align*}
			&a_{\pm}(x+x_i^*,y+y_j^*)\nabla u_\pm (r(t)+x_i^*, s(t)+y_j^*)\cdot(s'(t),-r'(t))\\
			&=\sum_{(m,n)\in \ind_{M+1}^1}
			u_\pm^{(m,n)}  \widetilde{G}^{\pm}_{m,n}(r(t),s(t)) \cdot(s'(t),-r'(t))
			+\sum_{(m,n)\in \ind_{M-1}}
			f_\pm ^{(m,n)}  \widetilde{H}^{\pm}_{m,n}(r(t),s(t))\cdot(s'(t),-r'(t))\\
			&=
			\sum_{p=0}^{M}
			\left(
			\sum_{(m,n)\in \ind_{M+1}^1}
			u_\pm^{(m,n)} \tilde{g}^{\pm}_{m,n,p}
			+\sum_{(m,n)\in \ind_{M-1}}
			f_\pm ^{(m,n)} \tilde{h}^{\pm}_{m,n,p}
			\right) t^p+\bo(t^{M+1}),
		\end{align*}
	}}
	where
		{\footnotesize{
		\begin{align*}
		& \widetilde{G}^{\pm}_{m,n}(x,y) = \nabla G^{\pm}_{m,n}(x,y)\Bigg(\sum_{(m,n)\in \ind_{M}} \frac{a_{\pm}^{(m,n)}}{m!n!}x^m y^{n}\Bigg),\ \ \widetilde{H}^{\pm}_{m,n}(x,y) = \nabla H^{\pm}_{m,n}(x,y)\Bigg(\sum_{(m,n)\in \ind_{M}} \frac{a_{\pm}^{(m,n)}}{m!n!}x^m y^{n}\Bigg),
	\end{align*}}}
	{\footnotesize{
	\be\label{gmnhmn2}
	 \tilde{g}^{\pm}_{m,n,p}:=\frac{1}{p!}\frac{d^p( \widetilde{G}^{\pm}_{m,n}(r(t),s(t)) \cdot(s'(t),-r'(t)))}{dt^p}\Big|_{t=0},\ \ 
	 \tilde{h}^{\pm}_{m,n,p}:=\frac{1}{p!}\frac{d^p( \widetilde{H}^{\pm}_{m,n}(r(t),s(t)) \cdot(s'(t),-r'(t)))}{dt^p}\Big|_{t=0}.
	\ee}}
	Note that each entry of $ \widetilde{G}^{\pm}_{m,n}$ is a homogeneous polynomial of degree $\ge m+n-1$. By $r(0)=s(0)=0$ and \eqref{gmnhmn2}, we can say that $\tilde{g}^{\pm}_{m,n,p}=0$ for all $0\le p<m+n-1$.
	Similarly, we have
	{\small{
	\begin{align*}
		 g_2(r(t)+x_i^*,s(t)+y_j^*)\sqrt{(r'(t))^2+(s'(t))^2}
		&=\sum_{(m,n)\in \ind_{M}} \frac{g_2^{(m,n)}}{m!n!} (r(t))^m (s(t))^n \sqrt{(r'(t))^2+(s'(t))^2}
		+\bo(t^{M+1})\\
		&=\sum_{p=0}^{M}\left(\sum_{(m,n)\in \ind_M} \frac{g_2^{(m,n)}}{m!n!} \tilde{r}_{m,n,p}\right) t^p+\bo(t^{M+1}),
	\end{align*}
}}
	as $t\to 0$,
	where
	 \[\tilde{r}_{m,n,p}:=\frac{1}{p!}\frac{d^p \Big((r(t))^m (s(t))^n\sqrt{(r'(t))^2+(s'(t))^2}\Big)}{d t^p}\Bigg|_{t=0}, \qquad p=0,\ldots,M.\]
	Therefore, \eqref{interface:flux} implies
	\be \label{interface:flux:0}
	\sum_{(m,n)\in \ind_{M+1}^1}
	 u_+^{(m,n)}\tilde{g}^+_{m,n,p}-u_-^{(m,n)} \tilde{g}^-_{m,n,p}=G_p,\qquad p=0,\ldots,M,
	\ee
	where
	\[
	G_p:=\sum_{(m,n)\in \ind_{M-1}} f_-^{(m,n)}\tilde{h}^{-}_{m,n,p}-f_+^{(m,n)}
	\tilde{h}^{+}_{m,n,p}
	+\sum_{(m,n)\in \ind_{M}}
	\frac{g_2^{(m,n)}}{m!n!} \tilde{r}_{m,n,p}.
	\]
	Clearly, $\tilde{g}^{\pm}_{0,0,0}=0$ and $\tilde{g}^{\pm}_{m,n,p}=0$ for all $0\le p< m+n-1$. We observe that \eqref{interface:flux:0} become
	{\footnotesize{
	\be \label{interface:flux:1}
	\begin{split}
	u_-^{(0,p)}\tilde{g}^{-}_{0,p,p-1}
	+u_-^{(1,p-1)} \tilde{g}^{-}_{1,p-1,p-1}
	&=u_+^{(0,p)}\tilde{g}^{+}_{0,p,p-1}
	+u_+^{(1,p-1)} \tilde{g}^{+}_{1,p-1,p-1}-G_{p-1}\\
	&+\sum_{(m,n)\in \ind_{p-1}^1}
	 u_{+}^{(m,n)}\tilde{g}^{+}_{m,n,p-1}-u_{-}^{(m,n)}\tilde{g}^{-}_{m,n,p-1},\quad p=1,\ldots,M+1.
	\end{split}
	\ee}}
	%%%%%%
	Since each entry of $ G^{\pm,2}_{m,n}(x,y)$ is a homogeneous polynomial of degree $\ge m+n+1$ and $s(0)=r(0)=0$, \eqref{gmnhmn2} \eqref{Gmn:seperateT}, \eqref{Gmn:seperate1} and \eqref{Gmn:seperate2} leads to
	\be\label{gmnhmntide}
	 \tilde{g}^{\pm}_{m,n,p-1}:=\frac{a_{\pm}^{(0,0)}}{(p-1)!}\frac{d^{p-1}( \nabla G^{\pm,1}_{m,n}(x,y) \cdot(s'(t),-r'(t)))}{dt^{p-1}}\Big|_{t=0}, \quad (m,n)\in\{(0,p),(1,p-1)\}.
	\ee
	%%
	%%
	%%
	%%%%%
	According to the assumption $(r'(0))^2+(s'(0))^2>0$ in \eqref{parametric}, $a_{\pm}^{(0,0)}\ne 0$ in \eqref{Qeques1} and the proof of Theorem 2.3 in \cite{FengHanMinev21},
	\eqref{Gmn:seperate1}, \eqref{gmnhmn1} and \eqref{gmnhmntide} imply
	\be \label{nonzero}
	 g^{\pm}_{0,p,p}\tilde{g}^{\pm}_{1,p-1,p-1}- g^{\pm}_{1,p-1,p}
	\tilde{g}^{\pm}_{0,p,p-1}>0,\qquad \forall\; p=1,\ldots,M.
	\ee
	%
	%%
	%%
	%%%
		Let
	\[
	W^{\pm}_p=	\left[ \begin{matrix} g^{\pm}_{0,p,p}
		&g^{\pm}_{1,p-1,p}\\
		\tilde{g}^{\pm}_{0,p,p-1} & \tilde{g}^{\pm}_{1,p-1,p-1}
	\end{matrix}\right] \quad \mbox{and}  \quad
	 Q^{\pm}_p:=\frac{1}{g^{\pm}_{0,p,p}\tilde{g}^{\pm}_{1,p-1,p-1}- g^{\pm}_{1,p-1,p}
	\tilde{g}^{\pm}_{0,p,p-1}}
\left[ \begin{matrix} \tilde{g}^{\pm}_{1,p-1,p-1}
	&-g^{\pm}_{1,p-1,p}\\
	-\tilde{g}^{\pm}_{0,p,p-1} &g^{\pm}_{0,p,p}\end{matrix}\right].
	\]
Then, by  \eqref{nonzero}, we have $W^{\pm}_pQ^{\pm}_p=I_2$, where $I_2$ is a 2 by 2 identity matrix.
	
	Therefore, the solution $\{u_-^{(0,p)}, u_-^{(1,p-1)}\}_{p=1,\ldots M}$ of the linear equations in \eqref{interface:u:1} and \eqref{interface:flux:1} can be recursively and uniquely calculated
	from $p=1$ to $p=M$ by $u_-^{(0,0)}=u_+^{(0,0)}-g_1^{(0,0)}$ due to \eqref{interface:u:0} and
	\be \label{transmissioncoef}
	\begin{split}
	\left[ \begin{matrix}
		u_-^{(0,p)}\\
		u_-^{(1,p-1)}\end{matrix}\right]
	&=Q^{-}_pW^{+}_p\left[ \begin{matrix}
		u_+^{(0,p)}\\
		 u_+^{(1,p-1)}\end{matrix}\right]-Q^{-}_p
	\left[ \begin{matrix}
		F_p\\
		G_{p-1}
	\end{matrix}\right]+\sum_{n=1}^{p-1}
	Q^{-}_p \left[
	 \begin{matrix}u_{+}^{(0,n)}g^{+}_{0,n,p}+u_{+}^{(1,n-1)}g^{+}_{1,n-1,p}\\
		 u_{+}^{(0,n)}\tilde{g}^{+}_{0,n,p-1}
		 +u_{+}^{(1,n-1)}\tilde{g}^{+}_{1,n-1,p-1}
	\end{matrix}\right]\\
&-\sum_{n=1}^{p-1}
Q^{-}_p \left[
\begin{matrix}u_{-}^{(0,n)}g^{-}_{0,n,p}+u_{-}^{(1,n-1)}g^{-}_{1,n-1,p}\\
u_{-}^{(0,n)}\tilde{g}^{-}_{0,n,p-1}
+u_{-}^{(1,n-1)}\tilde{g}^{-}_{1,n-1,p-1}
\end{matrix}\right],
\end{split}
	\ee
	for $p=1,\ldots,M+1$.
	Note that for $p=1$, the above summation $\sum_{n=1}^{p-1}$ is empty.
\end{proof}

\end{document}